\newtheorem{theorem}{Theorem}[section]
\newtheorem*{theorem*}{Theorem}
\newtheorem{proposition}[theorem]{Proposition}
\newtheorem{corollary}[theorem]{Corollary}
\newtheorem{lemma}[theorem]{Lemma}
\newtheorem{conjecture}[theorem]{Conjecture}
\newtheorem{mainthm}{Theorem}
\newtheorem{maincorr}[mainthm]{Corollary}
\theoremstyle{definition}
\newtheorem{definition}[theorem]{Definition}
\newtheorem*{convention}{Convention}
\newtheorem{remark}[theorem]{Remark}
\numberwithin{equation}{section}
\definecolor{myo}{RGB}{255,196,0}
\definecolor{mpurp}{RGB}{255,0,255}
\definecolor{orange}{RGB}{255,128,0}
\definecolor{light-blue}{RGB}{0,128,255}
\definecolor{dark-green}{RGB}{0,170,0}
\DeclareMathOperator{\im}{im}
\DeclareMathOperator{\Wh}{Wh}
\DeclareMathOperator{\GL}{GL}
\DeclareMathOperator{\St}{St}
\DeclareMathOperator{\tr}{tr}
\DeclareMathOperator{\Diff}{Diff}
\newcommand{\id}{\mathds{1}}
\DeclareMathOperator{\coker}{coker}
\DeclareMathOperator{\E}{E}
\DeclareMathOperator{\Conj}{Conj}
\DeclareMathOperator{\Cl}{Cl}
\newcommand{\hTheta}{\widehat{\Theta}}
\DeclareRobustCommand\widecheck[1]{{\mathpalette\@widecheck{#1}}}
\def\@widecheck#1#2{%
    \setbox\z@\hbox{\m@th$#1#2$}%
    \setbox\tw@\hbox{\m@th$#1%
       \widehat{%
          \vrule\@width\z@\@height\ht\z@
          \vrule\@height\z@\@width\wd\z@}$}%
    \dp\tw@-\ht\z@
    \@tempdima\ht\z@ \advance\@tempdima2\ht\tw@ \divide\@tempdima\thr@@
    \setbox\tw@\hbox{%
       \raise\@tempdima\hbox{\scalebox{1}[-1]{\lower\@tempdima\box
\tw@}}}%
    {\ooalign{\box\tw@ \cr \box\z@}}}
\let\oldtocsection=\tocsection
\let\oldtocsubsection=\tocsubsection
\let\oldtocsubsubsection=\tocsubsubsection
\renewcommand{\tocsection}[2]{\hspace{0em}\oldtocsection{#1}{#2}}
\renewcommand{\tocsubsection}[2]{\hspace{1em}\oldtocsubsection{#1}{#2}}
\renewcommand{\tocsubsubsection}[2]{\hspace{2em}\oldtocsubsubsection{#1}{#2}}
\title{Pseudo-Isotopies and diffeomorphisms of 4-manifolds}
\author{Oliver Singh}
\address{Department of Mathematical Sciences, Durham University, United Kingdom.}
\email{\href{mailto:oliver.singh@durham.ac.uk} {oliver.singh@durham.ac.uk}}
\begin{document}
\vspace*{-1.4cm}
\begin{abstract}

A diffeomorphism $f$ of a compact manifold $X$ is pseudo-isotopic to the identity if there is a diffeomorphism $F$ of $X\times I$ which restricts to $f$ on $X\times 1$, and which restricts to the identity on $X\times 0$ and $\partial X\times I$. We construct examples of diffeomorphisms of 4-manifolds which are pseudo-isotopic but not isotopic to the identity. To do so, we further understanding of which elements of the ``second pseudo-isotopy obstruction'', defined by Hatcher and Wagoner, can be realised by pseudo-isotopies of 4-manifolds. We also prove that all elements of the first and second pseudo-isotopy obstructions can be realised after connected sums with copies of $S^2\times S^2$.
\end{abstract}
\maketitle

\tableofcontents

\pagenumbering{arabic}

\section{Introduction}\label{pseudointro}
Let $X$ be a smooth compact manifold of dimension $n$. A \emph{pseudo-isotopy} of $X$ is a diffeomorphism $F\colon X\times [0,1]\rightarrow X\times[0,1]$ such that $F|_{X\times 0}$ and $F|_{\partial X\times [0,1]}$ are the identity. We let $\mathcal{P}=\mathcal{P}(X,\partial X)$ be the \emph{space of pseudo-isotopies of $X$}, which is a topological space when equipped with the $C^\infty$ topology. When $X$ is closed we write $\mathcal{P}(X) = \mathcal{P}(X,\emptyset)$.

Given diffeomorphisms $f,g\colon X\rightarrow X$ we say $f$ and $g$ are \emph{pseudo-isotopic} if there exists a pseudo-isotopy of $X$ such that $F|_{X\times 1}$ is $g^{-1}\circ f$. Thinking of an isotopy of $X$ as a map~$F\colon X\times [0,1]\rightarrow X\times[0,1]$ which is level preserving, that is $F(X\times t) = X\times t$ for every~$t\in[0,1]$, it is clear that if diffeomorphisms $f$ and $g$ of $X$ are isotopic, then they are pseudo-isotopic. 

The first aim of this paper is to extend the list of 4-manifolds for which the converse is known to be false. Denoting the subgroup of diffeomorphisms fixing the boundary which are pseudo-isotopic to the identity by~$\Diff_{PI}(X,\partial X)$, we construct non-trivial elements of $\pi_0\Diff_{PI}(X,\partial X)$ for certain 4-manifolds $X$.

\begin{restatable}{mainthm}{nontrivdiffeos}\label{nontrivdiffeos}
Let $X$ be either the 4-manifold $S^1\times S^2\times I$ or $(M_1\# M_2)\times I$, for~$M_1$,~$M_2$ closed, orientable, aspherical 3-manifolds. Then there is a subgroup~$K\leqslant \pi_0\Diff_{PI}(X, \partial X)$ and a surjective map
\[\Theta'\colon K\longrightarrow \bigoplus_{i\in \mathbb{N}}\mathbb{Z}.\]
Hence there are infinitely many distinct isotopy classes of diffeomorphisms of $X$ fixing the boundary, which are pseudo-isotopic to the identity.
\end{restatable}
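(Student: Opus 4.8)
The plan is to use Hatcher and Wagoner's \emph{second pseudo-isotopy obstruction}, reinterpreted in dimension four. The high-dimensional classification of $\pi_0\mathcal{P}$ fails for $4$-manifolds, but the obstruction itself is defined in every dimension: resolving into a generic Cerf family the $1$-parameter family of functions on $X\times I$ that underlies a pseudo-isotopy $F$ (running from the projection $X\times I\to I$ to itself), one records the algebraic incidence data, in the $4$-dimensional middle level, of the cancelling handle pairs that are born and then die. This produces a homomorphism $\Theta\colon\pi_0\mathcal{P}\to\mathcal{W}$, where $\mathcal{W}=\mathcal{W}(\pi_1 X,\pi_2 X)$ is Hatcher--Wagoner's abelian group built from $\mathbb{Z}/2[\pi_1 X]$ and $\pi_2 X\otimes_{\mathbb{Z}}\mathbb{Z}[\pi_1 X]$ modulo conjugation-type relations; I will only use the direct summand $\mathcal{W}_{\pi_2}$ coming from $\pi_2$, and will always arrange the first ($\mathrm{Wh}_2$) obstruction to vanish.

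\emph{The target.} For $X=S^1\times S^2\times I$ one has $\pi_1 X\cong\mathbb{Z}$ acting trivially on $\pi_2 X\cong\mathbb{Z}$, generated by the sphere $S=\{*\}\times S^2\times\{*\}$; since $\pi_1 X$ is abelian a short computation identifies $\mathcal{W}_{\pi_2}$ with the free abelian group on the nontrivial conjugacy classes of $\pi_1 X$, hence with $\bigoplus_{i\in\mathbb{N}}\mathbb{Z}$. For $X=(M_1\#M_2)\times I$ the group $\pi_1 X\cong\pi_1M_1*\pi_1M_2$ is infinite with infinitely many conjugacy classes, and $\pi_2 X\neq 0$ contains the class of the connected-sum $2$-sphere $S$, embedded with trivial normal bundle; one again finds a copy of $\bigoplus_{i\in\mathbb{N}}\mathbb{Z}$ inside $\mathcal{W}_{\pi_2}$, indexed by conjugacy classes $\gamma$ of $\pi_1 X$ paired with $S$.

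\emph{Realisation.} For each index $i$ I build a pseudo-isotopy $F_i\in\mathcal{P}$ with $\Theta(F_i)$ the $i$-th generator of $\mathcal{W}_{\pi_2}$, following Hatcher and Wagoner's realisation procedure: from the constant family at a fixed Morse function on $X\times I$ with middle level $N^3\times\{\tfrac12\}$, introduce a cancelling pair of handles of adjacent index; slide a sphere governing the cancellation once around the chosen loop $\gamma\in\pi_1 X$; as the slide is closed up, tube-sum it with a parallel copy of the distinguished sphere $S$ lying in the middle level; then cancel the pair. The step that is genuinely four-dimensional --- and, I expect, the principal obstacle --- is verifying that this modified family of handle slides is carried by an \emph{ambient} isotopy of the $4$-dimensional middle level. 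Here one exploits that $S$ has trivial normal bundle and, in the $S^1\times S^2$ case, an obvious geometric dual, so that the needed finger and Whitney-type moves can be performed explicitly, in place of the general-position arguments that are unavailable in dimension four; some care is also needed to keep the $\mathbb{Z}/2$-part of $\Theta(F_i)$ under control, which I would discard afterwards.

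\emph{Descent to $\Diff_{PI}$.} Put $f_i:=F_i|_{X\times 1}\in\Diff_{PI}(X,\partial X)$ and let $K\leqslant\pi_0\Diff_{PI}(X,\partial X)$ be the subgroup they generate. If $g$ fixes $\partial X$ and is isotopic to the identity through $g_t$, then $\Phi(x,t):=(g_t(x),t)$ lies in $\mathcal{P}$, and for any $G\in\mathcal{P}$ with $G|_{X\times 1}=g$ the pseudo-isotopy $\Phi^{-1}\circ G$ is the identity on \emph{all} of $\partial(X\times I)$ and satisfies $\Theta(\Phi^{-1}\circ G)=\Theta(G)$, since the level-preserving $\Phi$ has vanishing second obstruction. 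Consequently $\Theta$, followed by the projection $\mathcal{W}\to\mathcal{W}_{\pi_2}$ and reduced modulo the image of $\pi_0$ of the subgroup of pseudo-isotopies that are the identity on all of $\partial(X\times I)$, descends to a homomorphism $\overline{\Theta}\colon\pi_0\Diff_{PI}(X,\partial X)\to\mathcal{W}_{\pi_2}/(\text{that image})$. The remaining point --- the second place where genuine work is needed --- is that for these particular $4$-manifolds that image meets the $\bigoplus_{i\in\mathbb{N}}\mathbb{Z}$ of the second paragraph only in $0$; for this it suffices, and I expect one shows, that the $\pi_2$-part of the second obstruction vanishes for pseudo-isotopies of $X$ that are trivial on the whole boundary. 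Granting this, $\overline{\Theta}$ sends $f_1,f_2,\dots$ to a $\mathbb{Z}$-linearly independent family, so $\Theta':=\overline{\Theta}|_K$ surjects $K$ onto $\bigoplus_{i\in\mathbb{N}}\mathbb{Z}$; in particular the $f_i$ realise infinitely many distinct isotopy classes in $\pi_0\Diff_{PI}(X,\partial X)$, as required.
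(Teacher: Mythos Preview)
Your overall architecture is correct and matches the paper: realise elements of $\Wh_1(\pi_1 X;\pi_2 X)$ by explicit pseudo-isotopies, then pass to $\pi_0\Diff_{PI}$ by quotienting $\Theta$ by its values on \emph{inertial} pseudo-isotopies $\mathcal{J}=\Diff(X\times I,\partial(X\times I))$. The realisation step is essentially as the paper does it (your ``tube into $S$'' is exactly the paper's construction of the Whitney disk $W=U\#S$).

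The genuine gap is in your descent step. You write that it ``suffices, and I expect one shows, that the $\pi_2$-part of the second obstruction vanishes for pseudo-isotopies of $X$ that are trivial on the whole boundary.'' This is not true and is not what the paper proves. There is no reason for $\Theta(\mathcal{J})$ to vanish; indeed Hatcher's high-dimensional computations show it is typically large. What the paper actually uses is the \emph{duality involution} $\theta\mapsto\overline\theta$ on $\Wh_1(\pi_1 X;\mathbb{Z}_2\times\pi_2 X)$ together with the special product structure $X=M^3\times I$: any inertial $J$ is conjugate to its own dual via the rotation of the $I\times I$ square, forcing $\Theta(J)=\overline{\Theta(J)}$ and hence $\Theta(\mathcal{J}\cap\ker\Sigma)\subset Z_4^1=\{\theta:\theta=\overline\theta\}$. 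One then checks that the quotient $\Wh_1/Z_4^1$ still surjects onto $\bigoplus_{\mathbb{N}}\mathbb{Z}$ (the involution pairs $t^a$ with $t^{-a}$, respectively $\Cl(a)$ with $\Cl(a^{-1})$, so modding out only halves the index set). Without this duality argument you have no control over the inertial indeterminacy.

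A second omission, relevant only to the $(M_1\#M_2)\times I$ case: $\Theta$ is only defined modulo $\chi(K_3\mathbb{Z}[\pi_1 X])$ when $k_1X\neq 0$, and you must show this does not kill your infinite-rank image. The paper handles this by proving $K_3\mathbb{Z}[\pi_1(M_1\#M_2)]$ has finite rank via the Farrell--Jones conjecture for $3$-manifold groups and an Atiyah--Hirzebruch computation; this is not a formality and should appear in any complete proof.
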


Igusa points out in \cite{Igusanewnew}, that the map \[\pi_0\Diff_{PI}(M^{n-1}\times I,\partial (M^{n-1}\times I))\rightarrow \pi_0\Diff_{PI}(M^{n-1}\times S^1)\] induced by gluing the top and bottom of $M\times I$ together is injective \cite[Lemma~5.1]{Igusanewnew}. This allows us to state the below corollary.

\begin{maincorr}
Let $X$ be either the 4-manifold $S^1\times S^2\times S^1$ or $(M_1\# M_2)\times S^1$, for $M_1$, $M_2$ closed, orientable, aspherical 3-manifolds. Then there is a subgroup $K\leqslant \pi_0\Diff_{PI}(X)$ and a surjective map
$\Theta'\colon K\longrightarrow \bigoplus_{i\in \mathbb{N}}\mathbb{Z}.$
Hence there are infinitely many distinct isotopy classes of diffeomorphisms of $X$ which are pseudo-isotopic to the identity.
\end{maincorr}
%

To prove Theorem \ref{nontrivdiffeos}, we utilise the so called ``second pseudo-isotopy obstruction''~$\Theta$, taking values in $\Wh_1(\pi_1 X;\mathbb{Z}_2\times\pi_2 X)$, which was defined by Hatcher and Wagoner in \cite{HatcherWagoner} and refined by Igusa \cite{Igusa}. In dimensions $n\geq 5$, Hatcher \cite{Hatcher} uses~$\Theta$ to construct non-trivial elements of $\pi_0\Diff_{PI}(M^{n-1}\times I,\partial (M^{n-1}\times I))$. To extend this result to 4 dimensions we further understanding of which elements of~$\Wh_1(\pi_1 X;\mathbb{Z}_2\times\pi_2 X)$ are realised by $\Theta$ for 4-manifolds. This is the second aim of this paper.

Our third aim is to show that both $\Theta$ and the ``first pseudo-isotopy obstruction''
\[\Sigma\colon\pi_0\mathcal{P}(X,\partial X)\rightarrow\Wh_2(\pi_1 X)\]
are (in some sense) surjective in dimension 4 after taking connect sums of $X$ with copies of $S^2\times S^2$. Both invariants are surjective without any such stabilisation in dimension~$\geq 5$.

Before we state the rest of our main theorems, we recall some high-dimensional background. In high dimensions, pseudo-isotopies are classified up to isotopy by~$\Theta$ and $\Sigma$. Building on work of Hatcher and Wagoner in \cite{HatcherWagoner} and \cite{Hatcher}, Igusa shows in \cite{Igusa} that for $X$ a smooth manifold of dimension at least 6 there is a natural exact sequence
\[K_3\mathbb{Z}[\pi_1 X]\xrightarrow{\chi} \Wh_1(\pi_1 X; \mathbb{Z}_2\times \pi_2 X)\rightarrow \pi_0 \mathcal{P}(X,\partial X)\xrightarrow{\Sigma}\Wh_2(\pi_1 X)\rightarrow 0.\]
Igusa shows that if the first Postnikov invariant $k_1 X$ vanishes, then $\chi$ is 0 and the sequence splits, with splitting 
\[\Theta_\sigma\colon\pi_0 \mathcal{P}\rightarrow \Wh_1(\pi_1 X, \mathbb{Z}_2\times \pi_2 X)\]
dependent on a choice of section $\sigma\colon X_{(1)}\rightarrow X_{(2)}$, where $X_{(i)}$ is the $i$th stage in a Postnikov tower for $X$. The restriction of this map to $\ker\Sigma$
\[\Theta\colon\pi_0 \ker\Sigma\rightarrow \Wh_1(\pi_1 X, \mathbb{Z}_2\times \pi_2 X),\]
originally defined by Hatcher and Wagoner in \cite{HatcherWagoner}, does not depend on a choice of section. When $k_1 X\neq 0$ it follows from the constructions in \cite{Igusa} that there is a map 
\[\Theta\colon\ker\Sigma\rightarrow\Wh_1(\pi_1 X;\mathbb{Z}_2\times \pi_2 X)/\chi(K_3\mathbb{Z}[\pi_1 X]).\]

The maps $\Sigma$ and $\Theta$ (restricted to $\ker\Sigma$) are also defined in 4 and 5 dimensions. In dimension $n\geq 5$ Hatcher and Wagoner show that $\Sigma$ and $\Theta$ are surjective. However, they are not able to prove this in dimension 4, where the situation is less clear.

\begin{convention}
Throughout, 4-manifolds will be smooth, compact, and connected.
\end{convention}

We prove that in dimension 4, the following elements of the second obstruction group are realised.

\begin{restatable}{mainthm}{Whoneimage}
\label{Whoneimage}
For $X$ a compact 4-manifold, let 
\begin{align*} \Xi& =\left\langle (s+\sigma)\gamma\;|\;w_2^X(\sigma)\neq 0\text{ or }s=0,\ s\in \mathbb{Z}_2,\ \sigma\in\pi_2 X,\ \gamma\in\pi_1 X  \right\rangle \\
&\subset (\mathbb{Z}_2\times\pi_2 X )[\pi_1 X]/ \langle \alpha\gamma -\alpha^\tau \tau\gamma\tau^{-1}, \beta\cdot 1\; |\; \alpha,\beta\in\mathbb{Z}_2\times\pi_2 X,\; \tau,\gamma\in \pi_1 X\rangle \\
&= \Wh_1(\pi_1 X ;\mathbb{Z}_2\times\pi_2 X  ).
\end{align*}
If $k_1 X=0$ then $\Xi\subset\Theta(\ker\Sigma)$. Otherwise the same is true passing to the quotient $\Wh_1(\pi_1 X ;\mathbb{Z}_2 \times \pi_2 X )/\chi (K_3 \mathbb{Z}[\pi_1 X])$.
\end{restatable}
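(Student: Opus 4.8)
The plan is to realise each generator of $\Xi$ by an explicit one-parameter family of Morse functions on $X\times I$, adapting to dimension four the construction Hatcher uses in dimensions $\geq 5$ in \cite{Hatcher}. Since $\Sigma$ is a homomorphism and $\Theta$ restricted to $\ker\Sigma$ is a homomorphism for the stacking group structure on $\pi_0\mathcal{P}(X)$ (as in \cite{HatcherWagoner}), and since $\Xi$ is generated by the elements $(s+\sigma)\gamma$ with $s\in\mathbb{Z}_2$, $\sigma\in\pi_2 X$, $\gamma\in\pi_1 X$ satisfying $w_2^X(\sigma)\neq 0$ or $s=0$, it suffices to construct, for each such triple, a pseudo-isotopy $F=F_{s,\sigma,\gamma}$ with $\Sigma(F)=0$ and $\Theta(F)=(s+\sigma)\gamma$. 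When $k_1 X\neq 0$ one reads every invariant in the quotient by $\chi(K_3\mathbb{Z}[\pi_1 X])$ throughout, and nothing else changes.

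For the model family, represent $\gamma$ by an embedded loop $c$ in the interior of $X$ — possible since $\dim X=4>2$ — and fix an immersed $2$-sphere $S$ in $X$ representing $\sigma$. In a neighbourhood of $c\cup S$ I would build a loop of functions $f_t\colon X\times I\to[0,1]$ with $f_0=f_1$ the standard projection, whose Cerf graphic consists of the birth of a cancelling index-$(2,3)$ pair of critical points, a round trip carrying the index-$2$ critical point once around a loop representing $\gamma$ — during which its attaching sphere is dragged across the index-$3$ critical point so that the trajectory sweeps out $\sigma$ and the normal framing picks up the class $s$ in the $\mathbb{Z}_2$-factor — and a death recombining the pair. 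Only index-$2$ and index-$3$ critical points are involved, so $\Sigma(F)=0$; and reading the definition of $\Theta$ off the graphic gives $\Theta(F)=(s+\sigma)\gamma$, \emph{provided the underlying geometry can be realised honestly inside $X$}. This proviso is the only point at which dimension four, and the hypotheses on $\sigma$, enter.

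The four-dimensional difficulty is that $S$ is in general only immersed and that the handle manipulations above take place in the $4$-dimensional regular level sets of the $f_t$, where the Whitney trick is unavailable, so the double points of $S$ cannot simply be removed. When $s=0$ I would take $S$ with its double points occurring in oppositely-paired couples, arranged by finger moves; each double point contributes a correction term to $\Theta$, but by the relations defining $\Wh_1(\pi_1 X;\mathbb{Z}_2\times\pi_2 X)$ these cancel in pairs, leaving $\Theta(F)=\sigma\gamma=(0+\sigma)\gamma$. When $s=1$ — so that $w_2^X(\sigma)\neq 0$ by hypothesis — one must in addition realise the framing twist: the attaching region of the index-$2$ handle must return to itself by a twist detecting the generator of the $\mathbb{Z}_2$-factor, and this closes up consistently exactly when the $2$-sphere $S$ has odd normal Euler number. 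For any immersed representative of $\sigma$ this Euler number reduces mod $2$ to $w_2^X(\sigma)$, so $w_2^X(\sigma)\neq 0$ is precisely the condition under which the twisted construction can be completed inside $X$.

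I expect the main obstacle to be this third step: bookkeeping the correction terms contributed by the double points of $S$ and by the finger moves used to pair them, verifying that they cancel, and checking that the framing twist is realisable under the $w_2$-hypothesis. The reduction and the construction of the model family are essentially transcriptions of Hatcher's high-dimensional argument, and the $k_1 X\neq 0$ case requires no new ideas beyond passing to the quotient by $\chi(K_3\mathbb{Z}[\pi_1 X])$.
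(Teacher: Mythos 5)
Your global strategy — reduce to realising each generator $(s+\sigma)\gamma$ by a single one-parameter family with one eye, using additivity of $\Theta$ under stacking, and push the $k_1X\neq 0$ case into the quotient by $\chi(K_3\mathbb{Z}[\pi_1X])$ — matches the paper, which deduces Theorem \ref{Whoneimage} from a realisation statement for individual triples $(\sigma,\gamma,s)$ (Proposition \ref{wh1realisation}). But the core four-dimensional step has a genuine gap. The double points of the immersed representative $S$ of $\sigma$ do \emph{not} contribute ``correction terms to $\Theta$ that cancel in pairs by the relations of $\Wh_1(\pi_1X;\mathbb{Z}_2\times\pi_2 X)$'': the class $\sigma_C$ only sees the homotopy class of $U\cup W$, so no algebraic cancellation is needed or available. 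The real problem is geometric: to close up the one-parameter family you must actually perform a Whitney move in the middle-middle level, which requires an \emph{embedded, correctly framed} Whitney disk whose interior is disjoint from the $A$- and $B$-spheres. Your proposal never produces one. The paper's construction does this by (i) tubing the finger-move disk $U$ into $S$ to form $W$ (so $U\cup W$ represents $\sigma$), (ii) controlling framings by interior twists on $S$ and boundary twists of $W$ along $A$ or $B$, and (iii) pushing the self-intersections of $W$ down into $A$ (or $B$) and removing the resulting intersections with the Norman trick using the geometrically dual spheres $A^*,B^*$ that exist because $A$ and $B$ are the $S^2\times p$ and $p\times S^2$ factors of the $S^2\times S^2$ summand born with the handle pair. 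Without step (iii), or some substitute for it, the family is not constructed and $\Theta$ cannot be evaluated.

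Relatedly, your explanation of where the hypothesis ``$w_2^X(\sigma)\neq 0$ or $s=0$'' enters is not the right mechanism. It is not that ``the framing twist closes up exactly when $S$ has odd Euler number''; it is that a boundary twist is the only move that changes the relative framing by $1$, and each boundary twist creates an intersection of $W$ with $A$ or with $B$ that must then be removed by the Norman trick on $A^*$ or $B^*$. In the excluded case $w_2^X(\sigma)=0$, $s=1$ one is forced to boundary-twist along \emph{both} $A$ and $B$, and since $A^*$ and $B^*$ intersect each other, the two Norman tricks cannot be performed without reintroducing self-intersections of $W$ — this is exactly the case the theorem omits (and which the paper only recovers after a stabilisation by $S^2\times S^2$). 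Your argument as written does not distinguish the three realisable cases from the excluded one by any geometric criterion, so the case analysis needs to be rebuilt around the boundary-twist/dual-sphere bookkeeping.
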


For the identification of $\Wh_1(\pi_1 X ;\mathbb{Z}_2\times \pi_2 X)$ with 
\[(\mathbb{Z}_2\times\pi_2 X )[\pi_1 X]/ \langle \alpha\sigma -\alpha^\tau \tau\sigma\tau^{-1}, \beta\cdot 1\;|\;\alpha,\beta\in\mathbb{Z}_2\times\pi_2 X,\; \tau,\sigma\in \pi_1 X\rangle \]
see Corollary \ref{whtrace}. The proof of Theorem \ref{Whoneimage} involves a detailed analysis of the $\Theta$ obstruction in terms of Whitney discs and framings of these discs in the 4-dimensional ``middle-middle level'' of 1-parameter families of handlebody structures. We believe this to be of independent interest; see Propositions \ref{claimone} and \ref{claimtwo}.

Jahren \cite{Jahren}, in an unpublished work, proves a similar theorem by different methods. Specifically he proves that all elements of $\Wh_1(\pi_1 X, \pi_2 X)$ are realised, and that when~$X$ contains an odd sphere, all elements of $\Wh_1(\pi_1 X, \mathbb{Z}_2)$ are realised. We obtain both of these results as a corollary of Theorem \ref{Whoneimage}.

\begin{corollary}\label{surjcorr}\leavevmode
\begin{enumerate}
\item For any 4-manifold $X$, $\Theta$ surjects onto $\Wh_1(\pi_1 X, \pi_2 X)$.
\item If $X$ is a 4-manifold with an odd sphere, that is $S\in\pi_2 X$ with $S\cdot S$ odd $($note that we do not require $S$ to be embedded$)$, then $\Theta$ surjects onto $\Wh_1(\pi_1 X;\mathbb{Z}_2)$.
\end{enumerate}
\end{corollary}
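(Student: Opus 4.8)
My plan is to read both statements off the description of the generating set $\Xi$ in Theorem~\ref{Whoneimage}. The first thing I would record is that the coefficient group $\mathbb{Z}_2\times\pi_2 X$ is a direct sum with $\pi_1 X$ acting diagonally and trivially on the $\mathbb{Z}_2$ factor, so that from the presentation of $\Wh_1$ in Corollary~\ref{whtrace} one gets a splitting
\[\Wh_1(\pi_1 X;\mathbb{Z}_2\times\pi_2 X)\;=\;\Wh_1(\pi_1 X;\mathbb{Z}_2)\;\oplus\;\Wh_1(\pi_1 X;\pi_2 X),\]
under which $\Wh_1(\pi_1 X;\pi_2 X)$ is the subgroup generated by the elements $\sigma\gamma$ ($\sigma\in\pi_2 X$, $\gamma\in\pi_1 X$) and $\Wh_1(\pi_1 X;\mathbb{Z}_2)$ the subgroup generated by the $1\cdot\gamma$. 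With this identification, "$\Theta$ surjects onto" one of these summands means exactly that $\Theta(\ker\Sigma)$ contains it, and in the case $k_1 X\neq 0$ one makes the statement in the quotient by $\chi(K_3\mathbb{Z}[\pi_1 X])$, exactly as in Theorem~\ref{Whoneimage}. I will describe the argument for $k_1 X=0$; the other case is the same after replacing "$\subseteq\Theta(\ker\Sigma)$" by "$\subseteq\Theta(\ker\Sigma)\subseteq\Wh_1(\pi_1 X;\mathbb{Z}_2\times\pi_2 X)/\chi(K_3\mathbb{Z}[\pi_1 X])$".

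For part (1), every generator $\sigma\gamma$ of $\Wh_1(\pi_1 X;\pi_2 X)$ is already of the shape $(s+\sigma)\gamma$ with $s=0$, hence lies in $\Xi$ by definition; so $\Wh_1(\pi_1 X;\pi_2 X)\subseteq\Xi\subseteq\Theta(\ker\Sigma)$ by Theorem~\ref{Whoneimage}. For part (2), I would first note that an odd sphere $S$ satisfies $w_2^X(S)\neq 0$: by the definition of $w_2^X$ (equivalently, by Wu's formula) $w_2^X(S)$ is the mod~$2$ reduction of the self-intersection number $S\cdot S$, which is $1$ by hypothesis. Fixing such an $S$ and an arbitrary $\gamma\in\pi_1 X$, the element $(1+S)\gamma$ lies in $\Xi$ because $w_2^X(S)\neq 0$, while $S\gamma$ lies in $\Xi$ because $s=0$ is permitted; since $\Xi$ is a subgroup and the coefficient group is written additively,
\[1\cdot\gamma\;=\;(1+S)\gamma-S\gamma\;\in\;\Xi.\]
As $\gamma$ was arbitrary, $\Wh_1(\pi_1 X;\mathbb{Z}_2)\subseteq\Xi\subseteq\Theta(\ker\Sigma)$, giving part (2).

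Essentially all the content is in Theorem~\ref{Whoneimage}; the only points where I would be careful are the identification $w_2^X(S)=S\cdot S\bmod 2$, which is what lets an element with $s=1$ into $\Xi$ via the odd sphere, and the routine bookkeeping with the splitting of $\Wh_1$ and with the quotient by $\chi(K_3\mathbb{Z}[\pi_1 X])$ when $k_1 X\neq 0$. Neither is a serious obstacle, so I expect the proof to be very short.
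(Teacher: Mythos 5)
Your proposal is correct and is exactly the intended derivation: the paper gives no separate argument for Corollary \ref{surjcorr} beyond noting that it follows from Theorem \ref{Whoneimage}, and your two steps (the $s=0$ generators give all of $\Wh_1(\pi_1 X;\pi_2 X)$, and $w_2^X(S)\equiv S\cdot S\bmod 2$ lets you extract $1\cdot\gamma=(1+S)\gamma-S\gamma$ from $\Xi$) are precisely the required bookkeeping. No gaps.
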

Corollary \ref{surjcorr} extends a recent result by Igusa in concurrent work \cite[Theorem~A, Theorem~B]{Igusanew}. Using different methods, Igusa proves Corollary \ref{surjcorr}.(2) with the additional requirement that $S$ is embedded. Igusa also proves that certain elements of $\Wh_1(\pi_1 X, \pi_2 X)$ are realised, namely ones where the element of $\pi_2 X$ can be represented by an embedded sphere.

%

We also prove that after stabilisation of $X$ with a single $S^2\times S^2$ we may realise any element of $\Wh_1(\pi_1 X;\mathbb{Z}_2\times \pi_2 X)$.

\begin{restatable}{mainthm}{whonestablethm}
\label{theorem:stablewh1realisation}
Let $X$ be a compact 4-manifold. Note that $\Wh_1(\pi_1 X ;\mathbb{Z}_2 \times \pi_2 X)$ includes in 
\[\Wh_1(\pi_1(X\# S^2\times S^2) ;\mathbb{Z}_2 \times \pi_2(X\# S^2\times S^2) ),\]
and identify $x \in \Wh_1(\pi_1 X ;\mathbb{Z}_2 \times \pi_2 X)$ with its image under this inclusion. There is a pseudo-isotopy $F$ of $X\# S^2\times S^2$, which is in~$\ker \Sigma$ such that 
\[\Theta (F)=x\in \Wh_1(\pi_1(X\# S^2\times S^2) ; \mathbb{Z}_2 \times\pi_2(X\# S^2\times S^2))/\chi(K_3\mathbb{Z}[\pi_1(X\# S^2\times S^2)]).\]
\end{restatable}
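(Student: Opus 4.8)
The plan is to reduce Theorem \ref{theorem:stablewh1realisation} to Theorem \ref{Whoneimage} by exploiting the extra homology classes produced by the stabilisation. Writing $Y = X \# S^2 \times S^2$, a Mayer--Vietoris/van Kampen argument gives $\pi_1 Y \cong \pi_1 X$, and $\pi_2 Y \cong \pi_2 X \oplus \mathbb{Z}[\pi_1 X]\{A\} \oplus \mathbb{Z}[\pi_1 X]\{B\}$, where $A, B$ are the two sphere classes of the new $S^2\times S^2$ summand, with $A\cdot A = B\cdot B = 0$ and $A\cdot B = 1$. In particular $A + B$ is an odd sphere: $(A+B)\cdot(A+B) = 2$ is even, so that does not immediately help — instead note $A$ itself, while even, has the property that $w_2^Y(A) = A\cdot A = 0$, whereas $w_2^Y(A+B) = (A+B)\cdot(A+B) \bmod 2 = 0$ as well. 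The correct observation is that, after one stabilisation, the class $B$ satisfies $w_2^Y(B) = 0$ but $A\cdot(A+B)$ odd makes $A+B$ behave like an odd sphere relative to $A$; more simply, $Y$ acquires a hyperbolic summand in its intersection form, which lets us promote the "$w_2 \neq 0$ or $s = 0$" constraint in the definition of $\Xi$ to the full group.

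Concretely, the key step is: given an arbitrary generator $s\gamma \in \Wh_1(\pi_1 Y; \mathbb{Z}_2 \times \pi_2 Y)$ with $s \in \mathbb{Z}_2$, $\gamma \in \pi_1 Y$ and $s \neq 0$, I want to rewrite it modulo the Whitney relations as an element of $\Xi$ for $Y$. Using the relation $\alpha\gamma \sim \alpha^\tau \tau\gamma\tau^{-1}$ and bilinearity in the $\mathbb{Z}_2\times\pi_2 Y$ coefficient, and the fact that $B\cdot B = 0$ in $Y$ so $w_2^Y(B) = 0$, one checks that $s\gamma$ and $(s + B)\gamma$ differ by a relator together with the element $B\gamma$, which lies in $\Xi$ since $w_2^Y(B) = 0$ forces $B\gamma = (0 + B)\gamma \in \Xi$ (the "$s=0$" clause). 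Hence it suffices to realise $(s+B)\gamma$; but now I claim $(s+B)\gamma \in \Xi$ directly, because $w_2^Y$ evaluated on the relevant element is controlled by pairing with $A$: since $A\cdot B = 1$, after applying a self-diffeomorphism of the $S^2\times S^2$ factor (or a basis change $B \mapsto B + A$, realised by such a diffeomorphism) we may arrange the $\pi_2$-part to have non-trivial $w_2$, landing us in the first clause of $\Xi$. Thus every generator of $\Wh_1(\pi_1 Y; \mathbb{Z}_2 \times \pi_2 Y)$ lies in $\Xi$ for $Y$, i.e. $\Xi = \Wh_1(\pi_1 Y; \mathbb{Z}_2\times\pi_2 Y)$, and Theorem \ref{Whoneimage} applied to $Y$ gives a pseudo-isotopy $F$ of $Y$ with $F \in \ker\Sigma$ and $\Theta(F) = x$ in the appropriate (possibly quotiented) group. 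The inclusion $\Wh_1(\pi_1 X; \mathbb{Z}_2\times\pi_2 X) \hookrightarrow \Wh_1(\pi_1 Y; \mathbb{Z}_2\times\pi_2 Y)$ is induced by the inclusion $X \setminus D^4 \hookrightarrow Y$ on $\pi_1$ (an isomorphism) and the split inclusion on $\pi_2$, so identifying $x$ with its image is harmless.

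The main obstacle I anticipate is the bookkeeping in the previous paragraph: verifying that the algebraic manipulations in $\Wh_1(\pi_1 Y; \mathbb{Z}_2\times\pi_2 Y)$ using the Whitney relations $\langle \alpha\gamma - \alpha^\tau\tau\gamma\tau^{-1},\ \beta\cdot 1\rangle$ genuinely move an arbitrary $s\gamma$ into $\Xi$, and in particular that the conjugation action $\alpha \mapsto \alpha^\tau$ on the $\mathbb{Z}_2$-factor (which is trivial) versus the $\pi_2$-factor (which is the $\pi_1$-action) interacts correctly with the orientation-character twisting built into the $w_2$ condition. I would handle this by working generator-by-generator: first dispose of the pure $\pi_2$-coefficient generators (these are in $\Xi$ already by the "$s=0$" clause and Corollary \ref{surjcorr}.(1) style reasoning), then treat the $\mathbb{Z}_2$-coefficient generators $1\cdot\gamma$ by adding a correcting term supported on the new hyperbolic summand. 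A secondary technical point is confirming that the quotient by $\chi(K_3\mathbb{Z}[\pi_1 Y])$ in the case $k_1 Y \neq 0$ is compatible with the inclusion from $X$ — but since $\pi_1 Y = \pi_1 X$ and $k_1 Y$ is determined by $k_1 X$ under the splitting of $\pi_2$, this is immediate once the realisation over $Y$ is established.
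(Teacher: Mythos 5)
There is a genuine gap, and it sits exactly where the difficulty of the theorem lives. Your strategy is to show that after one stabilisation $\Xi$ becomes all of $\Wh_1(\pi_1 Y;\mathbb{Z}_2\times\pi_2 Y)$ for $Y=X\#S^2\times S^2$, so that Theorem \ref{Whoneimage} applied to $Y$ finishes the job. But this algebraic reduction is false: $S^2\times S^2$ is spin, so $w_2^Y$ vanishes on every class coming from the new summand. By the Wu formula the condition $w_2^Y(\sigma)\neq 0$ is (mod 2) the condition that $\sigma$ have odd self-intersection, and every class $aA+bB$ in the hyperbolic summand has $(aA+bB)\cdot(aA+bB)=2ab$, which is even. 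You notice this yourself ("$(A+B)\cdot(A+B)=2$ is even, so that does not immediately help") but then assert that a basis change $B\mapsto B+A$ "lands us in the first clause of $\Xi$" --- it does not, since $w_2^Y(A+B)=0$ as well, and no self-diffeomorphism of $S^2\times S^2$ can change the value of a characteristic class on a homology class. Consequently, if $w_2^X$ vanishes on all of $\pi_2 X$ (e.g.\ $X$ spin), the generator $1\cdot\gamma$ with $s=1$, $\sigma=0$ lies in $\Xi(Y)$ if and only if it lies in $\Xi(X)$, which it does not; your argument therefore cannot realise it, and this is precisely the only case Theorem \ref{theorem:stablewh1realisation} adds beyond Theorem \ref{Whoneimage}.

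The paper's proof (Proposition \ref{proposition:stablewh1realisation}) uses the extra $S^2\times S^2$ in a genuinely geometric way, not to enlarge $\Xi$. Recall why the case $w_2^X(\sigma)=0$, $s=1$ fails unstably: to arrange $s_C=1$ one must boundary-twist the Whitney disk $W$ an odd number of times around $B$, which unframes $W$; restoring the framing forces a boundary twist around $A$ as well, so $W$ meets both $A$ and $B$, and the dual spheres $A^*$ and $B^*$ intersect each other, so the Norman trick cannot clean up both families of intersections without creating new self-intersections of $W$. The stabilisation is used to build a geometric dual sphere $P'$ for $W$ out of the Clifford torus of $W$ and its caps, made embedded and null-homotopic in $\pi_2 X$ by a symmetric capping followed by two-sheeted Norman tricks into the fibres of the new $S^2\times S^2$ summand; $P'$ then removes the self-intersections of $W$ without disturbing $\sigma_C$ or $s_C$. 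Any correct proof of the stable theorem has to supply some such geometric input for the $w_2=0$, $s=1$ generators; your proposal supplies none. (Your closing remarks about composing pseudo-isotopies to realise sums of generators, and about compatibility of the $\chi(K_3)$ quotient under the inclusion, are fine and match the paper.)
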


We also prove a stable version of Hatcher and Wagoner's surjectivity result for $\Sigma$.
\begin{restatable}{mainthm}{stablesurjectivity}\label{stablesurjectivity}

Let $X$ be a compact 4-manifold and $x\in\Wh_2(\pi_1 X)$. There exists $N$, and a pseudo-isotopy $F$ of $X\#^N S^2\times S^2$ such that
\[\Sigma(F)=x \in \Wh_2(\pi_1 (X\#^N S^2\times S^2))=\Wh_2(\pi_1 X).\]
\end{restatable}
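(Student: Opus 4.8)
The plan is to realise $x$ by explicitly constructing a one-parameter family of Morse functions on $(X\#^N S^2\times S^2)\times I$ following Hatcher and Wagoner's high-dimensional recipe, using the extra $S^2\times S^2$-summands to supply the Whitney disks that are unavailable in dimension $4$.

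First I would recall from \cite{HatcherWagoner} how $\Sigma$-realisation works in dimension $\ge 5$. Lift $x\in\Wh_2(\pi_1 X)$ to an element of $K_2(\mathbb{Z}[\pi_1 X])\subset\St(\mathbb{Z}[\pi_1 X])$ and write it as a finite word $w=\prod_k x_{i_k j_k}(r_k)$ in Steinberg generators with $r_k\in\mathbb{Z}[\pi_1 X]$ and all indices $\le q$, whose image $\bar w$ in $\E(\mathbb{Z}[\pi_1 X])$ is trivial. One then builds a path rel endpoints, based at the projection, in the space of Morse functions on $Y\times I$: birth $q$ cancelling index-$(2,3)$ handle pairs, move the parameter while performing the sequence of handle slides among the index-$2$ handles encoded by $w$, and finally re-cancel the $q$ pairs; the value of $\Sigma$ on the resulting pseudo-isotopy is $[w]=x$.

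Next I would isolate what fails when $Y$ is four-dimensional. The $2$- and $3$-handles live in the five-manifold $Y\times I$, but all handle slides and cancellations take place in the four-dimensional level sets. The slides among the index-$2$ handles (attaching circles) and among the index-$3$ handles (attaching $2$-spheres, banded along embedded arcs) are still fine; the obstruction is the final re-cancellation. To cancel the $i$-th index-$2$ and index-$3$ handles one needs the attaching $2$-sphere $a_i$ of the latter to meet the belt $2$-sphere $b_i$ of the former in a single transverse point and to be disjoint from $b_j$ for $j\ne i$, whereas after the slides realising $w$ this configuration is only \emph{algebraically} correct --- this is precisely where $\bar w=1$ is used --- and converting algebraic to geometric intersection data for $2$-spheres in a $4$-manifold is a Whitney-trick problem that is generally obstructed. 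The repair is to run the construction on $Y=X\#^N S^2\times S^2$: each summand contributes to every relevant level $4$-manifold a hyperbolic pair of square-zero embedded $2$-spheres, geometrically dual and disjoint from the rest of the configuration, which serve as Whitney-disk material --- whenever an embedded framed Whitney disk is needed, tube its self-intersections and excess intersections into unused stabilisation spheres in the standard way. Because $w$ is a fixed finite word only finitely many such repairs are needed, so a finite $N$ depending on $x$ suffices; the stabilisation handles are born and re-cancelled trivially, the Whitney moves are isotopies of attaching regions rather than handle slides, and $\pi_1(X\#^N S^2\times S^2)=\pi_1 X$ gives $\Wh_2(\pi_1(X\#^N S^2\times S^2))=\Wh_2(\pi_1 X)$, so the resulting pseudo-isotopy $F$ has $\Sigma(F)=x$. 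One can keep the whole family inside a regular neighbourhood of a finite complex assembled from loops and spheres carrying the $\pi_1$- and $\pi_2$-data in $w$ together with the stabilisation spheres, which is where the one-parameter handlebody framework of the earlier sections applies.

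The main obstacle is to execute this stable Whitney trick \emph{parametrically} --- inside the one-parameter family of functions, without introducing new critical points --- while verifying that the stabilisation contributes nothing to $\Sigma$. The latter is the genuinely delicate bookkeeping: the auxiliary moves used to tube into the dual spheres, and the trivial births and deaths of the stabilisation handles, must all be arranged to lie in the kernel of $\St(\mathbb{Z}[\pi_1 X])\to\Wh_2(\pi_1 X)$, so that no spurious $K_2$-class is introduced; equivalently, the crux is a parametrised form of the statement that an algebraically cancelling collection of $2$- and $3$-handles in a $4$-manifold becomes geometrically cancelling after finitely many $S^2\times S^2$-stabilisations.
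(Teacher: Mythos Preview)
Your outline matches the paper's: represent $x$ by a Steinberg word in indices $\le N$, birth $N$ cancelling $2$--$3$ pairs inside $\widehat X=X\setminus B^4\subset X\#^N S^2\times S^2$, perform the prescribed slides (the paper slides the $3$-handles), and then geometrically re-cancel. But the step you summarise as ``tube its self-intersections and excess intersections into unused stabilisation spheres in the standard way'' is where the real content lies, and your recipe does not work as stated. The stabilisation spheres in $Y_N=\#^N S^2\times S^2$ are disjoint from both the belt spheres $A_i$ and the slid attaching spheres $B_j$, so they are not transverse spheres for either family and cannot by themselves be used in a Norman trick to clear a Whitney disk $W$ off the $A$'s and $B$'s. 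The paper instead first proves a separate lemma that geometric transverse spheres $C_1,\dots,C_N$ for the $B_j$, disjoint from all $A_i$, can be carried through an arbitrary sequence of $3$-handle slides; it then tubes each $C_j$ into one $S^2\times p\subset Y_N$ to obtain $C_j'$ equipped with its own geometric dual $D_j=p\times S^2$, and runs a cascade of Norman tricks (use $A_i^*$ to clear $W$ off the $A$'s, push self-intersections down into $B$, use $D_j$ to clear $W$ off the $C'$'s, then use $C_j'$ to clear $W$ off the $B$'s). This construction of the $C_j$ and their upgrade via $Y_N$ is exactly where the $N$ summands are spent---one per $C_j$---and is the substantive idea missing from your sketch.

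Your worries in the final paragraph are misdirected. The $S^2\times S^2$ summands live in the base manifold, not in the one-parameter family of handle structures: there are no ``stabilisation handles born and re-cancelled'', nothing from $Y_N$ enters the Cerf graphic, and hence nothing from $Y_N$ can possibly contribute to $\Sigma$. Likewise there is no parametric subtlety---all of the Whitney-disk surgery is carried out at a single fixed time $t_0$ after the slides, and the subsequent Whitney moves are isotopies of attaching spheres, which by construction contribute no Steinberg letters and hence nothing to $\Sigma$.
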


As a consequence of Theorem \ref{Whoneimage} we are also able to construct diffeomorphisms of certain 5-manifolds.
\begin{restatable}{mainthm}{diffoffivemanifolds}\label{diffoffivemanifolds}
Suppose $X$ is a 4-manifold which contains an element $\sigma\in\pi_2(X)$ with $w_2^X(\sigma)\neq 0$, and an element $\gamma\in\pi_1 X$ such that $\gamma$ and $\gamma^{-1}$ are not conjugate, and suppose also that either $k_1 X=0$ or $K_3\mathbb{Z}[\pi_1X]=0$. Then in $\Diff(X\times I,\partial(X\times I))$ there exist diffeomorphisms pseudo-isotopic to the identity but not isotopic to it.
\end{restatable}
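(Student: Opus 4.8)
The plan is to deduce this from Theorem \ref{Whoneimage} together with Hatcher's high-dimensional machinery for converting non-trivial $\Theta$-classes of pseudo-isotopies into non-trivial isotopy classes of diffeomorphisms of $X\times I$. First I would record that under the hypotheses on $X$ the element $\gamma\sigma\in\Wh_1(\pi_1 X;\mathbb{Z}_2\times\pi_2 X)$ — more precisely the class $(0+\sigma)\gamma$ in the notation of Theorem \ref{Whoneimage}, which lies in $\Xi$ since $w_2^X(\sigma)\neq 0$ — is a non-zero element of the obstruction group. Non-vanishing is exactly where the hypothesis that $\gamma$ and $\gamma^{-1}$ are not conjugate enters: in the trace-form description $\Wh_1(\pi_1 X;\mathbb{Z}_2\times\pi_2 X)=(\mathbb{Z}_2\times\pi_2 X)[\pi_1 X]/\langle \alpha\gamma-\alpha^\tau\tau\gamma\tau^{-1},\ \beta\cdot 1\rangle$ of Corollary \ref{whtrace}, the conjugation relation and the quotient by the identity coefficient cannot kill $\sigma\gamma$ when $\gamma\neq 1$ is not conjugate to its inverse and $\sigma\neq 0$; one checks this by projecting to the summand indexed by the conjugacy class of $\gamma$. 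The extra hypothesis ``$k_1 X=0$ or $K_3\mathbb{Z}[\pi_1 X]=0$'' ensures that $\chi(K_3\mathbb{Z}[\pi_1 X])=0$, so this non-zero class survives to the quotient $\Wh_1(\pi_1 X;\mathbb{Z}_2\times\pi_2 X)/\chi(K_3\mathbb{Z}[\pi_1 X])$ appearing in Theorem \ref{Whoneimage}, and hence is realised as $\Theta(F)$ for some pseudo-isotopy $F\in\ker\Sigma$ of $X$.

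Next I would feed this pseudo-isotopy into the standard mechanism (due to Hatcher, and used in dimensions $\geq 5$) that produces diffeomorphisms from pseudo-isotopies. The key point is the comparison of the two ``ends'': a pseudo-isotopy $F$ of $X$ gives, via the obvious identification of $X\times I$ with $(X\times I)$ restricted at the top level, a diffeomorphism of $X\times I$ fixing $\partial(X\times I)$, and the relevant question is whether this diffeomorphism is isotopic to the identity. By the general theory this is controlled by the image of $[F]$ under the composite $\pi_0\mathcal{P}(X)\to\pi_1\Diff(X\times I,\partial)\to\pi_0\Diff_{PI}(X\times I,\partial(X\times I))$, and the $\Theta$-obstruction detects non-triviality of this image provided one knows that $X\times I$ is a manifold of dimension $\geq 5$ to which the Igusa exact sequence applies — which it is, being $5$-dimensional, once one notes that $\pi_1(X\times I)=\pi_1 X$, $\pi_2(X\times I)=\pi_2 X$, and $k_1(X\times I)=k_1 X$, so that the obstruction group and the map $\chi$ for $X\times I$ agree with those for $X$. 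Concretely: $F$ determines a loop of diffeomorphisms of $X\times I$, whose ``clutching'' diffeomorphism of $(X\times I)\times I=X\times I\times I$ is pseudo-isotopic to the identity by construction, and whose isotopy class is obstructed by $\Theta(F)\neq 0$.

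The main obstacle — and the step requiring the most care — is the last one: verifying that the passage from a non-trivial $\Theta$-class of a pseudo-isotopy of $X$ to a non-trivial isotopy class of diffeomorphism of $X\times I$ goes through in dimension $4$ (i.e. for $X\times I$ of dimension $5$), since the high-dimensional arguments of Hatcher rely on handle manipulations that are delicate in low dimensions. The safe route is to observe that the \emph{target} manifold $X\times I$ is $5$-dimensional and \emph{its} pseudo-isotopies live on $X\times I\times I$ which is $6$-dimensional, so the full strength of Igusa's classification (valid for smooth manifolds of dimension $\geq 6$) is available for the space $\mathcal{P}(X\times I)$; thus the only genuinely $4$-dimensional input is the \emph{construction} of $F$ on $X\times I$, which is precisely what Theorem \ref{Whoneimage} supplies. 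Assembling these pieces — realising $\sigma\gamma$ by $F$ on $X$ via Theorem \ref{Whoneimage}, transporting $F$ to a diffeomorphism of $X\times I$, and detecting its isotopy class by $\Theta(F)\neq 0$ using the $6$-dimensional theory for $\mathcal{P}(X\times I)$ — yields the desired diffeomorphisms of $X\times I$ that are pseudo-isotopic but not isotopic to the identity.
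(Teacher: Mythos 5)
There is a genuine gap, and it sits at the heart of the argument: the passage from a pseudo-isotopy $F$ of $X$ to an element of $\Diff(X\times I,\partial(X\times I))$. A pseudo-isotopy $F$ with $F|_{X\times 1}=f$ is a diffeomorphism of $X\times I$ fixing only $X\times 0$ and $\partial X\times I$; it does not fix $X\times 1$ unless $f=\id$, so your ``obvious identification'' does not yield a diffeomorphism rel $\partial(X\times I)$, and the ``loop of diffeomorphisms / clutching'' description does not repair this. The paper's mechanism is the \emph{double} $2F$ (compress $F$ into $X\times[0,\tfrac12]$ and a reflected copy into $X\times[\tfrac12,1]$), which genuinely lies in $\Diff(X\times I,\partial(X\times I))=\mathcal{J}(X)$. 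Once you double, the class you must show is nonzero is not $\Theta(F)$ but $\Theta(2F)=\Theta(F)+(-1)^{4}\,\overline{\Theta(F)}$ (via the duality formula, Proposition \ref{duality-formulae}), and \emph{that} is where the hypothesis that $\gamma$ is not conjugate to $\gamma^{-1}$ enters: it prevents the terms supported on the classes of $\gamma$ and $\gamma^{-1}$ from cancelling. You instead invoke that hypothesis to argue $\Theta(F)\neq 0$, which is both the wrong place for it and not actually established: the summand of $\Wh_1(\pi_1X;\pi_2X)$ indexed by the conjugacy class of $\gamma$ is the coinvariants of $\pi_2X$ under the action of the centraliser of $\gamma$, so $\sigma\gamma$ could in principle die there. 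The paper sidesteps this by realising $(1+\sigma)\gamma$ --- possible by Theorem \ref{Whoneimage} precisely because $w_2^X(\sigma)\neq 0$, which is why that hypothesis appears --- and detecting in the untwisted summand $\Wh_1(\pi_1X;\mathbb{Z}_2)=\bigoplus_{\Conj(\pi_1X)^{\neq 1}}\mathbb{Z}_2$, where nonvanishing of $\Theta(2F)$ is immediate from the non-conjugacy hypothesis.

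Your final step is closer to the mark: one must indeed show that the resulting diffeomorphism of the $5$-manifold $X\times I$ is pseudo-isotopic to the identity, and the paper does this with the explicit suspension construction of Hatcher--Wagoner rather than by appeal to the six-dimensional classification. Detection, however, uses no high-dimensional theory at all: an isotopy of $2F$ to the identity rel $\partial(X\times I)$ would in particular be a path in $\mathcal{P}(X)$, forcing $\Theta(2F)=0$ as a four-dimensional pseudo-isotopy invariant --- a contradiction.
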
 
This result is analogous to a result by Hatcher \cite[Corollary 4.5]{Hatcher}, which gives diffeomorphisms of manifolds of dimension greater than 6.

The other examples of diffeomorphisms of 4-manifolds which are pseudo-isotopic but not isotopic to the identity that we know of come from Budney-Gabai \cite{BG}, Watanabe \cite{Watanabe}, Igusa \cite{Igusanewnew}, and the following examples from gauge theory: Ruberman \cite{rubermanone}, \cite{rubermantwo}, Baraglia-Konno \cite{konno}, Kronheimer-Mrowka \cite{kronheimer}, and Lin \cite{lin}. Budney and Gabai construct diffeomorphisms of $S^1\times B^3$ and $S^1\times S^3$, and by related methods Watanabe constructs diffeomorphisms of $\Sigma(2,3,5)\times S^1$. Budney and Gabai's examples are pseudo-isotopic to the identity by Sato \cite{sato} and Lashof-Shaneson \cite{lashof}, while Watanabe's examples are pseudo-isotopic to the identity by \cite[Theorem 1.8 and Theorem 9.3]{Watanabe}, which Watanabe attributes to Teichner. In a concurrent work, posted subsequently to the initial posting of this paper, Igusa \cite{Igusanewnew} constructed diffeomorphisms of $((S^1\times S^2)\# M)\times S^1$ for $M$ a non-simply connected 3-manifold, also using the second obstruction to pseudo-isotopy.
The gauge theoretic examples on the other hand give diffeomorphisms of simply connected 4-manifolds which are not isotopic to the identity but induce the identity on homology; by Kreck \cite[Theorem 1]{kreck}, these diffeomorphisms are pseudo-isotopic to the identity.

Other work that has been done on pseudo-isotopies of 4-manifolds includes the work of Quinn \cite{Quinn} who proves that topological pseudo-isotopies of simply connected 4-manifolds are topologically isotopic rel boundary to an isotopy. Hence in the topological setting isotopy and pseudo-isotopy are the same for simply connected 4-manifolds.

Kwasik \cite{Kwasik} shows that the same is not true for non simply connected 4-manifolds, and asserts that if topological pseudo-isotopies are allowed then $\Theta$ is surjective. However, it is unclear to us how to define $\Theta$ and $\Sigma$ in the topological setting, and unclear how to show it is well defined, particularly as current definitions are heavily reliant on Cerf theory. It appears to us that a topological definition of both $\Theta$ and $\Sigma$ would be of value, and could facilitate further results in the topological setting.

\subsection*{Acknowledgements}
Thanks to Mark Powell and Andrew Lobb for extensive discussions and ideas. Thanks to Arunima Ray and Dirk Schuetz for useful comments and corrections. Thanks to Wolfgang L\"uck for useful correspondence, Bj{\o}rn Jahren for his interest and for sharing his notes, and to Slava Krushkal for interesting conversations. Thanks also to the Max Planck Institute for Mathematics for hosting me for three months while I worked on this paper. This research was supported by EPSRC Grant no.\ EP/N509462/1 project no.\ 1918079, awarded through Durham University.


\section{Background}

We first recall some general topology background. We will review some standard tools of Morse theory, we will also review Whitney discs and operations on Whitney discs, as well as Postnikov towers and $k$-invariants.

As is standard in Morse theory and Cerf theory, we fix a Riemannian metric $\mu$ on $X$ and take the product metric on $X\times I$. 

\begin{remark} The choice of metric is not important, and in fact it is possible to avoid fixing a metric entirely. Indeed Hatcher and Wagoner consider a space $\widehat{\mathcal{F}}(X,\partial X)$ which includes all possible choices of metric, and show that it is homotopy equivalent to the space~$\mathcal{F}(X,\partial X)$ which we define in Section \ref{functional-approach}; see \cite[Chapter 1, \S 3]{HatcherWagoner}.
\end{remark}

\subsection{Morse theory and the Morse chain complex}\label{morsetheorysection}
Let $f$ be a Morse function
\[f\colon X\times I \rightarrow I\]
such that $f|_{\partial X\times I}$ is the projection to $I$ and that $f(X\times i)=i$ for $i\in\{0,1\}$. Suppose also that $f$ has no critical points near $X\times\partial I$ or $\partial X\times I$. By choice of Riemannian metric we have a uniquely defined gradient $\nabla f$. Given $s\in \mathbb{R}$ we denoted the \emph{flow} of $f$ by $\phi_{f,s}\colon X\times I \rightarrow X\times I$, where $\phi_{f,s}(p)$ is giving by pushing $p$ along $\nabla f$ for time $s$. 

Recall that the \emph{trajectory} of a point $p$ is given by $\{\phi_{f,s}(p) | s\in\mathbb{R}\}$, and that trajectories are embedded copies of $(0,1)$, $(0,1]$, $[0,1)$ or $[0,1]$, with any boundary points lying in~$X\times \partial I$. Recall also that the limit point on approaching the open end of a trajectory is always a critical point. Given a critical point $p$ we denote the \emph{stable} set by 
\[W_f(p) = \{q\; |\; \lim_{s\rightarrow +\infty} \phi_{f,s}(q) = p\}\]
and the \emph{unstable} set by 
\[W^\star_f(p) = \{q\ |\ \lim_{s\rightarrow -\infty} \phi_{f,s}(q) = p\}.\]

Given two critical points $p,q\in X\times I$ of index $i$ and $j$ respectively, let 
\[T^q_p =\left\{\left\{\phi_{f,s}(a)\right\}_{s\in\mathbb{R}} \;| \; a\in W(p)\cap W^\star(q)\right\}\]
be the set of trajectories from $q$ to $p$.  We refer to these trajectories as \emph{$j/i$ trajectories} (the $j$ being on top of the fraction indicates that is also ``on top'' in the manifold, since a~$j/i$ trajectory goes down from a critical point of index $j$ to one of index $i$). If $p, q$ are critical points of index $i$ and $i+1$ respectively and $f$ is a Morse function in general position, a dimension count shows that $W(p)\cap W^\star(q)$ is a collection of isolated arcs and so there are finitely many trajectories from $q$ to $p$.

\subsubsection{The Morse chain complex}
We can capture the data of the $\frac{i}{i+1}$ intersections in a chain complex, the \emph{Morse chain complex}, defined as follows. Let $\psi:\widetilde{X}\rightarrow X$ be the universal cover of $X$, and let $f$ be a self indexing Morse function on X, i.e.\ there exists $r_0=0<r_1<\cdots <r_{n+1}=1\in [0,1]$ with all critical points of index $i$ having critical values between $r_i$ and $r_{i+1}$. 
Let $V_i=f^{-1}([r_i,r_{i+1}])$, and let $\widetilde{V_i}=\psi^{-1}(V_i)$. Define the chain complex by 
\[C_i(X)=H_i(\widetilde{V_i},\widetilde{V}_{i-1})\]
which is a finitely generated free module over $\mathbb{Z}[\pi_1(X)]$ with a generator for each critical point of index $i$.

Fix a base point $p\in X$ and a lift of the base point $\tilde{p}\in\widetilde{X}$, and for each critical point $c$ of $f$ pick a path $\gamma_c$ from $p$ to $c$. This gives a basis for $C_i$ as a finitely generated free $\mathbb{Z}[\pi_1(X)]$ module. We can describe the differential as follows. Let $S_i$ be the collection of index $i$ critical points, recall that $T^a_b$ is the set of trajectories from $a$ to $b$. Using the basis above, this determines a matrix 
$\left(\partial_i^{f}\colon C_{i}\rightarrow C_{i-1}\right)$
given by
\[\partial_i^{f}(c)=\sum_{b\in S_{i-1}}\sum_{\psi\in T^c_b} [\gamma_c\cdot\psi\cdot\gamma_b^{-1}]b.\]
\subsection{Homotopies of surfaces in 4-manifolds and Whitney discs}\label{subsec:Homotopy}

We will wish to study deformations of immersed surfaces in 4-manifolds, and so recall some standard techniques for doing so; for full details see \cite{Freedman1990}.

Let $X$ be a smooth 4-manifold. Given a compact immersed surface $S\subset X$ (with possibly many components), recall that a \emph{regular homotopy} of this surface in $X$ is a homotopy through immersions; i.e.\ a smooth map $H\colon \Sigma \times I\rightarrow X$ such that $H(-,0)$ maps $\Sigma$ to the surface $S$, and $H(-,t)$ is an immersion for all $t$. We frequently omit the map and discuss only the immersed surface, which we denote $S_t = H(\Sigma,t)$.

Given an immersed surface, by standard general position arguments we can perturb it so that it has only finitely many isolated self-intersections, and that these self-intersections are transverse double points; that is the immersion is 2 to 1 at these points (as opposed to $n$ to 1 for $n>2$).

By general position we may perturb a regular homotopy so that for each $t$, $S_t$ has only transverse double-points, except at finitely many values of $t$. At these values of $t$ we see a non-transverse point; near these values, the homotopy can always be described as one of the following local moves; see \cite{Freedman1990} for further details.

\begin{definition}\label{fingermovedef}
Let $S\subset X$ be an immersed oriented surface, and let $\gamma$ be an arc such that the endpoints are in $S$, but the interior is disjoint from $S$. Then we may perform a \emph{finger move}, pushing one sheet of the surface along this arc into the other to introduce two points of transverse intersection; see the first image in Figure \ref{fig:finger-whitney-def}.
\end{definition}
\begin{convention}
It will be useful to us to insist that the finger move arc $\gamma$ in Definition \ref{fingermovedef} be an oriented arc (one may freely pick the orientation, we just require that it is kept track of).
\end{convention}

\begin{figure}[!ht]
   \centering
\begin{tikzpicture}
\node[anchor=south west,inner sep=0] (image) at (0,0) {\includegraphics[width=0.55\textwidth]{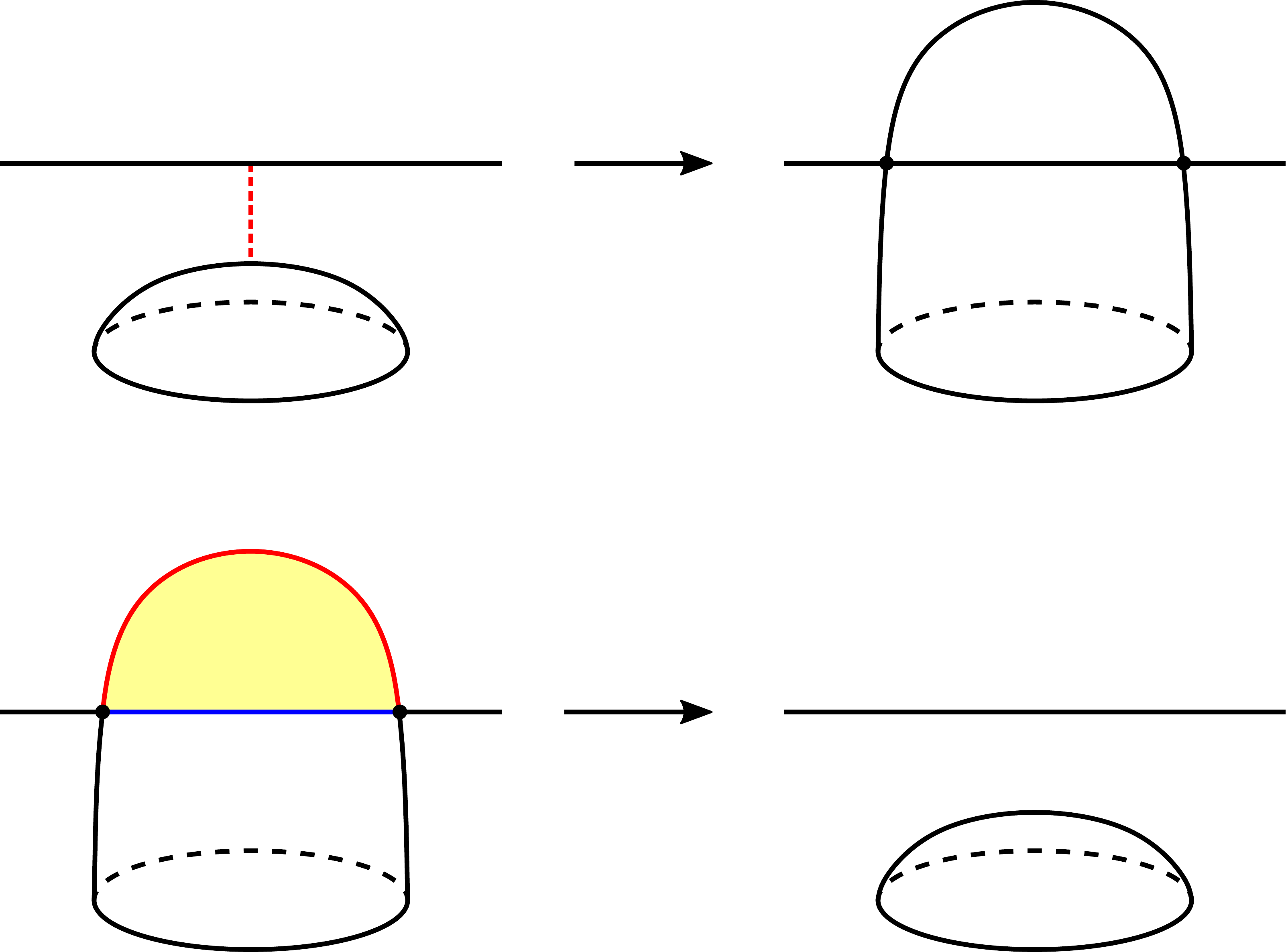}};
  \begin{scope}[x={(image.south east)},y={(image.north west)}]
    \node[text=red] at (0.195,0.445) {$\alpha$};
    \node[text=blue] at (0.195,0.215) {$\beta$};
    \node at (0.048,0.29) {$z^+$};
    \node at (0.352,0.29) {$z^-$};
    \node at (0.195,0.34) {$W$};
    \node[red] at (0.22,0.77) {$\gamma$};
    \node at (0.5,0.785) {Finger};
    \node at (0.5,0.785-0.046) {move};
    \node at (0.5,0.335) {Whitney};
    \node at (0.5,0.335-0.046) {move};
    \end{scope}
\end{tikzpicture}
\caption{\label{fig:finger-whitney-def} The time slice where the finger moves and Whitney moves occur. Note that the horizontal line continues into the past and future, and is unchanged by the homotopies.}
\end{figure}

\begin{remark}\label{pi1finger}
Let $S$ be a collection of spheres embedded in $X$, and suppose for each sphere we have an arc to a basepoint~$\ast\in X$. Then we can associate an element of $\pi_1(X)$ to any finger move arc  by going along a basepoint arc, along some arc in $S$ to the base of the finger move arc, along the finger move arc, along another arc in $S$ to the basepoint arc, then backwards along another basepoint arc (possibly the same basepoint arc if the endpoints of the finger move arc lie on the same component of $S$). We call this the element of $\pi_1(X)$ \emph{associated} to the finger move. If additionally $S$ is $\pi_1$-negligible (that is $\pi_1(X) = \pi_1(X\setminus S)$) then the finger move is uniquely determined by its associated element of $\pi_1(X)$; indeed for any $\gamma\in\pi_1(X)$ there always exists some choice of finger move arc with associated element $\gamma$ and any two choices can easily be shown to be homotopic in $X$, and hence in $X\setminus S$ since $S$ is $\pi_1$-negligible.
\end{remark}

\begin{remark}\label{reversingorientation}
Note that reversing the orientation of $\gamma$ results in an isotopic finger move, and inverts the associated element of $\pi_1 X$. Later, the surfaces between which we perform finger moves will have a natural order, so it will be useful to consider a finger move ``from'' one surface ``to'' another.
\end{remark}

\begin{remark}\label{localorientations}
Given an oriented immersed surface $S\subset X$ we wish to assign a sign to the double points of $S$. When $X$ is oriented this is easy, however when $X$ is non-orientable we must work a little harder. In this case, we restrict to the case that $S$ is $\pi_1$-trivial; that is $\pi_1 S$ includes as $0$ in $\pi_1 X$. Let $S=\cup_i S_i$, where $S_i$ are the connected components. We pick a basepoint $\ast\in X$ and arcs from $\ast$ to each component $S_i$. We pick a local orientation at $\ast$ then transport the orientation at $\ast$ along the arcs to give an orientation of the total space of $\nu(S_i,X)$ for each $i$. This allows us to define signs of intersection as for an oriented manifold. Note that the signs of the intersections depend on our choices of arcs, but that for any two points $p,q\in S_i\cap S_j$ the dichotomy of these having the same sign or a different sign does not depend on our choices.
\end{remark}

We now recall the definition of a Whitney disc, and a Whitney move.

\begin{definition}
Let $S\subset X$ be an oriented immersed surface (and that $S$ is $\pi_1$-trivial if $X$ is non-orientable). Suppose $z^+, z^-$ are double points of $S$ with sign $+$ and $-$ respectively. Suppose $\alpha,\beta\subset S$ are embedded arcs, both of which have endpoints $z^+$ and $z^-$, and which are disjoint except at these endpoints. Suppose also that $\alpha$ and $\beta$ do not intersect any intersection points of $S$ except at their endpoints. We also require that the endpoints of $\alpha$ live in different sheets of $S$ to the endpoints of $\beta$ of $S$; that is, letting $S$ be the image of some immersion $f\colon \Sigma\rightarrow X$ we require that $f^{-1}(\alpha)$ and $f^{-1}(\beta)$ are disjoint including at the endpoints.
	
If $W\subset X$ is an immersed disc with boundary $\alpha\cup\beta$ we call $W$ a \emph{Whitney disc}. We require that $W$ meets $S$ transversely on $\partial W$. We call $\alpha$ and $\beta$ the \emph{Whitney arcs}.
\end{definition}

\begin{definition}\label{def:cfwd}
Given a Whitney disc $W\subset X$, we pick an orientation for $W$. The bundle $\nu(W, X)$ is a 2-dimensional bundle over a disc and so is trivial and has a unique trivialisation determined by the orientation. This trivialisation induces a trivialisation, or \emph{framing}, of  $\nu(W,X)|_{\partial W}\cong S^1\times D^2$ which we call the \emph{disc framing}.
	
	There is another framing of $\nu(W,X)|_{\partial W}$ defined as follows. We define a section of~$\nu(W,X)|_{\partial W}$, denoted~$s\colon \partial W\rightarrow \nu(W,X)_{\partial W}$, by requiring that $s$ be parallel to $S$ along $\beta$, and normal to $S$ along $\alpha$; note that $W$ is transverse to $S$ on $\partial W$ so this uniquely determines $s$ up to homotopy. We call $s$ the \emph{Whitney section}. Now $s$ is a section of a 2-dimensional bundle over $S^1$, and so uniquely determines a trivialisation (up to orientation, we use the orientation on $\nu(W,X)|_{\partial W}$ induced by the orientation of~$W$ and the orientation of $X$ when $X$ is oriented; when $X$ is non orientable we use a local orientation as in Remark \ref{localorientations}). We call this trivialisation the \emph{Whitney framing}.
	
	If the Whitney framing and the disc framing agree (up to isotopy), we say that $W$ is a \emph{correctly framed Whitney disc}, or simply a \emph{framed Whitney disc}.
\end{definition}

\begin{remark}\label{Zframings}

If $\nu(W,X)|_{\partial W}$ is oriented, then if we fix a framing $\nu(W,X)|_{\partial W}\cong S^1\times D^2$, then any other framing with the same orientation gives an element of $\pi_1\GL_2(\mathbb{R})=\mathbb{Z}$, and this element of $\mathbb{Z}$ is canonical. Hence given two framings we have a well defined difference between them in $\mathbb{Z}$, using the first framing to identify $\nu(W,X)|_{\partial W}\cong S^1\times D^2$ and the second to give the element of $\pi_1\GL_2(\mathbb{Z})=\mathbb{Z}$.
\end{remark}

\begin{remark}
Equivalently and more succinctly one may say a Whitney disc is framed if the Whitney section extends to a section of $\nu(W,X)$. However, as we will perform moves to change the disc framing and the Whitney framing separately, it will be useful for us to consider both framings independently.
\end{remark}

We now describe the second local move.

\begin{definition} Given a correctly framed, embedded Whitney disc, as in Definition \ref{def:cfwd}, whose interior is disjoint from $S$, we may perform a \emph{Whitney move}, a regular homotopy removing the two double points $z^+$ and $z^-$; see Figure \ref{fig:finger-whitney-def}. 
\end{definition}

\subsubsection{Interior twists and boundary twists of discs}

Suppose that we have an immersed Whitney disc $W$ which pairs two intersections of some surface $S$ ($S$ is possibly disconnected, and $W$ may intersect $S$ away from the Whitney arcs). As in Remark \ref{Zframings} we denote the difference between the Whitney framing and the disc framing by $n_W\in\mathbb{Z}$. There are two operations to create a new Whitney disc described in \cite[Section 1.3]{Freedman1990}: the interior twist and the boundary twist. We recall briefly these operations and their effects on $n_W$ below. 

A \emph{positive interior twist} is an operation which alters $W$ in a small neighbourhood in the interior of $W$. In this neighbourhood we add an additional positive self-intersection to $W$. We call the resulting disc $W'$. Note that $\nu(W,X)|_{\partial W}=\nu(W',X)|_{\partial W'}$, and that the Whitney framing does not change. By perturbing $W$ we can make $W'\cup -W$ immersed, one can easily see that $e(\nu(W'\cup -W,X)=2$, and so the disc framings of $W$ and $W'$ must differ by 2 (considering the clutching construction of bundles over spheres). Hence $n_{W'}=n_W + 2$.

Similarly a \emph{negative interior twist} introduces an additional negative self-intersection and $n_{W'}=n_W - 2$.


A \emph{positive boundary twist} alters $W$ only in a neighbourhood of one Whitney arc; note that we may choose which arc. It introduces an additional single positive intersection between $W$ and $S$ by twisting $W$ positively around $S$ along this arc; see \cite[Section 1.3]{Freedman1990}. We call the result $W'$. In this case $n_{W'}=n_W + 1$.

Similarly a \emph{negative boundary twist} introduces a additional single negative intersection between $W'$ and $S$, and $n_{W'}=n_W - 1$.

We will frequently use these moves to obtain a correctly framed Whitney disc from one which is not correctly framed.

\subsubsection{Pushing down}\label{section:push-down}
Let $W$ be a Whitney disc in a 4-manifold $V$ which pairs two intersections of some surface $A\subset X$. If $W$ intersects some surface $U$ in some point $p$, we may remove the intersection between $W$ and $U$ by \emph{pushing down} the intersection into $A$. To do this we perform a finger move between $U$ and $A$, resulting in two intersections between $U$ and $A$; see Figure \ref{fig:pushingdown}.

\begin{figure}[ht]
	\begin{tikzpicture}
		\node[anchor=south west,inner sep=0] (image) at (0,0) {\includegraphics[width=0.9\textwidth]{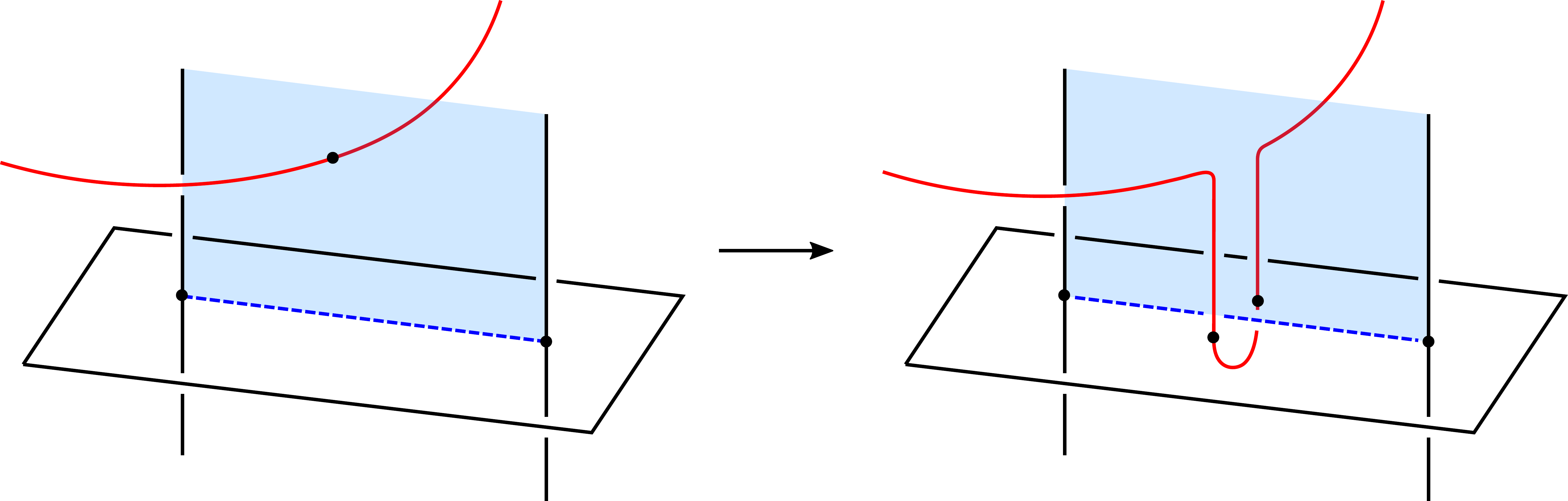}};
		\begin{scope}[x={(image.south east)},y={(image.north west)}]
			\node[text=light-blue]  at (0.37,0.7)  {$W$};
			\node[text=blue]  at (0.23,0.32)  {$\alpha$};
			\node at (0.42,0.25)  {$A$};
			\node[text=red] at (0.05,0.73)  {$U$};
			
		\end{scope}
	\end{tikzpicture}
	\caption{\label{fig:pushingdown}A depiction of the pushing down operation, which turns an intersection between $U$ and $W$ into two intersections between $U$ and $A$.}
\end{figure}

We can also push down to trade self intersections of $W$ for intersections between $W$ and $A$.

\subsubsection{Transverse spheres and the Norman trick}

Let $A$ be a surface in a 4-manifold $V$, and let $A^\ast\subset V$ be a (possibly non-embedded) sphere with trivial normal bundle which intersects $A$ transversely in a single point. We say $A^\ast$ is a \emph{transverse sphere} for $A$.

Given such a surface with a transverse sphere, if $A$ intersects some other surface $U$ in some point $p$, we may find a surface $U'$ with one fewer intersection with $A$, by taking an embedded arc $\gamma\subset A$ from $p$ to $A\cap A^\ast$, and tubing $U$ to a parallel copy of $A^\ast$ along the arc $\gamma$; see Figure \ref{fig:normantrick}. This operation is called the \emph{Norman trick}.

\begin{figure}[ht]
	\begin{tikzpicture}
		\node[anchor=south west,inner sep=0] (image) at (0,0) {\includegraphics[width=0.7\textwidth]{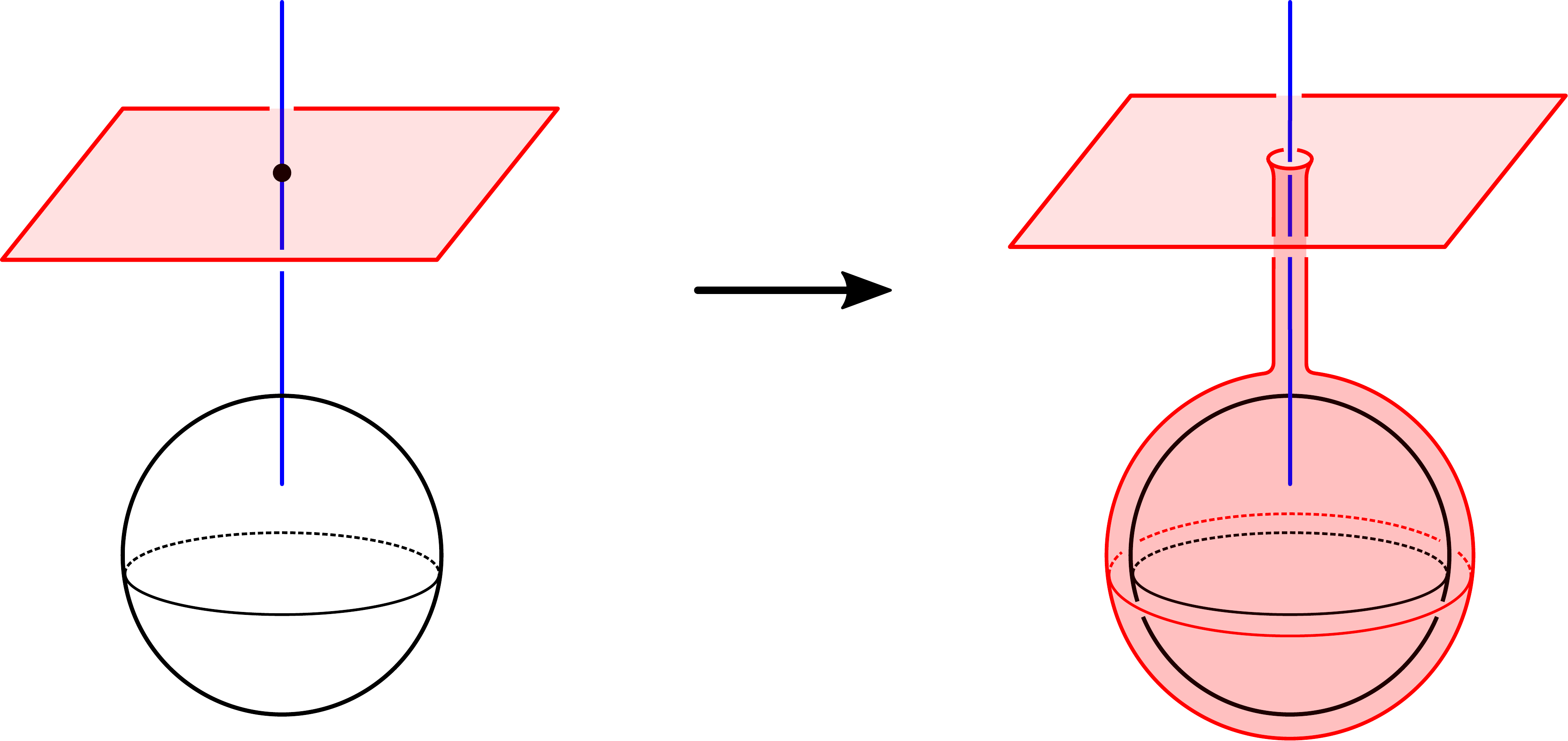}};
		\begin{scope}[x={(image.south east)},y={(image.north west)}]
			\node[text=red]  at (0.35,0.73)  {$U$};
			\node[text=blue]  at (0.21,0.94)  {$A$};
			\node at (0.32,0.25)  {$A^*$};
		\end{scope}
	\end{tikzpicture}
	\caption{\label{fig:normantrick}Using the transverse sphere $A^*$ to perform the Norman trick, removing the intersection between $A$ and $U$.}
\end{figure}

\subsection{The \texorpdfstring{$\mathbb{Z}[\pi_1 X]$}{Z[pi1X]} intersection number of spheres}\label{algebraic-intersection}
Let $A$ and $B$ be immersed spheres in a 4-manifold which intersect transversely (or more generally any two connected $pi_1$-trivial surfaces). Suppose also that we have chosen a basepoint $\ast\in X$, and that we have chosen arcs $\alpha,\beta\subset X$ from $\ast$ to $A$ and $B$ respectively; denote the endpoints at which these arcs meet $A$ and $B$ by~$\ast_A\in A$ and $\ast_B\in B$ respectively. Then given $p\in A\cap B$ we can assign a value in~$\pm\pi_1 X$ to this intersection point as follows. The sign comes from the sign of the intersection; in the case that $X$ is non-orientable this is as in Remark \ref{localorientations}. The element of $\pi_1 X$ comes from taking an arc $a\subset A$ from $\ast_A$ to $p$ and an arc $b\subset B$ from $\ast_B$ to $B$, then $\alpha\cdot a\cdot b^{-1}\cdot\beta^{-1}$ gives the element of $\pi_1 X$; see Figure~\ref{fig:algebraic-intersection}.
\begin{figure}[htb]
	\centering
	\begin{tikzpicture}
		\node[anchor=south west,inner sep=0] (image) at (0,0) {\includegraphics[width=0.45\textwidth]{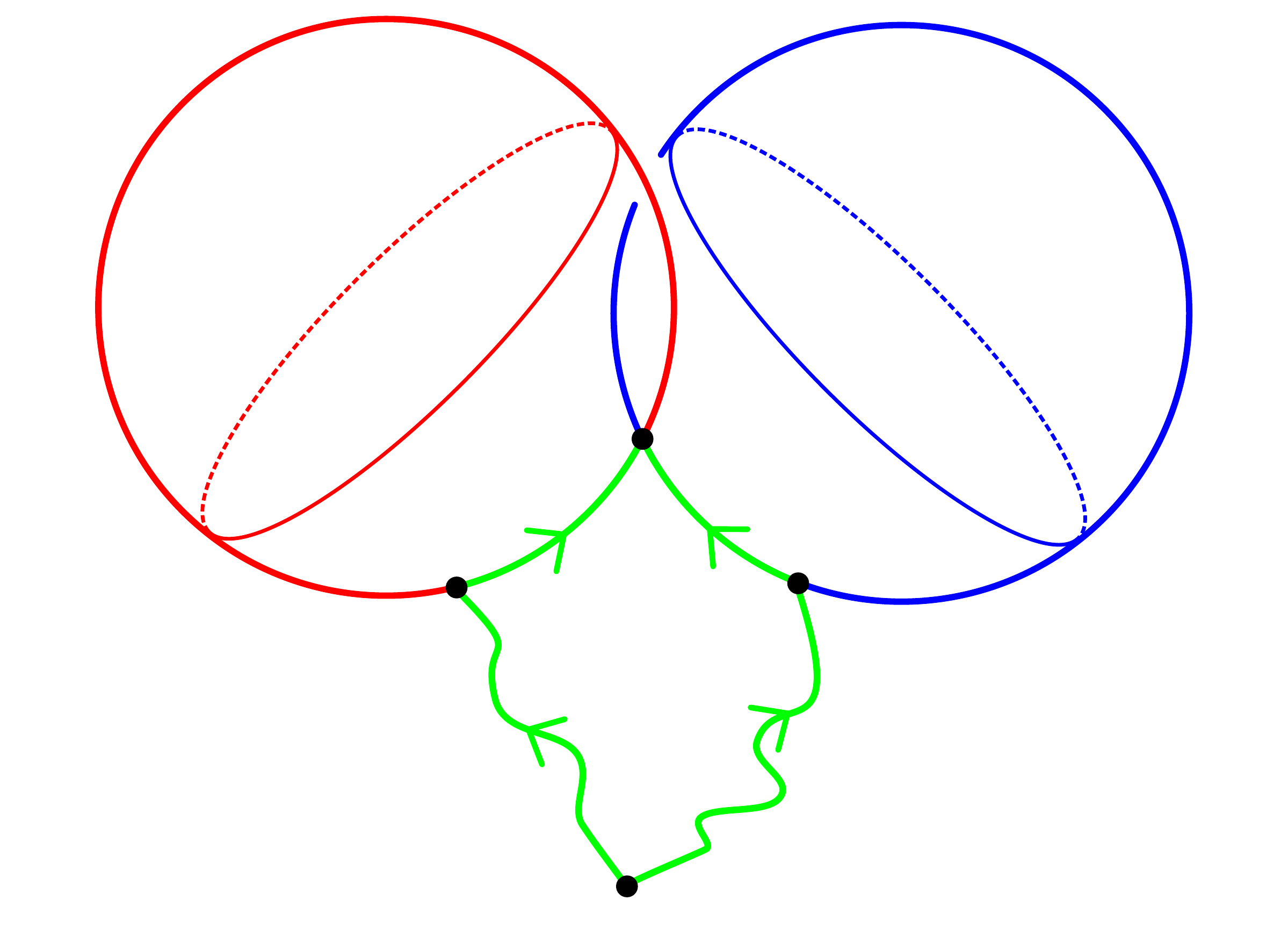}};

		\begin{scope}[x={(image.south east)},y={(image.north west)}]
			\node[text=red]  at (0.18,0.83)  {$A$};
			\node[text=blue]  at (0.82,0.83)  {$B$};
			\node  at (0.53,0.52)  {$p$};
			\node  at (0.35,0.415)  {$\ast_{A}$};
			\node  at (1-0.35,0.41)  {$\ast_{B}$};
			\node at (0.46,0.03) {$\ast$};
			\node[text=green] at (0.455, 0.37) {$a$};
			\node[text=green] at (1-0.455, 0.37) {$b$};
			\node[text=green] at (0.37, 0.2) {$\alpha$};
			\node[text=green] at (0.65, 0.2) {$\beta$};
		\end{scope}
	\end{tikzpicture}
	\caption{\label{fig:algebraic-intersection}The loop $\alpha\cdot a\cdot b^{-1}\cdot\beta^{-1}$.}
\end{figure}

We permit the arcs $a$ and $b$ may run over the intersection points and self-intersection points of $A$ and $B$, but they may not change sheets of the surface at these double points; equivalently, one must be able to make $a$ and $b$ disjoint from the intersection and self-intersection points. Note that since $A$ and $B$ are $\pi_1$-trivial $\alpha\cdot a\cdot b^{-1}\cdot\beta^{-1}$ is independent of the choice of $a$ and $b$.

We can define an intersection number $A\cdot B\in \mathbb{Z} [\pi_1 X]$ by summing over all $p\in A\cap B$. Note that this intersection number depends on the choice of arcs $\alpha$ and $\beta$ from $A$ and $B$ to $\ast$; when we wish to consider intersection numbers in $\mathbb{Z} [\pi_1 X]$ we will fix some choice of arcs to the basepoint.
%
%
\subsection{Postnikov towers and \texorpdfstring{$k$}{k}-invariants}\label{postnikovdef}

Since many of our theorems refer to the first Postnikov invariant (or $k$-invariant) $k_1 X$, we recall the basics of Postnikov towers and $k$-invariants below. For a more detailed treatment we direct the reader to \cite[pg. 421–437]{whitehead}.

Recall that an \emph{Eilenberg–MacLane space} $K(\pi,n)$ is a space whose $n$th homotopy group is isomorphic to $\pi$, with all other homotopy groups trivial.

\begin{definition}
Given a path connected space $X$, suppose we have spaces 
\[X_{(0)}, X_{(1)},\ldots, X_{(n)},\ldots\] 
and maps $p_n\colon X_{(n)}\rightarrow X_{(n-1)}$, $f_n\colon X\rightarrow X_{(n)}$, such that

\begin{enumerate}
\item the diagram below commutes.
\[
\begin{tikzcd}[column sep=huge, row sep=1.5em]
\centering
& \vdots\arrow[d, "p_{n+1}"]\\
& X_{(n)} \arrow[d, "p_n"]\\
& \vdots \arrow[d, "p_2"]\\
& X_{(1)} \arrow[d, "p_1"]\\
X \arrow[r, "f_0"]
\arrow[ru, "f_1",end anchor=south west ]
\arrow[ruu, "\vdots", draw=none]
\arrow[ruuu, "f_n",end anchor={[yshift=1ex]south west}] & X_{(0)}
\end{tikzcd}
\]
\item The induced map $(f_n)_\ast\colon\pi_i(X)\rightarrow \pi_i(X_{(n)})$ is an isomorphism for $i\leq n$, and that~$\pi_i(X_{(n)})=0$ for $i>n$. 
\item The map $p_n\colon X_{(n)}\rightarrow X_{(n-1)}$ is a fibration, with fiber a $K(\pi_n(X),n)$ space.
\end{enumerate}
We call such a tower of spaces and maps a \emph{Postnikov system} (or \emph{Postnikov tower}) and we call $X_{(n)}$ the Postnikov $n$-type of $X$.
\end{definition}

Postnikov towers always exist for $X$ a connected CW-complex, and uniquely determine $X$ up to weak homotopy; see \cite[Chapter XI]{whitehead}.

For each $n$ consider the fibration
\[K(\pi_{n+1}X,n+1)\longrightarrow X_{(n+1)}\xrightarrow{p_{n+1}} X_{(n)}.\]
there is a well defined homology class $k_n\in H^{n+2}(X_{(n)}; \pi_{n+1} X)$ which classifies this fibration, see \cite[pg. 421–437]{whitehead}. In particular there exists a section of this fibration $X_{(n)}\rightarrow X_{(n+1)}$ if and only if $k_n = 0$. We call $k_n$ the \emph{$n$th k-invariant}. Note that our indexing is different to that of \cite{whitehead}, in order to agree with the indexing of Igusa \cite{Igusa}.

We are particularly interested in $k_1 X$. Since $\pi_n(X_{(1)})=0$ for $n>1$, in fact $X_{(1)}$ is itself an Eilenberg–MacLane space, $X_{(1)}=K(\pi_1 X, 1)$, hence
\[k_1 X\in H^3(K(\pi_1 X,1),\pi_2 X) = H^3(\pi_1 X;\pi_2 X)\]
where $H^3(\pi_1 X;\pi_2 X)$ is group cohomology with coefficients twisted by the action of $\pi_1 X$ on $\pi_2 X$.

\section{Functional approach to pseudo-isotopies}\label{functional-approach}
We begin this section with a review of Cerf's view of pseudo-isotopies as paths of Morse functions as set out by Hatcher and Wagoner in \cite[Chapter 1, \S 2]{HatcherWagoner}.

Let $I=[0, 1]$ and let $\mathcal{F} = \mathcal{F}(X,\partial X)$ be the space of $C^\infty$ functions $f\colon X\times I\rightarrow I$ such that $f(x,0)=0$, $f(x,1)=1$ $\forall x$, and such that $f$ has no critical points near $X\times 0$, $X\times 1$ or~$\partial X\times I$. Let $\mathcal{E}\subset\mathcal{F}$ be the subset of all such functions with no critical points.

Denote the standard projection to $I$ by $p\colon X\times I\rightarrow I$. We define a map
\begin{align*}
\Pi  \colon\mathcal{P}&\longrightarrow \mathcal{F} \\
  F &\longmapsto p\circ F
\end{align*}

Since any $F\in\mathcal{P}$ is a diffeomorphism, $p\circ F$ has no critical points so
\[\Pi(\mathcal{P})\subset \mathcal{E}.\]
In fact $\Pi(\mathcal{P}) = \mathcal{E}$. Given $f\in \mathcal{E}$ we construct $F\in \mathcal{P}$ with $p\circ F = f$ by defining
\[F(x,s)=\phi_{f, s}(x,0).\]
Further, $\Pi$ is a fibration
$\mathcal{I}\rightarrow\mathcal{P}\xrightarrow{\Pi} \mathcal{E}$
with fiber
\[\mathcal{I} = \{F\colon X\times I\rightarrow I \;|\; F|_{X\times 0} = \mathds{1}_{X\times 0},\; F(X\times t) = X\times t\;\forall t\in I\}\]
that is, the space of isotopies of $X$ fixing $X\times 0$. The fiber $\mathcal{I}$ is contractible via
\begin{align*}
H_s\colon \mathcal{I}&\rightarrow\mathcal{I}\\
H_s(F)(x,t) &= F(x, (1-s)t).
\end{align*}
Hence $\Pi$ is a homotopy equivalence. Additionally $\mathcal{F}$ is contractible, so we have
\[\pi_0\mathcal{P}=\pi_0\mathcal{E}=\pi_1(\mathcal{F},\mathcal{E}).\]
In order to measure whether a given pseudo-isotopy $F$ is isotopic to the identity, our strategy will be to join $\Pi(F)$ to $p$ by a path $f_t\in\mathcal{F}$, and try to deform this path, fixing the ends, to lie in $\mathcal{E}$; if we succeed then the path $f_t$ is the trivial element of $\pi_1(\mathcal{P},\mathcal{E})$, so~$F$ is the trivial element of $\pi_0\mathcal{P}$. Conversely, our obstructions will be obstructions to finding such a path deformation, and so obstruct $F$ from being the trivial element of $\pi_0\mathcal{P}$.

%
%
%

\subsection{Generic paths of functions in \texorpdfstring{$\mathcal{F}$}{F}}
We recall genericity theorems of Cerf and Hatcher-Wagoner for paths $f_t\in\mathcal{F}$. Hatcher and Wagoner also consider 2-parameter families of functions in $\mathcal{F}$; this is important to show that the various invariants are well defined however we will not discuss them here and refer the reader to \cite{HatcherWagoner}. 

Following \cite{Cerf}, a generic path $f_t\in\mathcal{F}$ has the following properties. Except for at finitely many discrete values of $t$, $f_t$ is a Morse function with no $j/i$ trajectories for $j\leq i$. At the exceptional values of $t$, $f_t$ may additionally have either a single birth-death critical value (corresponding to the creation or cancellation of an $i+1$, $i$-handle pair), or a single $i/i$ trajectory (corresponding to a handle slide); otherwise $f_t$ has only Morse critical points, and no other $j/i$ trajectories for $j\leq i$ as above. 

We can display the critical value information of $f_t$ as follows.
\begin{definition}
The \emph{Cerf graphic} of a generic path $f_t$ is the subset of $I\times I$ given by
\[\bigcup_{t\in I} t\times\{\text{critical values of } f_t\}\subset I\times I.\]
We further annotate our Cerf graphics with an arrow whenever there is an $i/i$ trajectory; we draw this arrow between the two critical values. See Figure \ref{fig:cerf_arrows} for an example.
\end{definition}

\begin{figure}[htb]

	\begin{tikzpicture}
		\node[anchor=south west,inner sep=0] (image) at (0,0) {\includegraphics[width=0.35\textwidth]{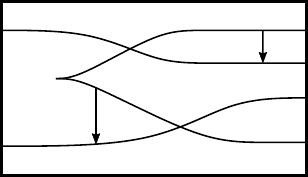}};
		\begin{scope}[x={(image.south east)},y={(image.north west)}]
		\end{scope}
	\end{tikzpicture}
	\caption{\label{fig:cerf_arrows}A Cerf graphic annotated by arrows to show trajectories between handles of the same index. This path satisfies the 1-parameter ordering condition of Proposition \ref{one-parameter ordering}. Left to right we see a birth, an $i/i$ handle slide, an $i+1$ crossing, and $i$ crossing, then an $i+1/i+1$ handle slide.}
\end{figure}

Just as we can deform Morse functions to be self-indexing, we can deform paths of functions to have desirable properties. We recall the one-parameter ordering theorem of Cerf \cite{Cerf}; see \cite{HatcherWagoner} for a detailed treatment of this.

\begin{proposition}[One-parameter ordering]\label{one-parameter ordering}
	Let $f_t\in\mathcal{F}$ be a path whose endpoints $f_0$ and $f_1$ are Morse functions with ordered, distinct critical values $($i.e.\ almost self-indexing, but perturbed so the critical values are distinct$)$. We can deform this family fixing the endpoints so that $f_t$ is a Morse function with ordered, distinct critical values for all but finitely many values of $t$. At these exceptional values of $t$, either two critical points of index $i$ have the same critical value (shown as a crossing on the Cerf Graphic), or there is a single birth-death point, or a single $i/i$ trajectory. Note that this necessarily means that each $(i+1)/i$ birth-death critical point has critical value between the critical values of index $i$ and those of index $i+1$.
	Further, we may arrange that the birth and death points are independent, meaning that there are no trajectories between any birth/death point and another critical value.
	If a path has these properties then we say it satisfies the \emph{one-parameter ordering condition}. See Figure \ref{fig:cerf_arrows} for the Cerf graphic of a 1-parameter family satisfying the one-parameter ordering condition.
\end{proposition}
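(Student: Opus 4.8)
The plan is to deduce the statement from Cerf's foundational one-parameter genericity theorem and then to upgrade the resulting path by an explicit rearrangement of critical values, paid for with handle slides. \textbf{Step 1 (Cerf genericity).} First I would invoke Cerf's stratification of $\mathcal{F}$: the set of Morse functions with no $j/i$ trajectories for $j\le i$ is open and dense, and its complement is a union of submanifolds of codimension $\ge 1$ whose codimension-$1$ part consists precisely of functions with a single birth-death point or a single $i/i$ trajectory. Since the endpoints $f_0,f_1$ already lie in the top stratum, making the path $f_t$ transverse to this stratification rel endpoints yields a path that is Morse with no $j/i$ trajectory for $j\le i$ except at finitely many $t$, at which there is either one birth-death point or one $i/i$ trajectory; this is \cite{Cerf}, treated in detail in \cite[Chapter 1]{HatcherWagoner}. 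After this step two things remain: (a) arrange the critical values to be ordered by index and mutually distinct for all but finitely many $t$, the failures being ``crossings'' of two index-$i$ values; and (b) arrange the births and deaths to be independent.

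\textbf{Step 2 (ordering).} At each fixed $t$ I would push the critical values of index $i$ below those of index $i+1$, as in Milnor's rearrangement theorem, doing so continuously in $t$ by composing $f_t$ with a time-dependent target isotopy $\theta_t\colon I\to I$. The obstruction to moving an index-$i$ critical point $a$ below an index-$(i+1)$ point $b$ is an $(i+1)/i$ trajectory from $b$ to $a$; these form a $0$-dimensional set and so may occur at any $t$. Following Cerf, the resolution is to first slide handles so that, as $t$ varies, a handle is overtaken by a handle of strictly larger index only at times where no trajectory joins them; each such overtaking is then a crossing on the Cerf graphic, and each handle slide we introduce is an arrow on it (compare Figure \ref{fig:cerf_arrows}). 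Here one must check that the handle slides used involve only $i/i$ and $(i+1)/i$ interactions and create no new $j/i$ trajectories with $j\le i$, which is where genericity of the $1$-parameter family is used again. Doing this for $i=0,1,\dots,n-1$ in turn produces a path which is Morse with ordered, distinct critical values except at finitely many $t$, where exactly one of: a crossing of two index-$i$ values, a single birth-death, or a single $i/i$ trajectory occurs. The parenthetical claim — that an $(i+1)/i$ birth-death critical value lies between the index-$i$ and index-$(i+1)$ values — then follows because the two critical points merging at the birth-death have indices $i$ and $i+1$ and are joined by a short trajectory, so no critical value of another index can separate them.

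\textbf{Step 3 (independence of births and deaths).} Near a birth-death time $t_0$ the two critical points being created or destroyed are confined to an arbitrarily small coordinate ball, and for $t$ near $t_0$ their stable and unstable manifolds are controlled by the standard birth-death normal form. A small isotopy supported near this ball, together with a small perturbation of the gradient, then makes those two critical points disjoint from the finitely many trajectories joining other pairs of critical points on a neighbourhood of $t_0$; since there are only finitely many birth-death points, this can be arranged at all of them simultaneously.

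\textbf{Main obstacle.} I expect Step 2 to be the crux. Naively rearranging critical values fails exactly when an $(i+1)/i$ trajectory is present, and one cannot in general delete all such trajectories; the genuine content of the theorem is the bookkeeping that lets one slide handles past one another without manufacturing new lower-index trajectories, and to do so consistently across the whole one-parameter family. Step 1 and Step 3, by contrast, are transversality and localization arguments.
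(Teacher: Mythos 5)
First, note that the paper does not prove this proposition: it is recalled from Cerf \cite{Cerf} and Hatcher--Wagoner \cite{HatcherWagoner} with a citation, so there is no in-paper argument to compare against. Judged on its own terms, your outline follows the standard route (stratification/transversality, then rearrangement, then independence of birth-death points), and Steps 1 and 3 are essentially right. But Step 2 --- which you correctly identify as the crux --- contains a genuine error in the identification of the obstruction, and the mechanism you propose to get around it does not match the statement being proved.

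Concretely: to lower the critical value of an index-$i$ point $a$ past that of an index-$(i+1)$ point $b$ with $f(b)<f(a)$, the obstruction (as in Milnor's rearrangement lemma) is a trajectory from the \emph{upper} point down to the \emph{lower} one, i.e.\ an $i/(i+1)$ trajectory from $a$ down to $b$ in the paper's notation --- not an ``$(i+1)/i$ trajectory from $b$ to $a$'', which cannot even exist while $f(b)<f(a)$ since $f$ decreases along downward trajectories. The dimension count is decisive here: the descending sphere of an index-$j$ point and the ascending sphere of an index-$i$ point meet a level set in a $(j-i-1)$-dimensional set, so $(i+1)/i$ trajectories persist in arcs, $i/i$ trajectories occur at isolated $t$, and $i/(i+1)$ trajectories --- the actual obstruction --- have formal dimension $-2$ and \emph{never} occur in a generic one-parameter family. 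This is precisely why ordering is possible at every $t$; the remaining (real) difficulty is performing the rearrangement continuously and coherently in $t$, especially near birth-death times and $i/i$ times, not ``deleting $(i+1)/i$ trajectories'' as your final paragraph suggests. Relatedly, your picture of ``a handle overtaken by a handle of strictly larger index'' appearing ``as a crossing on the Cerf graphic'' is inconsistent with the conclusion: in the ordered family no index-$(i+1)$ critical value ever passes an index-$i$ one, and the crossings permitted by the proposition are exclusively between critical values of the \emph{same} index. I would rework Step 2 around the correct dimension count and the continuity-in-$t$ issue, following \cite[Chapter 1]{HatcherWagoner}.
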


If the endpoints of $f_t$, $f_0$ and $f_1$ contain only index $i$ and $i+1$ critical points for some~$i$, then in fact we can do better
\begin{theorem}{\cite[Theorem 3.1]{HatcherWagoner}}\label{twoindices}
Suppose $f_t\in\mathcal{F}$ is a path such that $f_0$ and $f_1$ are Morse functions with only index $i$ and $i+1$ critical points for some $2\leq i\leq n-2$. Then we may deform $f_t$ fixing the boundary so that for all $t\in I$, $f_t$ has only critical points of index $i$ and $i+1$. If additionally $f_0$ and $f_1$ are Morse functions with ordered, distinct critical values then we can further deform~$f_t$ fixing the endpoints so that it additionally satisfies the 1-parameter ordering condition.
\end{theorem}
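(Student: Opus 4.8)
The plan is to carry out, in a one-parameter family, the handle-trading arguments underlying the $h$-cobordism theorem, working from the extreme indices inward. First I would apply Proposition~\ref{one-parameter ordering} to replace $f_t$ by a generic path with ordered, distinct critical values, so that away from finitely many $t$ the function $f_t$ is Morse, and at the exceptional values one sees a single birth-death point, a single $i/i$ trajectory, or a crossing of two equal-index critical values. I then regard each $f_t$ as giving a handle decomposition of $W=X\times I$ relative to $X\times 0$; note that $\dim W=n+1\geq 5$, since $2\leq i\leq n-2$ forces $n\geq 4$. Because $f_0$ and $f_1$ have critical points only of index $i$ and $i+1$, every critical point of any other index is \emph{transient}: it is born at a birth point and dies at a death point, possibly undergoing same-index handle slides in between.

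Next I would remove these transient critical points one index at a time, alternating the bottom and the top of the range. A transient index-$0$ critical point can be followed across the Cerf graphic and, using connectedness of $X\times I$, cancelled against a geometrically adjacent index-$1$ handle over its entire lifespan by a single deformation of the family; dually for index-$(n+1)$ critical points against index-$n$ handles. Having cleared indices $0$ and $n+1$, I would trade each transient index-$1$ handle up for an index-$3$ handle --- introduce an auxiliary cancelling $(2,3)$-handle pair positioned so that its $2$-handle geometrically cancels the $1$-handle, cancel that pair, and slide the surviving $3$-handle into place --- and dually trade index-$n$ handles down for index-$(n-2)$ handles; here $i\geq 2$ is used so that the new $3$-handles do not overshoot the target range $\{i,i+1\}$, and $i\leq n-2$ is used dually. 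Iterating, at the $k$-th stage one pushes transient index-$k$ handles up to index $k+2$ and transient index-$(n+1-k)$ handles down to index $n-1-k$; after the stage clearing indices $i-1$ and $i+2$, only indices $i$ and $i+1$ survive. Re-applying Proposition~\ref{one-parameter ordering} after each stage restores genericity, and since its deformations proceed only by handle slides and by creating and cancelling $(i,i+1)$-handle pairs --- never by introducing handles of other indices --- the procedure can be run so as to terminate at a path satisfying the one-parameter ordering condition, which gives the last clause once $f_0$ and $f_1$ are themselves almost self-indexing with distinct critical values.

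The main obstacle is the one-parameter bookkeeping: each cancellation or trade must be carried out coherently over the entire lifespan of a transient critical point and must not resurrect a critical point of an already-eliminated index elsewhere in the family, all while controlling the births, deaths, handle slides and crossings it introduces. What really has to be supplied, then, is the family version of each classical handle-calculus lemma (handle sliding, handle trading, the first cancellation theorem, the Whitney trick), and the hypothesis $2\leq i\leq n-2$ is precisely the range of indices in which the parametrized Whitney trick is available: ambient dimension at least $5$, with all the relevant attaching and belt spheres of positive dimension in the level sets where the moves take place. This is the content of Hatcher--Wagoner \cite[Ch.~1, \S3]{HatcherWagoner}, building on Cerf \cite{Cerf}.
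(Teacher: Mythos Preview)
The paper does not supply its own proof of this theorem; it is quoted from \cite[Theorem 3.1]{HatcherWagoner} and used as a black box. Your outline is a faithful high-level sketch of the Hatcher--Wagoner argument: parametrized handle trading from the extreme indices inward toward the target pair $\{i,i+1\}$, with the constraint $2\le i\le n-2$ guaranteeing enough room both for the parametrized Whitney trick in the level sets and for the low- and high-index trades not to overshoot the target range. Your reading of the hypothesis is correct --- $i\ge 2$ ensures that trading index~$1$ up to index~$3$ stays at or below $i+1$, and $i\le n-2$ ensures the dual trade stays at or above $i$ --- and iterating is the right way to close the gap when $i>2$.

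The place where real work is hidden is exactly where you flag it: the ``one-parameter bookkeeping''. Each cancellation or trade must be performed coherently over the entire lifespan of a transient critical point, through any births, deaths, slides and crossings it meets, without resurrecting handles of an already-cleared index. This is precisely what Hatcher--Wagoner develop in \cite[Chapter I]{HatcherWagoner}, building on Cerf \cite{Cerf}: parametrized versions of handle sliding, handle cancellation, handle trading, and uniqueness of birth--death points. Your sketch is therefore correct in spirit and in approach; it is a roadmap to the proof rather than the proof itself, and since the paper only cites the result, that is entirely appropriate here.
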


In particular when $f_0, f_1\in\mathcal{E}$ we can deform $f_t$ to satisfy the conditions of Theorem \ref{twoindices} for any $i$ of our choosing, provided $2\leq i\leq n-2$. 

\begin{remark}\label{birth-death-order}
If additionally $f_0, f_1\in\mathcal{E}$, then since the birth and death points can be made independent by Theorem \ref{one-parameter ordering}, we may arrange that the births occur in a neighbourhood of 0, and that the deaths occur in a neighbourhood of 1, and that in this neighbourhood no handle slides, critical value crossings, or handle slides occur; see Figure \ref{fig:cerf_ordered}. For details on how to do this, see \cite[Chapter 1, \S 7]{HatcherWagoner}.
\end{remark}

\begin{figure}[htb]

	\begin{tikzpicture}
		\node[anchor=south west,inner sep=0] (image) at (0,0) {\includegraphics[width=0.35\textwidth]{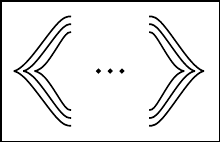}};
		\begin{scope}[x={(image.south east)},y={(image.north west)}]
		\end{scope}
	\end{tikzpicture}
	\caption{\label{fig:cerf_ordered}As in Remark \ref{birth-death-order} any path with $f_0, f_1\in\mathcal{E}$ can be deformed to satisfy the 1-parameter ordering condition, to have only critical points of index $i$ and $i+1$ for some $i$, and to have the pictured Cerf graphic in neighbourhoods of 0 and 1. Here we depict 3 births and 3 deaths, but an arbitrary number is possible (note that the number of births and deaths must be the same).}
\end{figure}

\section{Geometric picture and conventions in dimension 4}\label{geometricpicturedim4}

In this section we describe the geometric picture for paths of functions $f_t\in\mathcal{F}$ in dimension 4 and set some conventions. We will henceforth assume that $f_0, f_1\in\mathcal{E}$, i.e.\ they have no critical points.

By Theorem \ref{twoindices}, after a deformation, we may assume $f_t$ only has index 2 and 3 critical points, and that the critical values are ordered, indeed we assume that $f_t(p)<1/2$ for critical points of index 2, $f_t(p)>1/2$ for critical points of index 3, and $f_t(p)=1/2$ for births and deaths. We may also assume all births happen before time $\varepsilon$ and all deaths happen after time $1-\varepsilon$ as in Remark \ref{birth-death-order}, for some $\varepsilon\in(0,1/4)$.

Away from births, deaths, and handle slides, $f_t$ gives a handle decomposition of $X\times I$, relative to $X\times 0$, with only 2-handles and 3-handles. We wish to look at the ``middle 4-manifold'' after attaching the 2-handles, but before attaching the 3-handles. We make the following identification

\[\bigcup_{t\in[\varepsilon,1-\varepsilon]} f^{-1}_t(1/2) = V\times [\varepsilon,1-\varepsilon]\]
where $V\times t = f^{-1}_t(1/2) \cong X\#^m (S^2\times S^2)$ $\forall t$. Here $m$ is the number of births (also deaths), and we see one copy of $S^2\times S^2$ for each 2-3 handle pair. Directly after the births we see the belt sphere for a 2-handle as $S^2\times p$ and the attaching spheres of the corresponding 3-handle as $q\times S^2$ in each $S^2\times S^2$ summand. As we move forward in the $t$ direction we see an isotopy of these attaching spheres. The spheres will also change when a handle slide occurs. To keep track of these spheres we establish the following conventions.

\begin{enumerate}
	\item We denote the 2-handle belt spheres in any given $t$ slice by $A_1^t,\ldots, A_n^t\subset V\times t$. We refer to these collectively as the $A$-spheres.
	\item We denote the 3-handle attaching spheres in any given $t$ slice by $B_1^t,\ldots, B_n^t$. We refer to these collectively as the $B$-spheres.
	\item We orient the $A_i^t$s and $B_i^t$s so that the intersection between $A_i^t$ and $B_i^t$ directly after their birth is positive. There is a consistent choice of orientation for all $t$ so that the 3-manifolds $\cup_t A^t_i$ and $\cup_t B^t_i$ are oriented; note that these 3-manifolds have $S^2$ boundary components at the handle slides.
	\item We pick a basepoint $\ast\in V$; this gives us a basepoint for $V\times I$ or indeed $X\times I\times I$ by taking $\ast\times 0\in V\times I \subset X\times I\times I$. We will often abusively refer to $\ast$ as the base point in any given $t$ slice; really we mean $\ast \times t\in V\times t$. If we refer to a path to this basepoint, we implicitly take the further path to the ``true basepoint'' $\ast\times 0$ by taking a path through $\ast\times I$.
	\item We make a continuous choice of basepoint in the $A$-spheres and $B$-spheres for each value of t; that is $\ast_{A_i^t}\in A_i^t\subset V\times t$, $\ast_{B_i^t}\in B_i^t\subset V\times t$.
	\item We make a continuous choice for all $t$ of path from the basepoint of $V$ to the basepoints of the spheres; $\alpha_i^t\subset V\times t$ from $\ast$ to $\ast_{A_i^t}\in A_i^t$ and $\beta_i^t\subset V\times t$ from $\ast$ to $\ast_{B_i^t}\in B_i^t$. We do so such that directly after the creation of each pair, we have~$\alpha_i^t\cdot{\beta_i^t}^{-1}=1\in\pi_1 V$.
\end{enumerate}

\subsection{Handle slides}

At each $2/2$ trajectory we see a handle slide between the 2-handles, and similarly at each $3/3$ trajectory we see a handle slide between the 3-handles; we may assume that there are finitely many of these and that they occur at distinct values of $t$ in $(\varepsilon,1-\varepsilon)$.

We describe the effect of the handle slides on the $A$-spheres and $B$-spheres. Consider a $3/3$ handle slide at time $t$, where we slide the handle attached to $B_k^{t-\delta}$ over the handle attached to $B_j^{t-\delta}$. After the handle slide $B_j^{t+\delta} = B_j^{t-\delta}$, while $B_k^{t+\delta}$ is a connect sum of $B_k^{t-\delta}$ to a parallel copy of $B_j^{t-\delta}$.

The picture at $2/2$ handle slides is similar. In this case we slide the handle with belt sphere $A_k^{t-\delta}$ over the handle with belt sphere $A_j^{t-\delta}$ at time $t$. Then $A_k^{t+\delta} = A_k^{t-\delta}$, while $A_j^{t+\delta}$ is a connect sum of $A_j^{t-\delta}$ to a parallel copy of $A_k^{t-\delta}$.  We can see this by turning the handle decomposition upside down and considering the belt spheres of the 2-handles as attaching regions of some 3-handles; note that after turning upside down the $A_j$ 3-handle is being slid over the $A_k$ 3-handle.

\begin{remark}
In fact, the handle slides are determined by the (framed) arc in $V\times (t-\delta)$ from $A_k^{t-\delta}$ to $A_j^{t-\delta}$ or $B_k^{t-\delta}$ to $B_j^{t-\delta}$. We refer to this choice of arc as the \emph{handle slide arc}.
\end{remark}

\subsection{Intersections of spheres}

Throughout $f_t$, by our genericity assumptions the $A_i$s are disjoint from each other, as are the $B_j$s. There may be intersections between the $A_i$s and the $B_j$s however. Again by genericity of $f_t$, we can also assume that the $A_i$s and $B_j$s intersect transversely. 

Initially after the births, the handles $A_i^t$ and $B_j^t$ intersect in $\delta_{i,j}$ points. At later $t$ this may no longer be true. Note that the intersections form a 1-manifold in $V\times I$, and that the endpoints of any arcs in the 1-manifold occur at the births, the deaths or the handle slides.

Away from the handle slides we see a regular homotopy of the spheres which restricts to an ambient isotopy of each of the families $\{A_i^t\}$ and $\{B_i^t\}$. Hence new intersections are introduced and removed by finger moves and Whitney moves between some $A$ sphere and some $B$ sphere. 

\begin{remark}\label{1-param-handles}
When we wish to construct paths of functions $f_t\in\mathcal{F}$, we can do so by creating a 1-parameter family of handle structures. The rules for constructing 1-parameter families of handle structures are the same as those for creating generic 1-parameter families of functions in $\mathcal{F}$; we can create cancelling pairs of $i$, $(i+1)$-handles, perform handle slides, and perform isotopy of the attaching regions of handles, and cancel pairs of handles that intersect in a single point. Given such a 1-parameter family of handle structures there is certainly some 1-parameter family in $f_t\in\mathcal{F}$ which induces this family of handle structures. In dimension 4, when we only have 2 and 3 handles we can do all of this by considering deformation of the $A$ and $B$ spheres in the middle level; see \cite{Quinn} for Quinn's treatment of this in dimension 4.
\end{remark}

\section{Review of Hatcher and Wagoner's \texorpdfstring{$\Wh_2(\pi_1 X)$}{Wh2(pi1X)} invariant \texorpdfstring{$\Sigma$}{Sigma}}\label{review-of-sigma}

In this section we recall the definition of the map $\Sigma\colon\pi_0\mathcal{P}\rightarrow \Wh_2(\pi_1 X)$ of Hatcher and Wagoner, and the key result which we will use, namely the reduction to eyes for elements in the kernel of $\Sigma$; see \cite[Chapter VI]{HatcherWagoner}.

\subsection{Algebra of \texorpdfstring{$\Wh_2$}{Wh2}}\label{thesteinberggroup}
We begin by recalling some algebraic definitions.

Let $\Lambda=\mathbb{Z}[\pi_1 X]$. Let $\GL(\Lambda)=\lim_{n\rightarrow \infty} \GL_n(\Lambda)$.
For $\lambda\in\Lambda$ let $e_{i,j}^\lambda\in \GL(\Lambda)$ be the matrix which is the identity on the diagonal, has $\lambda$ in the $(i,j)$ position, and is zero elsewhere. We call $e_{i,j}^\lambda$ an \emph{elementary matrix}, and let~$\E(\Lambda)\subset \GL(\Lambda)$ be the subgroup generated by the elementary matrices.

One can easily verify the following relations in $\E(\Lambda)$:
\begin{enumerate}[(i)]
	\item $e_{i,j}^\lambda\cdot e_{i,j}^\mu=e_{i,j}^{\lambda+\mu}$,
	\item $[e_{i,j}^\lambda, e_{k,l}^\mu]=0$ for $i\neq l$ and $j\neq k$, and
	\item $[e_{i,j}^\lambda, e_{j,l}^\mu]=e_{i,l}^{\lambda\mu}$ for $i,j,l$ distinct.
\end{enumerate}
This motivates the following definition.
\begin{definition}
	The \emph{Steinberg group} $\St(\Lambda)$ is the group freely generated by symbols $x_{i,j}^\lambda$ for $i,j\in\mathbb{N}$ and $\lambda\in\Lambda$ subject to the relations
	\begin{enumerate}[(i)]
		\item $x_{i,j}^\lambda\cdot x_{i,j}^\mu=x_{i,j}^{\lambda+\mu}$,
		\item $[x_{i,j}^\lambda, x_{k,l}^\mu]=0$ for $i\neq l$ and $j\neq k$, and
		\item $[x_{i,j}^\lambda, x_{j,l}^\mu]=x_{i,l}^{\lambda\mu}$ for $i,j,l$ distinct.
	\end{enumerate}
	Note that we have a surjective homomorphism $\pi\colon \St(\Lambda)\rightarrow \E(\Lambda)$ sending $x_{i,j}^\lambda\mapsto e_{i,j}^\lambda$.
\end{definition}
We define $K_2(\Lambda)$ to be the kernel of $\pi\colon \St(\Lambda)\rightarrow \E(\Lambda)$. Hence we have the short exact sequence:
\[0\rightarrow K_2(\Lambda)\rightarrow\St(\Lambda)\xrightarrow{\pi}\E(\Lambda)\rightarrow 0.\]

For $g\in\pi_1X$ let $w_{i,j}^{\pm g}=x_{i,j}^{\pm g}x_{j,i}^{\mp g^{-1}} x_{i,j}^{\pm g}$, and let $W(\pm \pi_1 X)\subset \E(\Lambda)$ be the subgroup generated by the words $w_{i,j}^{\pm g}$. Then we define the \emph{second Whitehead group} to be,
\[Wh_2(\pi_1 X)=K_2(\Lambda)\; \text{ mod }\; W(\pm \pi_1 X)\cap K_2(\Lambda)\]

In order to define $\Sigma$ we need the following lemma.
\begin{lemma}[{\cite[Chapter III, Lemma 1.6]{HatcherWagoner}}]\label{lemmaW}
	Let $P\in GL(\Lambda)$ be a permutation matrix, and let $D\in GL(\Lambda)$ be diagonal with entries in $\pm\pi_1 X$. Then there exists some~$w\in W(\pm\pi_1 X)$ such that $\pi(w)=P\cdot D$.
\end{lemma}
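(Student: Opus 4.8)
The plan is to reduce the general case to the two special cases $D = \id$ (pure permutation matrix) and $P = \id$ (pure diagonal matrix with entries in $\pm\pi_1 X$), and then handle each of those by induction using the $w$-symbols. First I would observe that it suffices to find $w_1, w_2 \in W(\pm\pi_1 X)$ with $\pi(w_1) = P$ and $\pi(w_2) = D$: then $w = w_1 w_2$ satisfies $\pi(w) = PD$, since $\pi$ is a homomorphism and $W(\pm\pi_1 X)$ is a subgroup. So the lemma splits into two independent claims.

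For the diagonal case, I would use the standard identity in $\GL(\Lambda)$ (and its preimage in $\St(\Lambda)$) that $w_{i,j}^{g} w_{i,j}^{-1}$, or more precisely the product $w_{i,j}^{g}\cdot w_{i,j}^{-1}$ where $w_{i,j}^{-1} := w_{i,j}^{-(-1)}$ is built from the unit $-1 \in \Lambda$, realises the diagonal matrix $\mathrm{diag}(\dots, g, \dots, g^{-1}, \dots)$ with $g$ in position $i$ and $g^{-1}$ in position $j$ (and $1$ elsewhere). Starting from a diagonal $D$ with entries $d_1, \dots, d_k \in \pm\pi_1 X$ and $1$ elsewhere, I would peel off the entries one at a time: multiply $D$ by such a two-entry diagonal matrix using an auxiliary index $N$ much larger than $k$ (which starts life carrying a $1$), absorbing $d_r$ at position $r$ and dumping $d_r^{-1}$ at position $N$; after processing all $r$ the position $N$ carries $\prod_r d_r^{-1}$, which I clean up with one more $w$-word against a further fresh index. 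Each step multiplies by an element of $W(\pm\pi_1 X)$, so $D$ itself lies in $\pi(W(\pm\pi_1 X))$.

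For the permutation case, I would use that the symmetric group (stabilised, $S_\infty$) is generated by transpositions $(i\;j)$, and that each transposition matrix is $\pi$ of a $w$-word: indeed $w_{i,j}^{1}$ maps under $\pi$ to the signed permutation matrix swapping coordinates $i,j$ (with a sign), i.e. the matrix with $1$ in the $(i,j)$ slot, $-1$ in the $(j,i)$ slot, and $1$'s elsewhere on the diagonal. The sign discrepancies accumulate into a diagonal matrix with $\pm 1$ entries, which is itself in $\pi(W(\pm\pi_1 X))$ by the diagonal case just handled (taking $g = -1 \cdot 1 = -1 \in \pm\pi_1 X$, with $1$ the identity element of $\pi_1 X$). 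So writing $P$ as a product of transposition matrices and correcting by the resulting sign-diagonal matrix exhibits $P$ as $\pi$ of a product of $w$-words times a $\pi$-image of a $w$-word, hence in $\pi(W(\pm\pi_1 X))$.

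The main obstacle — really the only place care is needed — is bookkeeping the signs: the elementary $w$-words do not give honest permutation or diagonal matrices but signed ones, so one must track the accumulated diagonal sign matrix and verify it gets reabsorbed by the diagonal case. The stabilisation (working in $\GL(\Lambda) = \varinjlim \GL_n$, so that fresh indices $N$ are always available) is what makes the inductive peeling-off go through cleanly; none of the individual computations is more than a one-line check of the Steinberg relations.
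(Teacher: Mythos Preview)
Your approach has a real gap, and it originates in the very first move of splitting $P\cdot D$ into two separate problems. In the diagonal case the ``cleanup'' step does not work: after dumping everything at the fresh index $N$ you are left with $\prod_r d_r^{-1}$ there, and one further $w$-pair against another fresh index $M$ merely relocates that entry from $N$ to $M$ rather than removing it. This is not a repairable bookkeeping slip. Every $\pi(w_{i,j}^{\pm g})$ is a product of three elementary matrices, so $\pi(W(\pm\pi_1 X))\subset E(\Lambda)$; but $\mathrm{diag}(-1,1,1,\dots)$ represents the nontrivial class $[-1]\in K_1(\mathbb{Z})\hookrightarrow K_1(\Lambda)$ and hence does not lie in $E(\Lambda)$ at all. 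The same obstruction shows that an odd permutation matrix $P$ is itself not in $\pi(W)$, so your permutation step --- which produces a leftover $\pm 1$-diagonal correction and then appeals to the diagonal case --- also fails whenever $P$ is odd. The upshot is that $P$ and $D$ individually need not lie in $\pi(W)$ even when $P\cdot D$ does.

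The paper (following Hatcher--Wagoner) instead treats $P\cdot D$ as a single monomial matrix and writes it directly as $\prod_k \pi(w_{i_k,j_k}^{\pm g_k})$. Each factor $\pi(w_{i,j}^{\pm g})$ is already a transposition carrying the weights $\pm g$ and $\mp g^{-1}$, so one absorbs the permutation and the $\pm\pi_1 X$-entries simultaneously: write the underlying permutation as a product of transpositions and choose each $\pm g_k$ to match a nonzero entry of $P\cdot D$, passing any residual weight into the next factor. Your splitting throws away exactly this freedom by committing to $g_k=1$ for every transposition in the permutation step.
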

This follows from the fact that $P\cdot D$ can be written as a product \[P\cdot D=\prod_k e_{i_k,j_k}^{\pm g_k}e_{j_k,i_k}^{\mp g_k^{-1}} e_{i_k,j_k}^{\pm g_k}.\]

\subsection{Definition of \texorpdfstring{$\Sigma$}{Sigma}}

We now recall the definition of $\Sigma\colon\mathcal{P}\rightarrow \Wh_2(\pi_1 X)$. For full details see \cite[Chapter IV]{HatcherWagoner}, and see \cite[Chapter V, \S 6]{HatcherWagoner} for the version of the definition we give here. The approach is to construct a map (which we also abusively call $\Sigma$) 
\[\Sigma\colon \pi_1(\mathcal{F},\mathcal{E})\rightarrow \Wh_2(\pi_1 X),\]
and use $\Pi$ to identify $\pi_0\mathcal{P}$ with $\pi_1(\mathcal{F},\mathcal{E})$.

As in Remark \ref{birth-death-order} we deform $f_t$ so that it satisfies 1-parameter ordering, has only critical points of index $i$ and $i+1$ for some $2\leq i\leq n-2$, that the birth points appear in time interval $(0,\varepsilon)$, that all death points appear in the time interval $(1-\varepsilon,1)$ and that no critical lines cross and no handle slides occur in either of these intervals, so that in these intervals the Cerf graphic is as in Figure \ref{fig:cerf_ordered}.

We fix a base point $\ast\in X\times I$ and a local orientation at $\ast$. For each birth point $c\in X\times I$ we choose a label $j\in\mathbb{N}$ for this critical point, and a path in $\gamma_{c}\subset X\times I$ from $\ast$ to $c$. For $t\in [\varepsilon,1-\varepsilon]$ denote the index $i+1$ and $i$ critical points created by this birth by $z^{j}_t$ and $b^{j}_t$ respectively. Using the path $\gamma_c$ we can obtain a continuous choice of path from $\ast$ to $z^{j}_t$ and $b^{j}_t$ for each $t$, denoted $\gamma_{b^{j}_t}$ and $\gamma_{z^{j}_t}$ respectively. We also choose an orientation of the stable sets. This gives a basis for $C_i(f_t,\eta_t)$ as in Section \ref{morsetheorysection} and a matrix $\partial_{i+1}^{f_t}=\partial_t\GL(\mathbb{Z}[\pi_1 X])$.

After the birth points $\partial_{\varepsilon}$ is the identity matrix, as there is precisely one trajectory from $z^j$ to $b^j$ for each $j$.

The matrix $\partial_t$ remains constant until passing an $i/i$ or $i+1/i+1$ trajectory. On passing an $i+1/i+1$ trajectory $\varphi$ from $z^j_t$ to $z_k^t$, with associated $\pm\pi_1 X$ element given by $\lambda=\gamma_{z^j_t}\cdot\varphi\cdot\gamma_{z^k_t}^{-1}\in \pm \pi_1 X$ (with sign depending on the orientations), we have that
\[\partial_{t+\delta}=\partial_{t-\delta}\circ  e_{j,k}^\lambda.\]

On passing an $i/i$ trajectory $\varphi$ from $b_j^t$ to $b_k^t$ with associated $\pm\pi_1 X$ element given by~$\lambda=\pm\gamma_{b^j_t}\circ\varphi\circ\gamma_{b^k_t}^{-1}\in \pm \pi_1 X$, we have
\[\partial_{t+\delta}=\left(e_{j,k}^{\lambda}\right)^{-1}\circ\partial_{t-\delta}=e_{j,k}^{-\lambda}\circ\partial_{t-\delta}\]

Hence just before the death points at time $1-\varepsilon$ we have,
\begin{align*}
\partial_{1-\varepsilon} &= e_{j'_m,k'_m}^{-\lambda'_m}\ldots e_{j'_{2},k'_{2}}^{-\lambda'_{2}} e_{j'_{1},k'_{1}}^{-\lambda'_{1}}\partial_{\varepsilon}e_{j_{1},k_{1}}^{\lambda_{1}}e_{j_{2},k_{2}}^{\lambda_{2}}\ldots e_{j_{l},k_{l}}^{\lambda_{l}}\\
&= e_{j'_m,k'_m}^{-\lambda'_m}\ldots e_{j'_{2},k'_{2}}^{-\lambda'_{2}} e_{j'_{1},k'_{1}}^{-\lambda'_{1}}e_{j_{1},k_{1}}^{\lambda_{1}}e_{j_{2},k_{2}}^{\lambda_{2}}\ldots e_{j_{l},k_{l}}^{\lambda_{l}}.
\end{align*}

On the other hand, at time $1-\varepsilon$ each critical point $b_j$ will cancel with some critical point $z_j$, so we know there is precisely one trajectory between them along some arc~$\lambda\in\pm\pi_1 X$, and no other trajectories. Hence,
\[\partial_{1-\varepsilon}(z_j)=\lambda\cdot b_k\]
for some $k$. Hence we can write $\partial_{1-\varepsilon}$ as a product,
\[\partial_{1-\varepsilon}=P\cdot D\]
where $P$ is a permutation matrix, and $D$ is diagonal with entries in $\pm \pi_1 X$. By Lemma~\ref{lemmaW}, there exists~$w\in W(\pm\pi_1 X)$ such that $\pi(w)=P\cdot D$.

We can now define $\Sigma\colon\pi_1(\mathcal{F},\mathcal{E})\rightarrow \Wh_2(\pi_1 X)$ by sending the path $f_t$ to 
\[x_{j'_m,k'_m}^{-\lambda'_m}\ldots x_{j'_{2},k'_{2}}^{-\lambda'_{2}} x_{j'_{1},k'_{1}}^{-\lambda'_{1}}x_{j_{1},k_{1}}^{\lambda_{1}}x_{j_{2},k_{2}}^{\lambda_{2}}\ldots x_{j_{l},k_{l}}^{\lambda_{l}}\cdot w^{-1}\; \text{ mod }\; W(\pm\pi_1 X)\cap K_2(\Lambda) \in \Wh_2(\pi_1 X).\]
Note that $\pi\left(x_{j'_m,k'_m}^{-\lambda'_m}\ldots x_{j'_{1},k'_{1}}^{-\lambda'_{1}}x_{j_{1},k_{1}}^{\lambda_{1}}\ldots x_{j_{l},k_{l}}^{\lambda_{l}}\cdot w^{-1}\right)=\partial_{1-\varepsilon}\partial_{1-\varepsilon}^{-1}=1$ so this is indeed an element of $K_2(\Lambda)$.

Hatcher and Wagoner that in dimension $n\geq 4$ that $\Sigma(f_t)$ does not depend on the various choices made above, and indeed is independent of the homotopy class of the path in $(\mathcal{F},\mathcal{E})$ as required. See \cite[Chapter IV, \S 3]{HatcherWagoner} for the proof that $\Sigma$ is well defined.

\begin{remark} If $P\cdot D$ were the identity matrix, then the product 
\[x_{j'_m,k'_m}^{-\lambda'_m}\ldots x_{j'_{1},k'_{1}}^{-\lambda'_{1}}x_{j_{1},k_{1}}^{\lambda_{1}}\ldots x_{j_{l},k_{l}}^{\lambda_{l}}\]
 would give an element of $K_2(\pi_1 X)$. In the case that it is not, we could deform the 1-parameter family so that $P\cdot D$ is the identity by adding a trivial pseudo-isotopy corresponding to word the $w$ defined above. Indeed by \cite[Chapter IV, Lemma 2.7]{HatcherWagoner} for $n\geq 4$, for every such $w\in W(\pm \pi_1 X)$ there exists a path $f_t$ with handle slides corresponding to $w$, such that $f_t$ is isotopic to the trivial path fixing the end points.
\end{remark}

\subsection{Reduction to eyes and \texorpdfstring{$\ker\Sigma$}{ker(Sigma)}}
Hatcher and Wagoner prove the following about paths in the kernel of $\Sigma$.
\begin{theorem}[{\cite[Chapter~VI,~Theorem~2]{HatcherWagoner}}]\label{reductiontoeyethm}
	Let $n\geq 4$. Given a 1-parameter family $f_t$ representing a class in $\pi_1(\mathcal{F},\mathcal{E})$ for which $\Sigma(f_t)=0$, there is a deformation of~$f_t$ fixing the end points to a 1-parameter family $f_t^\prime$ whose Cerf graphic consists of a nested collection of \emph{eyes}, with only index $i$ and $i+1$ critical points for some $2\leq i\leq n-2$, and no $i/i$ or $i+1/i+1$ trajectories; this necessarily means that each $(i+1)$-handle cancels at a death point with the $i$-handle it was created with. See Figure \ref{fig:nested_eyes}. We can also assume that the births and deaths remain independent.
\end{theorem}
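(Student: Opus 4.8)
The plan is to follow Hatcher and Wagoner's strategy: reduce the geometry of $f_t$ to bookkeeping of handle slides in the Steinberg group, use $\Sigma(f_t)=0$ to trivialise that bookkeeping modulo $W(\pm\pi_1 X)$, and then realise the simplification by elementary deformations of the one-parameter family until no $i/i$ or $i+1/i+1$ trajectory remains. First I would put $f_t$ into standard position: by Theorem~\ref{twoindices}, the one-parameter ordering condition of Proposition~\ref{one-parameter ordering}, and Remark~\ref{birth-death-order}, deform $f_t$ rel endpoints so that it has only index $i$ and $i+1$ critical points, satisfies one-parameter ordering, has all births in a small interval $(0,\varepsilon)$ and all deaths in $(1-\varepsilon,1)$, and has no critical-value crossings and no handle slides near the ends. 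Then all $i/i$ and $i+1/i+1$ trajectories occur at finitely many times in $(\varepsilon,1-\varepsilon)$, and near the ends the Cerf graphic already looks like Figure~\ref{fig:cerf_ordered}.

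Next, as in the definition of $\Sigma$, I would track the differential $\partial_t\in\E(\mathbb{Z}[\pi_1 X])$: it is the identity just after the births and is multiplied by an elementary matrix at each handle slide, ending at $\partial_{1-\varepsilon}=P\cdot D$ with $P$ a permutation and $D$ diagonal with entries in $\pm\pi_1 X$. The slides determine a word $\sigma\in\St(\mathbb{Z}[\pi_1 X])$ with $\pi(\sigma)=P\cdot D$ and $\Sigma(f_t)=[\sigma\,w^{-1}]$, where $\pi(w)=P\cdot D$ and $w\in W(\pm\pi_1 X)$. The hypothesis $\Sigma(f_t)=0$ says precisely that $\sigma\,w^{-1}\in K_2(\mathbb{Z}[\pi_1 X])\cap W(\pm\pi_1 X)$. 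I would then append to the end of the path a sub-path realising the handle slides of $w^{-1}$; such a sub-path exists and is isotopic rel its endpoints to the constant path by \cite[Chapter~IV,~Lemma~2.7]{HatcherWagoner}, so appending it does not change the class of $f_t$, and now the total slide word is $\sigma\,w^{-1}$, a word in the standard generators $w_{j,k}^{\pm g}=x_{j,k}^{\pm g}x_{k,j}^{\mp g^{-1}}x_{j,k}^{\pm g}$ that moreover lies in $K_2$.

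The defining relations of $\St(\mathbb{Z}[\pi_1 X])$, the reordering of handle slides involving disjoint handles, and the amalgamation of consecutive slides between the same handles are all realised by elementary deformations of a generic one-parameter family; this is the Cerf-theoretic dictionary used throughout \cite{HatcherWagoner}. Using these I would deform $f_t$ rel endpoints until its handle slides are organised into consecutive blocks, one for each letter $w_{j,k}^{\pm g}$ of a fixed word for $\sigma\,w^{-1}$, each block being the three slides $x_{j,k}^{\pm g},x_{k,j}^{\mp g^{-1}},x_{j,k}^{\pm g}$. Each such block realises an element of $W(\pm\pi_1 X)$ on its own, so by \cite[Chapter~IV,~Lemma~2.7]{HatcherWagoner} together with the uniqueness (for $n\geq 4$) of a handle-slide sub-path with prescribed slide word and no births or deaths, it is homotopic rel its endpoints to a sub-path with no handle slides; doing this block by block removes every $i/i$ and $i+1/i+1$ trajectory. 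The resulting family has $\partial_t$ constantly the identity on $(\varepsilon,1-\varepsilon)$, so each $(i+1)$-handle cancels at its death with the $i$-handle it was born with; the birth--death pairs trace out disjoint eyes, which are nested because all index $i$ critical values stay below $1/2$, all index $i+1$ critical values stay above $1/2$, and neither family crosses. Births and deaths stay independent as in Proposition~\ref{one-parameter ordering}.

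The genuine difficulty is the last step: showing the algebraic simplification can be executed by honest homotopies of the one-parameter family, and in particular that a $W(\pm\pi_1 X)$-block is a \emph{trivial} sub-path --- that once $\Sigma$ vanishes there is nothing left obstructing deletion of the handle slides (any residual information is carried instead by the geometry of the eyes, which is what $\Theta$ will measure). This is exactly the content of \cite[Chapter~VI]{HatcherWagoner}, and it rests on a delicate analysis of one-parameter families of Morse functions, introducing auxiliary cancelling handle pairs to realise the Steinberg moves, with a dimension count valid for $n\geq 4$. The normal form and the matrix bookkeeping are routine by comparison.
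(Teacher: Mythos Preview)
The paper does not give its own proof of this theorem; it is cited from Hatcher--Wagoner \cite[Chapter~VI]{HatcherWagoner}, with only the additional remark that the reduction to a \emph{single} eye requires $n\geq 5$, whereas the reduction to a nested collection of eyes goes through for $n\geq 4$ (as observed by Quinn). So there is no in-paper argument to compare against beyond that attribution.

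Your outline is in the spirit of the Hatcher--Wagoner strategy, but the step in which you remove the $w$-blocks is not correct as written. A single block $w_{j,k}^{\pm g}=x_{j,k}^{\pm g}x_{k,j}^{\mp g^{-1}}x_{j,k}^{\pm g}$ changes $\partial_t$ by the signed permutation matrix $\pi(w_{j,k}^{\pm g})\neq I$, so the two endpoints of that sub-path are Morse functions with genuinely different handle data. Since $\partial_t$ can only change at a handle slide (there are no births or deaths in the interior), no slide-free sub-path can join those same endpoints, and hence the block is \emph{not} homotopic rel endpoints to a slide-free sub-path. The lemma you cite, \cite[Chapter~IV, Lemma~2.7]{HatcherWagoner}, produces a null-homotopic \emph{loop} in $(\mathcal{F},\mathcal{E})$---births, then the $w$-slides, then deaths in the permuted pairing---which is a different object from a sub-path between fixed interior endpoints; and the ``uniqueness of a handle-slide sub-path with prescribed slide word'' you invoke is not an available theorem. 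The actual Hatcher--Wagoner argument does not proceed block by block: it uses that the \emph{total} word lies in $K_2\cap W(\pm\pi_1 X)$ (so $\partial$ genuinely returns to the identity before the deaths) and then performs a global simplification of the family; this is exactly the Chapter~VI analysis you defer to in your final paragraph, but your description of \emph{what} that analysis establishes is the part that is off. You also elide the distinction between $i/i$ slides and $(i{+}1)/(i{+}1)$ slides, which act on $\partial_t$ by left and right multiplication respectively and must be separated before any Steinberg-relation reorganisation of the geometric word makes sense.
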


Hatcher and Wagoner in fact prove that when $n\geq 5$ any one-parameter family $f_t$ with $\Sigma(f_t)=0$ can be deformed to a 1-parameter family consisting of only a \emph{single} eye. As noted by Quinn in \cite{Quinn} the reduction to multiple eyes holds in 4-dimensions, and only the step reducing from many eyes to a single eye requires dimension $n\geq 5$.

\begin{figure}[h]
	\begin{tikzpicture}
		\node[anchor=south west,inner sep=0] (image) at (0,0) {\includegraphics[width=0.35\textwidth]{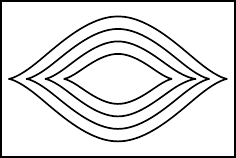}};
		\begin{scope}[x={(image.south east)},y={(image.north west)}]
		\end{scope}
	\end{tikzpicture}
	\caption{\label{fig:nested_eyes}A Cerf graphic that is a collection of nested eyes.}
\end{figure}

\section{Stable surjectivity of \texorpdfstring{$\Sigma$}{Sigma} in dimension 4}
In \cite{HatcherWagoner} they show the following result.
\begin{theorem}[{\cite[Theorem 2, Chapter VI ]{HatcherWagoner}}]\label{high-dim-surjectivity}
	$\Sigma\colon\pi_0\mathcal{P}\rightarrow\Wh_2(\pi_1 X)$ is surjective in dimension $n\geq 5$.
\end{theorem}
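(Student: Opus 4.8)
The plan is to realize every class of $\Wh_2(\pi_1 X)$ by an explicit generic path of functions assembled from births, handle slides and deaths, and then to compute $\Sigma$ of that path directly from the formula recalled in Section~\ref{review-of-sigma}. So fix $x\in\Wh_2(\pi_1 X)$ and lift it to $w\in K_2(\Lambda)$, so that $\pi(w)=1$ in $\E(\Lambda)$. Using the Steinberg relation $x_{p,q}^{\lambda+\mu}=x_{p,q}^{\lambda}x_{p,q}^{\mu}$ together with $x_{p,q}^{ng}=(x_{p,q}^{g})^{n}$ for $n\in\mathbb{Z}$ and $g\in\pi_1 X$, write $w=\prod_{k=1}^{m}x_{i_k,j_k}^{\epsilon_k g_k}$ with $i_k\neq j_k$, $\epsilon_k\in\{\pm1\}$ and $g_k\in\pi_1 X$, and set $N=\max_k\max(i_k,j_k)$.

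Now build the path. Fix an index $i$ with $2\le i\le n-2$, a nonempty range since $n\ge 5$. Near $t=0$ introduce $N$ independent birth points, all supported in a small ball $B\subset X$, producing cancelling pairs $(b_1,z_1),\dots,(b_N,z_N)$ of index-$i$ and index-$(i+1)$ critical points with boundary matrix $\partial_{\varepsilon}=\id_N$. For $k=1,\dots,m$, at strictly increasing times $t_k\in(\varepsilon,1-\varepsilon)$, perform an $(i+1)/(i+1)$ handle slide of $z_{i_k}$ over $z_{j_k}$ along an embedded framed arc in the appropriate level set whose associated element of $\pi_1 X$ is $g_k$, with the direction and framing of the slide chosen to produce the sign $\epsilon_k$; such an arc exists because its interior may be routed through $X\setminus B$ in general position with respect to all attaching and belt spheres, an arc being of codimension at least two relative to each, and its framing is a free choice. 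By the computation in Section~\ref{review-of-sigma} the slide at time $t_k$ right-multiplies the boundary matrix by $e_{i_k,j_k}^{\epsilon_k g_k}$, so after the last slide $\partial=\prod_{k}e_{i_k,j_k}^{\epsilon_k g_k}=\pi(w)=\id_N$. Granting the geometric cancellation discussed below, each $z_j$ then cancels $b_j$, so the path is closed up by $N$ independent death points near $t=1$ with $f_1\in\mathcal{E}$. For this path the $\Sigma$-formula has no $i/i$ (``primed'') contributions, and $\partial_{1-\varepsilon}=\id_N$ factors as the trivial permutation times the trivial diagonal matrix, so the auxiliary element of $W(\pm\pi_1 X)$ may be taken to be the identity; hence $\Sigma(f_t)=\prod_{k}x_{i_k,j_k}^{\epsilon_k g_k}=w\equiv x$ in $\Wh_2(\pi_1 X)$. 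Transporting back along the homotopy equivalence $\Pi$ of Section~\ref{functional-approach} gives a pseudo-isotopy with $\Sigma=x$, proving surjectivity.

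The main obstacle is precisely the geometric cancellation needed before the death points can be inserted. Although the slides realize $w$ exactly at the level of boundary matrices, they generically create excess, algebraically cancelling, pairs of intersection points between the paired attaching and belt spheres, and these must be removed by Whitney moves before the handle pairs can die. The relevant Whitney disks live in the $n$-dimensional middle level, and the key point is that, for a suitable choice of $i$, the slides can be arranged so that these disks are standard, namely embedded and with interiors disjoint from the remaining spheres, exactly when $n\ge 5$. It is the failure of this Whitney trick for surfaces that makes the middle level, a $4$-manifold, intractable when $n=4$, which is why the analogous statement in dimension $4$ is left open.
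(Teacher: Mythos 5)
Your proposal is correct and follows essentially the same route as the paper's proof: realize the chosen word in $K_2(\mathbb{Z}[\pi_1 X])$ by births and $(i+1)/(i+1)$ handle slides, observe that the boundary matrix returns to the identity, and then invoke the Whitney trick in the $n$-dimensional middle level (available precisely because $n\geq 5$) to achieve geometric cancellation before inserting the deaths. You also correctly identify the failure of the Whitney trick in the 4-dimensional middle level as the reason the argument breaks down when $n=4$, which is exactly the point the paper makes.
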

The proof is simple, we recall it here to motivate our upcoming proof.
\begin{proof}
	Given $x\in \Wh_2(\pi_1 X)$, let $x_{j_1,k_1}^{s_1\lambda_1}\ldots x_{j_m,k_m}^{s_m\lambda_m} \in K_2(\mathbb{Z}[\pi_1 X])$ with $\lambda_l\in\pi_1 X$ and $s_l\in\{+1,-1\}$ be a word representing $x$; note that any word in $K_2(\mathbb{Z}[\pi_1 X])$ can be written in this way. 
	
We build a path $f_t$ in $\mathcal{F}$ as in Remark \ref{1-param-handles}. Let~$N=\max(\max_l(j_l),\max_l(k_l))$. Starting with the trivial element $p\in\mathcal{F}$, we first create $N$ pairs of cancelling $i$ and $(i+1)$-handles for some $2\leq i\leq n-2$, and label them $1,\ldots N$. 

We then perform $m$ handle slides of the $(i+1)$-handles over $(i+1)$-handles. For $l=1,\ldots, m$ we slide the $j_l$th $(i+1)$-handle over the $k_l$th $(i+1)$-handle along an arc so that the associated element of $\pi_1 X$ is $\lambda_l$. For each handle slide we can also choose to perform an oriented or unoriented handle slide when $s_l=+1$ or $s_l=-1$ respectively. After these handle slides, the resulting differential is
	\[\partial_{i+1}= e_{j_1,k_1}^{s_1\lambda_1}\ldots e_{j_m,k_m}^{s_m\lambda_m}=\pi(x_{j_1,k_1}^{s_1\lambda_1}\ldots x_{j_m,k_m}^{s_m\lambda_m})=1.\]
	
Now as in the proof of the $s$-cobordism theorem we may realise this differential geometrically; that is in the middle level, we know that algebraically the attaching region for the~$j$th $(i+1)$-handle and belt sphere for the $k$th $i$-handle intersect in $\delta_{j,k}$ points. We make use of the Whitney trick to deform these spheres (and correspondingly the handle decomposition) so that they intersect in $\delta_{j,k}$ points geometrically. Note that the Whitney trick requires dimension $n\geq 5$. 

Now the handles intersect geometrically in $\delta_{j,k}$ points. We may cancel the pairs of handles, creating $N$ deaths. After the deaths, the resulting Morse function is an element of $\mathcal{E}$ as it has no critical points. Hence we have described a path in $\pi_1(\mathcal{F},\mathcal{E})$ as required. It is clear that this path has $\Sigma(f_t)= x_{j_1,k_1}^{s_1\lambda_1}\ldots x_{j_m,k_m}^{s_m\lambda_m}$.
\end{proof}

This proof does not work in dimension $4$ since we cannot apply the Whitney trick. In this section we present an proof of a stable version of this theorem in dimension 4.

We first prove a further useful lemma for constructing transverse spheres.
\begin{lemma}\label{handle-slide-transverse}
Let $X$ be a 4-manifold. Consider a handle decomposition of $X\times I$ relative to $X\times 0$ with $N$ 3-handles. Denote the attaching regions for the 3-handles by $B_1,\ldots, B_N\subset V$ where $V$ is the middle level between the 2 and 3 handles. Suppose there are disjointly embedded spheres $T_1,\ldots, T_N\subset V$ such that $B_i$ intersects $T_j$ in $\delta_{i,j}$ points. Assume that~$T_1,\ldots, T_N$ have trivial normal bundle. Suppose we now perform some number of handle slides between the 3-handles to obtain a new set of 3-handle spheres $B_1^\prime,\ldots , B_N^\prime$. We claim there exists disjointly embedded spheres $T_1^\prime,\ldots ,T_N^\prime\subset V$ such that $B_i^\prime$ intersects $T_j^\prime$ in $\delta_{i,j}$ points, and that $T_i^\prime$ and $T_j$ are disjoint for all $i$ and $j$. We also claim that we may choose $T_1^\prime,\ldots, T_N^\prime$ to have trivial normal bundle.
\end{lemma}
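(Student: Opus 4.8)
The plan is to build $T_1',\ldots,T_N'$ from $T_1,\ldots,T_N$ by mimicking, on the transverse spheres, the sequence of handle slides performed on the $B_i$. Recall that a $3/3$ handle slide sliding $B_k$ over $B_j$ replaces $B_k$ by a connect sum of $B_k$ with a parallel copy of $B_j$ along a framed handle slide arc $\gamma$, while leaving all other $B$-spheres unchanged. I would process the handle slides one at a time, maintaining the inductive hypothesis that after the first $r$ slides we have disjointly embedded framed spheres $T_1^{(r)},\ldots,T_N^{(r)}$, disjoint from all the $T_j$, with $B_i^{(r)}\cdot T_j^{(r)}=\delta_{i,j}$.

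For the inductive step, suppose the $(r+1)$st slide moves $B_k$ over $B_j$ along an arc $\gamma$. The geometric intersection $B_k$ meets $T_k^{(r)}$ in one point and is disjoint from $T_j^{(r)}$; after the slide, $B_k' = B_k \#_\gamma (\text{parallel copy of } B_j)$, so $B_k'$ now meets $T_k^{(r)}$ once (good) but also meets $T_j^{(r)}$ in the point inherited from the parallel copy of $B_j$ — a single extra intersection, which may come with either sign. To fix this I would modify $T_j^{(r)}$: since $T_k^{(r)}$ is an embedded sphere with trivial normal bundle meeting $B_k'$ transversely in a single point, $T_k^{(r)}$ is a transverse sphere for $B_k'$ (after discarding the other, now-absent intersections), so I can apply the Norman trick to remove the intersection between $B_k'$ and $T_j^{(r)}$ by tubing $T_j^{(r)}$ to a parallel copy of $T_k^{(r)}$ along an arc in $B_k'$. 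Call the result $T_j^{(r+1)}$; set $T_i^{(r+1)}=T_i^{(r)}$ for $i\neq j$. The Norman trick keeps $T_j^{(r+1)}$ an embedded sphere, and because $T_k^{(r)}$ has trivial normal bundle and we tube in a parallel copy (which can be chosen to agree with the framing), $T_j^{(r+1)}$ still has trivial normal bundle; one must also check the tubing arc and parallel copy can be pushed off the other $T_i^{(r)}$ and $T_l$, which holds by general position since $\dim V = 4$ and these are all disjoint embedded surfaces, plus choosing the parallel copy of $T_k^{(r)}$ close enough. Finally one checks $B_i^{(r+1)}\cdot T_{i'}^{(r+1)}=\delta_{i,i'}$ for every pair: the only $B$-sphere that changed is $B_k$, and the Norman-trick modification to $T_j^{(r)}$ was designed precisely to restore $B_k'\cdot T_j^{(r+1)}=0=\delta_{k,j}$ while the tubed-in parallel copy of $T_k^{(r)}$ is disjoint from all $B_i^{(r+1)}$ except possibly contributing intersections that cancel — here one uses that $B_i \cdot T_k = \delta_{i,k}$ and the parallel copies are small.

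The main obstacle I expect is bookkeeping the intersection numbers carefully through the Norman trick: tubing $T_j$ to a parallel copy of $T_k^{(r)}$ could a priori introduce new intersections between $T_j^{(r+1)}$ and the $B_i$ for $i\neq j$ (via the parallel copy of $T_k$, which meets $B_k'$ once and meets $B_i$ how often?). The clean way around this is to take the parallel copy of $T_k^{(r)}$ to be a pushoff living in a tiny tubular neighbourhood of $T_k^{(r)}$, and the tubing arc to be short and generic; then the new sphere intersects each $B_i^{(r+1)}$ in the intersections $B_i$ already had with $T_k^{(r)}$ (namely $\delta_{i,k}$ of them) plus the single point removed from $B_k'\cdot T_j$, and one verifies these combine to give exactly $\delta_{i,j'}$ after accounting for the handle slide's effect on $B_k$. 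A secondary, routine point is the normal bundle computation: connect-summing two framed embedded spheres along an arc gives a framed embedded sphere, so triviality of the normal bundle is preserved at each step. Iterating over all handle slides yields $T_1',\ldots,T_N'$ with the required properties.
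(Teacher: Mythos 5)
Your proposal is correct and follows essentially the same argument as the paper: induct on the handle slides, choose the slide arc generic, and after each slide use the Norman trick to tube $T_j$ into a parallel copy of the dual sphere of the slid handle, noting that tubing framed embedded spheres preserves embeddedness and triviality of the normal bundle.
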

\begin{proof}
	We prove the claim by induction on the number of handle slides. Denote the 3-handle attaching spheres after $k$ handle slides by $B_1^k,\ldots, B_N^k$, we construct $T_1^k,\ldots, T_N^k$ which are dual to $B_1^k,\ldots, B_N^k$ and have trivial normal bundle. Since $T_i$ has trivial normal bundle, letting $T_i^0$ be a parallel copy of $T_i$ is sufficient for $k=0$. 
	
	\begin{figure}[h!]
		\begin{tikzpicture}
			\node[anchor=south west,inner sep=0] (image) at (0,0) {\includegraphics[width=0.9\textwidth]{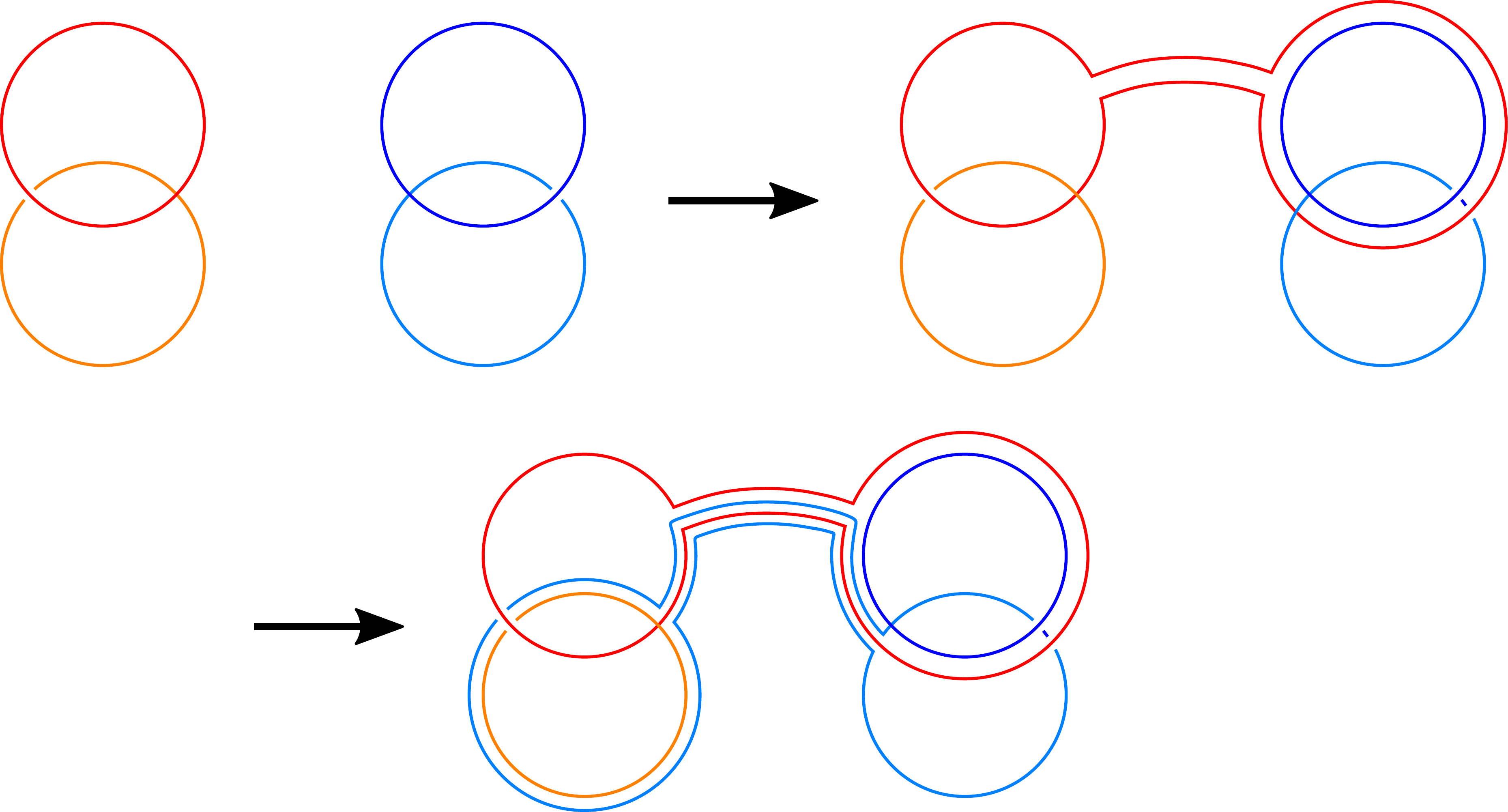}};
			\begin{scope}[x={(image.south east)},y={(image.north west)}]
				\node[red] at (0.07,0.91) {$B_i^k$};
				\node[orange] at (0.07,0.615) {$T_i^k$};
				\node[blue] at (0.325,0.91) {$B_j^k$};
				\node[light-blue] at (0.325,0.615) {$T_j^k$};
				
				\node[red] at (0.07+0.595,0.91) {$B_i^{k+1}$};
				\node[orange] at (0.07+0.595,0.615) {$T_i^k$};
				\node[blue] at (0.325+0.595,0.91) {$B_j^{k+1}$};
				\node[light-blue] at (0.325+0.595,0.615) {$T_j^k$};
				
				\node[red] at (0.07+0.325,0.91-0.53) {$B_i^{k+1}$};
				\node[orange] at (0.07+0.325,0.615-0.53) {$T_i^{k+1}$};
				\node[blue] at (0.325+0.325,0.91-0.53) {$B_j^{k+1}$};
				\node[light-blue] at (0.325+0.325,0.615-0.53) {$T_j^{k+1}$};

			\end{scope}
		\end{tikzpicture}
		\caption{\label{fig:transversespheres}Performing the $(k+1)$th handle slide, then performing the Norman trick to remove the extra intersection.}
	\end{figure}

Suppose we have a family of spheres $T_1^k,\ldots T_N^k$ with trivial normal bundle, which are dual to $B_1^k,\ldots B_N^k$ and disjoint from $T_1,\ldots T_N$. We now perform the $(k+1)$th handle slide, which we may assume slides $B_i^k$ over $B_j^k$ using some arc $\gamma\subset V$ from  $B_i^k$ to $B_j^k$. By general position we may perform an isotopy of $\gamma$ so that it is disjoint from $T_1^k,\ldots T_N^k$ and $T_1,\ldots T_N$ (it is necessarily disjoint from $B_1^k,\cdots B_N^k$ except at the endpoints). Now $B_i^{k+1}=B_i^k\#_\gamma B^k_j$. Hence $B_i^{k+1}$ intersects $T_i^k$ in a single point $p$, and $T_j^k$ in a single point $q$. We remove the intersection between $B_i^{k+1}$ and $T_j^k$ by choosing an arc in $B_i^{k+1}$ from $q$ to $p$, and tubing $T_j^k$ to a parallel copy of $T_i^k$ using the Norman trick. We call the resulting sphere $T_j^{k+1}$; see Figure \ref{fig:transversespheres}. Note that $B_i^{k+1}$ may intersect $T_1,\ldots T_N$ in finitely many points so we must ensure we choose the arc from $q$ to $p$ so that it misses these intersections, and so we do not introduce any intersections with $T_1,\ldots T_N$. Since $T_i^k$ is disjoint from $T_1,\ldots T_N$ provided we choose a small enough neighbourhood in which to take the parallel copy we can ensure that the $T^{k+1}_j$  sphere does not intersect $T_1,\ldots T_N$. Taking $T_r^{k+1}=T_r^k$ for $r\neq j$ we see that $T_1^{k+1},\ldots,T_N^{k+1}$ are a dual family for $B_1^{k+1},\ldots B_N^{k+1}$ as required, and do not intersect $T_1,\ldots T_N$.

We also note that $T_1^k,\ldots, T_N^k$ have trivial normal bundle, since tubing spheres with trivial normal bundle together results in a sphere with trivial normal bundle.
\end{proof}

\begin{remark}
By turning the handle decompositions in the above proof upside down, we can prove the same fact about the belt spheres of 2-handles in $V$, performing 2-handle slides instead of 3-handle slides.
\end{remark}

\subsection{Proof of stable surjectivity in dimension 4}
In this section we prove that $\Sigma$ is stably surjective in dimension $4$.
\stablesurjectivity*
\begin{proof}
	As in the proof of Theorem \ref{high-dim-surjectivity} we consider the word $x_{i_1,j_1}^{s_1\lambda_1}\cdots x_{i_m,j_m}^{s_m\lambda_m}\in K_2(\mathbb{Z}[\pi_1 X])$ representing $x$, where $s_k\in \{+1,-1\}$, $\lambda_k\in\pi_1(X)$, and $i_k, j_k\in {1,\ldots, N}$ for some $N$. 
	
Let $Y_N=\#^N S^2\times S^2$ and let $X' = X\# Y_N$. We claim that $N$ is sufficient to find a pseudo-isotopy of $X'$ such that 
\[\Sigma(F)=\Big[x_{i_1,j_1}^{s_1\lambda_1}\cdots x_{i_m,j_m}^{s_m\lambda_m}\Big]=x\in\Wh_2(\pi_1 (X\#^N S^2\times S^2)).\]
	
	To show the existence of such a pseudo-isotopy we construct a path in $\mathcal{F}(X')$ by considering a deformation of handle structures of $X'\times I$ as in Remark \ref{1-param-handles}. We start with the trivial element of $\mathcal{F}(X')$ and correspondingly the trivial handle structure for~$X'\times I$. 
	
	We denote the $X$ summand of $X'=X\# Y_N$ by $\widehat{X}=X\setminus B^3\subset X'$. We initially describe a handle deformation just in $\widehat{X}\times I$ and leave the handle structure of 
	\[(Y_N\setminus B^3)\times I\subset X'\times I\] fixed. Note that $\pi_1(\widehat{X})=\pi_1(X)$.
	
	We first create $N$ cancelling 2 and 3-handle pairs in $\widehat{X}$ and then perform $m$ handle slides, again within $\widehat{X}$, of 3-handles over 3-handles in accordance with the word 
\[x_{i_1,j_1}^{s_1\lambda_1}\cdots x_{i_m,j_m}^{s_m\lambda_m}\in K_2(\mathbb{Z}[\pi_1 X])=K_2(\mathbb{Z}[\pi_1 X']).\]
	
	As usual we consider the middle level $V=X'\#^N S^2\times S^2$ between the 2-handles and 3-handle. In order to differentiate this copy of $\#^N S^2\times S^2$ from $Y_N$, we let $Z_N = \#^N S^2\times S^2$ and let $V=X'\# Z_N$. Since the cancelling pairs were created in $\widehat{X}\times I$, we consider the middle level of $\widehat{X}\times I$, namely $\widehat{V}=\widehat{X}\# Z_N\subset V$. Note that 
	\[V=X' \# Z_N = X\# Y_N \# Z_N = (\widehat{X} \# Z_N)\cup (Y_N\setminus B^4)  = \widehat{V}\cup (Y_N\setminus B^4)\]
Note also that that the 2-handle belt spheres and 3-handle attaching spheres lie within $\widehat{V}$ throughout the handle slides.

We fix a time $t_0$ \emph{after} the handle slides have taken place and work in this fixed time $t_0$. In the middle level at time $t_0$ let $A_1,\ldots,A_N\subset \widehat{V}$ be the belt spheres of the 2-handles and $B_1,\ldots,B_N\subset \widehat{V}$ be the attaching spheres for the 3-handles. Note that since the 2-handle spheres did not move during the handle slides, they are still $S^2\times p$ slices of $Z_N=\#^N S^2\times S^2$.
	
Since prior to the handle slides the $i$th 3-handle attaching sphere and the $j$th 2-handle belt sphere (which since it remained fixed is $A_j\subset V$) intersect in $\delta_{i,j}$ points, by Lemma \ref{handle-slide-transverse} there exist spheres $C_1,\ldots, C_N$ with trivial normal bundle such that $B_i$ and $C_j$ intersect in $\delta_{i,j}$ points, and that $A_i$ and $C_j$ are disjoint for all $i$, $j$.

	As $x_{i_1,j_1}^{s_1\lambda_1}\cdots x_{i_m,j_m}^{s_m\lambda_m}\in K_2(\mathbb{Z}[\pi_1 X])$, the differential $\partial_3$ is the identity, so $A_i$ and $B_j$ have algebraic intersection $\delta_{i,j}\in \mathbb{Z}[\pi_1 X]$. This means that we can pair all of the intersections up (except for a single intersection for each $A_i$, $B_i$ pair) so that for each of the paired intersection points $p,q\in A_i\cap B_j$ the intersection as measured in $\pm\pi_1 X$ is $+\gamma$ for~$p$ and~$-\gamma$ for~$q$ for some $\gamma\in\pi_1 X$. We can also pick Whitney arcs for each pair; note that removing disjointly embedded arcs from a sphere does not make the sphere disconnected, so we can pick disjoint Whitney arcs for all of the pairings. The resulting Whitney circles for each pair of intersections vanish in~$\pi_1 X=\pi_1 \widehat{V}$, and so there exists a Whitney disc~$W\subset\widehat{V}$ for each pair of points. We can perform boundary twists between~$W$ and $B_j$ so that~$W$ is a correctly framed Whitney disc. We do this for all of the pairs, and refer to the collection of these Whitney discs as the $W$-discs.
	
	The $W$-discs may self intersect, and also intersect other $W$-discs. They may also intersect the $A$-spheres, the $B$-spheres and the $C$-spheres. We will manipulate these discs to construct a new family of discs which are embedded and disjoint from the $A$-spheres and $B$-spheres.
	
	First, noting that $A_i$ is a copy of $S^2\times p$ in $\widehat{V}= \widehat{X}\# Z_N = \widehat{X}\#^N S^2\times S^2$, there exists some transverse sphere for $A_i$, $A_i^\ast=q\times S^2$. For each intersection $r$ between a $W$-disc and an $A_i$ we use the Norman trick to tube the $W$-disc into a parallel copy of $A_i^\ast$. Doing this requires a choice of arc in $A_i$ from $r$ to $A_i\cap A_i^*$, when choosing this arc we ensure it misses any other intersection points in $A_i$ and any Whitney arcs; this is possible as these do not disconnect~$A_i$. We do this for each intersection between an $A$-sphere and $W$-disc successively. We obtain new $W$-discs which are disjoint from the $A$-spheres; note that this may introduce intersections between $W$-discs and $W$-discs, as well as new intersections between the $W$-discs and the $B$-spheres and $C$-spheres. The $W$-discs are also still correctly framed Whitney discs as the dual spheres $A_i^\ast$ have trivial normal bundle. 
	
	Next, we push down the self intersections of the $W$-discs into the $B$-spheres, as in Section \ref{section:push-down}. After doing this for all intersections between the $W$-discs, the $W$-discs are disjoint and embedded and disjoint from the $A$-spheres, but still possibly intersect the $C$ and $B$ spheres. The $W$-discs are also still correctly framed Whitney discs.
	
	Since everything we have done so far has been within $\widehat{V}=\widehat{X}\# Z_N$, all of the $W$-discs, $A$-spheres, $B$-spheres and $C$-spheres are disjoint from the $Y_N\setminus B^4$ subset of \[V= (\hat{X}\# Z_N)\cup (Y_N \setminus B^4).\] We now make use of the $S^2\times S^2$s in $Y_N =\#^N S^2\times S^2$. For each $C$-sphere $C_j$, we create a new sphere $C_j'$ by tubing $C_j$ into $S^2\times p\subset S^2\times S^2$ in $Y_N$. Ensuring to tube each $C$-sphere into a different $S^2\times S^2$, we obtain spheres $C_1',\ldots, C_N'$ which are disjoint and embedded. Taking $D_j= p\times S^2$ in each summand of $Y_N$, we also have spheres $D_1,\ldots D_n$ such that $D_i$ and $C_j'$ intersect in $\delta_{i,j}$ points. Clearly the $D$-spheres are disjoint from the $W$-discs, the $A$-spheres and the $B$-spheres. Note also that the $C'$-spheres have trivial normal bundle, as do the $D$-spheres.
	
	We can now remove intersections between the $W$ discs and the $C'$-spheres by using the Norman trick; for each intersection between a $W$-disc and $C_j'$ we create a new Whitney disc by tubing into a parallel copy of $D_j$, noting that after each tubing $D_j$ is still disjoint from the $W$-discs so we may repeat the process without introducing new intersections between $W$-discs. After doing this for each intersection we obtain embedded $W$-discs which are disjoint from the $A$-spheres and $C'$-spheres but still intersect the $B$-spheres. The $W$-discs are also still correctly framed Whitney discs as each $D_j$ has trivial normal bundle.
	
We now remove the intersections between the $B$-spheres and the $W$-discs; for each intersection $p$ between a $W$-disc and $B_j$, pick an arc from $p$ to $B_j\cap C_j'$ which is disjoint from the Whitney arcs and other intersection points (note that the Whitney arcs do not disconnect $B_j$ so this is always possible), then use this arc to perform the Norman trick, tubing the $W$-disc into a parallel copy of $C_j'$. Note that $C_j'$ is disjoint from the $W$-discs so this does not introduce new intersections between the $W$-discs. Since after each tubing $C_j'$ is still disjoint from the $W$-discs, we may repeat the process for every intersection between $B$-spheres and $W$-discs. Note also that $C_j'$ does not intersect the $A$-spheres so this does not introduce intersections between $W$-discs and $A$-spheres. At the end of this process we obtain embedded $W$-discs which are disjoint from all the $A$-spheres and $B$-spheres. Additionally the $W$-discs are still framed as $C_j'$ has trivial normal bundle for all $j$.
	
	Finally, having built embedded $W$-discs in $V$ at time $t_0$, we now use them to perform Whitney moves for each pair of points to remove the pairs of intersections; note that to do this we stop working in time $t_0$. After performing these Whitney moves, $A_i$ and $B_j$ intersect in $\delta_{i,j}$ points, so we can cancel all of the handles handles. After we do this the resulting handle structure has no handles, so the corresponding Morse function lies in $\mathcal{E}(X\#^N S^2\times S^2)$. Hence we have built a path in $\mathcal{F}(X\#^N S^2\times S^2)$ with endpoints in $\mathcal{E}(X\#^N S^2\times S^2)$. It is clear that $\Sigma(f_t)=x_{i_1,j_1}^{s_1\lambda_1}\cdots x_{i_m,j_m}^{s_m\lambda_m}$ as required.
\end{proof}

\section{The \texorpdfstring{$\Wh_1(\pi_1 X;\mathbb{Z}_2\times \pi_2 X )$}{Wh1(pi1(X);Z2xpi2(X)} invariant \texorpdfstring{$\Theta$}{Theta}}

In Section \ref{review-of-sigma} we described the map $\Sigma$, which in dimension $n\geq 4$ is a complete obstruction to reducing a 1-parameter family $f_t$ with endpoints in $\mathcal{E}$ to one whose Cerf graphic is a collection of nested eyes as in Figure \ref{fig:nested_eyes}. That is, any pseudo-isotopy in $\ker\Sigma$ is represented by such a 1-parameter family.

In this section we recall the definition of the map described by Hatcher and Wagoner in \cite[Chapter VII]{HatcherWagoner} \[\Theta\colon\ker\Sigma\rightarrow\Wh_1(\pi_1(X);\mathbb{Z}_2\times \pi_2(X))/ \chi( K_3 \pi_1 X).\] 
In dimension $\geq 6$, Hatcher and Wagoner (with later clarification by Igusa in \cite{Igusa}) show that $\Theta$ is a complete obstruction to removing all the eyes from such a nested collection of eyes (at which point we are left with an empty Cerf graphic, and so a path in $\mathcal{E}$). Hence, in dimension $\geq 6$, $\Sigma$ together with $\Theta$ provide a complete obstruction to a pseudo-isotopy being isotopic to an isotopy.

When $k_1 X=0$, Igusa in \cite{Igusa} describes a map 
\[\Theta_\sigma\colon\pi_0\mathcal{P}\rightarrow \Wh_1(\pi_1(X);\mathbb{Z}_2\times \pi_2(X))\] 
 dependent on a choice of section $\sigma\colon X_{(1)}\rightarrow X_{(2)}$. We will not address this extension here. When restricted to $\ker\Sigma$, Igusa's map $\Theta_\sigma|_{\ker\Sigma}$ agrees with the $\Theta$ described in \cite[Chapter VII]{HatcherWagoner} and is independent of the section $\sigma$.

\subsection{Definition of \texorpdfstring{$\Wh_1(\pi_1 X;\mathbb{Z}_2\times\pi_2 X)$}{Wh1(pi1(X);Z2xpi2(X))}}\label{wh1def}

Let $\Gamma$ be an abelian group acted on by a group~$\pi$. For our purposes $\Gamma$ will be the group $\mathbb{Z}_2\times \pi_2(X)$ and $\pi$ will be $\pi_1(X)$, with the usual action on $\pi_2(X)$ and the trivial action on $\mathbb{Z}_2$. We write elements of $R=\Gamma[\pi]\times\mathbb{Z}[\pi]$ as finite formal sums $\sum_i (\alpha_i +n_i) \sigma_i$ for $\alpha_i\in\Gamma$, $\sigma_i\in\pi$, $n_i\in\mathbb{Z}$. We give $R$ a ring structure via
\[\bigg(\sum_i (\alpha_i+n_i)\sigma_i\bigg)\cdot \bigg(\sum_j(\beta_j+m_j)\tau_j\bigg)=\sum_{i,j}(n_i \beta_j^{\sigma_i}+m_j\alpha_i+n_i m_j)\sigma_i\tau_j,\]
where $\alpha_i,\beta_j\in\Gamma$, $n_i,m_j\in\mathbb{Z}$, $\sigma_i,\tau_j\in\pi$ and $\beta^\sigma$ denotes the action of $\sigma$ on $\beta$. Note that~$\Gamma[\pi]$ is an ideal of $R$. Note also that the multiplication is trivial on $\Gamma[\pi]$; that is~$x\cdot y=0$, for all~$x,y\in\Gamma[\pi]$. We define
\[\GL(\Gamma[\pi])=\ker\big(\GL(R)\rightarrow\GL(R/\Gamma[\pi])\big)=\{ I+A\;| \; A\text{ has entries in }\Gamma[\pi]\}\]
Note that $R$ and $R/\Gamma[\pi]$ are rings, and here $\GL(R)$ and $\GL(R/\Gamma[\pi])$ denote the usual general linear group of matrices in these rings (that is, the union over $n$ of $GL_n(R)$ and $\GL_n(R/\Gamma[\pi])$ respectively). Note also that $I+ A$ is always in $\GL(R)$ if $A$ has entries in $\Gamma[\pi]$, since
\[(I+A)\cdot(I-A)= I - A\cdot A= I,\]
noting that $A\cdot A=0$ as multiplication in the ideal $\Gamma[\pi]$ is trivial.

\begin{definition}
	We define $K_1\Gamma[\pi]$ by
	$K_1\Gamma[\pi]=\GL(\Gamma[\pi])/[\GL(R),\GL(\Gamma[\pi])]$
\end{definition}
Hatcher proves the following.
\begin{proposition}[{\cite[Proposition 1.1]{Hatcher}}]
	The trace map 
	\[\tr:K_1\Gamma[\pi]\rightarrow\Gamma[\pi]/\langle ar-ra\;|\;a\in\Gamma[\pi],\ r\in R\rangle\] 
	is an isomorphism. The subgroup $\langle ar-ra\;|\;a\in\Gamma[\pi],\ r\in R\rangle$ can also be expressed as~$\langle \alpha\sigma -\alpha^\tau \tau\sigma\tau^{-1}|\alpha\in\Gamma ,\ \tau,\sigma\in \pi\rangle $, where $\alpha^\tau$ denotes the result of acting on $\alpha$ by $\tau$. Hence we obtain
	\[ K_1\Gamma[\pi] \cong \Gamma[\pi]/ \langle \alpha\sigma -\alpha^\tau \tau\sigma\tau^{-1}\mid\alpha\in\Gamma\;\tau,\sigma\in \pi\rangle.\]
\end{proposition}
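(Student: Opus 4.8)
\emph{Proof plan.} The idea is to reduce everything to matrix computations, exploiting that $\Gamma[\pi]$ squares to zero in $R$. First I would note that for any finite matrix $A$ with entries in $\Gamma[\pi]$ one has $A^2=0$, so $I+A$ is invertible with inverse $I-A$, and $I+A\mapsto A$ is a group isomorphism from $\GL(\Gamma[\pi])$ onto the additive group $M_\infty(\Gamma[\pi])$ of finite matrices over $\Gamma[\pi]$, since $(I+A)(I+B)=I+(A+B)$ as $AB=0$. Because $\Gamma[\pi]$ is a two-sided ideal, for $g\in\GL(R)$ the matrix $gAg^{-1}$ still has entries in $\Gamma[\pi]$, and a one-line computation gives $ghg^{-1}h^{-1}=I+(gAg^{-1}-A)$ for $h=I+A$ (the term $gAg^{-1}A$ vanishes, being a product of two matrices over $\Gamma[\pi]$). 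Hence under the isomorphism above, $[\GL(R),\GL(\Gamma[\pi])]$ is carried onto the additive subgroup $N=\langle\, gAg^{-1}-A \mid g\in\GL(R),\ A\in M_\infty(\Gamma[\pi])\,\rangle$, so $K_1\Gamma[\pi]\cong M_\infty(\Gamma[\pi])/N$.

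Next I would check that the trace $\tr\colon M_\infty(\Gamma[\pi])\to\Gamma[\pi]$ descends to the claimed quotient. It is additive and surjective, and it kills $N$: expanding cyclically, $\tr(gAg^{-1})-\tr(A)=\sum_{i,k}\big((gA)_{ik}(g^{-1})_{ki}-(g^{-1})_{ki}(gA)_{ik}\big)$, and each summand has the form $ar-ra$ with $a=(gA)_{ik}\in\Gamma[\pi]$ (as $\Gamma[\pi]$ is an ideal) and $r=(g^{-1})_{ki}\in R$. So there is an induced homomorphism $\overline{\tr}\colon K_1\Gamma[\pi]\to\Gamma[\pi]/\langle ar-ra\rangle$, which I claim is an isomorphism with inverse induced by $a\mapsto [\,aE_{11}\,]$, where $E_{ij}$ denotes the matrix with a $1$ in position $(i,j)$ and $0$ elsewhere.

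The heart of the proof is three short conjugation computations in $M_\infty(\Gamma[\pi])$, using the elementary matrices $I+rE_{ij}=e_{i,j}^r\in\GL(R)$. \emph{(a)} Conjugating $nE_{ik}$ by $I+E_{kj}$, with $i,j,k$ distinct (possible since we allow matrices of arbitrary size), gives $-nE_{ij}$; hence $[\,nE_{ij}\,]=0$ for all $i\ne j$ and $n\in\Gamma[\pi]$. \emph{(b)} Conjugating $nE_{1j}$ by $I+E_{j1}$ gives $nE_{jj}-nE_{11}-nE_{j1}$, so modulo $N$, discarding the off-diagonal term by (a), $[\,nE_{jj}\,]=[\,nE_{11}\,]$. \emph{(c)} Conjugating $aE_{12}$ by $I+rE_{21}$ gives $raE_{22}-arE_{11}-rarE_{21}$, so modulo $N$, discarding the off-diagonal term and using (b), $[\,(ar-ra)E_{11}\,]=0$. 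Granting these, (a) and (b) show that any $A=\sum_{i,j}A_{ij}E_{ij}$ equals $\sum_i A_{ii}E_{ii}$ and then $(\tr A)E_{11}$ modulo $N$, whence $\overline{\tr}$ is injective; (c) shows $a\mapsto[\,aE_{11}\,]$ is well defined on $\Gamma[\pi]/\langle ar-ra\rangle$; and the two maps are visibly mutually inverse.

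Finally, the alternative description of the denominator is a direct expansion using the multiplication of $R$: for $a=\sum_i\alpha_i\sigma_i\in\Gamma[\pi]$ and $r=\sum_j(\beta_j+m_j)\tau_j\in R$ the $\beta_j$-contributions vanish (square-zero once more) and $ar-ra=\sum_{i,j}m_j\big(\alpha_i\sigma_i\tau_j-\alpha_i^{\tau_j}\tau_j\sigma_i\big)$; conversely $\alpha\sigma-\alpha^\tau\tau\sigma\tau^{-1}=ar-ra$ for $a=\alpha(\sigma\tau^{-1})$ and $r=\tau$. I expect the main obstacle to be computation (a) and the reduction it enables: conjugation by an elementary matrix produces a spurious off-diagonal term that is \emph{linear} in the ideal, hence \emph{not} killed by $\Gamma[\pi]^2=0$, so (a) genuinely must be proved first and the order of the three steps matters; one must also be slightly careful that only $\GL(R)$-conjugations are permitted, although every conjugation used above already lies in $\E(R)\subset\GL(R)$.
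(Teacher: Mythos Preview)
Your argument is correct. The paper does not actually prove this proposition; it is quoted without proof from \cite[Proposition 1.1]{Hatcher}, so there is no ``paper's own proof'' to compare against. Your approach---using the square-zero property of $\Gamma[\pi]$ to identify $\GL(\Gamma[\pi])$ with the additive group of finite matrices, then reducing modulo $N$ via three elementary conjugations---is the standard one and is essentially what Hatcher does. The three computations (a)--(c) are all correct as stated (I checked them entry by entry), and your final identification of the commutator subgroup $\langle ar-ra\rangle$ with $\langle \alpha\sigma-\alpha^\tau\tau\sigma\tau^{-1}\rangle$ is clean. One very minor remark: in step (c) you need $r\in R$ arbitrary, and $I+rE_{21}$ is invertible in $\GL(R)$ regardless of whether $r$ is a unit (its inverse is $I-rE_{21}$), so there is no issue there; you implicitly used this but it is worth saying.
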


We can now define the first Whitehead group of the pair $(\pi,\Gamma)$.
\begin{definition}
	$\Wh_1(\pi ;\Gamma )=\coker \left(\Gamma [1]\rightarrow K_1\Gamma[\pi]\right)$, where $\Gamma [1]\rightarrow K_1\Gamma[\pi]$ is defined by~$1\alpha\mapsto [1\alpha]$
\end{definition}
It follows easily that
\begin{corollary}\label{whtrace}
	$\Wh_1(\pi ;\Gamma )= \Gamma[\pi]/ \langle \alpha\sigma -\alpha^\tau \tau\sigma\tau^{-1}, \beta\cdot 1\mid\alpha,\beta\in\Gamma\; \tau,\sigma\in \pi\rangle$
\end{corollary}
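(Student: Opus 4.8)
The plan is to deduce this directly from the defining equality $\Wh_1(\pi;\Gamma)=\coker(\Gamma[1]\rightarrow K_1\Gamma[\pi])$ together with the trace isomorphism of the preceding Proposition. First I would invoke that Proposition to replace $K_1\Gamma[\pi]$ by the module
\[
M := \Gamma[\pi]/\langle \alpha\sigma-\alpha^\tau\tau\sigma\tau^{-1}\mid\alpha\in\Gamma,\ \tau,\sigma\in\pi\rangle,
\]
via the trace map $\tr\colon K_1\Gamma[\pi]\xrightarrow{\ \cong\ } M$, transporting the entire cokernel computation into $M$. It then suffices to identify the image in $M$ of the structural map $\Gamma[1]\rightarrow K_1\Gamma[\pi]$.

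Next I would unwind that map: an element $\beta\in\Gamma$ corresponds to the $1\times 1$ invertible matrix $(1+\beta)\in\GL_1(\Gamma[\pi])\subset\GL(\Gamma[\pi])$, and since $\tr$ sends the class of a matrix $I+A$ to the trace of $A$, the composite $\Gamma[1]\rightarrow K_1\Gamma[\pi]\xrightarrow{\tr}M$ is simply $\beta\mapsto \beta\cdot 1$. Hence the image of $\Gamma[1]$ in $M$ is the subgroup $\langle\beta\cdot 1\mid\beta\in\Gamma\rangle$.

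Finally, since all the groups in sight are abelian, the cokernel is just $M$ modulo this image; combining the presentation of $M$ with the elementary ``quotient of a quotient'' identification then yields
\[
\Wh_1(\pi;\Gamma)\cong \Gamma[\pi]/\langle \alpha\sigma-\alpha^\tau\tau\sigma\tau^{-1},\ \beta\cdot 1\mid\alpha,\beta\in\Gamma,\ \tau,\sigma\in\pi\rangle,
\]
as claimed. There is essentially no serious obstacle here — the Proposition has already carried out the substantive work of establishing the trace isomorphism — and the only point deserving a line of care is checking that the structural map $\Gamma[1]\rightarrow K_1\Gamma[\pi]$ really is the one sending $\beta$ to the class of $(1+\beta)$, so that under $\tr$ it becomes $\beta\mapsto\beta\cdot 1$.
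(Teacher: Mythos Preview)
Your proposal is correct and is precisely the intended argument: the paper states this corollary immediately after the trace-isomorphism Proposition and the definition $\Wh_1(\pi;\Gamma)=\coker(\Gamma[1]\rightarrow K_1\Gamma[\pi])$ with only the remark ``It follows easily that,'' so your unwinding of the cokernel via the trace is exactly what is meant.
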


\begin{remark}
It is clear from the definitions in this section that $\Wh_1(\pi_1 X;\mathbb{Z}_2\times\pi_2 X)$ splits as 
\[\Wh_1(\pi_1 X;\mathbb{Z}_2\times\pi_2 X) = \Wh_1(\pi_1 X;\mathbb{Z}_2)\oplus \Wh_1(\pi_1 X;\pi_2 X).\]
\end{remark}
\subsection{Construction of \texorpdfstring{$\Theta$}{Theta}}\label{constructionoftheta}
In this section we recall the definition of $\Theta$ set out in \cite[Chapter VII]{HatcherWagoner}. Let $f_t$ be a path in $\mathcal{F}(X)$ with endpoints in $\mathcal{E}(X)$. Suppose~$f_t$ lies in the kernel of $\Sigma$. We will associate to $f_t$ an element of $\Wh_1(\pi_1 X;\mathbb{Z}_2\times\pi_2 X)$. In the quotient~$\Wh_1(\pi_1 X;\mathbb{Z}_2\times\pi_2 X)/ \chi( K_3 \pi_1 X)$ this element will be an invariant of the homotopy class in $(\mathcal{P},\mathcal{E})$. This gives a map from $\ker\Sigma\subset\pi_1(\mathcal{F},\mathcal{E})$, and hence from~$\ker\Sigma\subset\pi_0(\mathcal{P})$, into $\Wh_1(\pi_1(X);\mathbb{Z}_2\times \pi_2(X))/ \chi( K_3 \pi_1 X)$ which in both cases we denote by $\Theta$. 

By Section \ref{reductiontoeyethm}, after a deformation we may assume $f_t$ is a collection of nested eyes, and that it has only handles of dimension $i$ and $i+1$ for $2\leq i\leq n-2$. As before we may assume that the births appear at times in $(0,\varepsilon)$ and the deaths occur at times in~$(1-\varepsilon,1)$, and that index $i$ critical points have critical value $<1/2$ and that index $i+1$ critical points have value $>1/2$. 

As previously we denote the middle level $f_t^{-1}(1/2)$ by $V_t$ for $t\in[\varepsilon,1-\varepsilon]$, and label the belt spheres for the $i$-handles and attaching spheres for the $(i+1)$-handles by $A_1,\ldots, A_N$ and $B_1,\ldots,B_N$ in $V_t$ respectively. In $V_\varepsilon$, $A_i\cap B_j$ consists of $\delta_{i,j}$ points.  As we move forward in the time direction we see a homotopy of the $A$-spheres and $B$-spheres which keeps the $B$-spheres disjoint and embedded, keeps the $A$-spheres disjoint and embedded, but possibly introduces intersections between the $A$-spheres and the $B$-spheres. By general position we may assume in that in each $t$ slice the intersection between $A$-spheres and~$B$-spheres consists of disjoint double points. In $V_{1-\varepsilon}$ again we see $A_i$ and~$B_j$ intersect in $\delta_{i,j}$ points. Note that there are no handle slides. 

Consider the trace of this homotopy in the trace of the middle level. We adopt the notation $I_\varepsilon = [\varepsilon,1-\varepsilon]$, and denote
\[V\times I_\varepsilon=\bigcup_{t\in I_\varepsilon} V_t \]
for the trace of $V_t$. For the trace of the $A$-spheres we adopt the notation
\[A_i\times I_\varepsilon=\bigcup_{t\in I_\varepsilon} A_i\]
and for the trace of the $B$-spheres
\[B_i\times I_\varepsilon=\bigcup_{t\in I_\varepsilon} B_i.\]

The intersections of $A_i\times I_\varepsilon$ and $B_j\times I_\varepsilon$ for each $i$, $j$ is a collection of lines and circles. In fact there is a single line of intersection in $A_i\times I_\varepsilon\cap B_i\times I_\varepsilon$, and all other intersection components are circles. 

We will associate to each circle of intersection $C$, elements $\gamma_C\in\pi_1(X)$, $\sigma_C\in\pi_2(X)$ and $s_C\in\mathbb{Z}_2$. We construct a matrix $M$ as follows
\[M_{i,j} = \sum_{\text{Circles of intersection } C \text{ between } A_i \text{ and } B_j}(\sigma_C + s_C)\gamma_C\]
Now $I+M$ is in $GL((\mathbb{Z}_2 \times \pi_2 X)[\pi_1 X])$; recall from Subsection \ref{wh1def} that
\[GL((\mathbb{Z}_2 \times \pi_2 X)[\pi_1 X]) = \{I+A\mid A\text{ has entries in }(\mathbb{Z}_2 \times \pi_2 X)[\pi_1 X]\}.\]
Hence $I+M$ specifies an element of $\Wh_1(\pi_1 X;\mathbb{Z}_2 \times \pi_2 X)$ as required.

\begin{remark} \leavevmode
	\begin{enumerate}
		\item To motivate why these circles of intersection matter, if there were no circles and only a ``straight line'' of intersection in $A_i\times I_\varepsilon\cap B_i\times I_\varepsilon$, that is a line intersecting each~$t$ slice in a single point, then in fact we could remove this pair from the family and ``cancel the eye''; that is deform the path $f_t$ to one whose Cerf graphic did not contain this eye.
		\item One might hope to remove a circle of intersection $C$ between $A_i\times I_\varepsilon$ and $B_j\times I_\varepsilon$  by some `higher dimensional Whitney move'. That is, by finding discs $D_a\subset A_i\times I_\varepsilon$ and~$D_b\subset B_j\times I_\varepsilon$ (the analogue of Whitney arcs), and finding an embedded 3-ball~$B^3\subset V\times I$ such that $\partial B^3=D_a\cup D_b$ (the analogue of a Whitney disc). One would also ask that this 3-ball is framed, with framing on the boundary agreeing with a ``Whitney Framing'' specified by the discs $D_a$ and $D_b$. One would also require that the intersection with any $V_t$ contained no $S^2$ components. If we could find such a framed 3-ball we could perform an analogue of a Whitney move to remove the circle of intersection $C$. The element $\sigma_C\in \pi_2 X$ is an obstruction to finding such a ball, and the element $s_C\in\mathbb{Z}_2$ is the obstruction to correctly framing such a ball. In high dimensions the vanishing of these obstructions imply we can find and frame such a ball, and so perform a 3-dimensional Whitney trick to remove circles of intersection.
	\end{enumerate}
\end{remark}

\subsubsection{Construction of $\gamma_C\in\pi_1 X$}
To obtain an element of $\pi_1 X$ associated to $C$, choose some time $t$ such that $V\times t$ intersects $C$ in some point $p$. pick an arc $a$ from $\ast_{A_i}$ to $p$, and an arc $b$ from $\ast_{B_j}$ to $p$. Then define
\[ \gamma_C = \alpha_i\cdot a\cdot b^{-1}\cdot\beta_j^{-1}\]
in the same way that we defined the algebraic intersection in Section \ref{algebraic-intersection}. To see independence from the choices of $t$, $p$, $a$ and $b$, note that $C$ is a circle in $A_i\times I_\varepsilon \cap B_j\times I_\varepsilon $, and that both $A_i\times I_\varepsilon $ and $B_j\times I_\varepsilon $ are simply connected, and so $C$ is null homotopic in both of these manifolds, and in $V\times I_\varepsilon $.

\subsubsection{Construction of $\sigma_C\in\pi_2 X$}
Since $A_i\times I_\varepsilon $ and $B_j\times I_\varepsilon $ are simply connected, the circle $C$ bounds discs $D_a\subset A_i\times I_\varepsilon $ and~$D_b\subset B_j\times I_\varepsilon $; note that these are possibly not embedded. We orient these discs by giving $C$ an orientation; here we use the convention that at any point which is a positive intersection of $A_i$ and $B_j$ the orientation of $C$ points in the positive $t$ direction. Now the union $D_a\cup D_b$, along with the arc $\alpha_i$ (by convention) to the base point $\ast$ gives an element of $\pi_2 X$. 

Note that both $A_i$ and $B_j$ are null-homotopic in $X$ for all $t$ (since they are belt spheres of $2$-handles and attaching sphers of $3$-handles), so $A_i\times I_\varepsilon$ and $B_i\times I_\varepsilon$ are null-homotopic in $X\times I_\varepsilon$ and so the above construction does not depend on the choice of $D_a$ and $D_b$

\subsubsection{Construction of $s_C\in\mathbb{Z}_2$}\label{z2contstruction}
First note that $A_i$ and $B_j$ intersect transversely, so we can identify the bundles
\[P\vcentcolon = \nu(C,A_i\times I_\varepsilon )=\nu(B_j\times I_\varepsilon , V\times I_\varepsilon )|_C\]
This bundle is a $(n-2)$-dimensional bundle over the circle. Since $A_j\times I_\varepsilon\cong S^{n-2}\times I$ is orientable, so is $\nu(C,A_j\times I_\varepsilon )$, so it must be the trivial bundle. In dimension 4 are $\mathbb{Z}$ many choices of trivialisation (framing) for this bundle, in higher dimensions there are~$\mathbb{Z}_2$ framings. The \emph{difference} of any two choices of framing gives a well defined element of~$\mathbb{Z}$ in dimension 4 or~$\mathbb{Z}_2$ in higher dimensions as in Remark \ref{Zframings}. We will construct two framings of this bundle, and the difference (taking mod 2 when in dimension four) is $s_C\in\mathbb{Z}_2$.

\begin{remark}
	One might ask, as Hatcher and Wagoner do in \cite{HatcherWagoner}, if it is possible that in dimension 4 there is in fact a $\mathbb{Z}$ valued invariant instead of a $\mathbb{Z}_2$ one. Igusa in \cite{Igusanew} proves that there is such an integer valued invariant on the space of ``marked lens space models'', that is paths in $\mathcal{F}$ which have a single eye, with some additional ``marking'' information. He shows that when the marking information is dropped the $\mathbb{Z}$ valued invariant only survives mod 2, suggesting a negative answer to this question; see \cite[Lemma 1.12]{Igusanew}.
\end{remark}

\subsubsection{Two framings of $P$}\label{two-framings}

For the first framing of $P$, consider $\nu(C,A_i\times I_\varepsilon )$; since $C$ is null-homotopic in $A_i\times I_\varepsilon $, it has a canonical \emph{0-framing}. We call this the $A$-0-framing.

For the second framing, we note that $B_j$ is the attaching sphere for a handle, so~$\nu(B_j,V)$ has a canonical framing that specifies how the handle is attached. Along with the isotopy of $B_j$ this induces a framing of $\nu(B_j\times I_\varepsilon,V\times I_\varepsilon)$. Taking the restriction of this framing to $\nu(B_j\times I_\varepsilon,V\times I_\varepsilon)|_C$ gives the second framing. We call this the $B$-attaching-framing.

The difference of the $A$-0-framing and the $B$-attaching-framing up to homotopy gives an element of $\mathbb{Z}_2$, (or in dimension 4 an element of $\mathbb{Z}$, which we then take mod 2 of) as in Remark \ref{Zframings}.

\subsection{Geometric description of 1-parameter families in \texorpdfstring{$\ker \Sigma$}{ker(Sigma)} in dimension 4}

We now describe some geometric features specific to dimension 4. We specify that~$X$ is a 4-manifold, with $f_t$ as in Section \ref{constructionoftheta}.

Now we have that $V_t=X\#^N S^2\times S^2$, and $A_1, \ldots, A_n\in V_t$ and $B_1,\ldots , B_n\in V_t$, are collections of 2-spheres. Again at $t=\varepsilon$ and $t=1-\varepsilon$. $A_i$ intersects~$B_j$ in $\delta_{i,j}$ points, and at times in-between there is a homotopy of the spheres with possibly extra intersections between the $A$-spheres and the $B$-spheres. We can perturb this homotopy so that it can be seen as a sequence of finger moves and Whitney moves between the $A$-spheres and the $B$-spheres. It is a well known fact that any homotopy can be deformed so that all of the finger moves occur before the Whitney moves; this is a consequence of the fact that by dimensionality the guiding arcs for the finger moves may be freely isotoped whilst avoiding any Whitney discs, see \cite[Section 4.1]{Quinn} for a detailed treatment. We arrange that the finger moves occur before time $1/2$ and the Whitney moves occur after time $1/2$.

All of the data of this homotopy can now be seen in the level $V_{1/2}$; we call this copy of~$V$ the \emph{middle-middle} level since it is $V_{1/2}=f^{-1}_{1/2}(1/2)$. At this time the spheres $A_i,B_j\in V_{1/2}$ intersect algebraically as~$\delta_{i,j}\in\mathbb{Z}[\pi_1 X]$, but possibly have more geometric intersections. We see a family of framed embedded Whitney discs~$W_1,\ldots, W_m$ which describe the Whitney moves that will be performed in the rest of the homotopy, and we also see framed embedded Whitney discs $U_1,\ldots, U_m$ which undo the finger moves that were just performed. The Whitney discs $U_1,\ldots,U_m$ can be made disjoint from each other, as the finger moves are determined by arcs, so by dimensionality we can perturb these arcs to be disjoint from each other. Dually (considering the homotopy running backwards) we can arrange that the discs $W_1,\ldots, W_m$ are disjoint from each other. We refer to the finger move discs as the \emph{$U$-discs} and the Whitney move discs as the \emph{$W$-discs}. The $U$-discs may intersect the $W$ discs.

\subsubsection{Dual Spheres}\label{dualspheres}

There are also several dual spheres present. At time $t=\varepsilon$, $A_i\cap B_i=\{p\}\in V_{\varepsilon}$, and both $A_i$ and $B_i$ are embedded with trivial normal bundle; indeed they are $p\times S^2$ and $S^2\times p$ fibers of an $S^2\times S^2$ summand in $V_\varepsilon=X\#^N S^2\times S^2$. Let $A_i^*$ be a parallel copy of $B_i$, and let $B_i^*$ be a parallel copy of $A_i$; see Figure \ref{fig:pushofftransversespheres}. Then $A_i^*$ is a dual sphere to $A_i$ since $A_i^*\cap A_i$ is a single point, $A_i^*\cap B_j=\emptyset$ for all $j$, and $A_i^*\cap A_j =\emptyset$ for~$i\neq q$. Similarly for $B_i^*$. Note that $A_i^*$ and $B_j^*$ consists of $\delta_{i,j}$ points.

\begin{figure}[htb]
	\begin{tikzpicture}
		\node[anchor=south west,inner sep=0] (image) at (0,0) {\includegraphics[width=0.18\textwidth]{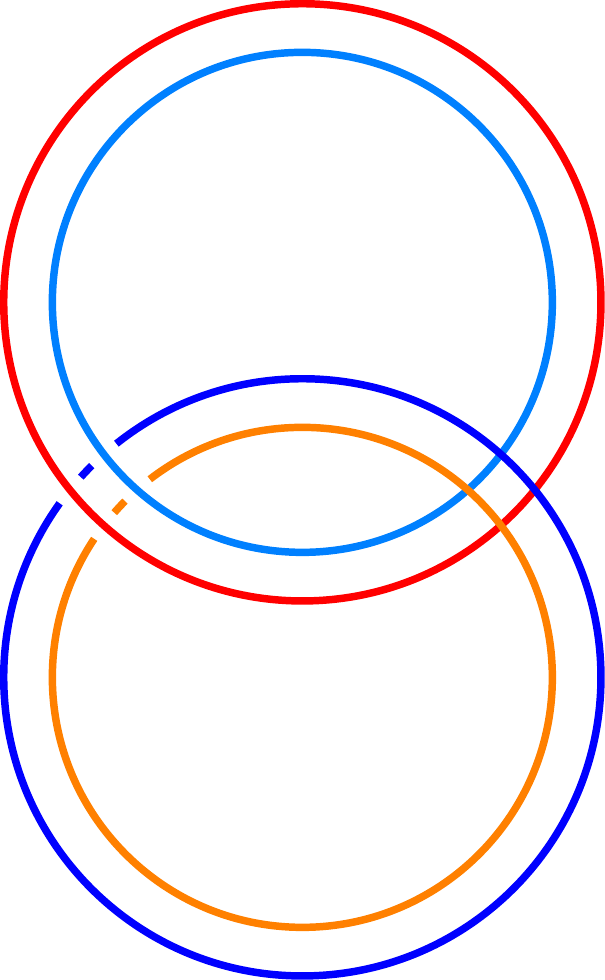}};

		\begin{scope}[x={(image.south east)},y={(image.north west)}]
			\node[text=red]  at (1.1,0.8)  {$A_i$};
			\node[text=blue]  at (1.1,0.2)  {$B_i$};
			\node[text=light-blue]  at (0.7,0.8)  {$B_i^*$};
			\node[text=orange]  at (0.7,0.2)  {$A_i^*$};

		\end{scope}
	\end{tikzpicture}
	\caption{\label{fig:pushofftransversespheres}The spheres $A_i$ and $B_i$, and the parallel copies which give transverse spheres $A_i^*$ and $B_i^*$. We see that $A_i^*$ intersects $A_i$ in a single point, is disjoint from~$B_i$, but intersects $B_i^*$ in a single point; similarly $B_i^*$ intersects $B_i$ in a single point but is disjoint from $A_i$.}
\end{figure}

By dimensionality we can arrange that the arcs which determine the finger moves are disjoint from all dual spheres $A_1^*,\ldots,A_n^*, B_1^*,\ldots B_n^*$. It follows that in the middle-middle level the spheres $A_1^*,\ldots,A_n^*, B_1^*,\ldots B_n^*$ still intersect $A_1,\ldots,A_n, B_1,\ldots,B_n$ as above, and further are disjoint from the $U$-discs; they may however intersect the $W$-discs.

Noting that just before the deaths $A_i\cap B_j$ consists of $\delta_{i,j}$ points, we can repeat this argument running the homotopy backwards to obtain another set of dual spheres. Doing so we obtain (likely different) dual spheres $A_1^*,\ldots,A_n^*, B_1^*,\ldots B_n^*$ in the middle-middle level which are disjoint from the $W$-discs but not the $U$-discs. When we wish to emphasise the difference we will refer to the dual spheres disjoint from the $U$-discs as the \emph{initial dual spheres}, and as the dual spheres disjoint from the $W$-discs as the \emph{terminal dual spheres}.

\subsection{Calculation for geometrically simple families}\label{z2calculation}

For $X$ a 4-manifold we consider the following simple family of paths $f_t$ in $\ker\Sigma$. First, a 2-handle 3-handle cancelling pair is created with spheres $A$ and $B$ in the middle level. Then, a single finger move is performed between $A$ and $B$. Then, a single Whitney move is performed to remove the two intersections created by the finger move. Since $A$ and $B$ now intersect in a single point we then cancel them. 

Considering the middle-middle level $V=V_{1/2}$, we may without loss of generality assume that the initial Whitney disc $U$ (from the finger move) and the terminal Whitney disc $W$ share the same Whitney arcs $\alpha\in A$ and $\beta \in B$. Indeed if they did not, we can isotope~$W$ to arrange that this is true as follows. Denote the Whitney arcs for $W$ by $\alpha_W\subset A$ and $\beta_W\subset B$, and similarly $\alpha_U\subset A$, $\beta_U\subset B$ for the Whitney arcs of $U$. Note that $A$ intersects $B$ in three points, two of which are the endpoints of $\alpha_W$. Since two arcs in a disc $D^2$ with the same endpoints are always isotopic, there is an isotopy taking $\alpha_W$ to $\alpha_U$, which avoids the intersection points. We can extend this isotopy to the disc $W$ in a small neighbourhood of $A$. We may do the same in a neighbourhood of $B$ to make $\beta_W$ agree with $\beta_U$

The family $f_t$ has a single of circle of intersection $C$, so 
\[\Theta(f_t)= \left(\left(s_C+\sigma_C)\gamma_C\right)\right)\in \Wh_1(\pi_1(X),\mathbb{Z}_2\times\pi_2(X) ).\]
It is also clear that $\gamma_C\in\pi_1(X)$ is the element of $\pi_1(X)$ associated to the finger move arc as in Remark \ref{pi1finger}.

To calculate $\sigma_C$ we claim the following.
\begin{proposition}\label{claimone}
The element $\sigma_C\in\pi_2(X)$ can be represented by the 2-sphere $U\cup W$ joined to the basepoint $\ast$ by the basepoint arc for $A$, $\gamma_A$.
\end{proposition}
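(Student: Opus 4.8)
\emph{Overall approach.} The plan is to compute $\sigma_C$ directly from its definition, using disks $D_a\subset A\times I_\varepsilon$ and $D_b\subset B\times I_\varepsilon$ bounding $C$ that are built out of the finger move and the Whitney move, and then to recognise the sphere $D_a\cup D_b$ as $U\cup W$ up to a homotopy bounding a pair of $3$-balls. As a preliminary remark: $A$ is a $2$-handle belt sphere and $B$ a $3$-handle attaching sphere in a handle decomposition of $X\times I$, so $A$ bounds the cocore and $B$ the core, and hence $[A]=[B]=0$ in $\pi_2(X\times I\times I)=\pi_2 X$. This is exactly what makes $\sigma_C$ insensitive to the choice of $D_a$ and $D_b$ (other choices change it by $\pm[A],\pm[B]$), and it lets me use the particular disks constructed next.

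\emph{Building the disks.} Put the finger move at a time $t_f\in(\varepsilon,1/2)$ and the Whitney move (performed along $W$) at a time $t_w\in(1/2,1-\varepsilon)$, and realise each by an ambient isotopy of $V$ supported in a ball about the relevant disk and preserving $A$ and $B$ setwise. Let $\Phi_t$, for $t\in[t_f,1/2]$ with $\Phi_{1/2}=\mathrm{id}$, be the isotopy undoing the finger move (the Whitney move along $U$), and $\Psi_t$, for $t\in[1/2,t_w]$ with $\Psi_{1/2}=\mathrm{id}$, the isotopy performing the Whitney move along $W$. Writing $\alpha=\alpha_U=\alpha_W\subset A$ and $\beta=\beta_U=\beta_W\subset B$ for the common Whitney arcs (which we have already arranged to coincide), set $\alpha(t)=\Phi_t(\alpha)$, $\beta(t)=\Phi_t(\beta)$ for $t\le1/2$ and $\alpha(t)=\Psi_t(\alpha)$, $\beta(t)=\Psi_t(\beta)$ for $t\ge1/2$. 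Since $\Phi_t$ and $\Psi_t$ carry $A\cap B$ onto $A_t\cap B$, the endpoints of $\alpha(t)$ and $\beta(t)$ are the two finger intersection points $z^\pm(t)$, which collide at $t_f$ and at $t_w$; thus $D_a:=\bigcup_{t\in[t_f,t_w]}\alpha(t)$ and $D_b:=\bigcup_{t\in[t_f,t_w]}\beta(t)$ are disks in $A\times I_\varepsilon$ and $B\times I_\varepsilon$ with $\partial D_a=\partial D_b=C$. Writing $\ell(t)=\alpha(t)\cup\beta(t)$ and $\ell=\ell(1/2)=\partial U=\partial W$, the sphere $D_a\cup D_b=\bigcup_{t\in[t_f,t_w]}\ell(t)$ splits along $\ell$ into a ``past'' disk $D^-=\bigcup_{t\in[t_f,1/2]}\ell(t)$ and a ``future'' disk $D^+=\bigcup_{t\in[1/2,t_w]}\ell(t)$, both bounding $\ell$.

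\emph{Two $3$-balls and the computation.} The trace $\Delta^-:=\bigcup_{t\in[t_f,1/2]}\Phi_t(U)$ is an embedded $3$-ball in $V\times I_\varepsilon\subset X\times I\times I$ with $\partial\Delta^-=D^-\cup U$: its slice at $t=1/2$ is $U$, its lateral boundary is $\bigcup_{t\in[t_f,1/2]}\ell(t)=D^-$, and as $t\to t_f$ the disk $\Phi_t(U)$ shrinks with the collapsing finger to a point. Symmetrically, $\Delta^+:=\bigcup_{t\in[1/2,t_w]}\Psi_t(W)$ is an embedded $3$-ball with $\partial\Delta^+=D^+\cup W$. (This is precisely the mechanism in the remark following the construction of $\Theta$: $\sigma_C$ obstructs filling $D_a\cup D_b$ by one $3$-ball over $[t_f,t_w]$, and the past and future halves can indeed be filled separately, but the fillings restrict over $V_{1/2}$ to the \emph{different} disks $U$ and $W$.) Hence $[D^-\cup U]=[D^+\cup W]=0$ in $\pi_2 X$. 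Using the additivity $[P\cup R]=[P\cup Q]+[Q\cup R]$ for disks bounding the fixed null-homotopic loop $\ell$ (glue the three disks in pairs, the two copies of $Q$ cancelling), we obtain
\[\sigma_C=[D_a\cup D_b]=[D^-\cup D^+]=[D^-\cup U]+[U\cup W]+[W\cup D^+]=[U\cup W].\]
All the spheres involved are based via a point of $\ell\subset A$, hence via the basepoint arc $\gamma_A$ used in the definition of $\sigma_C$; and orienting $C$ so that it runs forward in $t$ at positive intersections of $A$ with $B$ orients $D_a,D_b,D^\pm$ coherently, fixing the signs so that the displayed identity holds on the nose.

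\emph{Main obstacle.} The genuinely delicate step is the geometric claim that $\Delta^-$ and $\Delta^+$ really are $3$-balls with the asserted boundaries: this means unwinding the local models of a finger move and a Whitney move, understanding the traces they sweep in $V\times I_\varepsilon$, and controlling orientations there. The remaining ingredients — the $\pi_2$-additivity, the vanishing of $[A]$ and $[B]$, and the basepoint comparison — are routine by comparison.
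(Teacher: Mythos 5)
Your proposal is correct and follows essentially the same route as the paper: both take the disks bounding $C$ to be the sweep-outs of the Whitney arcs over $[t_f,t_w]$, and both use the $3$-balls traced out by $U$ and $W$ (the paper's $Q_U$ and $Q_W$, your $\Delta^\mp$) to identify $D^-\cup D^+$ with $U\cup W$. The only cosmetic difference is that the paper phrases the last step as an isotopy of disks rel boundary across each ball, whereas you phrase it as null-homotopy of $D^-\cup U$ and $D^+\cup W$ plus additivity in $\pi_2$.
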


To describe $s_C$ we must work a little harder. Consider the bundle $\nu(B,V)|\beta$. For either disc $U$ and $W$ we have 1-dimensional sub-bundles $c_U, c_W$ of this bundle by taking the intersection with the normal bundle for the disc,
\begin{gather*}
c_U=\nu(B,V)|_\beta\cap \nu(U,V)|_\beta\\
c_W=\nu(B,V)|_\beta\cap \nu(W,V)|_\beta.
\end{gather*}
We arrange that the 1-bundles agree on $\nu(B,V)|_{\partial\beta}=\nu(\{p,q\}, A)=\{p,q\}\times D^2\subset A$. Note that both $U$ and $W$ have Whitney arc $\alpha$ in $A$ and so $c_U|_{p}$ and $c_W|_{p}$ are normal to $\alpha$ in $\nu(B, V)|_p = \nu(p,A) = p\times \mathring{D}^2$. Since in $\nu(p,A)$ there is only one choice of 1-bundle  which is normal to $\alpha\subset \nu(p,A)= p\times \mathring{D}^2$ (up to isotopy), we can arrange that $c_U|_{p}=c_W|_{p}$. Similarly we can arrange that  $c_U|_{q}=c_W|_{q}$. Arbitrarily we pick an orientation for these bundles to obtain two sections $c_W$ and $c_U$ of $\nu(B,V)|_{\beta}= S^2\times \mathring{D}^2$ which agree on $\nu(B,V)|_{\partial\beta}$. 

\begin{remark} Note that these sections are the Whitney sections for $W$ and $U$, but considered as a section of $\nu(B,V)|_{\beta}$ instead of $\nu(W,V)|_{\beta}$ and $\nu(U,V)|_{\beta}$. Recall that the definition of the Whitney section is asymmetric; we require the Whitney section for $W$ be a section of $\nu(W,V)|_{\partial W}$ which is parallel to $A$ on $\alpha$ and normal to $B$ on $\beta$. Hence on~$\beta$ the Whitney section is precisely $c_W=\nu(B,V)|_\beta\cap \nu(W,V)|_\beta$.
\end{remark}

Since $c_W$ and $c_U$ are sections of the same bundle and agree on the boundary we may consider the difference of these sections, which gives a well defined element of $\mathbb{Z}$ as in Remark \ref{Zframings}; note that Remark \ref{Zframings} refers to framings of $S^1\times D^2$, here we consider framings of $I\times D^2$ which are fixed on $\partial I\times D^2$, which can also be seen to determine an element of $\pi_1\GL_2(\mathbb{Z})=\mathbb{Z}$. 
\begin{proposition}\label{claimtwo}
Consider the difference of $c_U$ and $c_W$ as an element of $\mathbb{Z}$ as above. Taking this element of $\mathbb{Z}$ $\bmod\ 2$ gives $s_C\in\mathbb{Z}_2$.
\end{proposition}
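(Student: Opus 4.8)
The plan is to compute the defining framing difference of $s_C$ (from Subsection~\ref{two-framings}) in terms of the Whitney disks $U$ and $W$, by cutting the circle $C$ open along the middle-middle slice. First recall the shape of $C$. Writing $t_f<\tfrac12<t_w$ for the times of the finger move and the Whitney move, $C$ meets $V_t$ for $t_f<t<t_w$ in the two intersection points $z^{\pm}(t)$ of $A$ and $B$, and meets $V_{t_f}$, $V_{t_w}$ in the tangency points $\tau_f$, $\tau_w$. Using that $U$ and $W$ share Whitney arcs $\alpha\subset A$ and $\beta\subset B$ (Subsection~\ref{z2calculation}), the disk $D_a\subset A\times I_\varepsilon$ bounded by $C$ decomposes as a union of bigons $D_a^U\cup_\alpha D_a^W$, with $D_a^U\subset A\times[t_f,\tfrac12]$ the trace of the Whitney arc of $U$ collapsing to $\tau_f$ and $D_a^W\subset A\times[\tfrac12,t_w]$ the trace of that of $W$ collapsing to $\tau_w$; likewise $D_b=D_b^U\cup_\beta D_b^W$ in $B\times I_\varepsilon$, and $D_a^U\cup_{C_U}D_b^U$, $D_a^W\cup_{C_W}D_b^W$ recover $U$ and $W$ up to isotopy (this is the same decomposition underlying Proposition~\ref{claimone}). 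This cuts $C$ into two arcs $C_U=C\cap(V\times[t_f,\tfrac12])$ and $C_W=C\cap(V\times[\tfrac12,t_w])$ meeting at $C\cap V_{1/2}=\partial\beta=\{z^+(\tfrac12),z^-(\tfrac12)\}$.

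After an initial homotopy the $A$-$0$-framing and the $B$-attaching-framing of $P$ agree at the two points of $\partial\beta$, so over $C_U$ and over $C_W$ their difference defines loops in $\GL_2(\mathbb{R})$ whose degrees add up to the degree over $C$, which is the integer whose reduction mod $2$ is $s_C$. I would transport the framing data over $C_U$ to framing data over $\beta$ using that $C_U$ and $\beta$ cobound the bigon $D_b^U$ in $B\times[t_f,\tfrac12]$, over which $\nu(B\times I_\varepsilon,V\times I_\varepsilon)$ is trivial, and similarly over $C_W$ via $D_b^W$; these transports agree on $\partial\beta$ because $D_b^U\cup_\beta D_b^W=D_b$ is a single disk. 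Under them, the $B$-attaching-framing transports over either half to the attaching framing of $\nu(B,V)|_\beta$ --- a reference independent of $U$ and $W$ that therefore cancels in the sum --- while the $A$-$0$-framing transports to a framing recording the bigon $D_a^U$, resp.\ $D_a^W$.

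The heart of the argument is the local computation over each half. Over $C_U$ we are looking at the standard model of a finger move whose undoing disk is the correctly framed disk $U$: there one identifies the transported difference of the $D_a^U$-framing and the attaching framing of $\nu(B,V)|_\beta$ with the Whitney section $c_U$ of $U$ along $\beta$, measured against the $B$-attaching framing; correct-framedness of $U$, i.e.\ disk framing $=$ Whitney framing, is exactly what ties the $D_a^U$-side data to the $\nu(B,V)|_\beta$-side data. Over $C_W$ the model is the $t$-reversal of the same picture with $W$ replacing $U$ (a Whitney move run backwards is a finger move), so the contribution is $c_W$ against the $B$-attaching framing, with an overall sign flip coming from the reversal of the $t$-direction and hence of the orientation of that portion of $C$. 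Adding the two contributions, the $B$-attaching references cancel and $s_C=\pm(c_U-c_W)$ in $\mathbb{Z}$; reducing mod $2$ --- which also absorbs the overall sign and the orientation choices in the definition of the difference of sections --- gives $s_C=(c_U-c_W)\bmod 2$, as claimed.

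I expect the main obstacle to be precisely this local model computation and its bookkeeping: writing down explicit coordinates for the finger move together with a correctly framed embedded disk $U$, computing both framings of $P$ there, and checking the orientation conventions, the matching of the framings across $V_{1/2}$, and the behaviour near the corners $\tau_f$, $\tau_w$ of $C$ --- in particular verifying that the answer depends on $U$ and $W$ only through the line fields $c_U$, $c_W$, and that their possible self-intersections and mutual intersections do not contribute to this particular framing difference.
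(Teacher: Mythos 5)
Your proposal is correct and follows the same architecture as the paper's proof: the same decomposition of $C$ into the two arcs lying over $[t_1,1/2]$ and $[1/2,t_2]$, the same use of the bigons $D^U_\beta$, $D^W_\beta$ (the traces of $\beta$ in $B\times I_\varepsilon$) to relate framings over $C$ to the sections $c_U$, $c_W$ over $\beta$, and the same cancellation of the $B$-attaching reference via the uniqueness of the trivialisation of $\nu(B\times I_\varepsilon,V\times I_\varepsilon)\cong (S^2\times I)\times D^2$. The one point where you diverge is exactly the step you flag as the main obstacle: identifying the transported $A$-$0$-framing with the Whitney-section data via an explicit coordinate model of the finger move. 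The paper avoids any local model. It considers the $3$-balls $Q_U$, $Q_W$ swept out by $U$ and $W$ in the $t$-direction and sets $S_U=\nu(B\times I_\varepsilon,V\times I_\varepsilon)|_{D^U_\beta}\cap\nu(Q_U,V\times I_\varepsilon)|_{D^U_\beta}$, and similarly $S_W$. These sections restrict to $c_U$, $c_W$ on $\beta$ essentially by definition, and their union over $C$ is normal to the disk $D^U_\alpha\cup D^W_\alpha\subset A\times I_\varepsilon$ (each being normal to its $Q$-ball, hence to the $\alpha$-trace it contains), so it is \emph{automatically} the $A$-$0$-framing --- no coordinates, no corner analysis at the tangencies, and no issue with intersections of the disks, which never enter. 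The comparison with the $B$-attaching framing is then done exactly as you propose when $c_U=c_W$ (the glued section extends over all of $B\times I_\varepsilon$ and so induces the unique trivialisation), and when $c_U$ and $c_W$ differ by $n$ twists one isotopes $S_W$ over $D^W_\beta$ to agree with $S_U$ on $\beta$, which transfers the $n$ twists onto $C\cap Q_W$ and yields $s_C=n\bmod 2$. If you replace your planned local computation with this global normal-bundle device, your outline becomes a complete proof.
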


We now proceed to prove both propositions.
\begin{proof}[Proof of Propositions \ref{claimone} and \ref{claimtwo}]

Consider $V\times I_\varepsilon$, say the finger move happens at time $\varepsilon \leq t_1<1/2$ and the Whitney move happens at time $1/2<t_2\leq 1-\varepsilon$. We consider the sweep out of the Whitney disc $W$ from time 1/2 to time $t_2$ as the Whitney move is performed; considering the isotopy the Whitney disc takes as the Whitney move is performed, the sweep out is the union over all $t$ of the Whitney disc. We denoted this sweep out by $Q_W\subset V\times[1/2,t_2]\subset V\times I_\varepsilon$. We similarly consider the sweep out of $U$, which we denote~$Q_U\subset V\times[t_1,1/2]\subset V\times I_\varepsilon$. See Figure \ref{fig:quarterball} for a depiction of $Q_U$.

\begin{figure}[htb]
	\centering
	\begin{tikzpicture}
		\node[anchor=south west,inner sep=0] (image) at (0,0) {\includegraphics[width=0.2\textwidth]{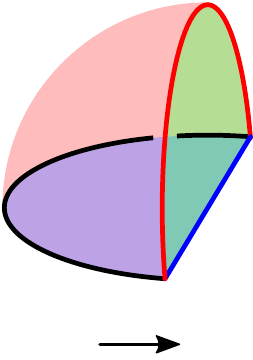}};

		\begin{scope}[x={(image.south east)},y={(image.north west)}]
			\node at (0.14*3.8,-0.05)  {$t$};
			\node[text=red]  at (0.05*3.8,0.92)  {$D^U_\beta$};
			\node[text=red]  at (0.27*3.8-0.05,0.95)  {$\beta$};
			\node[text=blue]  at (0.08*3.8-0.05,0.18)  {$D^U_\alpha$};
			\node[text=blue]  at (0.225*3.8,0.325)  {$\alpha$};
			\node  at (-0.03,0.325)  {$C$};
			
			\node[text=dark-green]  at (0.28*3.8,0.79)  {$U$};

		\end{scope}
	\end{tikzpicture}
	\caption{\label{fig:quarterball}The ball $Q_U$ swept out by the Whitney disc $U$ through the time direction (denoted $t$).}
\end{figure}

Clearly $Q_U$ is a ball, with boundary consisting of three discs, one of which is the disc~$U\subset V\times \{1/2\}$. Another boundary component is the sweep out of the Whitney arc~$\alpha_U\subset A$ denoted $D_U^\alpha =Q_U\cap (A\times I_\varepsilon)$. The final boundary component is the sweep out of the Whitney arc $\beta$ denoted $D^U_\beta= Q_U\cap (B\times I_\varepsilon)$. We similarly denote the sweep out of~$\alpha_W\subset A\times \{1/2\}$ by $D^W_\alpha\subset A\times[1/2, t_2]$ and sweep out of $\beta_W\subset A\times \{1/2\}$ by~$D^W_\beta\subset A\times[1/2, t_2]$.

To prove Proposition \ref{claimone}, note that $D^W_\beta\cup D^U_\beta$ gives a disc in $B\times I_\varepsilon$ with boundary $C$, and $D^W_\alpha\cup D^U_\alpha$ gives a disc in $A\times I_\varepsilon$ with boundary $C$. Hence $\sigma_C$ is represented by 
\[\Big(D^W_\alpha\cup D^U_\alpha\Big)\cup \Big(D^W_\beta\cup D^U_\beta\Big)\] along with the arc $\gamma_A$ from $A$ to the basepoint. Since $D^U_\alpha\cup D^U_\beta\cup U$ bounds $Q_U$, the disc $D^U_\alpha\cup D^U_\beta$ is isotopic to $U$. Similarly the disc $D^W_\alpha\cup D^W_\beta$ is isotopic to $W$ via $Q_W$. Hence
\[\Big(D^W_\alpha\cup D^U_\alpha\Big)\cup \Big(D^W_\beta\cup D^U_\beta\Big)=\Big(D^U_\alpha\cup D^U_\beta\Big)\cup \Big(D^W_\alpha\cup D^W_\beta\Big)\simeq U\cup W\]
proving the Proposition \ref{claimone}.

To prove Proposition \ref{claimtwo}, define a 1-dimensional sub-bundle of $\nu(B\times I_\varepsilon,V\times I_\varepsilon)|_{D^U_\beta}$ via
\[S_U = \nu(B\times I_\varepsilon,V\times I_\varepsilon)|_{D^U_\beta}\cap\nu(Q_U,V\times I_\varepsilon)|_{D^U_\beta}\]
and a 1-dimensional sub-bundle of $\nu(B\times I_\varepsilon,V\times I_\varepsilon)|_{D^W_\beta}$ via
\[S_W = \nu(B\times I_\varepsilon,V\times I_\varepsilon)|_{D^W_\beta}\cap\nu(Q_W,V\times I)|_{D^W_\beta}.\]
Note that on $\beta$
\[S_U|_\beta=\nu(B\times I_\varepsilon,V\times I_\varepsilon)|_{\beta}\cap\nu(Q_U,V\times I)|_{\beta}=\nu(B,V)|_{\beta}\cap\nu(U,V)|_{\beta}=c_U\]
and similarly $S_W|_\beta = c_W$. As we did for $c_W$ and $c_U$, we arrange that $S_W$ and $S_U$ agree on $\{p,q\}=\partial\beta$. We also pick orientations for $S_W$ and $S_U$ so that we can consider them as sections (picking the orientations so that they agree on $\{p,q\}$, and with $c_U$ and $c_W$).

As  $S_W|_C$ and $S_U|_C$ agree where they meet at $p,q$ the union $S_W|_C\cup S_U|_C$ gives a section of $\nu(B\times I_\varepsilon,V\times I_\varepsilon)|_C=\nu(C,A\times I_\varepsilon)=S^1\times D^2$; recall that this is the bundle which is used to define $s_C$. Consider the section $S$ of $\nu(C,A\times I_\varepsilon)$ which points into the disc~$D^W_\alpha\cup D^U_\alpha\subset A\times I_\varepsilon$. Clearly $S$ is the 0-framed section of $C$ in $A$ (by definition). Since $S_W$ is normal to $Q_W$, it is normal to $D^W_\alpha\subset Q_W$, and so $S_W|_C$ is normal to~$S$. Similarly~$S_U|_C$ is normal to $S$. 

Hence $S_W|_C\cup S_U|_C$ is normal to $S$ in $\nu(C,A\times I_\varepsilon)$. Since~$\nu(C,A\times I_\varepsilon)=S^1\times D^2$ this means that $S$ and $S_W|_C\cup S_U|_C$ must be isotopic. Hence~$S_W|_C\cup S_U|_C$ is the 0-framed section of $C$ in $A\times I$. Hence the framing induced by~$S_U|_C\cup S_W|_C$ is the $A$-0-framing of Section~\ref{two-framings}.

We divide the remainder of the proof into two cases.

\textbf{Case 1:}\textit{ on $\beta$ the sections $c_W=S_W|_\beta$ and $c_U=S_U|_\beta$ agree up to isotopy.}
 In this case (after possibly an isotopy) $S_W\cup S_U$ is a section of $\nu(B\times I_\varepsilon,V\times I_\varepsilon)_{D^U_\beta\cup D^W_\beta}$. This section necessarily extends to a section of $\nu(B\times I_\varepsilon,V\times I_\varepsilon)= (S^2\times I)\times D^2$. This section induces a trivialisation~$T$ of $\nu(B\times I_\varepsilon,V\times I_\varepsilon)$. Since $(S^2\times I)\times D^2$ has a unique trivialisation, $T$ must agree with the sweep out of the trivialisation used to attach the~3-handle.

Consider the trivialisation induced by~$(S_W\cup S_U)|_C$ on~$\nu(B\times I_\varepsilon,V\times I_\varepsilon)|_C$. This is necessarily the restriction of $T$ to $\nu(B\times I_\varepsilon,V\times I_\varepsilon)|_C$, and hence is the restriction of the~3-handle framing to $C$. This is precisely the $B$-attaching-framing of Section \ref{two-framings}, hence $(S_W\cup S_U)|_C$ induces the $B$-attaching-framing. 

Since by the above $(S_W\cup S_U)|_C=S_W|_C\cup S_U|_C$ induces the $A$-0-framing, and the $B$-attaching framing we see that they agree and so~$s_C=0$ as required.

\begin{figure}[htb]
	\centering
	\begin{tikzpicture}
		\node[anchor=south west,inner sep=0] (image) at (0,0) {\includegraphics[width=0.99\textwidth]{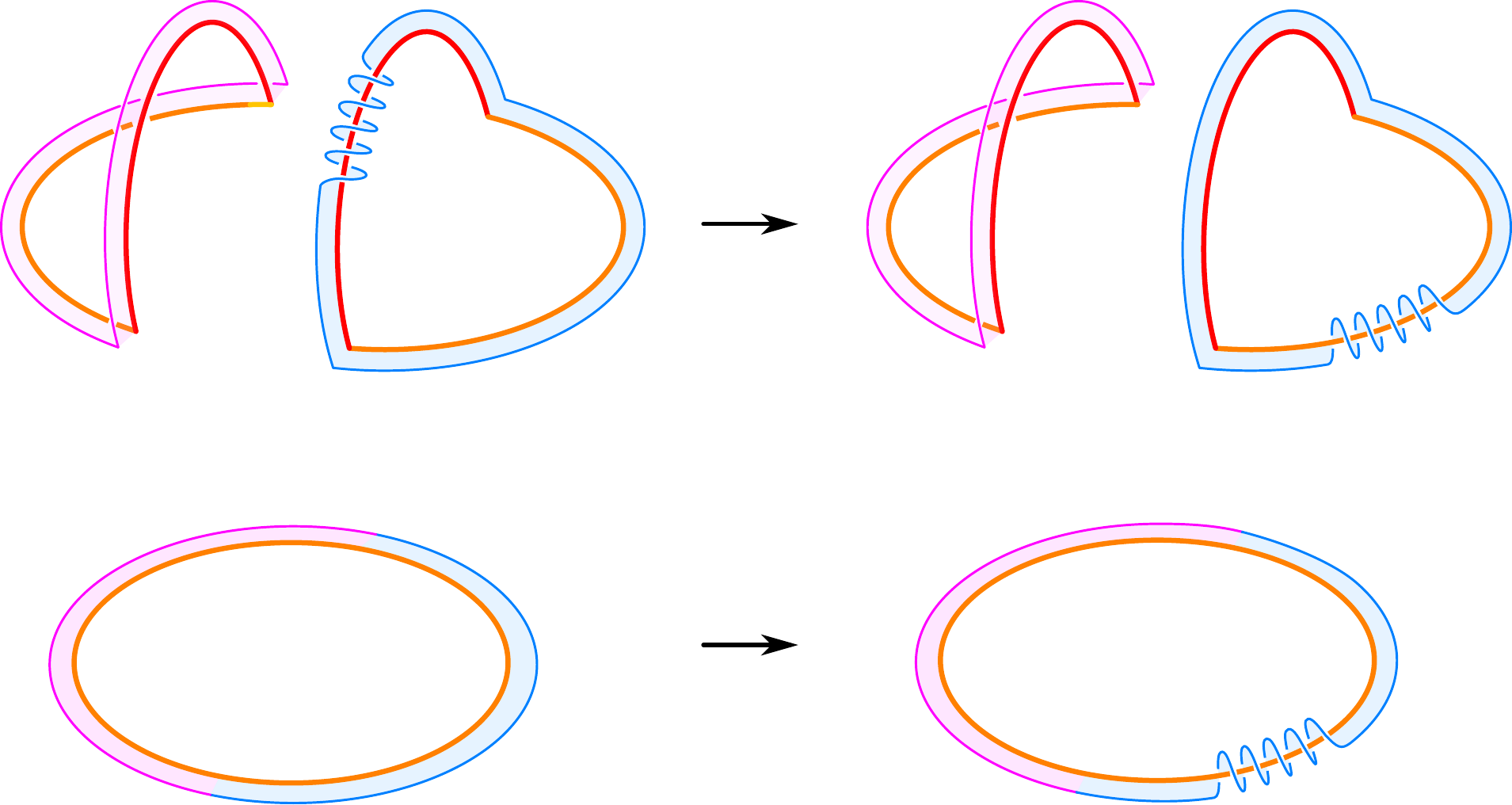}};

		\begin{scope}[x={(image.south east)},y={(image.north west)}]
			\node[text=red]  at (0.1,0.67)  {$\beta$};
			\node[text=red]  at (0.238,0.67)  {$\beta$};
			\node[text=orange]  at (0.15,0.82)  {$C\cap Q_U$};
			\node[text=orange, anchor=west]  at (0.3,0.7)  {$C\cap Q_W$};
			\node[text=mpurp, anchor=west]  at (-0.005,0.95)  {$S_U|_{\partial D_\beta^U}$};
			\node[text=light-blue, anchor=west]  at (0.325,0.95)  {$S_W|_{\partial D_\beta^W}$};
			
			\node[text=red]  at (0.1+0.573,0.67)  {$\beta$};
			\node[text=red]  at (0.238+0.573,0.67)  {$\beta$};
			\node[text=orange]  at (0.15+0.573,0.82)  {$C\cap Q_U$};
			\node[text=orange, anchor=west]  at (0.3+0.573,0.7)  {$C\cap Q_W$};
			\node[text=mpurp, anchor=west]  at (-0.005+0.573,0.95)  {$S_U|_{\partial D_\beta^U}$};
			\node[text=light-blue, anchor=west]  at (0.315+0.573,0.95)  {$S_W^\prime|_{\partial D_\beta^W}$};

			\node[text=mpurp, anchor=west]  at (0.03,0.369)  {$S_U|_{C\cap Q_U}$};
			\node[text=light-blue, anchor=west]  at (0.295,0.325)  {$S_W|_{C\cap Q_W}$};
			\node[text=orange]  at (0.15,0.08)  {$C$};
			
			\node[text=mpurp, anchor=west]  at (0.03+0.573,0.369)  {$S_U|_{C\cap Q_U}$};
			\node[text=light-blue, anchor=west]  at (0.295+0.573,0.325)  {$S_W^\prime|_{C\cap Q_W}$};
			\node[text=orange]  at (0.15+0.573,0.08)  {$C$};

		\end{scope}
	\end{tikzpicture}
	\caption{\label{fig:framing-twist} The operation of changing the section $S_W$ so that it agrees with $S_U$ on $\beta$. In the top left we depict $\nu(B\times I_\varepsilon,V\times I_\varepsilon)|_{D_\beta^W}$ and $\nu(B\times I_\varepsilon,V\times I_\varepsilon)|_{D_\beta^W}$, but for clarity only depict $\partial D_\beta^W$ and $\partial D_\beta^U$, and $S_W|_{\partial D_\beta^W}$ and $S_U|_{\partial D_\beta^U}$. In the top left we see that the sections $S_U|_\beta$ and $S_W|_\beta$ differ by $n=5$ twists. To obtain $S_W^\prime$ pictured on the right, we perform an isotopy which moves these twists to $\nu(B\times I_\varepsilon,V\times I_\varepsilon)|_{C\cap Q_W}$. 
Below we see that $S_W|_{C\cap Q_W}\cup S_U|_{C\cap Q_U}$ (bottom left) and $S_W^\prime|_{C\cap Q_W}\cup S_U|_{C\cap Q_U}$ (bottom right) differ by $\pm n=\pm 5$ twists (depending on orientation).}
\end{figure}

\textbf{Case 2:} \textit{on $\beta$ the sections $c_W=S_W|_\beta$ and $c_U=S_U|_\beta$ do not agree, but rather differ by $n$ twists.} Given a trivial disc bundle $D^2\times D^2\rightarrow D^2$, a section of this bundle~$s\colon D^2\rightarrow D^2\times D^2$, an arc $\gamma\subset\partial D^2$, and a section $r\colon \gamma\rightarrow \gamma\times D^2$, we can always perform an isotopy of the section $s$ to obtain a new section $s^\prime$ such that $s|_\gamma=r$. Applying this to the disc bundle $\nu(B\times I_\varepsilon,V\times I_\varepsilon)|_{D^W_\beta}$ and the section $S_W$, we can perform an isotopy of $S_W$ to obtain a section $S_W^\prime$ such that $S_W^\prime|_\beta$ agrees with $S_U|_\beta$. We note that doing so changes $S_W^\prime|_{C\cap Q_W}$. Since $S_W^\prime|_\beta$ differs from $S_W|_\beta$ by $n$ twists, it must be the case that~$S_W^\prime|_{C\cap Q_W}$ and $S_W|_{C\cap Q_W}$ differ by $\pm n$ twists (depending on the orientations), since the sections $S_W^\prime|_{\partial W}$ and $S_W|_{\partial W}$ must agree up to isotopy. See Figure \ref{fig:framing-twist} for a depiction of this operation.

Repeating the above argument for when the framings agreed, using the framings $S_W^\prime$ and $S_U$, we see that the framing induced by $(S_W^\prime\cup S_U)|_C$ is the~$B$-attaching-framing. Again the framing induced by $S_W|_C\cup S_U|_C$ is the the $A$-0-framing. Since $(S_W^\prime\cup S_U)|_C$ and $S_W\cup S_U$ differ by $\pm n$ twists, so do the~$B$-attaching-framing and the $A$-0-framing, hence $s_C = \pm n \mod 2 = n\mod 2$ as required.
\end{proof}

\section{Realisation theorem for \texorpdfstring{$\Wh_1(\pi_1 X; \mathbb{Z}_2 \times \pi_2 X)$}{Wh1(pi1(X);Z2xpi2(X))}}
	
In dimension $\geq 5$,
\[\Theta\colon\ker \Sigma\rightarrow\Wh_1(\pi_1 X; \mathbb{Z}_2 \times \pi_2 X)/\chi(K_3\mathbb{Z}[\pi_1 X])\]
is surjective. Hatcher and Wagoner prove this when $k_1 X=0$ \cite[Chapter VII]{HatcherWagoner}; note that they claim to prove surjectivity for general $k_1 X$, however their definition of $\Theta$ is only valid when $k_1 X=0$. The more general result was proved by Igusa~\cite{Igusa}. Hatcher and Wagoners proof considers $\Wh_1(\pi_1 X;\mathbb{Z}_2 \times \pi_2 X)$ as a quotient of \[(\mathbb{Z}_2 \times \pi_2 X)[\pi_1 X] = \langle(s+\sigma)\gamma\;|\; \sigma\in\pi_2 X,\;s\in\mathbb{Z}_2,\;\gamma\in\pi_1 X\rangle.\] 
They show that there exists a path $f_t\in\mathcal{F}$ with a single circle of intersection $C$ such that~$\gamma_C=\gamma$,~$\sigma_C=\sigma$ and~$s_C=s$ for arbitrary triples~$\gamma\in\pi_1 X$,~$\sigma\in\pi_2 X$,~$s\in\mathbb{Z}_2$. In dimension 4 we cannot see a way to realise all possible triples.

Given a 4-manifold $X$, we denote the ``Stiefel-Whitney classes'' by $w_1^X\colon\pi_1(X)\rightarrow \{\pm 1\}$, and~$w_2^X\colon\pi_2(X)\rightarrow \{0,1\}$. These are the usual Stiefel-Whitney classes composed with the Hurewicz map and reduction mod 2, except that for $w_1^X$ it will be useful for the target to be the multiplicative group of order two rather than the additive one, so if $w_1^X(\gamma)$ is trivial (usually 0) we send it to 1, if non-trivial (usually 1) we send it to $-1$. The map $w_1^X$ is often known as the ``orientation character''. We prove

\begin{proposition}\label{wh1realisation}
Let $X$ be a 4-manifold. Let $\sigma\in\pi_2 X$, $\gamma\in\pi_1 X$, and~$s\in\mathbb{Z}_2$. If~$w_2^X(\sigma)\neq 0$ or~$s=0$ then there exists a 1-parameter family $f_t$ with a single circle of intersection $C$ for which $\sigma_C=\sigma$, $\gamma_C=\gamma$, $s_C=s$.

That is to say we can realise all elements $(\sigma+s)\gamma\in(\mathbb{Z}_2 \times \pi_2 X)[\pi_1]$ except for those with~$w_2^X(\sigma)=0$ and $s=1$.
\end{proposition}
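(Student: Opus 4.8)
The plan is to realise $(\sigma+s)\gamma$ by a single \emph{geometrically simple} family of the type studied in Section~\ref{z2calculation}: create one cancelling $2$--$3$ handle pair, perform a single finger move, perform a single Whitney move, then cancel. For such a family the unique circle of intersection $C$ has $\gamma_C$ equal to the class of the finger--move arc (Remark~\ref{pi1finger}), $\sigma_C=[U\cup W]$ by Proposition~\ref{claimone}, and $s_C=(c_U-c_W)\bmod 2$ by Proposition~\ref{claimtwo}, where $U$ is the finger disk, $W$ the Whitney disk, and we normalise so that $U$ and $W$ share Whitney arcs $\alpha\subset A$ and $\beta\subset B$. So the task reduces to choosing the Whitney disk $W$ so that $[U\cup W]=\sigma$ and $c_W$ differs from $c_U$ by $s$ twists mod $2$, subject to $W$ being a correctly framed, embedded disk with interior disjoint from $A\cup B$ (exactly what is needed to carry out the Whitney move).

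First I would fix the geometry. Creating one cancelling pair produces $V=X\# S^2\times S^2$ with $A,B\subset V$ meeting transversally in one point, and, as in Section~\ref{dualspheres}, transverse spheres $A^*$, $B^*$ of trivial normal bundle with $A^*$ disjoint from $B$ and $B^*$ disjoint from $A$. Perform the finger move along an arc representing $\gamma$, so $\gamma_C=\gamma$, and take as a first approximation $W_0=\overline{U}$, the disk undoing the finger move, for which $[U\cup W_0]=0$ and $c_{W_0}=c_U$. To install $\sigma$, represent it by a generic immersion $S\colon S^2\to X\setminus B^4\subset V$ and tube $W_0$ with $S$ along an arc, obtaining $W=W_0\# S$ with $[U\cup W]=\sigma$. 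Now $W$ is immersed and may meet $A$ and $B$; clean it up by pushing its self-intersections down into $B$ (Section~\ref{section:push-down}) and then removing all intersections of $W$ with $A$ and with $B$ by the Norman trick, tubing into parallel copies of $A^*$ and $B^*$ along arcs chosen to avoid $\partial W$. Since $A^*$ and $B^*$ have trivial normal bundle these moves preserve the value of $n_W$ (the difference between the Whitney and disk framings of $W$) and leave $c_W$ unchanged, and they alter $[U\cup W]$ only by multiples of the spherical classes $[A]$, $[B]$, which are precisely the classes quotiented out in the definition of $\sigma_C$; hence $\sigma_C=\sigma$ is maintained.

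The delicate point is the framing of $W$. After tubing with $S$ one has $n_W=e(\nu(S,V))$, and $e(\nu(S,V))\equiv S\cdot S\equiv \sigma\cdot\sigma\equiv w_2^X(\sigma)\pmod 2$ by the Wu formula, while the push--downs and Norman tricks above do not change $n_W$. To perform the Whitney move we must reach $n_W=0$. Interior twists change $n_W$ by $\pm 2$ (at the cost of a self-intersection, pushed down into $B$ and then cleaned up), whereas a boundary twist changes $n_W$ by $\pm 1$; crucially, a boundary twist performed along $\beta$ additionally changes $c_W$, hence $s_C$, by $1$, while a boundary twist performed along $\alpha$ does not touch $c_W$ and so leaves $s_C$ unchanged. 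Thus: if $s=0$ and $w_2^X(\sigma)=0$, correct $n_W$ to $0$ with interior twists alone, giving $s_C=0$; if $s=0$ and $w_2^X(\sigma)\neq 0$, correct $n_W$ with one boundary twist along $\alpha$ and interior twists, again giving $s_C=0$; if $s=1$ and $w_2^X(\sigma)\neq 0$, correct $n_W$ with one boundary twist along $\beta$ and interior twists, giving $s_C=1$. In each of these (exactly the admissible) cases, clean up the intersection created by the boundary twist with a further Norman trick, then run the finger--and--Whitney family. It has no handle slides, so it lies in $\ker\Sigma$, and by the above $\Theta(f_t)=(\sigma_C+s_C)\gamma_C=(\sigma+s)\gamma$. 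The excluded case $w_2^X(\sigma)=0$, $s=1$ is exactly the one where the parity of $n_W$ after tubing leaves no boundary twist to spend along $\beta$.

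The hard part will be the framing bookkeeping: verifying the stated effect of each operation (push--down, Norman trick into a trivially framed transverse sphere, interior twist, and boundary twist along $\alpha$ versus along $\beta$) on both $n_W$ and on $c_W$, and pinning down the identity $n_{W_0\# S}=e(\nu(S,V))$ together with the fact that $e(\nu(S,V))$ can be adjusted, by finger moves on $S$, to any integer of the forced parity $w_2^X(\sigma)$. One must also check that none of the cleanup disturbs the normalisation that $U$ and $W$ share the arcs $\alpha,\beta$ used in Propositions~\ref{claimone} and~\ref{claimtwo}; this is ensured by performing all tubings and push--downs along arcs disjoint from $\partial W$ and confining boundary twists to a neighbourhood of a single Whitney arc.
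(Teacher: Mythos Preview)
Your strategy is essentially the paper's, but there is a real gap in the cleanup step. You propose to remove intersections of $W$ with both $A$ and $B$ by Norman tricks into $A^*$ and $B^*$. The trouble is that $A^*$ and $B^*$ are not disjoint: by construction (Section~\ref{dualspheres}) $A^*$ is a parallel copy of $B$ and $B^*$ is a parallel copy of $A$, so $A^*\cap B^*$ is a single point. Hence once you have tubed $W$ into copies of $B^*$ (to kill the $W\cap B$ coming from your initial push-down), any subsequent Norman trick into a parallel $A^*$ will meet those $B^*$-copies inside $W$ and reintroduce self-intersections; the process does not terminate.

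This bites precisely in your case $s=0$, $w_2^X(\sigma)\neq 0$: you first push self-intersections of $W_0\#S$ into $B$ and clean with $B^*$, then boundary-twist along $\alpha$, producing a point of $W\cap A$ that forces you to use $A^*$. The paper avoids ever needing both duals in the same case. It first performs interior twists \emph{on $S$} (not on $W$) so that $e(\nu(S,V))$ is exactly $0$ when $w_2^X(\sigma)=0$ and exactly $1$ when $w_2^X(\sigma)\neq 0$; after tubing $U$ to $S$ the defect $n_W$ is then $0$ or $1$, so at most a single boundary twist is needed and no interior twists on $W$ at all. All push-downs are done into the \emph{same} sphere that the boundary twist hit ($A$ in the two $s=0$ cases, $B$ in the $s=1$ case), so only one of $A^*,B^*$ is ever invoked. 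With this organisation your argument goes through; without it, it does not. The paper in fact identifies the $A^*\cap B^*$ collision as exactly the obstruction in the excluded case $w_2^X(\sigma)=0$, $s=1$.

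Two smaller points. After tubing, $W=W_0\# S$ already has interior disjoint from $A$ and $B$ (since $S\subset X\setminus B^4$ and $\mathring U$ are), so ``may meet $A$ and $B$'' is inaccurate prior to any push-down or boundary twist. And tubing into $A^*,B^*$ preserves $\sigma_C$ not because those classes are ``quotiented out'' but because $[A^*],[B^*]$ lie in the $S^2\times S^2$ summand of $\pi_2(V)$ and hence vanish under $\pi_2(V)\to\pi_2(X\times I)=\pi_2(X)$.
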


As a corollary we obtain Theorem \ref{Whoneimage}

\Whoneimage*

While we cannot realise all triples, we also cannot obstruct those exceptional values from being realised. Indeed \emph{topologically} one can find a homotopy of transverse spheres $A$ and $B$ with one circle of intersection realising all triples, as shown by \cite{Kwasik}.

We can however realise all values in $\Wh_1(\pi_1 X; \mathbb{Z}_2 \times \pi_2 X)$ by taking a single stabilisation with $S^2\times S^2$.

\begin{proposition}\label{proposition:stablewh1realisation}
Let $X$ be a 4-manifold. Note that $\pi_1 X$ and $\pi_2 X$ include as subgroups of $\pi_1 (X\# S^2\times S^2)$ and $\pi_2 (X\# S^2\times S^2)$ respectively. For all values $\sigma\in\pi_2 X$, $\gamma\in\pi_1 X$ and~$s\in\mathbb{Z}_2$ there exists a 1-parameter family $f_t\colon X\# S^2\times S^2\rightarrow I$ with a single circle of intersection $C$, such that~$\sigma_C=\sigma$, $\gamma_C=\gamma$, $s_C=s$.
\end{proposition}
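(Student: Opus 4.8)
The plan is to dispatch the easy case by a direct appeal to Proposition~\ref{wh1realisation}, and to handle the one remaining case by inserting a controlled $\mathbb{Z}_2$-twist into a geometrically simple family of the kind studied in Section~\ref{z2calculation}. Since $S^2\times S^2$ is spin, for any $\sigma\in\pi_2 X\subset\pi_2(X\#S^2\times S^2)$ the evaluation $w_2$ of $X\#S^2\times S^2$ on $\sigma$ equals $w_2^X(\sigma)$. Hence if $s=0$, or if $w_2^X(\sigma)\neq 0$, Proposition~\ref{wh1realisation} applied to $X':=X\#S^2\times S^2$ already yields a $1$-parameter family with a single circle $C$ and $\sigma_C=\sigma$, $\gamma_C=\gamma$, $s_C=s$. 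So the only case left is $s=1$ and $w_2^X(\sigma)=0$; in particular $\sigma\cdot\sigma$ is even.

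For that case I would build a geometrically simple family on $X'$. Create a cancelling $2/3$-handle pair, so that in the middle--middle level $V=X'\#S^2\times S^2$ we have the spheres $A,B$ of the born summand together with dual spheres $A^\ast,B^\ast$ as in Section~\ref{dualspheres}; retain also the $S^2\times S^2$ summand coming from the stabilisation of $X$ itself, with fibres $G,H$ (so $G\cdot G=H\cdot H=0$, $G\cdot H=1$), arranged disjoint from $A,B,A^\ast,B^\ast$. Perform a finger move along an arc whose associated element of $\pi_1 X'$ is $\gamma$, with finger disk $U$. For the Whitney disk, start from a small correctly framed $W_0$ with $[U\cup W_0]=0$, tube it into an immersed representative of a class $\sigma'\in\pi_2 X'$ to be fixed later, and clean up — making $W$ embedded, correctly framed, and disjoint from $A$, $B$ and $U$ — exactly by the push-down and Norman-trick argument used in the proof of Theorem~\ref{stablesurjectivity}, with $G,H$ supplying the needed transverse spheres; at this point $\sigma_C=\sigma'$ and $s_C=0$. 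Now perform one positive boundary twist on $W$ along $\beta$ and one negative boundary twist along $\alpha$. By the behaviour of boundary twists recalled earlier, the two changes to $n_W$ cancel, so $W$ stays correctly framed; but the twist along $\beta$ alters $\nu(W,V)|_\beta$, hence alters the section $c_W$ of Proposition~\ref{claimtwo} by one twist, so $s_C$ changes by $1$. The price is one intersection $W\cap B$ on $\beta$ and one intersection $W\cap A$ on $\alpha$; remove them by the Norman trick using $B^\ast$ and $A^\ast$ (after first making $A^\ast,B^\ast$ disjoint from $W$ using $G,H$, so no new intersections appear). Because $A^\ast,B^\ast$ are parallel copies of $B,A$, which bound the core and cocore of the handles and so are null-homotopic in $X'\times I\times I$, and because they have trivial normal bundle, these removals change neither $[U\cup W]\in\pi_2 X'$ nor the correct framing of $W$, and — being supported away from the relevant part of $\beta$ — nor the twisting measured in Proposition~\ref{claimtwo}; should the cleanup nonetheless move $\sigma_C$ by some $G,H$-supported class $\rho\in\pi_2 X'$, absorb it by taking $\sigma'=\sigma-\rho$ from the start. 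After a final isotopy arranging that $U$ and $W$ share Whitney arcs, as in Section~\ref{z2calculation}, perform the Whitney move and cancel the handle pair; the resulting path in $\mathcal{F}(X')$ has endpoints in $\mathcal{E}(X')$, is a single eye with no handle slides in between — hence in $\ker\Sigma$ — and has the single circle $C$ with $\sigma_C=\sigma$, $s_C=1$, $\gamma_C=\gamma$ by Propositions~\ref{claimone} and~\ref{claimtwo}.

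I expect the cleanup step to be the main obstacle, and the genuine reason the stabilisation is needed: in $X$ alone the transverse spheres $A^\ast,B^\ast$ meet $W$, so removing the two intersection points created by the boundary twists tends either to reintroduce intersections or to move $\sigma_C$ out of control, which is precisely why Proposition~\ref{wh1realisation} fails when $w_2^X(\sigma)=0$ and $s=1$; the extra $S^2\times S^2$ supplies enough disjoint spheres and room to carry out the Norman-trick/push-down cleanup cleanly, just as in Theorem~\ref{stablesurjectivity}. The points requiring care are: that the pair of boundary twists changes $s_C$ by exactly $1$ and affects nothing else visible to $\Theta$, via the descriptions in Propositions~\ref{claimone} and~\ref{claimtwo}; that no cleanup move alters the twisting of the Whitney section on $\beta$ or the class $[U\cup W]\in\pi_2 X'$ beyond a controlled $G,H$-supported correction; and that $U$ and $W$ can be returned to standard position with shared Whitney arcs, so that Propositions~\ref{claimone} and~\ref{claimtwo} apply verbatim.
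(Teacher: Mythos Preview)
Your reduction to the single case $w_2^X(\sigma)=0$, $s=1$ is correct, and your initial setup---one birth, a $\gamma$-finger move, tubing $U$ into an immersed representative of $\sigma$ to form $W$, then one boundary twist along $\beta$ and one along $\alpha$ so that $W$ stays framed while $c_W$ acquires a single twist relative to $c_U$---matches the paper exactly. The gap is in your cleanup. After the two boundary twists you have one point of $W\cap A$ and one point of $W\cap B$, and you propose to remove them by Norman tricks into $A^\ast$ and $B^\ast$. But $A^\ast$ and $B^\ast$ meet each other in a single point (Figure~\ref{fig:pushofftransversespheres}), so the two parallel copies you tube into intersect once, and $W$ acquires a new self-intersection. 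Your parenthetical ``after first making $A^\ast,B^\ast$ disjoint from $W$ using $G,H$, so no new intersections appear'' does not address this: it is $A^\ast\cap B^\ast$, not $A^\ast\cap W$ or $B^\ast\cap W$, that produces the self-intersection. Nor can you simply Norman-trick that self-intersection away with $G$ or $H$, since $G\cap H\neq\emptyset$ reproduces the same problem one step down.

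This is exactly the obstacle the paper isolates in the paragraph preceding its proof, and it is where essentially all of the work in the paper's argument goes. The paper accepts the self-intersection of $W$ and instead constructs a \emph{geometric} dual sphere $P'$ to $W$---embedded, with trivial normal bundle, null in $\pi_2(X'\times I)$, disjoint from $A$ and $B$---and then uses $P'$ to Norman-trick away every self-intersection of $W$ at once. The construction of $P'$ is the Clifford torus $T$ of the Whitney disk $W$, capped by the two standard caps $D_A,D_B$; these caps are tubed into $A^\ast,B^\ast$ to make them disjoint from $A,B$, then symmetrically surgered to a sphere $P$; the four self-intersections of $P$ (coming from the single point $D_A\cap D_B$) are finally removed using the extra $S^2\times S^2$ summand via a two-sheeted Norman trick. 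Only then is $W$ made embedded. Your proposal does not supply any mechanism for this last self-intersection, and the vague absorption ``$\sigma'=\sigma-\rho$'' does not help, since the issue is geometric embeddedness of $W$, not the homotopy class.
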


Note that unlike the stable theorem for $\Sigma$, in this case, one $S^2\times S^2$ stabilisation is enough. Indeed for any generator $(\sigma+s)\gamma$ we can come up with a 1-parameter family $f_t$ and hence a corresponding pseudo-isotopy $F\in\mathcal{P}(X\#S^2\times S^2)$ with $\Theta(F)=(\sigma+s)\gamma$. The composition of such pseudo-isotopies gives sums of generators without needing additional $S^2\times S^2$ summands. This yields Theorem \ref{theorem:stablewh1realisation}.

\whonestablethm*

Note that unlike in the stable $\Sigma$ case $\Wh_1(\pi_1 X\# S^2\times S^2; \mathbb{Z}_2 \times \pi_2 (X\# S^2\times S^2))$ is potentially a larger group, and there may be additional elements that we cannot realise (at least not without taking further connect sums).

We proceed with the proofs of both propositions.

\begin{proof}[Proof of Proposition \ref{wh1realisation}]
Fix $\sigma\in\pi_2 X$ and $\gamma\in\pi_1 X$. As in Remark \ref{1-param-handles}, we build a path of Morse functions $f_t\in \mathcal{F}(X)$ by building a 1-parameter family of handle structures on~$X\times I$, as in the proof of Theorem \ref{stablesurjectivity}.

We can represent $\sigma$ by an immersed 2-sphere $S\subset X$ (with an arc $\eta$ to the base point), and we can arrange that $S$ intersects itself only at transverse double points.

We start with the trivial handle structure on $X\times I$. We then first create a cancelling 2-3 handle pair. When we do so, we ensure that the 2-3 handle creation takes place disjointly from $S\subset X$. We consider the middle level $V=X\# S^2\times S^2$.

We denote the 2-handle belt sphere and 3-handle attaching sphere by $A$ and $B$ in $V$ respectively; initially these are $p\times S^2$ and $S^2\times q$ in $V=X\# S^2\times S^2$. Note that we also have dual spheres $A^*$ and $B^*$ as in Section \ref{dualspheres}. Since we ensured the handle creation was disjoint from $S$, we see that $S$ is disjoint from~$A$,~$B$,~$A^*$, and~$B^*$.

Since $A\cup B$ is $\pi_1$-negligible in $V$, $\gamma$ determines a finger move from $A$ to $B$ as in Remark~\ref{pi1finger}. By dimensionality, we can ensure the finger move misses $S$ and the dual spheres $A^*$ and $B^*$. We perform this finger move to obtain a new handle structure. Afterwards, we see the Whitney disc $U$ that undoes this finger move. The interior of $U$ is disjoint from~$A$,~$B$,~$A^*$ and~$B^*$

Our plan now is to tube $U$ into $S$ to create a new Whitney disc $W$, then make $W$ embedded and perform a Whitney move. We will do this so that $U\cup W$ represents $\sigma$ and hence $\sigma_C=\sigma$, and so that $W$ has the correct framing on $\beta$ so that $s_C=s$. We deal with this in three cases.

\textbf{Case 1:}\textit{ $w_2^X(\sigma)=0$ and $s=0$.} First we perform interior twists to $S$ so that $\nu(S,V)$ is the trivial disc bundle over the sphere. This is possible because $w_2^X(\sigma)=0$. Now we tube $U$ to $S$ along an arc homotopic to $\gamma_A^{-1}\cdot \eta$ where $\gamma_A$ is the arc from $A$ to the base point; we ensure this arc misses any spheres. Since $U$ and $S$ are disjoint from $A$, $B$, $A^*$ and $B^*$, so is $W$. Note that $W\cup U$ (with the basepoint arc $\gamma_A$) represents $\sigma$. 

Since $\nu(S,V)$ is trivial, the framing of $\nu(U,V)$ extends to a framing of $\nu(W,V)$. Hence the disc-framing of $U$ and the disc framing of $W$ agree, so $W$ is a framed (immersed) Whitney disc. We now remove each self intersection of $W$ by pushing down into $A$ as in Section \ref{section:push-down}. Pushing down each intersection one by one into $A$, $W$ becomes embedded but now intersects $A$.

We may remove an intersection $p$ between $W$ and $A$ by using the dual sphere $A^\star$ and the Norman trick; we take a path through $A$ from $p$ to $A\cap A^*$, then tubing $W$ to a parallel copy of $A^*$ using this path. Note that $A^*$ is non-trivial in $\pi_2(V)$ but becomes trivial when included in $\pi_2(X\times I)=\pi_2(X)$, so this does not change the $\pi_2 X$ element given by $U\cup W$. We remove the intersections between $W$ and $A$ one by one, until $W$ and $A$ are disjoint.

Now $W$ is a framed embedded Whitney disc, disjoint from $A$ and $B$. We use~$W$ to perform a Whitney move. After this move the spheres intersect in a single point so we cancel the 2 and 3 handles. The resulting 1-parameter family $f_t$ has a single circle of intersection $C$. By Section \ref{z2calculation} we see that $\gamma_C=\gamma$. Since $U\cup W$ along with the base point arc $\gamma_A$ represents $\sigma$, we have that $\sigma_C=\sigma$ by Section \ref{z2calculation}. We also see that the sections $c_W$ and $c_U$ of Section \ref{z2calculation} agree; note that in a neighbourhood of the Whitney arc $\beta\subset B\subset V$, $W$ and $U$ agree by construction. Again by Section \ref{z2calculation} we see that $s_C=0$ as required.

\textbf{Case 2:}\textit{ $w_2^X(\sigma)=1$ and $s=0$.} In this case we cannot arrange $\nu(S,V)$ to be trivial, but we can perform interior twists to arrange it to be a bundle of Euler number 1 (note that an interior twist changes the Euler number of the normal bundle by two). We do this, then again tube $U$ into $S$ to form $W$. Doing so does not change the Whitney framing but does change the disc framing by a single twist, so $W$ is not framed. We can fix this however by performing a single boundary twist between $W$ and $A$. This makes~$W$ framed, but adds an intersection between $A$ and $W$. We can now push down the self-intersections of $W$ into $A$ as before, then remove intersections between $W$ and~$A$ (including the one from the boundary twist) using the dual sphere $A^*$.

As in Case 1 $W$ is now embedded so we perform a Whitney move, then cancel the handles.

Again, we see that in a neighbourhood of $\beta\subset B\subset V$, $W$ and $U$ agree (since we boundary twisted with $A$), so again the sections $c_W$ and $c_U$ agree, so as in Case 1 we have that $s_C=0$. It similarly follows that $\gamma_C=\gamma$ and $\sigma_C=\sigma$.

\textbf{Case 3:}\textit{ $w_2^X(\sigma)=1$ and $s=1$.} As in Case 2 we interior twist so that $\nu(S,V)$ has normal bundle one, then tube $U$ to $S$ to form $W$. To make $W$ framed we now perform a single boundary twist with $B$. 

We now push down the self intersections of $W$ into $B$, then resolve the intersections of $W$ and $B$ using $B^*$ to make $W$ embedded. Again we now perform the Whitney move and cancel the handles.

In the neighbourhood of $\beta$, $U$ and $W$ differ by a single twist (from the boundary twist we performed). Hence $c_W$ and $c_U$ also differ by a single twist. Hence $\gamma_C=\gamma$, $\sigma_C=\sigma$ and~$s_C=1$ by Section \ref{z2calculation}.
\end{proof}

Before we prove the stable version of the theorem, it is useful to see what goes wrong when we try to realise $s_C=1$ when $w_2^X(\sigma)=0$. Using the notation from the proof of Proposition \ref{wh1realisation} we consider the Whitney disc $U$ which undoes the finger move and tube it to $S$, which we arranged to have trivial normal bundle, to form a new Whitney disc~$W$. Since we would like that $c_W$ and $c_U$ do not agree (mod 2), we would like to do an odd number of boundary twists of $W$ with $B$. However doing so unframes the disc, and this cannot be fixed using interior twists (since these change the disc framing by an even number). Hence we must perform at least one boundary twist of $W$ with $A$. However if we do then $W$ intersects both $A$ and $B$. This is a problem as the dual spheres $A^*$ and $B^*$ intersect; we cannot use the Norman trick on both $A^*$ and~$B^*$ without creating new self intersections of $W$. 

We can however make the above construction work in the case that we have an $S^2\times S^2$ summand. We will use this summand to build an additional dual sphere, which can be used to resolve the intersections.

\begin{proof}[Proof of Proposition \ref{proposition:stablewh1realisation}]

The only case left to realise is when $w_2^X (\sigma)=0$ and $s=1$. Consider~$X\# S^2\times S^2$, let $Y=S^2\times S^2$, let $X'= X\# Y$ and let $\widehat{X}=X\setminus B^4\subset X'$. 

Given $\sigma \in \pi_2 X$, we again represent $\sigma$ by an immersed sphere $S\subset \widehat{X}$ with intersection points transverse double points; note that $\pi_2\widehat{X}=\pi_2 X$. We again arrange that $\nu(S,\widehat{X})$ is the trivial disc bundle over $S^2$ by performing interior twists to $S$.

We again construct a 1-parameter family $f_t$ for the 4-manifold $X\# S^2\times S^2$. Following the steps in the proof of Proposition \ref{wh1realisation} we first create a cancelling 2-3 handle pair in~$\widehat{X}\times I$, and consider the middle level of $\widehat{X}\times I$, denoted $\widehat{V}=\widehat{X}\# S^2\times S^2$ which is a subset of the middle level of $X\times I$, denoted $V=X\# Y\# S^2\times S^2$. Again we ensure that this 2-3 handle pair is created in the complement of of $S$. To distinguish this copy of $S^2\times S^2$, we let $Z=S^2\times S^2$ and say $\widehat{V} = \widehat{X}\#Z$, and note that $V = \widehat{V}\cup (Y\setminus B^4)$

We again denote the 2-handle belt sphere and 3-handle attaching spheres by $A$ and $B$ respectively. We now perform a finger move, in $\widehat{V}$, corresponding to $\gamma\in\pi_1 X=\pi_1 \widehat{X}$, ensuring this finger move misses $A$, $B$, $A^*$, and $B^*$. In $\widehat{V}$ we again see a Whitney disc~$U$ that undoes the move. We again tube $U$ to $S$ to create an immersed framed Whitney disc~$W$. 

To obtain the correct value for $s_C$, we now perform a single boundary twist of $W$ with $B$. We then perform another boundary twist on $W$ with $A$ (an opposite twist), so that $W$ remains a framed Whitney disc. However $W$ now intersects $A$ and $B$. We resolve the intersection single between $W$ and $A$ using the Norman trick on $A^*$. We then the resolve the single intersection with $B$ using the Norman trick on $B^*$, noting that this adds a single further self-intersection of $W$ as the parallel copy of $A^*$ intersects the parallel copy of $B^*$.

We consider the Clifford torus $T\subset \widehat{V}$ for the Whitney disc $W$. The torus $T$ intersects~$W$ in exactly one point, but is disjoint from $A$, $B$, $A^*$ and $B^*$. See Figure \ref{fig:cliffordtorus} for a depiction of this torus.

\begin{figure}[ht]
	\begin{tikzpicture}
		\node[anchor=south west,inner sep=0] (image) at (0,0) {\includegraphics[width=\textwidth]{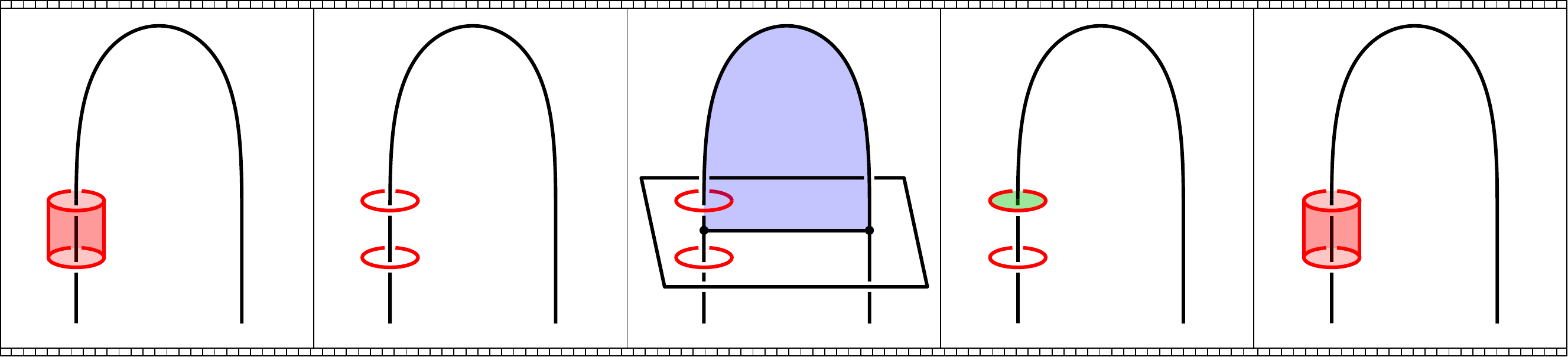}};
		\begin{scope}[x={(image.south east)},y={(image.north west)}]
			\node at (0.53,0.253)  {$B$};
			\node  at (0.563,0.8)  {$A$};
			\node[text=dark-green]  at (0.687,0.42)  {$D_A$};
			\node[text=red] at (0.082,0.36)  {$T$};
			\node[text=blue] at (0.5,0.7)  {$W$};
		\end{scope}
	\end{tikzpicture}
	\caption{\label{fig:cliffordtorus}We present 4-dimensional space as slices of 3-dimension space. In the middle slice we see the Whitney disc $W$ and a disc subset of the sphere $B$. In each slice we see a line from $A$, which sweep out a disc subset from $A$. We picture the Clifford torus $T$, and see that it intersects $W$ in exactly one point, but does not intersect $A$ or $B$. In the fourth frame we depict the cap $D_A$, which intersects $A$ in exactly one point. To see $D_B$ one can draw a new picture with the roles of $A$ and~$B$ reversed. }
\end{figure}

There are two \emph{caps} for $T$, namely embedded discs $D_A$ and~$D_B$, which intersect $T$ exactly on $\partial D_A$ and $\partial D_B$; we picture $D_A$ in the fourth frame in Figure \ref{fig:cliffordtorus}. The disc $D_A$ intersects $A$ in exactly one point but is disjoint from $W$, $B$, $A^*$ and $B^*$, and the disc~$D_B$ intersects~$B$ in exactly one point but is disjoint from $W$, $A$, $A^*$ and $B^*$. Further~$D_A$ and~$D_B$ intersect only in a single point on their boundary; see Figure \ref{fig:cliffordcaps}.

\begin{figure}[ht]
	\begin{tikzpicture}
		\node[anchor=south west,inner sep=0] (image) at (0,0) {\includegraphics[width=0.35\textwidth]{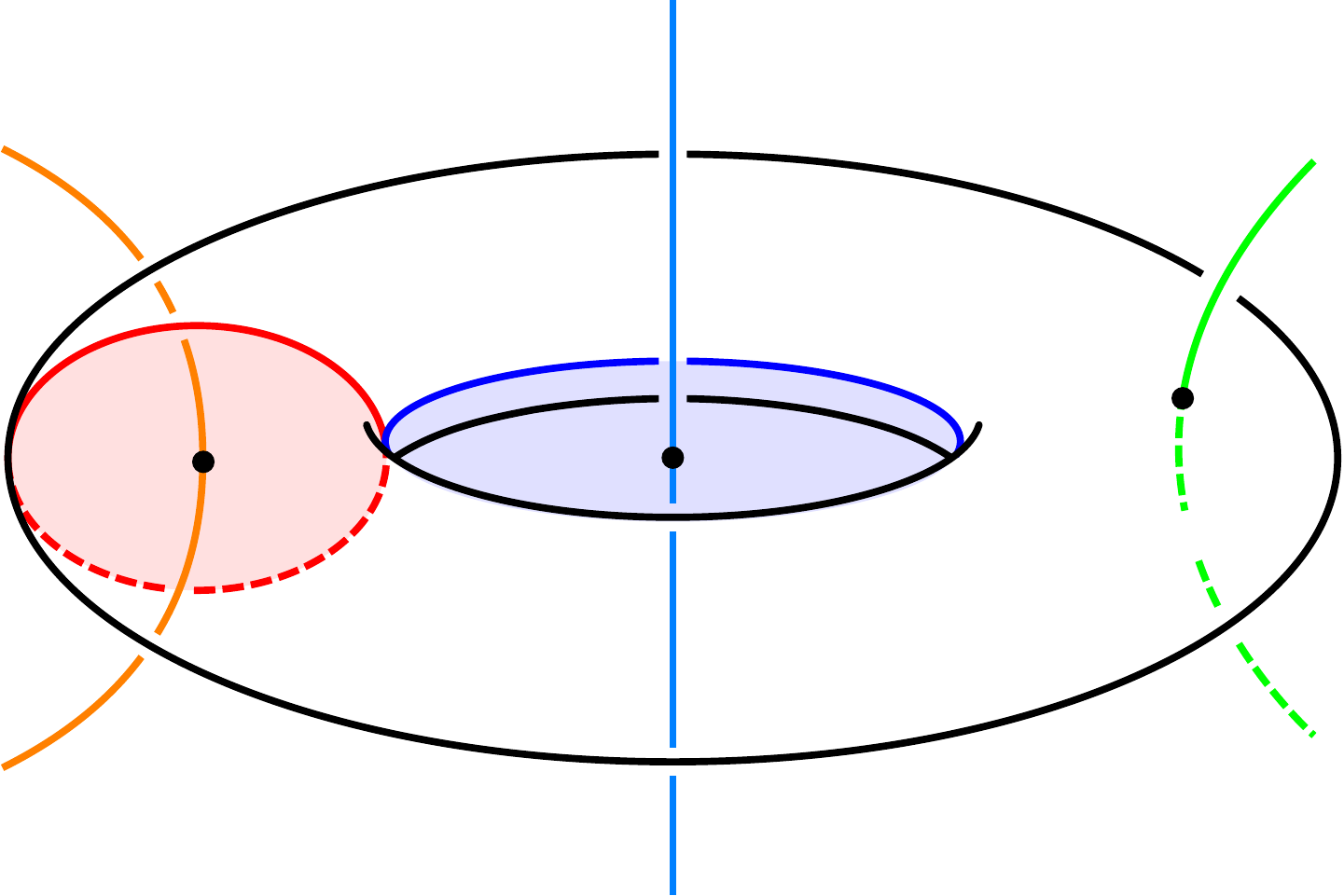}};
		\begin{scope}[x={(image.south east)},y={(image.north west)}]
			\node[text=red]  at (0.21,0.49)  {$D_A$};
			\node[text=blue]  at (0.57,0.49)  {$D_B$};
			\node[text=orange]  at (0.09,0.82)  {$A$};
			\node[text=light-blue]  at (0.535,0.93)  {$B$};
			\node[text=green]  at (0.92,0.8)  {$W$};
			
			\node at (0.73,0.32)  {$T$};
		\end{scope}
	\end{tikzpicture}
	\caption{\label{fig:cliffordcaps}The Clifford torus with the caps. Note that $A$ and $B$ do not intersect~$T$, and that $W$ only intersects $T$ in a single point.}
\end{figure}
We now manipulate the caps. We direct the reader to Figure \ref{fig:wh1hardcase} for this manipulation.

\begin{figure}[ht]
	\begin{tikzpicture}
		\node[anchor=south west,inner sep=0] (image) at (0,0) {\includegraphics[width=0.7\textwidth]{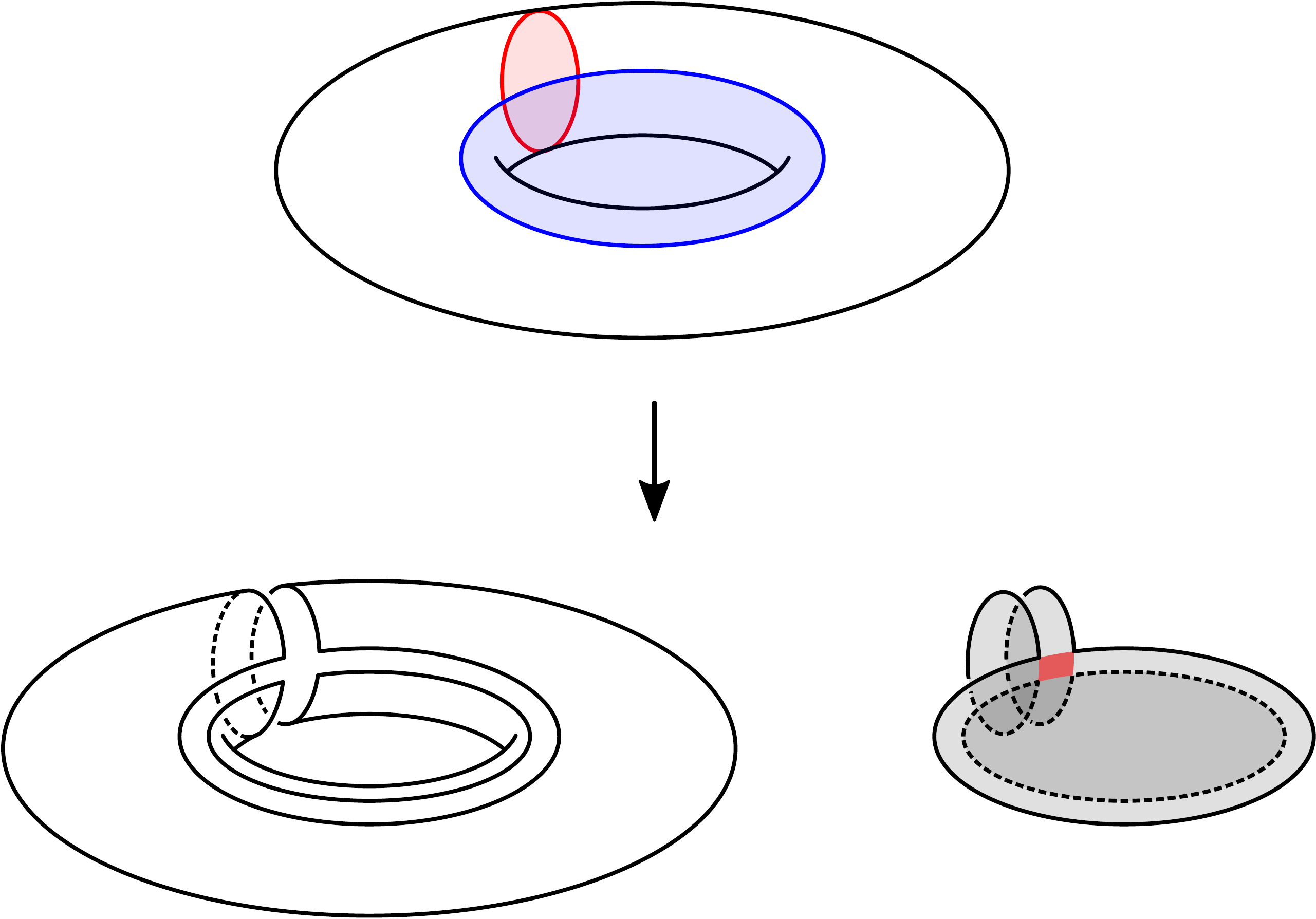}};
		\begin{scope}[x={(image.south east)},y={(image.north west)}]
			\node[text=red] at (0.35,0.92) {$D_A$};
			\node[text=blue] at (0.49,0.81) {$D_B$};
			\node  at (0.69,0.82)  {$T$};
			\node at (0.64,0.195)  {$\bigcup$};
		\end{scope}
	\end{tikzpicture}
	\caption{\label{fig:symetriccapping}The symmetric capping operation. In the top picture we see $T$ and the caps $D_A$ and $D_B$. In the bottom picture we see the two parts of the symmetric capping; left we see $T$ with the neighbourhoods of $\partial D_A$ and $\partial D_B$ removed, and right the parallel copies of the caps, with the extra square around $\partial D_A\cap\partial D_B$ which we glue back in; we have highlighted this square glued back in in red.}
\end{figure}

We remove the intersection of $D_A$ and $A$ using the Norman trick, tubing it into $A^*$. We similarly remove the intersection of $D_B$ and $B$ using $B^*$, noting that this adds intersections with $W$, and with the dual spheres $A^*$ and $B^*$, and adds a single point of intersection between $D_A$ and $D_B$. See the first picture of Figure \ref{fig:wh1hardcase} for a depiction of the resulting discs.

We can now perform a ``symmetric capping'' operation of Freedman and Quinn \cite{Freedman1990} using the two discs $D_A$ and~$D_B$. To do this, we remove the neighbourhood of $\partial_A\cup\partial_B$ from $T$ (Noting these circles intersect in a single point) and glue back in two parallel copies of $D_A$, two parallel copies of $D_B$, and we glue back in a square in $T$ around $\partial D_A\cap \partial D_B\in T$ to fill the resulting hole; see Figure \ref{fig:symetriccapping}. We smooth the edges of the resulting sphere and denote the result of this operation by $P$.

Note that since $D_A$ and $D_B$ have a single point of intersection, $P$ is an immersed sphere with four double points. $P$ is disjoint from $D_A$ and $D_B$, but since the caps intersected $W$, $P$ has additional points of intersection with $W$. See the top left and bottom left pictures of Figure \ref{fig:intersectionremoval}; in the top left we see an intersection of $D_B$ and $W$, in the bottom left we see how this introduces two points of intersection between $W$ and $P$.

We may however remove these new intersections of $W$ and $P$ in the following way. First, consider the intersections of $D_B$ and $W$. We may push down all such intersections into $T$ so that the intersection points of $W$ and $T$ lie on $\partial D_A$; see the top right picture in Figure \ref{fig:intersectionremoval}. Now when we take the symmetric capping to obtain $P$, $W$ will not intersect~$P$; see the bottom right picture in Figure \ref{fig:intersectionremoval}. We now do the same thing for intersections between~$W$ and~$D_A$. Note that this operation introduces extra self-intersections of~$W$; indeed each pair of intersections $p\in D_A\cap W$, $q\in D_B\cap W$ gives rise to a single self-finger move of $W$, which is performed when we push these intersections down into~$T$. See the second and third picture of Figure \ref{fig:wh1hardcase} for another depiction of the symmetric capping operation. 

\begin{figure}[ht]
	\begin{tikzpicture}
		\node[anchor=south west,inner sep=0] (image) at (0,0) {\includegraphics[width=\textwidth]{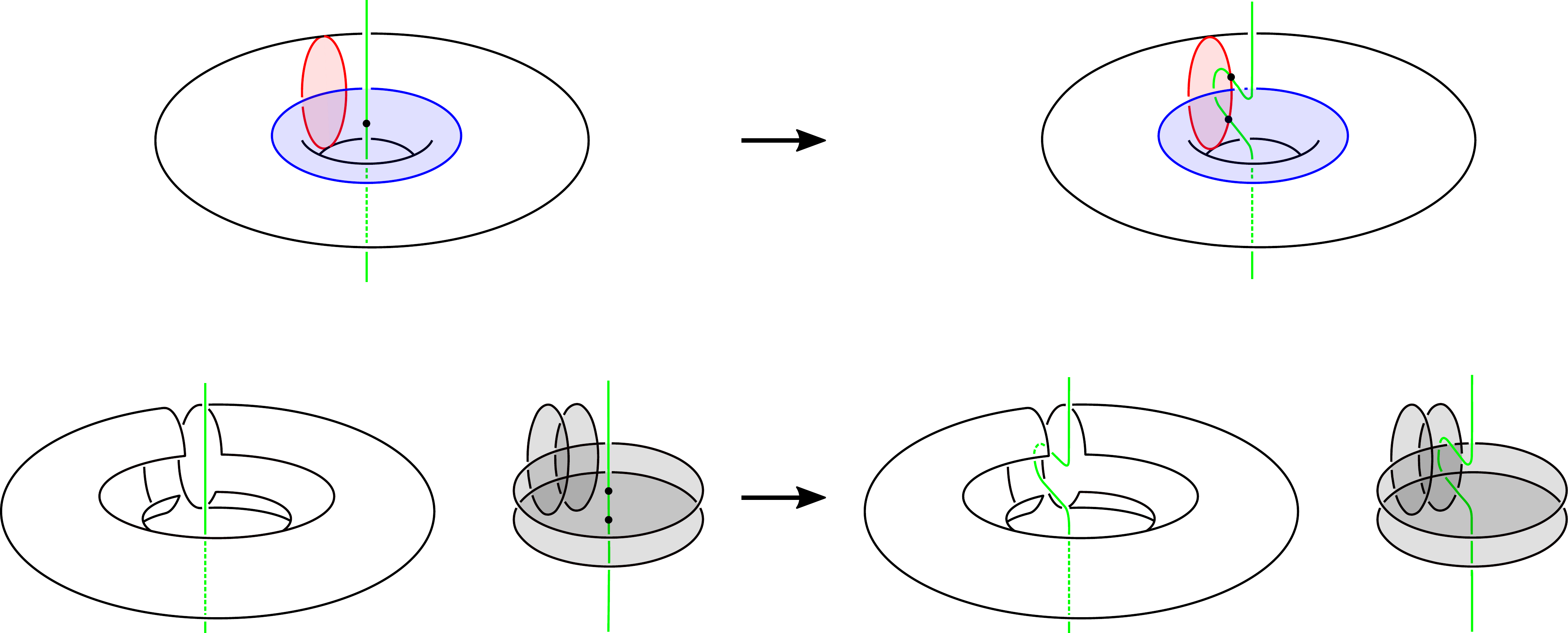}};
		\begin{scope}[x={(image.south east)},y={(image.north west)}]
			
			\node[text=green] at (0.25,0.901) {$W$};
			\node[text=blue] at (0.295,0.69) {$D_B$};
			
			\node[text=red] at (0.17,0.872)  {$D_A$};
			\node  at (0.35,0.785)  {$T$};
			\node  at (0.305,0.2)  {$\bigcup$};
			\node  at (0.85,0.2)  {$\bigcup$};
		\end{scope}
	\end{tikzpicture}
	\caption{\label{fig:intersectionremoval}Removing intersections with $S$ which arise from the intersections with the caps. In the top figure we see $T$, the caps $D_A$ and $D_B$ and a point of intersection between $U$ and $D_B$ (note that $W$ continues into the past and future). In the bottom left we see the two parts of the symmetric capping $P$; we draw them separately to make the picture clear. We picture $W$ in both these parts (we draw $W$ twice to show how it interacts with both parts of the symmetric capping). In the top right figure we see the result of pushing down the intersection of $W$ and $D_B$ into $T$. In the bottom left we see that $W$ now misses the symmetric capping.}
\end{figure}

\begin{figure}[p]
	\begin{tikzpicture}
		\node[anchor=south west,inner sep=0] (image) at (0,0) {\includegraphics[width=0.78\textwidth]{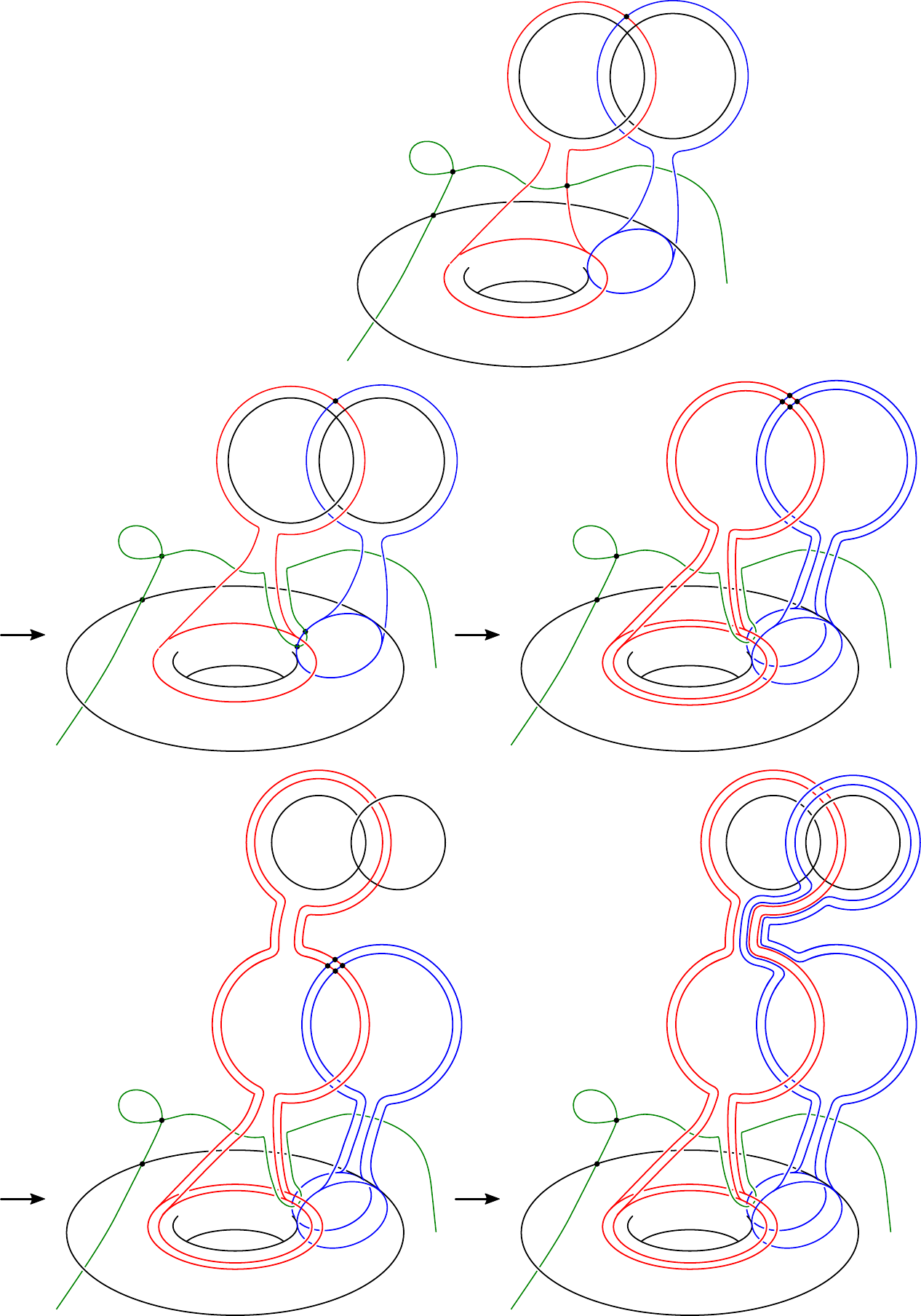}};
		\begin{scope}[x={(image.south east)},y={(image.north west)}]
			\node[text=red]  at (0.52,0.95)  {$D_A$};
			\node[text=blue]  at (0.85,0.95)  {$D_B$};
			\node[text=dark-green]  at (0.43,0.9)  {$W$};
			\node  at (0.4,0.825)  {$T$};
			
			\node[text=red]  at (0.2,0.95-0.29)  {$D_A$};
			\node[text=blue]  at (0.53,0.95-0.29)  {$D_B$};
			\node[text=dark-green]  at (0.43-0.32,0.9-0.29)  {$W$};
			\node  at (0.4-0.32,0.825-0.29)  {$T$};
			
			\node[text=red]  at (0.205+0.49,0.95-0.29)  {$D_A^+$};
			\node[text=red]  at (0.205+0.49+0.075,0.95-0.29)  {$D_A^-$};
			\node[text=blue]  at (0.53+0.49,0.95-0.29)  {$D_B^+$};
			\node[text=blue]  at (0.53+0.49-0.075,0.95-0.29)  {$D_B^-$};
			\node[text=dark-green]  at (0.43-0.32+0.49,0.9-0.29)  {$W$};
			\node  at (0.4-0.32+0.49,0.825-0.29)  {$T$};

			\node[text=red]  at (0.205,0.95-0.29-0.43)  {$D_A^+$};
			\node[text=red]  at (0.205+0.075,0.95-0.29-0.43)  {$D_A^-$};
			\node[text=blue]  at (0.53,0.95-0.29-0.43)  {$D_B^+$};
			\node[text=blue]  at (0.53-0.075,0.95-0.29-0.43)  {$D_B^-$};
			\node[text=dark-green]  at (0.43-0.32,0.9-0.29-0.43)  {$W$};
			\node  at (0.4-0.32,0.825-0.29-0.43)  {$T$};
			\node[anchor=west]  at (0.155,0.35)  {$p\times S^2$};
			\node[anchor=west]  at (0.475,0.35)  {$S^2\times p$};

			\node[text=red]  at (0.205+0.49,0.95-0.29-0.43)  {$D_A^+$};
			\node[text=red]  at (0.205+0.49+0.075,0.95-0.29-0.43)  {$D_A^-$};
			\node[text=blue]  at (0.53+0.49,0.95-0.29-0.43)  {$D_B^+$};
			\node[text=blue]  at (0.53+0.49-0.075,0.95-0.29-0.43)  {$D_B^-$};
			\node[text=dark-green]  at (0.43-0.32+0.49,0.9-0.29-0.43)  {$W$};
			\node  at (0.4-0.32+0.49,0.825-0.29-0.43)  {$T$};
			
		\end{scope}
	\end{tikzpicture}
	\caption{\label{fig:wh1hardcase}We depict the manipulation of the Clifford caps. To obtain the first picture we tube $D_A$ into $A^*$ and $D_B$ into $B^*$. To obtain the second picture from the first we push down the intersections between $W$ and $D_A$ into $T$ (we do the same for any intersections between $W$ and $D_B$). To obtain the third picture we perform the symmetric capping operation, and note that $W$ is now disjoint from the discs, and from $T\setminus(\partial D_A\cup \partial D_B)$. In the third picture we see the sphere $P$ as the union of $T\setminus(\partial D_A\cup \partial D_B)$, the caps, and the square around $\partial D_A\cap \partial D_B$. To obtain the fourth picture we perform a two sheeted Norman trick, tubing $D_A^\pm$ into $p\times S^2\subset Y\setminus B^4$. To obtain the final picture we perform a further two sheeted Norman trick, tubing $D_B^\pm$ into  $S^2\times p\subset Y\setminus B^4$. In the final picture we see $P'$ as the union of $T\setminus(\partial D_A\cup \partial D_B)$, the caps, and the square around $\partial D_A\cap \partial D_B$. We note that $P'$ is embedded with a single point of intersection with $W$.}
\end{figure}

After this process, $W$ and $P$ intersect only in a single point (the point in which the original Clifford torus intersected $W$). $W$ has many self intersections, but still does not intersect $A$ or $B$ in its interior. Further, $P$ is disjoint from $A$ and $B$. $P$ is immersed with 4 double points which arose from the point of self intersection between $D_A$ and $D_B$. Hence we can see the intersection points of $P$ as the intersection of two parallel copies of $D_A$, denoted $D_A^+$ and $D_A^-$, and two parallel copies of $D_B$, denoted $D_B^+$ and $D_B^-$; see the third picture of Figure \ref{fig:wh1hardcase}.

Since so far we worked entirely within $\widehat{V}$, we may now use the $S^2\times S^2$ summand
\[Y\setminus B^4= S^2\times S^2\setminus B^4\subset V = \widehat{V}\cup (Y\setminus B^4)\]
to resolve the self-intersections of $P$. First, we take the parallel sheets $D_A^+$ and $D_A^-$ and tube them to two parallel copies of $p\times S^2\subset Y\setminus B^4$; see Figure \ref{fig:twosheetedtubing}. We tube using some arc which we make disjoint from any spheres and discs by dimensionality. See the fourth picture of Figure \ref{fig:wh1hardcase} for a depiction of the resulting discs. Another way to see this operation is to consider tubing $D_A$ into $p\times S^2$, then taking two parallel copies of the resulting surface; hence $D_A^\pm$ are still parallel copies of some surface. 

\begin{figure}[ht]
	\begin{tikzpicture}
		\node[anchor=south west,inner sep=0] (image) at (0,0) {\includegraphics[width=0.7\textwidth]{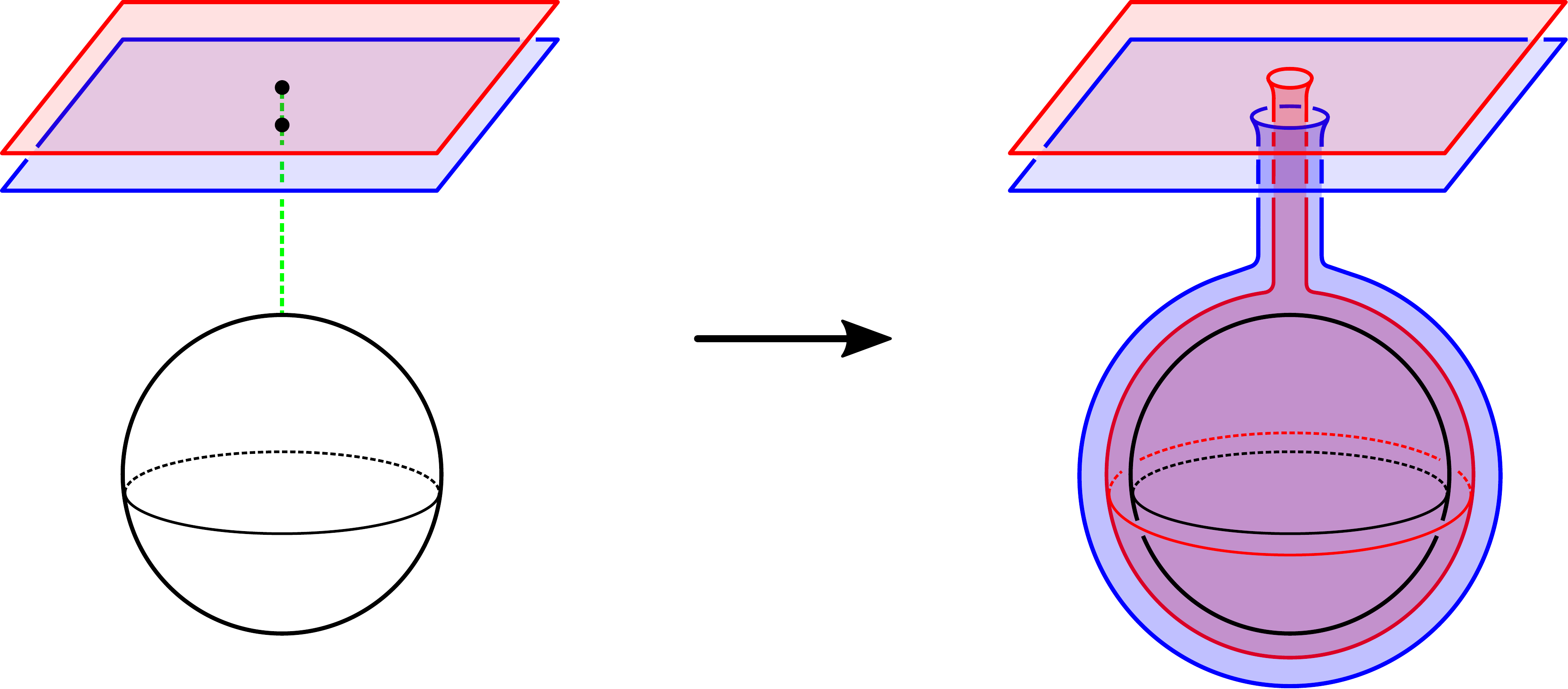}};
		\begin{scope}[x={(image.south east)},y={(image.north west)}]

			\node[text=blue] at (0.32,0.72) {$D_A^-$};
			\node[text=red] at (0.383,0.975) {$D_A^+$};

			\node  at (0.335,0.336)  {$p\times S^2$};
		\end{scope}
	\end{tikzpicture}
	\caption{\label{fig:twosheetedtubing}Tubing the discs $D_A^+$ and $D_A^-$ into two parallel copies of $p\times S^2$.}
\end{figure}

The two parallel sheets $D_A^\pm$ now intersect $S^2\times q$. We can remove the intersections of $D_A^\pm$ with~$D_B^\pm$  by performing a two-sheeted  Norman trick, tubing the two sheets  into~$S^2\times q$ to remove all self-intersections of $P$. See Figure \ref{fig:twosheetednorman} for a depiction of this. See the fifth Figure of \ref{fig:wh1hardcase} for a depiction of the resulting discs. Another way to see this is to view~$D_A^\pm$ as parallel copies of~$D_A$ which intersects~$S^2\times q$ in a single point, and~$D_B^\pm$ as parallel copies of~$D_B$. Performing the Norman trick with~$S^2\times q$ to remove the single point of intersection between~$D_A$ and~$D_B$, then taking parallel copies of both yields the same result. We denote the resulting surface by~$P^\prime$. The sphere~$P^\prime$ is embedded, intersects $W$ in a single point, and is disjoint from~$A$ and~$B$.

\begin{figure}[ht]
	\begin{tikzpicture}
		\node[anchor=south west,inner sep=0] (image) at (0,0) {\includegraphics[width=0.8\textwidth]{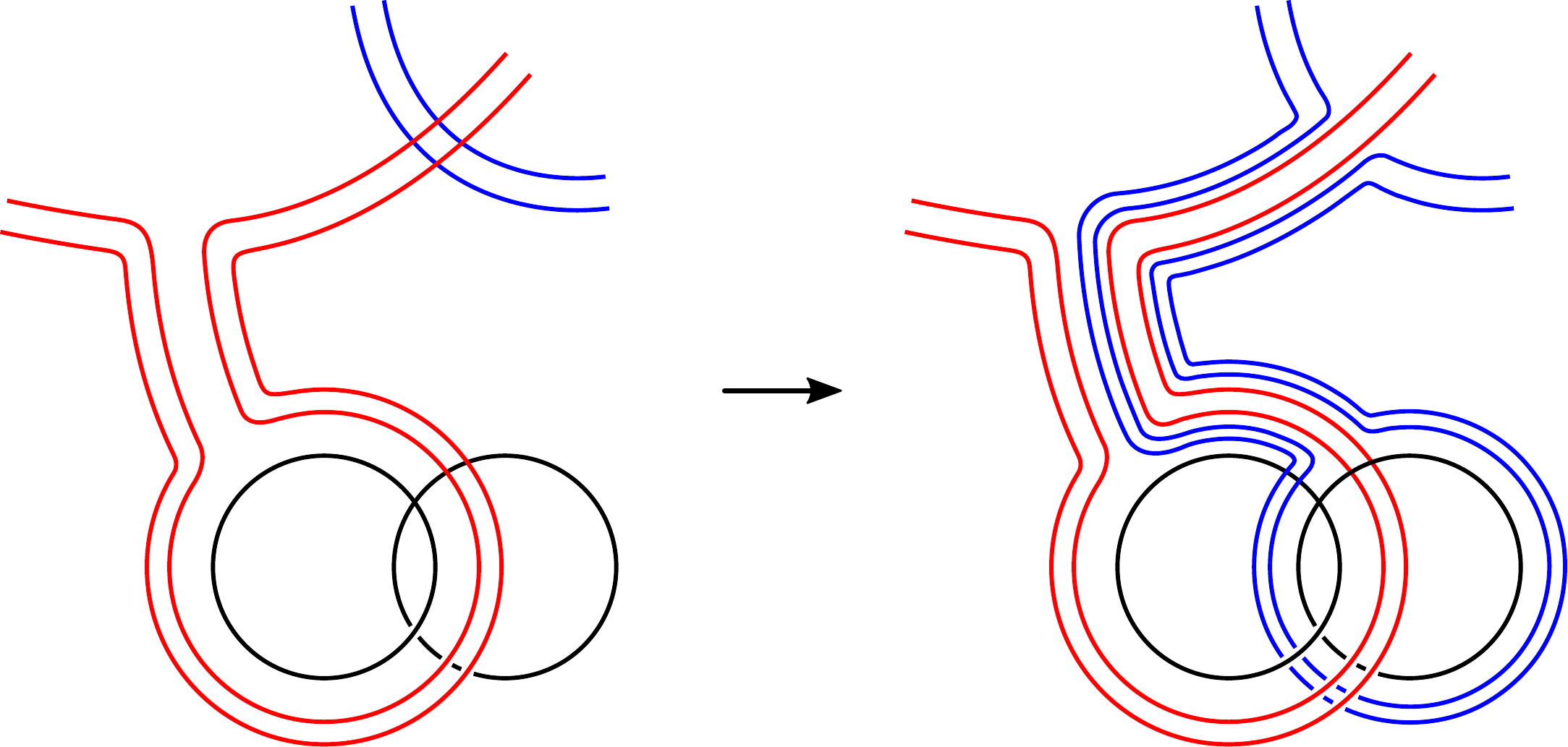}};
		\begin{scope}[x={(image.south east)},y={(image.north west)}]
			\node[text=red] at (0.03,0.775) {$D_A^+$};
			\node[text=red] at (0.03,0.625) {$D_A^-$};
			\node[text=blue] at (0.405,0.815) {$D_B^+$};
			\node[text=blue] at (0.405,0.665) {$D_B^+$};
			\node[anchor=west]  at (0.133,0.25)  {$p\times S^2$};
			\node[anchor=west]  at (0.388,0.25)  {$S^2\times q$};

		\end{scope}
	\end{tikzpicture}
	\caption{\label{fig:twosheetednorman}Left we see $D_A^+$ and $D_A^-$ after we tube them into parallel copies of $p\times S^2$. We also see the four intersection points with $D_B^+$ and $D_B^-$. Right we perform a two sheeted Norman trick, tubing $D_B^+$ and $D_B^-$ into parallel copies of $S^2\times q$, removing the intersections.}
\end{figure}

Note that we could have equivelently used $Y\setminus B^4= S^2\times S^2\setminus B^4$ to make $D_A$ and $D_B$ disjoint, then taken parallel copies of both.

Since $D_A^+$ and $D_A^-$ had opposite orientations, and so did $D_B^+$ and $D_B^-$, $P^\prime$ represents the same homotopy class in $V$ as $P$ (this is precisely why we went to so much trouble with the symmetric capping). Since $P$ is contained in the $S^2\times S^2$ summand of $\widehat{V}=\widehat{X}\# S^2\times S^2$, when we include $P$ into $ X\times I$, it is 0 in $\pi_2 (X\times I)=\pi_2 X$, hence $[P^\prime]=0\in\pi_2 (X\times I)$.

Since $P'$ is embedded with trivial normal bundle (note that all the spheres we tubed into had trivial normal bundle), and $P'$ intersects $W$ in a single point, we can use~$P'$ to perform the Norman trick and remove the self intersections of $W$. After doing so~$W$ is a framed embedded Whitney disc, and does not intersect $A$ or $B$ in its interior. Note also, since $P'$ is $0$ in $\pi_2 X$, the union of the Whitney discs $U\cup W$ (with the basepoint arc) still represents $\sigma$ in $\pi_2 X$.

We now use $W$ to perform a Whitney move, then cancel the resulting handles. As previously we see that $\gamma_C=\gamma$, $\sigma_C=\sigma$ and $s_C=1$, the latter because we performed a single boundary twist with $B$.
\end{proof}

\section{Pseudo-isotopy versus isotopy}\label{section:Pseudo-isotopy versus isotopy}
In this section we construct diffeomorphisms of 4-manifolds which are pseudo-isotopic but not isotopic to the identity.

To do this we make use of the previously described obstructions $\Theta$ and $\Sigma$. Given a diffeomorphism $f\in\Diff_{PI}(X,\partial X)$, we may try to obstruct it from being isotopic to the identity by picking a pseudo-isotopy $F\in \mathcal{P}$ from the identity to $f$, that is with~$F|_{X\times 1}=f$, and then evaluating $\Sigma(F)$ and $\Theta(F)$. However, $\Sigma(F)$ and $\Theta(F)$ clearly depend on the choice of $F$, not just on $f$. Given another choice of pseudo-isotopy $G\in\mathcal{P}$ from the identity to~$f$, we see that the composition $F\circ G^{-1}$ is a pseudo-isotopy fixing the entire boundary of $X\times I$. This motivates the following definition.

\begin{definition}
Let $F\in \mathcal{P}(X)$ be a pseudo-isotopy with $F_{X\times 1}=\id_X$. Note that $F_{X\times 0}$ and $F_{\partial X}$ are the identity by definition of $\mathcal{P}$ so in fact $F_{\partial(X\times I)}=\id_{\partial(X\times I)}$. We say that~$F$ is an \emph{inertial} pseudo-isotopy, and denote the set of inertial pseudo-isotopies by~$\mathcal{J}(X)\subset\mathcal{P}(X)$, or just $\mathcal{J}$ when $X$ is clear from the context.
\end{definition}

\begin{remark}
	Since inertial pseudo-isotopies fix the entire boundary of $X\times I$ in fact 
\[\mathcal{J}(X) = \Diff(X\times I, \partial(X\times I)).\]
\end{remark}

Let $f\colon X\rightarrow X$ be a diffeomorphism that is pseudo-isotopic to the identity. Define 
\begin{align*}
\Sigma\colon\pi_0\Diff_{PI}(X,\partial X)\rightarrow \Wh_2(\pi_1 X)/\Sigma(\mathcal{J})\\
\end{align*}
by saying $\Sigma(f)\vcentcolon=\Sigma(F)$, where $F$ is some pseudo-isotopy from $\id_X$ to $f$; by the discussion above this is independent of the choice of $F$. 

To show $\Sigma(f)$ is an invariant of the isotopy class of $f$ consider $g$ isotopic of $f$, and let $S$ be an isotopy from the identity to $f^{-1}\circ g$. Pick a pseudo-isotopy $F$ from the identity to $f$, then $F\circ S$ is a pseudo-isotopy from the identity to $g$, so 
\[\Sigma(f)=\Sigma(F)= \Sigma(F)+\Sigma(S)=\Sigma(F\circ S)=\sigma(g).\]
Note here that $\Sigma(S)=0$ since $S$ is an isotopy.

We wish to also define $\Theta(f)\in\Big(\Wh_1(\pi_1 X ;\mathbb{Z}_2 \times \pi_2 X)/\chi(K_3\mathbb{Z}[\pi_1 X])\Big)/\Theta(\mathcal{J}\cap\ker\Sigma)$. To define $\Theta$ for $f\in\ker\Sigma$ we first prove the below lemma.

\begin{lemma}\label{kersigmalemma}
Let $f$ be a self-diffeomorphism of $X$ which is pseudo-isotopic to the identity. If $\Sigma(f)=0\in\Wh_2(\pi_1 X)/\Sigma(\mathcal{J})$, then there exists a pseudo-isotopy $F$ from $\id_X$ to $f$ such that~$\Sigma(F)=0\in \Wh_2(\pi_1 X)$. 
\end{lemma}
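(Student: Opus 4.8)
The plan is to take an arbitrary pseudo-isotopy realising $f$ and correct it by an inertial pseudo-isotopy chosen precisely so as to cancel its $\Sigma$-obstruction. Since $f$ is pseudo-isotopic to the identity, I would first choose some $F_0\in\mathcal{P}(X)$ with $F_0|_{X\times 1}=f$. By hypothesis $\Sigma(F_0)\in\Wh_2(\pi_1 X)$ maps to $0$ in $\Wh_2(\pi_1 X)/\Sigma(\mathcal{J})$, so there is an inertial pseudo-isotopy $J\in\mathcal{J}(X)$ with $\Sigma(J)=\Sigma(F_0)$.

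Next I would set $F=F_0\circ J^{-1}$. Because $J\in\mathcal{J}(X)$ fixes all of $\partial(X\times I)$, so does $J^{-1}$, and hence so does the composite $F$; thus $F\in\mathcal{P}(X)$. Since $J$ is inertial, $J|_{X\times 1}=\id_X$, so $J^{-1}|_{X\times 1}=\id_X$, and therefore
\[F|_{X\times 1}=(F_0|_{X\times 1})\circ(J^{-1}|_{X\times 1})=f\circ\id_X=f,\]
so $F$ is a pseudo-isotopy from $\id_X$ to $f$, as needed.

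Finally, I would invoke the fact of Hatcher and Wagoner (already used in the paragraph preceding this lemma) that $\Sigma$ is a homomorphism from the group $\pi_0\mathcal{P}(X)$, under composition, to $\Wh_2(\pi_1 X)$, together with $\Sigma(\id_{X\times I})=0$ (the identity lies in $\mathcal{E}$, so has empty Cerf graphic). From $0=\Sigma(\id_{X\times I})=\Sigma(J\circ J^{-1})=\Sigma(J)+\Sigma(J^{-1})$ we get $\Sigma(J^{-1})=-\Sigma(J)$, and hence
\[\Sigma(F)=\Sigma(F_0)+\Sigma(J^{-1})=\Sigma(F_0)-\Sigma(J)=0\in\Wh_2(\pi_1 X),\]
which is exactly the conclusion.

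I do not anticipate any genuine obstacle here: the argument is a one-line manipulation in the group $\pi_0\mathcal{P}(X)$. The only points that need care are that $\mathcal{P}(X)$ is a group under composition, that $\Sigma$ descends to a homomorphism on $\pi_0$, and that restriction to $X\times 1$ is multiplicative — all standard. The real content is simply that $\Sigma(\mathcal{J})$ is, by construction, precisely the indeterminacy appearing when $\Sigma$ is defined on $\pi_0\Diff_{PI}(X,\partial X)$.
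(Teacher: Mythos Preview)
Your proof is correct and matches the paper's own argument essentially line for line: the paper also picks an arbitrary pseudo-isotopy $G$ to $f$, chooses $S\in\mathcal{J}$ with $\Sigma(S)=\Sigma(G)$, sets $F=G\circ S^{-1}$, and concludes $\Sigma(F)=\Sigma(G)-\Sigma(S)=0$. Your version is slightly more explicit in verifying that $F$ restricts to $f$ on $X\times 1$ and that $\Sigma$ is additive, but the content is identical.
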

\begin{proof}
Let $G$ be a pseudo-isotopy from the identity to $f$. Since $\Sigma(f)= 0 \in\Wh_2(\pi_1 X)/\Sigma(\mathcal{J})$, we have~$\Sigma(G)\in\Sigma(\mathcal{J})$. Hence there exists $S\in\mathcal{J}$ with $\Sigma(G)=\Sigma(S)$. Let $F=G\circ S^{-1}$, since $S$ is inertial $F$ is also a pseudo-isotopy from $f$ to the identity. Then $\Sigma(F)=\Sigma(G\circ S^{-1}) = \Sigma(G) -\Sigma(S)=0$ as required.
\end{proof}

Hence given $f\in\ker{\Sigma}$, Taking $F$ to be a pseudo-isotopy from the identity to $f$ with $\Sigma(F)=0$, we define.
\begin{align*}
\Theta\colon \ker\Sigma\subset\pi_0\Diff_{PI}(X,\partial X) &\longrightarrow \Wh_1(\pi_1 X ;\mathbb{Z}_2 \times \pi_2 X)/\chi(K_3\mathbb{Z}[\pi_1 X])/\Theta(\mathcal{J}\cap\ker\Sigma)\\
f &\longmapsto \Theta(F).
\end{align*}
Note that the $\ker\Sigma$ on the right refers to the subset of $\mathcal{P}$, while the $\ker\Sigma$ on the left refers to the subset of $\Diff_{PI}(X,\partial X)$.

To see that this is well defined, suppose $f$ and $g$ are isotopic, and let~$F,G\in\ker\Sigma$ be pseudo-isotopies from the identity to $f$ and $g$ respectively as in Lemma \ref{kersigmalemma}. Also let $S$ be an isotopy from the identity to $f^{-1}\circ g$. Then
\[\Theta(f)-\Theta(g)=\Theta(F)-\Theta(G)=\Theta(F) +\Theta(S)-\Theta(G)= \Theta(F\circ S\circ G^{-1})\in\Theta(\mathcal{J}\cap\ker\Sigma) \]

If $k_1 X=0$ we may also define $\Theta(f)$ when $\Sigma(f)\neq 0$, however in order to obtain something well defined we must define it in the group $\Theta(f)\in\Wh_1(\pi_1 X ;\mathbb{Z}_2 \times \pi_2 X)/\Theta(\mathcal{J})$. Note that we possibly quotient out by a larger subgroup. We conjecture that these groups are the same.
\begin{conjecture}
For $X$ a 4-manifold with $k_1 X=0$, $\Theta(\mathcal{J}(X)) = \Theta(\mathcal{J}(X)\cap\ker\Sigma)$
\end{conjecture}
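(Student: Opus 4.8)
As the statement is only conjectured, what follows is a line of attack rather than a complete argument.

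The plan is to reduce the conjecture to a realisation statement. The inclusion $\Theta(\mathcal{J}(X)\cap\ker\Sigma)\subseteq\Theta(\mathcal{J}(X))$ is immediate, so only the reverse inclusion is at issue, and it suffices to prove: for every $x\in\Sigma(\mathcal{J}(X))$ there is $G_x\in\mathcal{J}(X)$ with $\Sigma(G_x)=x$ and $\Theta_\sigma(G_x)=0$. Granting this, take any $F\in\mathcal{J}(X)$, put $x=\Sigma(F)\in\Sigma(\mathcal{J}(X))$, and consider $F\circ G_x^{-1}$. Since $\mathcal{J}(X)=\Diff(X\times I,\partial(X\times I))$ is a group, $F\circ G_x^{-1}\in\mathcal{J}(X)$; since $\Sigma$ is a homomorphism, $\Sigma(F\circ G_x^{-1})=\Sigma(F)-\Sigma(G_x)=0$, so $F\circ G_x^{-1}\in\mathcal{J}(X)\cap\ker\Sigma$; and since, for $k_1X=0$, Igusa's splitting $\Theta_\sigma$ is a homomorphism which restricts to $\Theta$ on $\ker\Sigma$, we obtain $\Theta(F\circ G_x^{-1})=\Theta_\sigma(F)-\Theta_\sigma(G_x)=\Theta_\sigma(F)$. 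Hence $\Theta_\sigma(F)\in\Theta(\mathcal{J}(X)\cap\ker\Sigma)$, as desired. One caveat is that the homomorphism property of $\Theta_\sigma$ must be verified in dimension $4$; the excerpt records it only in dimension $\geq 6$, but it is expected to follow from the constructions of \cite{Igusa}.

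To build $G_x$, first represent $x$ by a word $w\in K_2(\mathbb{Z}[\pi_1 X])$ and form a path of Morse functions $f_t$ exactly as in the proof of Theorem \ref{high-dim-surjectivity}: create $N$ cancelling pairs of $2$- and $3$-handles, perform the handle slides dictated by $w$, then recancel. Two things must then be arranged. First, the associated pseudo-isotopy must be made \emph{inertial}, i.e.\ the diffeomorphism of $X$ induced by the gradient flow from $X\times 0$ to $X\times 1$ must be the identity; this should be achievable by performing the final handle cancellations with the same framings and in the same position as the creations. Second, one must ensure $\Theta_\sigma(G_x)=0$. The natural route is to follow the proof of Theorem \ref{stablesurjectivity}: immediately after the handle creation one has dual spheres $A_i^*,B_i^*\subset V$ with trivial normal bundle, the handle slides can be arranged to preserve a dual family by Lemma \ref{handle-slide-transverse}, and one then uses these dual spheres and the Norman trick to make the cancelling $W$-disks embedded and disjoint from the $A$- and $B$-spheres \emph{without any finger moves}. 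The trace of the middle level then contains no essential circles of intersection, so $\Theta_\sigma(G_x)=0$.

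The hard part is exactly the phenomenon that makes $\Sigma$ and $\Theta$ difficult to control in dimension $4$, and which motivates the rest of the paper. In dimension $\geq 5$ the Whitney trick realises the differential $\partial_3=1$ geometrically with no auxiliary data, so the constructed pseudo-isotopy automatically has vanishing $\Theta$; in dimension $4$ one cannot do this. As in Theorem \ref{stablesurjectivity} one is forced either to stabilise $X$ by copies of $S^2\times S^2$ --- which is not permitted here --- or to introduce finger moves whose undoing $U$-disks may genuinely meet the cancelling $W$-disks, and such intersections are precisely what can make $\Theta_\sigma$ nonzero. Proving that, for every $X$ with $k_1X=0$, each element of $\Sigma(\mathcal{J}(X))$ is nonetheless realised by an inertial pseudo-isotopy with vanishing $\Theta_\sigma$ --- equivalently, that these unwanted circles of intersection can always be removed inside $X$ itself --- is the crux, and is why the statement is only conjectural. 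Even identifying $\Sigma(\mathcal{J}(X))$ itself, as opposed to $\Sigma(\mathcal{P}(X))$, appears subtle in dimension $4$: for a pseudo-isotopy $F$ with $\Sigma(F)=x$, the diffeomorphism $F|_{X\times 1}$ of $X$ is pseudo-isotopic but not necessarily isotopic to the identity, so passing to an inertial representative of the class $x$ is not automatic.
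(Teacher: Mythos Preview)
The statement is explicitly a conjecture in the paper, and the paper offers no proof or proof sketch: it is stated immediately after the discussion of extending $\Theta$ to $\Diff_{PI}$ outside $\ker\Sigma$ when $k_1X=0$, and is then left open. There is therefore no ``paper's own proof'' to compare your proposal against.

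Your reduction is correct and your diagnosis of the obstacles is accurate. Reducing the reverse inclusion to the realisation problem --- for each $x\in\Sigma(\mathcal{J}(X))$, produce $G_x\in\mathcal{J}(X)$ with $\Sigma(G_x)=x$ and $\Theta_\sigma(G_x)=0$ --- is the natural approach, and the deduction from it is sound provided $\Theta_\sigma$ is a homomorphism on $\pi_0\mathcal{P}(X)$ in dimension $4$; you are right to flag this as something that must be checked separately, since the paper only records Igusa's splitting in dimension $\geq 6$. You also correctly identify why the realisation step is hard: the construction in Theorem~\ref{stablesurjectivity} uses the extra $S^2\times S^2$ summands precisely to make the $W$-disks embedded and disjoint from the $A$- and $B$-spheres without introducing circles of intersection, and without stabilisation there is no known way to force $\Theta_\sigma(G_x)=0$. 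The further subtlety you raise --- that even producing an \emph{inertial} representative of a given $x\in\Wh_2(\pi_1X)$ is not automatic in dimension $4$, because $F|_{X\times 1}$ is a priori only pseudo-isotopic to the identity --- is a genuine additional difficulty not addressed in the paper. Your proposal is an honest outline of the problem and of why it remains open.
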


The images $\Theta(\mathcal{J})$ and $\Sigma(\mathcal{J}\cap\ker\Sigma)$ are in general difficult to determine. For 4-manifolds of a certain form however we can say something about these two subgroups, in particular when $X=M^3\times [0,1]$ where $M$ is a $3$-manifold. 

\subsection{Duality formulae}

We recall the duality formulae and involutions of the Whitehead groups from Chapter VIII of \cite{HatcherWagoner} and Sections 4 and 5 of \cite{Hatcher}. We first give an analogue of turning a Morse function upside down for pseudo-isotopies. 

\begin{definition}
	Let $F$ be a pseudo-isotopy. Denote the reflection map on $X\times I$ which sends $(p, s)$ to $(p, 1-s)$ by $R$. We define the \emph{dual} pseudo-isotopy to $F$ to be
	\[\overline{F}=\left( \left(F|_{X\times 1}\right)^{-1}\times \id_I\right)\circ R\circ F\circ R.\]
	We note $\overline{F}$ sends $X\times i$ to $X\times i$ for $i\in\{0,1\}$ and that $(R\circ F\circ R)|_{X\times 0}=F|_{X\times 1}$, so
	\[\overline{F}|_{X\times 0}=\left(\left( \left(F|_{X\times 1}\right)^{-1}\times \id_I\right)\circ R\circ F\circ R\right)|_{X\times 0}= F|_{X\times 1}^{-1}\circ F|_{X\times 1}=\id_X.\]
	Further $(R\circ F\circ R)|_{\partial X\times I} = \id_{\partial X\times I}$ so $\overline{F}|_{\partial X\times I}=\id_{\partial X\times I}$, so $\overline{F}\in\mathcal{P}(X)$.
\end{definition}

Let $f_t$ be a path in $\mathcal{F}$ representing $F$; that is with $f_1=p\circ F$, $f_0=p$. Denote 
\[\overline{f_t} = R_I\circ f_t\circ R = 1 - f_t\circ R,\] where $R_I$ is the reflection map on $I$. It is clear that~$\overline{f_t}$ is a 1-parameter family for $\overline{F}$. Considering the index of the Morse critical points, we see that a critical point of index~$i$ becomes a critical point of index~$n-i$. 

One can view $\overline{f_t}$ as turning each Morse function upside down (which is usually considered to be taking $R_I\circ f$ for a Morse function $f$), then using the diffeomorphism $R$ to fix the ends, that is to make $\overline{f_t}(X\times i)=i$ for $i\in\{0,1\}$. Because $R$ is a diffeomorphism it does not change the index of the critical points.

Hatcher in \cite{Hatcher} in fact uses the path $1-f_t$ to compute $\Sigma (\overline{F})$ and $\Theta (\overline{F})$. Again because $R_{X\times I}$ is a diffeomorphism it does not change the index of handles, or the intersections between handles, and it is the identity on homotopy groups, so does not change the computation. We describe this computation below.

\subsubsection{Involution of $\Wh_1(\pi_1 X;\mathbb{Z}_2\times \pi_2 X)$}\label{wh_1_involution}

In this section we recall the involution
\begin{align*}
\bar{\cdot}\colon\Wh_1(\pi_1 X;\mathbb{Z}_2\times \pi_2 X)&\longrightarrow\Wh_1(\pi_1 X;\mathbb{Z}_2\times \pi_2 X)\\
\theta&\longmapsto\overline{\theta}
\end{align*}
described by Hatcher in \cite[Lemma 4.3]{Hatcher}. We will use this to relate $\Theta(\overline{F})$ and $\Theta(F)$.

We define maps
\begin{align*}
\bar{\cdot}\colon  \mathbb{Z}[\pi_1 X] & \longrightarrow  \mathbb{Z}[\pi_1 X]\\
					    \gamma & \longmapsto w_1^X(\gamma) \gamma^{-1}\;\text{ for }\;\gamma\in\pi_1 X\\
\end{align*}

and 
\begin{align*}
\bar{\cdot}\colon  (\mathbb{Z}_2\times \pi_2 X)[\pi_1 X] & \longrightarrow  (\mathbb{Z}_2\times \pi_2 X)[\pi_1 X]\\
					    (n,\sigma)\gamma & \longmapsto \left(n + w_2^X(\sigma), -w_1^X(\gamma)\sigma^{\gamma^{-1}}\right)\gamma^{-1}.\\
\end{align*}
For $\gamma\in\pi_1 X$, $\sigma\in\pi_2 X$ ,$n\in\mathbb{Z}_2$. Here $\sigma^{\gamma^{-1}}$ denotes the action of $\gamma^{-1}$ on $\sigma$. Clearly this defines these maps on the whole group by additivity.

We can define an involution on $GL\left(\left(\mathbb{Z}_2\times \pi_2 X\right)[\pi_1 X]\right)$
\[ I+A_{i,j}\longmapsto I+\overline{A_{j,i}}. \]
This in turn induces an involution on $\Wh_1(\pi_1 X;\mathbb{Z}_2\times \pi_2 X)$, which we denote $\theta\mapsto\overline{\theta}$. Considering the isomorphism in Corollary \ref{whtrace}, we can equivalently define the involution on $\Wh_1(\pi_1 X;\mathbb{Z}_2\times \pi_2 X)$ by considering it as a quotient of $(\mathbb{Z}_2\times \pi_2 X)[\pi_1 X]$, then the involution is induced directly by the involution on $(\mathbb{Z}_2\times \pi_2 X)[\pi_1 X]$.

\begin{remark}
Note that this involution depends not only on the groups $\pi_1 X$ and $\pi_2 X$ and the action of $\pi_1 X$ on $\pi_2 X$ (which is all that is needed to define the Whitehead groups) but also on the Stiefel-Whitney classes $w_1^X$ and $w_2^X$.
\end{remark}

We also wish to define an involution on the target of $\Theta$ when $k_1 X\neq 0$, namely the quotient~$\Wh_1(\pi_1 X;\mathbb{Z}_2\times\pi_2 X)/\chi(K_3 \mathbb{Z}[\pi_1 X])$, however in order to do so we would need the involution to fix $\chi(K_3 \mathbb{Z}[\pi_1 X])$; we conjecture that this is true.
\begin{conjecture}\label{overlinechi} $\overline{\chi(K_3 \mathbb{Z}[\pi_1 X])}=\chi(K_3 \mathbb{Z}[\pi_1 X])$.
\end{conjecture}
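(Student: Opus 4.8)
The plan is to prove the two inclusions separately; since $\bar{\cdot}$ is an involution, the inclusion $\chi(K_3\mathbb{Z}[\pi_1 X])\subseteq\overline{\chi(K_3\mathbb{Z}[\pi_1 X])}$ follows from $\overline{\chi(K_3\mathbb{Z}[\pi_1 X])}\subseteq\chi(K_3\mathbb{Z}[\pi_1 X])$ by applying $\bar{\cdot}$ again, so it suffices to show that for every $a\in K_3\mathbb{Z}[\pi_1 X]$ the element $\overline{\chi(a)}$ is again in the image of $\chi$. The natural way to achieve this is to produce a commuting square: a duality involution $\iota\colon K_3\mathbb{Z}[\pi_1 X]\to K_3\mathbb{Z}[\pi_1 X]$ together with the identity $\chi\circ\iota=\bar{\cdot}\circ\chi$. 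Granting such a $\chi$-compatible $\iota$, one gets $\overline{\chi(a)}=\chi(\iota(a))\in\chi(K_3\mathbb{Z}[\pi_1 X])$ at once, and the conjecture follows.

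The candidate for $\iota$ is the standard $w_1^X$-twisted duality involution of algebraic $K$-theory: the anti-involution $g\mapsto w_1^X(g)g^{-1}$ of $\mathbb{Z}[\pi_1 X]$ induces, together with transpose–inverse of matrices (exactly as in the duality of Whitehead torsion and of Wall surgery obstructions), an involution on $K_3\mathbb{Z}[\pi_1 X]$. One then has to check that Igusa's map $\chi$ intertwines this involution with the algebraically defined $\bar{\cdot}$ on $\Wh_1(\pi_1 X;\mathbb{Z}_2\times\pi_2 X)$ of Section \ref{wh_1_involution}. The route I would take is to unwind the construction of $\chi$ in \cite{Igusa}: $\chi$ is assembled from the first $k$-invariant $k_1 X\in H^3(\pi_1 X;\pi_2 X)$ together with a pairing of $K_3$ against it (which is why $\chi=0$ when $k_1 X=0$), followed by the trace identification of Corollary \ref{whtrace}. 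I would then verify that each of the constituent maps is equivariant for the respective duality involutions — the Hurewicz/edge map out of $K_3$, the cap product with $k_1 X$ (self-dual up to the orientation character $w_1^X$, which accounts for the $\gamma\mapsto w_1^X(\gamma)\gamma^{-1}$ part of $\bar{\cdot}$, and up to the $w_2^X$-correction, which accounts for the $n\mapsto n+w_2^X(\sigma)$ term), and the trace map.

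An alternative, more geometric route avoids the algebra of $\chi$. By Igusa's exact sequence, $\chi(K_3\mathbb{Z}[\pi_1 X])$ is precisely the kernel of the realization map $\Phi\colon\Wh_1(\pi_1 X;\mathbb{Z}_2\times\pi_2 X)\to\pi_0\mathcal{P}(X)$ built from the single-eye families of Sections \ref{z2calculation}--\ref{proposition:stablewh1realisation} (this holds stably, or in dimension $\geq 6$; the statement of Conjecture \ref{overlinechi} itself is purely algebraic). Hatcher's duality formula \cite[Lemma 4.3]{Hatcher} gives $\Theta(\overline F)=\overline{\Theta(F)}$, and one expects the sharper statement $\overline{\Phi(x)}=\Phi(\bar x)$ in $\pi_0\mathcal{P}(X)$: turning the realizing family for $x$ upside down, via $\overline{f_t}=1-f_t\circ R$, should recover (up to isotopy of paths) the realizing family for $\bar x$, since the index $2\leftrightarrow 3$ swap exchanges the roles of the $A$- and $B$-spheres and conjugates the associated $\pi_1$-element (matching $\gamma\mapsto w_1^X(\gamma)\gamma^{-1}$), while the framing/twist bookkeeping on $\beta$ goes over to the bookkeeping after the role swap (matching the $s$ and $w_2^X$ terms). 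From $\overline{\Phi(x)}=\Phi(\bar x)$ one concludes $x\in\ker\Phi\iff\overline{\Phi(x)}=0\iff\Phi(\bar x)=0\iff\bar x\in\ker\Phi$, hence $\overline{\chi(K_3\mathbb{Z}[\pi_1 X])}=\overline{\ker\Phi}=\ker\Phi=\chi(K_3\mathbb{Z}[\pi_1 X])$.

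In both routes the crux — and the reason the statement is left as a conjecture here — is the same: making precise the duality behaviour of Igusa's $\chi$ (equivalently, of $\Phi$), which is defined through a fairly intricate Cerf-theoretic/$K$-theoretic construction, and in particular accounting for the $w_2^X$-correction in $\bar{\cdot}$, which is invisible on $K_3\mathbb{Z}[\pi_1 X]$ itself and so cannot be matched by a formal argument. I expect the decisive step to be a careful bookkeeping of framings in the upside-down one-parameter family, in the style of the proof of Propositions \ref{claimone} and \ref{claimtwo}, combined with the known naturality of $\chi$ in $X$.
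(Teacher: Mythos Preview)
The statement you are addressing is labelled a \emph{Conjecture} in the paper, and the paper gives no proof. Immediately after stating it, the author writes: ``We suspect that one can define an involution on $K_3\mathbb{Z}[\pi_1 X]$ so the involution commutes with $\chi$, which would prove this conjecture, however have not been able to resolve this.'' There is therefore no argument in the paper against which to compare your proposal.

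Your first route is exactly the strategy the paper already names: construct a duality involution $\iota$ on $K_3\mathbb{Z}[\pi_1 X]$ and verify $\chi\circ\iota=\bar{\cdot}\circ\chi$. Your second route, via $\ker\Phi$, is a reasonable reformulation but rests on the same unresolved point, namely that the realization/Igusa map is compatible with the involutions. You yourself identify this as ``the crux --- and the reason the statement is left as a conjecture here,'' so you are aware that neither route is complete. In particular, the step ``one expects the sharper statement $\overline{\Phi(x)}=\Phi(\bar x)$'' is exactly where a proof would have to do real work, and the $w_2^X$-correction you flag is precisely the piece that resists a purely formal argument. What you have written is an outline of the expected shape of a proof, not a proof; this matches the state of affairs in the paper.
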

We suspect that one can define an involution on $K_3\mathbb{Z}[\pi_1 X])$ so the involution commutes with $\chi$, which would prove this conjecture, however we have not been able to resolve this. In order to avoid this problem, instead we define
\[\widehat{\chi}\vcentcolon = \chi(K_3 \mathbb{Z}[\pi_1 X])+\overline{\chi(K_3 \mathbb{Z}[\pi_1 X])}\]
and 
\begin{align*}
\widehat{\Theta}\colon\pi_0\mathcal{P}(X)&\longrightarrow \Wh_1(\pi_1 X;\mathbb{Z}_2\times\pi_2 X)/\widehat{\chi}\\
\end{align*}
by taking the composition of $\Theta$ and the quotient map
\[\Wh_1(\pi_1 X;\mathbb{Z}_2\times\pi_2 X)/\chi\rightarrow \Wh_1(\pi_1 X;\mathbb{Z}_2\times\pi_2 X)/\widehat{\chi}.\]
Now we can also define
\[\widehat{\Theta}\colon\pi_0\Diff_{PI}(X,\partial X)\longrightarrow \big(\Wh_1(\pi_1 X;\mathbb{Z}_2\times\pi_2 X)/\widehat{\chi}\big)/\widehat{\Theta}(\mathcal{J}\cap\ker\Sigma)\]
as we did for $\Theta$.

Clearly when $k_1 X=0$ or $\chi(K_3 \mathbb{Z}[\pi_1 X])=0$, $\Theta=\widehat{\Theta}$, and if Conjecture \ref{overlinechi} holds then $\Theta=\widehat{\Theta}$ regardless.

\subsubsection{Involution of $\Wh_2(\pi_1 X)$}\label{wh_2_involution}

We now recall the involution of $\Wh_2(\pi_1 X)$ defined by Hatcher-Wagoner; see \cite[ChapterVIII]{HatcherWagoner}. We use this to relate $\Sigma(\overline{F})$ and $\Sigma(F)$.

First we define an involution on the Steinberg group $\St(\mathbb{Z}[\pi_1 X])$ group denoted $a\mapsto \overline{a}$. We define the map on the generators by $s_{i,j}^\lambda\mapsto s_{j,i}^{\overline{\lambda}}$; note that $\lambda\in\mathbb{Z}[\pi_1]$ and that we use the involution defined in Section \ref{wh_1_involution}. Since this preserves the Steinberg relations it defines an involution of $\St(\mathbb{Z}[\pi_1 X])$.

One can easily define an involution on $E(\mathbb{Z}[\pi_1 X])$ sending $M_{i,j}$ to $\overline{M_{j,i}}$, and it is clear that this commutes with the map $\pi\colon\St(\mathbb{Z}[\pi_1 X])\rightarrow E(\mathbb{Z}[\pi_1 X])$, which means the involution is defined on $K_2(\mathbb{Z}[\pi_1 X])$.

Since $\overline{w_{i,j}^{\pm g}} = w_{j,i}^{\pm \overline{g}}$ the involution sends $W(\pm\pi_1 X)$ to itself; recall that $W(\pm\pi_1 X)$ is the subgroup of $\St(\mathbb{Z}[\pi_1 X])$ defined in Section \ref{thesteinberggroup}, and that $\Wh_2(\pi_1[X]) = K_2(\mathbb{Z}\pi_1[X])/W(\pm\pi_1 X)$. Hence it follows that the involution is defined in the quotient $\Wh_2(\pi_1[X])$ as required.

\subsubsection{Duality formulae}
We can now state the duality formulae of Hatcher and Wagoner.

\begin{proposition}[{\cite[Chapter VIII]{HatcherWagoner}, \cite[Duality Formula 4.4]{Hatcher}}]\label{duality-formulae}
Let $X$ be a manifold of dimension $n$. Then
\[\Sigma\left(\overline{F}\right)=(-1)^n \overline{\Sigma(F)}\]
and if $F\in\ker\Sigma$
\[\widehat{\Theta}\left(\overline{F}\right)=(-1)^n \overline{\widehat{\Theta}(F)},\]
where on the right hand side we use the involutions defined in Sections \ref{wh_1_involution} and \ref{wh_2_involution}.
\end{proposition}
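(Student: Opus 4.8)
The plan is to adapt the arguments of Hatcher--Wagoner \cite[Chapter~VIII]{HatcherWagoner} and Hatcher \cite{Hatcher} and to check that nothing in them forces dimension $\geq 5$. Everything rests on the observation made just above: $\overline{f_t}=1-f_t\circ R$ is a generic one-parameter family representing $\overline{F}$, and since $R$ is a diffeomorphism of $X\times I$ it leaves unchanged the indices of the critical points, the handle structures, the algebraic and geometric intersection data of the attaching and belt spheres, and the induced maps on $\pi_1 X$ and $\pi_2 X$. So it is enough to understand how the combinatorial and geometric data defining $\Sigma$ and $\Theta$ transform when each $f_t$ is turned upside down, and to match the outcome against the involutions of Sections \ref{wh_1_involution} and \ref{wh_2_involution}. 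The only genuinely dimension-sensitive ingredient, the reduction of an element of $\ker\Sigma$ to a nested collection of eyes, is available in dimension $4$ by Theorem \ref{reductiontoeyethm} and the remark following it, and no Whitney trick is needed beyond that; this is why the argument transplants.

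For the $\Sigma$ formula, put $f_t$ in the standard form (one-parameter ordering, only index $i$ and $i+1$ critical points, births near $0$ and deaths near $1$). Turning upside down replaces each $i$- and $(i{+}1)$-handle by its dual handle and leaves births and deaths in place, since $R$ acts trivially on the parameter $t$, while an $i/i$ or $(i{+}1)/(i{+}1)$ handle slide becomes a slide between the dual handles with the trajectory reversed. Reversing a trajectory inverts the associated element of $\pi_1 X$ and, because the relevant stable-set orientations are reversed, produces the $w_1^X$-twist; on the Steinberg level this sends $x_{a,b}^{\lambda}$ to $x_{b,a}^{\overline{\lambda}}$, with the order of the factors reversed and the differential transposed, exactly as for the dual handle decomposition. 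Bookkeeping the permutation-diagonal correction $w$ as in \cite[Chapter~VIII]{HatcherWagoner} yields $\Sigma(\overline{F})=(-1)^n\overline{\Sigma(F)}$, the sign $(-1)^n$ coming from the orientation bookkeeping; in particular $F\in\ker\Sigma$ forces $\overline{F}\in\ker\Sigma$, so $\widehat{\Theta}(\overline{F})$ is defined.

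For the $\widehat{\Theta}$ formula, reduce $f_t$ to a nested collection of eyes with only index $i$ and $i+1$ handles and turn it upside down; the result is again a collection of eyes, with dual handle indices and with the roles of the belt spheres $A_i$ and the attaching spheres $B_j$ interchanged. Hence each circle of intersection $C\subset (A_i\times I_\varepsilon)\cap(B_j\times I_\varepsilon)$ becomes a circle $\overline{C}$ contributing to the $(j,i)$ entry of the defining matrix in place of the $(i,j)$ entry, and one must verify
\[\gamma_{\overline C}=\gamma_C^{-1},\qquad \sigma_{\overline C}=-w_1^X(\gamma_C)\,\sigma_C^{\gamma_C^{-1}},\qquad s_{\overline C}=s_C+w_2^X(\sigma_C)\in\mathbb{Z}_2,\]
which, with the matrix transpose, is exactly the bar of Section \ref{wh_1_involution}. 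The first identity is immediate from the definition of $\gamma_C$ once the $A$- and $B$-sides are swapped. The second follows from the sweep-out description of Propositions \ref{claimone} and \ref{claimtwo}: $\sigma_{\overline C}$ is represented by the same union of sweep-out disks, but with basepoint arc $\beta_j$ in place of $\alpha_i$ (conjugating the class by $\gamma_C$) and with reversed orientation, since interchanging the two handle roles reverses the sign convention at the double points used to orient $C$, and when $X$ is non-orientable the ambient orientation of $V\times I_\varepsilon$ entering the construction is twisted by $w_1^X(\gamma_C)$. The third identity is the crux: $s_C$ compares the $A$-$0$-framing of $\nu(C,A_i\times I_\varepsilon)$ with the $B$-attaching-framing induced by the $(i{+}1)$-handle framing of $\nu(B_j,V)$, while after turning upside down one compares the $0$-framing coming from $B_j\times I_\varepsilon$ with the attaching framing of the dual handle whose attaching sphere is $A_i$; the two $0$-framings agree, both being pulled back from nullhomotopies, and the attaching framing of the dual handle differs from the $0$-framing by the mod-$2$ normal Euler number of the sphere $U\cup W$ representing $\sigma_C$, which by Wu's formula is $w_2^X(\sigma_C)$. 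Reducing mod $2$ exactly as in the computation of Section \ref{z2calculation} gives $s_{\overline C}=s_C+w_2^X(\sigma_C)$. Assembling these into the matrix $I+M$ gives $\Theta(\overline{f_t})=(-1)^n\overline{\Theta(f_t)}$ as elements of $\Wh_1(\pi_1 X;\mathbb{Z}_2\times\pi_2 X)$ for the chosen representative path; projecting to $\Wh_1(\pi_1 X;\mathbb{Z}_2\times\pi_2 X)/\widehat{\chi}$, on which the bar is well-defined by construction of $\widehat{\chi}=\chi(K_3\mathbb{Z}[\pi_1 X])+\overline{\chi(K_3\mathbb{Z}[\pi_1 X])}$ and on which the images of $\Theta(f_t)$ and $\Theta(\overline{f_t})$ are $\widehat{\Theta}(F)$ and $\widehat{\Theta}(\overline{F})$, yields the stated identity.

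The main obstacle is the $s_C$ computation. Pinning down the $w_2^X(\sigma_C)$ correction, and the overall sign $(-1)^n$, requires careful tracking of how the framing of the normal bundle of a belt sphere compares with the attaching framing of its dual handle along $C$, and of how the ambient orientation of $V\times I_\varepsilon$ used to fix the $\mathbb{Z}$-valued framing difference in dimension $4$ behaves under turning upside down when $X$ is non-orientable. I expect that, as with all the $\Theta$ arguments above, one should work with the $\mathbb{Z}$-valued framing difference throughout and reduce mod $2$ only at the end; all the correction terms are then mod-$2$ classes and the higher-dimensional bookkeeping of Hatcher--Wagoner applies verbatim, so that beyond the dimension-$4$ reduction-to-eyes theorem no essentially new input is needed.
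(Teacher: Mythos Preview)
Your proposal is correct and follows essentially the same approach as the paper. The paper itself does not give a detailed proof of this proposition: it cites Hatcher--Wagoner \cite[Chapter~VIII]{HatcherWagoner} and Hatcher \cite{Hatcher}, and then remarks that the original argument---comparing how the data $\gamma_C$, $\sigma_C$, $s_C$ attached to each circle of intersection change when one passes from $f_t$ to $1-f_t$---goes through unchanged after taking the further quotient by $\widehat{\chi}$, since the argument is carried out at the level of $(\mathbb{Z}_2\times\pi_2 X)[\pi_1 X]$ before passing to any quotient. Your sketch is a faithful and more detailed reconstruction of exactly that Hatcher--Wagoner computation, with the additional observation (also made in the paper, just before the proposition) that the reduction to nested eyes in Theorem~\ref{reductiontoeyethm} is available in dimension~$4$, so no dimension restriction is needed.

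One minor caution: you invoke Propositions~\ref{claimone} and~\ref{claimtwo} for the sweep-out description of $\sigma_C$ and $s_C$, but those are stated only for the geometrically simple case of a single eye with one finger move and one Whitney move. The general Hatcher--Wagoner argument works directly with the definitions of $\sigma_C$ and $s_C$ in Section~\ref{constructionoftheta} (disks $D_a$, $D_b$ and the two framings of Section~\ref{two-framings}), which apply to an arbitrary circle of intersection; your use of the sweep-out picture is not wrong, but it is not needed and the general definitions suffice.
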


Hatcher and Wagoner prove this for $\Theta$ which is defined when $k_1 X=0$, however taking the additional quotient by $\widehat{\chi}$ as in Section \ref{wh_1_involution} makes no difference to the proof. Indeed for $\Theta$ they consider the change of path of Morse functions from~$f_t$ to~$1-f_t$, and compare the change on the elements $\sigma_C$, $\gamma_C$, $s_C$ for each circle of intersection. They show that it corresponds to the involution we defined on $(\mathbb{Z}_2\times \pi_2 X)[\pi_1 X]$, then pass to the quotient $\Wh_1(\pi_1 X;\mathbb{Z}_2 \times \pi_2 X)$; passing to a further quotient by $\widehat{\chi}$ does not change the argument.

\subsection{Inertial pseudo-isotopies of \texorpdfstring{$M^3\times I$}{MxI}}\label{subsec:inertial-pis-in-product}

When $X=M\times I$ is the product of a 3-manifold $M$ and the interval, we can say more about the image of $\mathcal{J}$ under $\widehat{\Theta}$ and $\Sigma$.

We first note that there is a differential defined on $\Wh_2(\pi_1 X)\oplus \Wh_1(\pi_1 X;\mathbb{Z}_2\times\pi_1 X)/\widehat{\chi}$ given by $d_i (x) = x-(-1)^i\overline{x}$. Note that this is defined independently on the summands. We define $Z_i=\ker d_i$ and $B_i=\im d_{i+1}$, we will also split these out as 
\[B_i=B_i^2\oplus B_i^1\subset \Wh_2(\pi_1 X)\oplus \Wh_1(\pi_1 X;\mathbb{Z}_2\times\pi_1 X)/\widehat{\chi}\]
and 
\[Z_i=Z_i^2\oplus Z_i^1\subset \Wh_2(\pi_1 X)\oplus \Wh_1(\pi_1 X;\mathbb{Z}_2\times\pi_1 X)/\widehat{\chi}.\]

We recall the result of Hatcher that will allow us to bound the size of $\Sigma(\mathcal{J})$ and $\Theta(\mathcal{J}\cap\ker\Sigma)$ for $M\times I$.
\begin{proposition}{\cite[Lemma 5.3]{Hatcher}}\label{Bnprop}
	Let $M$ be an $(n-1)$-manifold, let $X=M\times I$, and let $F\in\mathcal{J}(X)$ be an inertial pseudo-isotopy. Then
	\begin{align*}
	\Sigma(J) &= (-1)^n\overline{\Sigma(J)}\\
	\end{align*}
	and if $J\in\ker\Sigma$,
	\begin{align*}
	\hTheta(J) &= (-1)^n\overline{\hTheta(J)}.
	\end{align*}
Hence we have
	\[\Sigma(\mathcal{J})\subset Z_n^2\]
	and
	\[\Theta(\mathcal{J}\cap\ker\Sigma)\subset Z_n^1.\]
\end{proposition}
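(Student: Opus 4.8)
The plan is to prove Proposition \ref{Bnprop} (the statement attributed to \cite[Lemma 5.3]{Hatcher}) by combining the duality formula of Proposition \ref{duality-formulae} with the defining property of an inertial pseudo-isotopy. The key observation is that if $F \in \mathcal{J}(X) = \Diff(X\times I, \partial(X\times I))$, then $F|_{X\times 1} = \id_X$, so when we form the dual pseudo-isotopy $\overline{F} = ((F|_{X\times 1})^{-1}\times \id_I)\circ R\circ F\circ R$, the first factor becomes the identity and $\overline{F} = R\circ F\circ R$. Since $R$ is a diffeomorphism of $X\times I$ which, after suitable rescaling, is isotopic to the identity rel the relevant pieces of boundary (or more precisely, since conjugation by $R$ does not change the homotopy class in $(\mathcal{P},\mathcal{E})$ — this is exactly the remark in the paragraph preceding Section \ref{wh_1_involution} that $R_{X\times I}$ is a diffeomorphism and hence changes neither indices of handles nor intersections nor the action on homotopy groups), we get $\overline{F} \simeq F$ in $\pi_0\mathcal{P}(X)$, hence $\Sigma(\overline{F}) = \Sigma(F)$ and $\widehat\Theta(\overline{F}) = \widehat\Theta(F)$ whenever $F\in\ker\Sigma$.

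First I would verify carefully that $F \in \mathcal{J}(M^3\times I\times I)$ really does satisfy $F|_{X\times 1} = \id_X$: this is immediate from the definition of $\mathcal{J}(X)$, which requires $F_{X\times 1} = \id_X$ on top of the usual pseudo-isotopy conditions $F_{X\times 0} = \id$ and $F_{\partial X \times I} = \id$. Next I would spell out that $\overline{F} = R\circ F\circ R$ in this case, directly from the displayed formula in the definition of the dual pseudo-isotopy, using $(F|_{X\times 1})^{-1} = \id$. Then I would invoke the fact (discussed in the text around the involutions) that post- and pre-composition by the reflection diffeomorphism $R$ does not alter the computation of $\Sigma$ or $\widehat\Theta$: concretely, a path of Morse functions $f_t$ representing $F$ gives the path $f_t\circ R$ representing $R\circ F\circ R$ up to the reparametrization already absorbed, and $R$ being a diffeomorphism leaves the index of every critical point, the $\mathbb{Z}[\pi_1 X]$-valued intersection data, and the $\pi_2$- and $\mathbb{Z}_2$-valued circle invariants unchanged. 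Therefore $\Sigma(\overline{F}) = \Sigma(F)$ and, for $F\in\ker\Sigma$, $\widehat\Theta(\overline{F}) = \widehat\Theta(F)$.

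Combining with Proposition \ref{duality-formulae} applied to $X = M^3\times I$, which has dimension $n = 4$, gives $\Sigma(F) = \Sigma(\overline{F}) = (-1)^n\overline{\Sigma(F)}$ and, for $F\in\ker\Sigma$, $\widehat\Theta(F) = \widehat\Theta(\overline{F}) = (-1)^n\overline{\widehat\Theta(F)}$. (Stating it for general $n$ costs nothing, as in Hatcher's lemma.) These are precisely the asserted identities $\Sigma(J) = (-1)^n\overline{\Sigma(J)}$ and $\widehat\Theta(J) = (-1)^n\overline{\widehat\Theta(J)}$. Rewriting, $\Sigma(J) - (-1)^n\overline{\Sigma(J)} = 0$, i.e.\ $d_n(\Sigma(J)) = 0$ in the $\Wh_2$ summand, so $\Sigma(J) \in Z_n^2$; and likewise $d_n(\widehat\Theta(J)) = 0$ gives $\widehat\Theta(J)\in Z_n^1$ for $J\in\ker\Sigma$. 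Since $J\in\mathcal{J}$ was arbitrary, $\Sigma(\mathcal{J})\subset Z_n^2$ and $\Theta(\mathcal{J}\cap\ker\Sigma)\subset Z_n^1$, as required.

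The main obstacle, and the step deserving the most care, is the claim that conjugation by $R$ is invisible to $\Sigma$ and $\widehat\Theta$ — equivalently that $\overline{F}$ and $F$ are genuinely pseudo-isotopic (not merely that their invariants coincide by an independent argument). In Hatcher's original setting this is handled by noting $R$ extends to an ambient diffeomorphism isotopic to the identity; in our setting one must be slightly careful because $R$ reverses the $I$-direction and so is not itself isotopic to $\id$ through maps fixing $X\times\partial I$. The honest way around this is the one the text already flags: do not claim $\overline{F}\simeq F$, but instead track the path-of-functions computation directly, observing that replacing $f_t$ by $1 - f_t\circ R$ (rather than $1-f_t$) changes nothing in the bookkeeping of indices, $\pi_1$-labels, circles of intersection, and the associated $\sigma_C, s_C, \gamma_C$, because $R$ acts as a diffeomorphism that is the identity on all relevant homotopy and homology. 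This is exactly the content of the paragraph in the excerpt explaining why Hatcher uses $1-f_t$ in place of the "correct" dual path, and it is the piece I would write out in full detail.
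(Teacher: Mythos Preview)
Your argument has a genuine gap, and the tell is that it never uses the hypothesis $X = M\times I$. The error is in the path of functions you assign to $\overline{F} = R\circ F\circ R$: you claim it is $f_t\circ R$, but $\Pi(R\circ F\circ R) = p\circ R\circ F\circ R = R_I\circ(p\circ F)\circ R$, so the correct path is $\overline{f_t} = 1 - f_t\circ R$, exactly as in the definition of the dual. The paragraph you cite says only that the precomposition by $R$ is harmless (so one may compute with $1-f_t$ in place of $1-f_t\circ R$); it does \emph{not} say the postcomposition by $R_I$ is harmless. That operation turns index-$i$ critical points into index-$(n{+}1{-}i)$ critical points and is precisely what produces the involution in Proposition~\ref{duality-formulae}. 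So tracking the bookkeeping for $1 - f_t\circ R$ only reproduces $\Sigma(\overline{F}) = (-1)^n\overline{\Sigma(F)}$, which holds for every $F$, not the independent identity $\Sigma(\overline{F}) = \Sigma(F)$ that you need.

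The paper's proof uses the product structure essentially. Writing $X\times I = M\times I\times I$, let $L$ be the reflection in the first $I$ factor and $R_\theta$ the rotation by $\theta$ in the $I^2$ factor, so that $R_\pi = R\circ L$. Because $J$ is inertial it fixes all of $M\times\partial(I^2)$, so $J_\theta := R_\theta^{-1}\circ J\circ R_\theta$ is a genuine path in $\mathcal{P}(X)$ from $J$ to $\widetilde{J} := R_\pi\circ J\circ R_\pi = L\circ\overline{J}\circ L$. Since $L$ is level-preserving and acts trivially on $\pi_* X$, conjugation by $L$ leaves $\Sigma$ and $\widehat\Theta$ unchanged; hence $\Sigma(J) = \Sigma(\widetilde{J}) = \Sigma(\overline{J})$, and now the duality formula finishes. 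The ingredient you are missing is this second reflection $L$, available only because $X$ is itself a product, which converts the orientation-reversing $R$ into a rotation that \emph{is} isotopic to the identity.
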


Hatcher proves this for $\Theta$ but it also holds for $\hTheta$ when $k_1 X\neq 0$. We recall the proof below.
\begin{proof}[Proof of Proposition \ref{Bnprop}]
	Let $R$ denote the map on $X\times I$ sending $(x,s)$ to $(x,1-s)$. Since $X=M\times I$, there is also an involution on $X$ sending $(m,l)$ to $(m,1-l)$, which in turn induces an involution on $X\times I=M\times I\times I$, which we denote by $L$.
	
	We can define a further map on $M\times I\times I$ by rotating around the $I^2$ factor; we denote this rotation $R_\theta$ for $\theta\in[0,2\pi]$. Define
	\[\widetilde{J}= R_\pi\circ J\circ R_\pi.\]
	Noting that $R_\pi = R\circ L$ we have that $\widetilde{J}=L\circ\overline{J}\circ L$. Since conjugation by $L$ induces the identity on $\pi_* X$ and because $L$ is level preserving, conjugation by $L$ induces the identity on $\Wh_2(\pi_1 X)$ and $\Wh_1(\pi_1 X;\mathbb{Z}_2\times\pi_2 X)/\widehat{\chi}$, so $\Sigma\big(\widetilde{J}\big)=\Sigma\big(\overline{J}\big)$ and~$\Theta\big(\widetilde{J}\big)=\Theta\big(\overline{J}\big)$. 
	
	Further $\widetilde{J}$ is isotopic to $J$ in $\mathcal{P}(X)$ via the path
	\[J_\theta = R_\theta^{-1}\circ J\circ R_\theta\in\mathcal{P}(X)\]
where $\theta\in[0,\pi]$. Note that $J_0 = J$ and $J_\pi = \widetilde{J}$. Hence applying Proposition \ref{duality-formulae} we have $\Sigma(J)=\Sigma\big(\widetilde{J}\big)=\Sigma\big(\overline{J}\big)=(-1)^n \overline{\Sigma(J)}$ and when~$J\in\ker\Sigma$,~$\hTheta(J)=\hTheta\big(\widetilde{J}\big)=\hTheta\big(\overline{J}\big)=(-1)^n \overline{\Sigma(J)}$ as required. 
\end{proof}

\subsection{Diffeomorphisms of \texorpdfstring{$X^4\times I$}{X3xI}}

In this section we prove Theorem \ref{diffoffivemanifolds} which gives diffeomorphisms of the 5-manifold $X\times I$, for $X$ a 4-manifold, which are pseudo-isotopic but not isotopic for the identity. We do not use the results of this section elsewhere so a reader uninterested in 5-manifolds may skip this subsection entirely.

We recall the following result from Hatcher.
\begin{proposition}{\cite[Lemma 5.2]{Hatcher}}
Let $X$ be an n-manifold, $n\geq 5$. Then
\begin{gather*}
B_n^2\subset \Sigma(\mathcal{J})\text{ and}\\
B_n^1\subset \Theta(\mathcal{J}\cap \ker\Sigma).
\end{gather*}
\end{proposition}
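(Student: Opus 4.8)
The plan, following \cite[Lemma~5.2]{Hatcher}, is to realise every element of $B_n^2$ and of $B_n^1$ by an inertial pseudo-isotopy of the shape $J=F\circ\overline{F}$, leveraging the genuine surjectivity of $\Sigma$ and of $\Theta$ available in dimension $n\ge 5$ (Theorem~\ref{high-dim-surjectivity} for $\Sigma$; the analogous surjectivity of $\Theta|_{\ker\Sigma}$ from \cite[Chapter~VII]{HatcherWagoner}), the duality formulae of Proposition~\ref{duality-formulae}, and the additivity of $\Sigma$ and $\widehat{\Theta}$ under composition of pseudo-isotopies.

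The first step is the purely formal observation that $F\circ\overline{F}$ is inertial for every $F\in\mathcal{P}(X)$. Writing $f=F|_{X\times 1}$, the composite $R\circ F\circ R$ restricts to the identity on $X\times 1$ (because $R$ interchanges $X\times 0$ and $X\times 1$ and $F|_{X\times 0}=\id$), so $\overline{F}|_{X\times 1}=(f^{-1}\times\id_I)|_{X\times 1}=f^{-1}$; consequently $J=F\circ\overline{F}$ satisfies $J|_{X\times 1}=f\circ f^{-1}=\id_X$, while $J|_{X\times 0}=\id$ and $J|_{\partial X\times I}=\id$ hold automatically, so $J\in\mathcal{J}(X)$.

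Next I would run the realisation argument in each grading. For $\Sigma$: given $y\in B_n^2$, write $y=d_{n+1}(x)=x+(-1)^n\overline{x}$ for some $x\in\Wh_2(\pi_1 X)$; pick, by high-dimensional surjectivity, an $F\in\mathcal{P}(X)$ with $\Sigma(F)=x$ and put $J=F\circ\overline{F}\in\mathcal{J}(X)$. Then $\Sigma(J)=\Sigma(F)+\Sigma(\overline{F})=x+(-1)^n\overline{x}=y$ by additivity and Proposition~\ref{duality-formulae}, so $y\in\Sigma(\mathcal{J})$. For $\widehat{\Theta}$ (which specialises to $\Theta$ when $k_1 X=0$ or $\chi(K_3\mathbb{Z}[\pi_1 X])=0$): given $y\in B_n^1$, write $y=d_{n+1}(x)$ with $x\in\Wh_1(\pi_1 X;\mathbb{Z}_2\times\pi_2 X)/\widehat{\chi}$, and pick $F\in\ker\Sigma$ with $\widehat{\Theta}(F)=x$. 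By Proposition~\ref{duality-formulae}, $\Sigma(\overline{F})=(-1)^n\overline{\Sigma(F)}=0$, so $\overline{F}\in\ker\Sigma$ and hence $J=F\circ\overline{F}\in\mathcal{J}(X)\cap\ker\Sigma$; then $\widehat{\Theta}(J)=\widehat{\Theta}(F)+\widehat{\Theta}(\overline{F})=x+(-1)^n\overline{x}=y$.

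The only substantive inputs are the two high-dimensional surjectivity theorems — precisely the place where $n\ge 5$ enters, and the reason this lemma has no unconditional $4$-dimensional analogue — together with the duality formulae. I expect the main (mild) care to lie in the bookkeeping of the previous paragraph: checking that $F\circ\overline{F}$ is inertial and, in the $\widehat{\Theta}$ case, still lies in $\ker\Sigma$, so that the invariants are defined on $J$ and combine with the duality sign $(-1)^n$ as asserted. Everything else is formal once Proposition~\ref{duality-formulae} is in hand.
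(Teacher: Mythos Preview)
Your proof is correct and follows essentially the same approach as the paper: combine $F$ with its dual $\overline{F}$ to obtain an inertial pseudo-isotopy whose invariant is $x+(-1)^n\overline{x}$, invoking high-dimensional surjectivity of $\Sigma$ and $\Theta$ together with the duality formulae. The only cosmetic difference is that the paper packages this via the \emph{double} $2F$ (stacking $F$ and its reflection in the interval direction) rather than the composition $F\circ\overline{F}$, but both constructions are inertial and yield the same invariant by the same additivity-plus-duality computation.
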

Recall that we defined the subgroups $B_n^i$ at the beginning of Section \ref{subsec:inertial-pis-in-product}.

This uses the surjectivity of $\Sigma$ and $\Theta$. In 4-dimensions the following weaker statement still holds.
\begin{proposition}\label{Znprop}
Let $X$ be a 4-manifold. Then
\begin{gather*}
\{\theta +\overline{\theta}\;|\;\theta\in\Sigma(\mathcal{P})\}\subset \Sigma(\mathcal{J})\text{ and}\\
\{\theta +\overline{\theta}\;|\;\theta\in\hTheta(\ker\Sigma)\}\subset\widehat{\Theta}(\mathcal{J}\cap \ker\Sigma).
\end{gather*}
\end{proposition}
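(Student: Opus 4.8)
The plan is to mimic the proof of Proposition \ref{Bnprop}, but now starting from an arbitrary pseudo-isotopy rather than an inertial one. Given $\theta \in \Sigma(\mathcal{P}(X))$, choose a pseudo-isotopy $F \in \mathcal{P}(X)$ with $\Sigma(F) = \theta$. The key observation is that $F \circ \overline{F}$ is an \emph{inertial} pseudo-isotopy: by construction $\overline{F}$ sends $X \times 0$ to itself via the identity, and $\overline{F}|_{X\times 1} = (F|_{X\times 1})^{-1}$, so $(F\circ\overline{F})|_{X\times 1} = F|_{X\times 1}\circ (F|_{X\times 1})^{-1} = \id_X$, while both maps fix $\partial X \times I$; hence $F \circ \overline{F} \in \mathcal{J}(X)$. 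Then using additivity of $\Sigma$ under composition together with the duality formula $\Sigma(\overline{F}) = (-1)^4\overline{\Sigma(F)} = \overline{\theta}$ from Proposition \ref{duality-formulae} (with $n=4$), we obtain $\Sigma(F \circ \overline{F}) = \Sigma(F) + \Sigma(\overline{F}) = \theta + \overline{\theta}$, which shows $\theta + \overline{\theta} \in \Sigma(\mathcal{J})$ as desired.

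For the $\widehat{\Theta}$ statement, first I would address the subtlety that $\widehat{\Theta}$ is only a genuine group homomorphism (and additive under composition) on $\ker\Sigma$. Given $\theta \in \widehat{\Theta}(\ker\Sigma)$, pick $F \in \ker\Sigma \subset \mathcal{P}(X)$ with $\widehat{\Theta}(F) = \theta$. Then $\Sigma(\overline{F}) = \overline{\Sigma(F)} = \overline{0} = 0$, so $\overline{F} \in \ker\Sigma$ as well, and since $\ker\Sigma$ is a subgroup and $\Sigma(F\circ\overline{F}) = 0$, the composition $F \circ \overline{F}$ lies in $\mathcal{J}(X) \cap \ker\Sigma$. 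Applying additivity of $\widehat{\Theta}$ on $\ker\Sigma$ together with the duality formula $\widehat{\Theta}(\overline{F}) = (-1)^4 \overline{\widehat{\Theta}(F)} = \overline{\theta}$, we get $\widehat{\Theta}(F\circ\overline{F}) = \widehat{\Theta}(F) + \widehat{\Theta}(\overline{F}) = \theta + \overline{\theta} \in \widehat{\Theta}(\mathcal{J}\cap\ker\Sigma)$.

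The main obstacle I expect is verifying that $\widehat{\Theta}$ (and $\Sigma$) really are additive under composition of pseudo-isotopies in the relevant range — that is, $\Sigma(F\circ G) = \Sigma(F) + \Sigma(G)$ and likewise for $\widehat{\Theta}$ on $\ker\Sigma$. This is the standard fact that these invariants are group homomorphisms on $\pi_0\mathcal{P}$ (respectively $\pi_0\ker\Sigma$), which is implicit in the Hatcher--Wagoner framework: composing pseudo-isotopies corresponds to stacking the associated paths of Morse functions in $(\mathcal{F},\mathcal{E})$, and both $\Sigma$ and $\Theta$ are defined on and respect the group structure of $\pi_1(\mathcal{F},\mathcal{E}) \cong \pi_0\mathcal{P}$. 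One also needs the minor point that the duality formula of Proposition \ref{duality-formulae} — stated there for $\Theta$ with $k_1 X = 0$ and for $\widehat\Theta$ in general — applies here, which the text has already arranged. A careful writeup should spell out the additivity and check that $F\circ\overline{F}$ genuinely lies in $\mathcal{J} = \Diff(X\times I, \partial(X\times I))$, but no deep new input beyond Proposition \ref{duality-formulae} is required.
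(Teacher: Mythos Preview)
Your proposal is correct and follows essentially the same approach as the paper: combine $F$ with its dual to produce an inertial pseudo-isotopy, then apply the duality formula of Proposition~\ref{duality-formulae} together with additivity of the invariants. The only cosmetic difference is that the paper uses the explicit ``double'' construction $2F$ (compressing $F$ into $X\times[0,\tfrac12]$ and a reflected copy into $X\times[\tfrac12,1]$) rather than the composition $F\circ\overline{F}$, but these represent the same class in $\pi_0\mathcal{P}$ and the computation of $\Sigma$ and $\hTheta$ is identical.
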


Combining this with Theorem \ref{Whoneimage} gives the following corollary.
\begin{corollary}
Let $X$ be a 4-manifold. Then
\begin{gather*}
\{\theta +\overline{\theta}\;|\;\theta\in\Xi\}\subset \widehat{\Theta}(\mathcal{J}\cap \ker\Sigma)
\end{gather*}
where we consider $\Xi$ in the quotient $\Wh_1 (\pi_1 X;\mathbb{Z}_2\times \pi_2 X)/\widehat{\chi}$.
\end{corollary}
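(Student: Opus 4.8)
The plan is to deduce this directly by combining the realisation statement of Theorem~\ref{Whoneimage} with the bound on inertial pseudo-isotopies in Proposition~\ref{Znprop}; no new geometry is required, and the whole argument is a matter of tracking the two quotients involved.

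First I would check that $\Xi$ descends to a well-defined subgroup of $\Wh_1(\pi_1 X;\mathbb{Z}_2\times\pi_2 X)/\widehat{\chi}$ and that $\theta\mapsto\theta+\overline{\theta}$ makes sense on this quotient. This is immediate: the involution $\overline{\cdot}$ of Section~\ref{wh_1_involution} satisfies $\overline{\widehat{\chi}}=\widehat{\chi}$, since by construction $\widehat{\chi}=\chi(K_3\mathbb{Z}[\pi_1 X])+\overline{\chi(K_3\mathbb{Z}[\pi_1 X])}$ and $\overline{\cdot}$ is an involution; hence $\overline{\cdot}$, and therefore also the symmetrisation $\theta\mapsto\theta+\overline{\theta}$, passes to $\Wh_1(\pi_1 X;\mathbb{Z}_2\times\pi_2 X)/\widehat{\chi}$. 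This is exactly the quotient in which the corollary asks us to regard $\Xi$.

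Next I would invoke Theorem~\ref{Whoneimage}: when $k_1 X=0$ we have $\Xi\subset\Theta(\ker\Sigma)$ inside $\Wh_1(\pi_1 X;\mathbb{Z}_2\times\pi_2 X)$, and otherwise $\Xi\subset\Theta(\ker\Sigma)$ inside $\Wh_1(\pi_1 X;\mathbb{Z}_2\times\pi_2 X)/\chi(K_3\mathbb{Z}[\pi_1 X])$. Composing with the further quotient map to $\Wh_1(\pi_1 X;\mathbb{Z}_2\times\pi_2 X)/\widehat{\chi}$ (which is the identity when $k_1 X=0$), and recalling that $\widehat{\Theta}$ is by definition the composition of $\Theta$ with this quotient map, I obtain $\Xi\subset\widehat{\Theta}(\ker\Sigma)$ as subsets of $\Wh_1(\pi_1 X;\mathbb{Z}_2\times\pi_2 X)/\widehat{\chi}$, where $\ker\Sigma$ here denotes the subset of $\pi_0\mathcal{P}(X)$.

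Finally I would apply Proposition~\ref{Znprop}, which gives $\{\theta+\overline{\theta}\mid\theta\in\widehat{\Theta}(\ker\Sigma)\}\subset\widehat{\Theta}(\mathcal{J}\cap\ker\Sigma)$. Restricting the outer set to those $\theta$ lying in $\Xi\subset\widehat{\Theta}(\ker\Sigma)$ yields precisely $\{\theta+\overline{\theta}\mid\theta\in\Xi\}\subset\widehat{\Theta}(\mathcal{J}\cap\ker\Sigma)$, which is the claim. There is no genuine obstacle: all the substance is already contained in Theorem~\ref{Whoneimage} (realisation of $\Xi$ by pseudo-isotopies in $\ker\Sigma$) and Proposition~\ref{Znprop} (promoting $\theta$ realised by a pseudo-isotopy to $\theta+\overline{\theta}$ realised by an inertial one), and the only point needing care is the bookkeeping between the two quotients $\chi(K_3\mathbb{Z}[\pi_1 X])$ and $\widehat{\chi}$, together with the observation above that $\Xi$ and the symmetrisation map behave well under passing to $\Wh_1/\widehat{\chi}$.
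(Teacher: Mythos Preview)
Your proposal is correct and follows exactly the paper's approach: the paper states the corollary immediately after Proposition~\ref{Znprop} with the single sentence ``Combining this with Theorem~\ref{Whoneimage} gives the following corollary,'' which is precisely your argument of restricting the inclusion $\{\theta+\overline{\theta}\mid\theta\in\widehat{\Theta}(\ker\Sigma)\}\subset\widehat{\Theta}(\mathcal{J}\cap\ker\Sigma)$ to $\theta\in\Xi\subset\widehat{\Theta}(\ker\Sigma)$. Your extra care in checking that the symmetrisation map descends to the $\widehat{\chi}$ quotient is a good addition that the paper leaves implicit.
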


The proof for 4-dimensions is the same as that in high dimensions. Let $X$ be an $n$-manifold, and let $F$ be a pseudo-isotopy of $X$. Let $p_1\colon X\times I\rightarrow X\times I$ be the map which sends $(x,s)$ to~$(x, s/2)$, and let $p_2\colon X\times I\rightarrow X\times I$ be the map which sends~$(x,s)$ to~$(x, 1- s/2)$. We form the \emph{double} of $F$, $2F\in\mathcal{J}(X)$ via
	\[(2F)(x,s) = 
	\begin{cases}
	p_1\circ F(x,2 s) & s\leq 1/2\\
	p_2\circ F(x, 2-2 s) & s>1/2\\
	\end{cases}
	\]
	Where $p_X$ is the projection of $X\times I$ onto $X$. That is we compress $F$ into the first half of the interval, and $\overline{F}$ into the second half. It is clear that $2F\in \Diff(X\times I, \partial(X\times I)) = \mathcal{J}(X)$. Now as in \cite[Corollary 4.5]{Hatcher} and \cite[Lemma 5.2]{Hatcher} we have
\[\Sigma (2F) = \Sigma(F)+(-1)^n\overline{\Sigma(F)}\]
and
\[\widehat{\Theta} (2F) = \widehat{\Theta}(F)+(-1)^n\overline{\widehat{\Theta}(F)}.\]
Using $2F$ we can see the above elements of $\Wh_2(\pi_1 X)$ and $\Wh_1(\pi_1 X;\mathbb{Z}_2\times\pi_1 X)$ in the image of $\mathcal{J}$ under $\Sigma$ and $\hTheta$ as required.

We can now prove the an analogue of \cite[Corollary 4.5]{Hatcher} for 5-manifolds.

\diffoffivemanifolds*

\begin{proof}
By Corollary \ref{whtrace} identify $\Wh_1(\pi_1 X; \mathbb{Z}_2\times \pi_2 X)$ with 
\[(\mathbb{Z}_2\times\pi_2 X )[\pi_1 X]/ \langle \alpha\sigma -\alpha^\tau \tau\sigma\tau^{-1}, \beta\cdot 1|\alpha,\beta\in\mathbb{Z}_2\times\pi_2 X,\; \tau,\sigma\in \pi_1 X\rangle \]
and $\Wh_1(\pi_1 X; \mathbb{Z}_2)$ with
\[\mathbb{Z}_2[\pi_1 X]/ \langle \sigma - \tau\sigma\tau^{-1}, 1| \tau,\sigma\in \pi_1 X\rangle= \bigoplus_{\Conj(\pi_1 X)^{\neq 1}}\mathbb{Z}_2  \]
where $\Conj(\pi_1 X)^{\neq 1}$ is the set of conjugacy classes of $\pi_1 X$ which are not the conjugacy class of $1$.

By Theorem \ref{Whoneimage} there exists a pseudo-isotopy $F$ of $X$, with $\Theta(F)=(1+\sigma)\gamma$. Considering only $\Wh_1(\pi_1 X; \mathbb{Z}_2)$, this means $F$ has $\Wh_1(\pi_1 X; \mathbb{Z}_2)$ invariant $\gamma$.

We proceed as in \cite[Corollary 4.5]{Hatcher}. Consider the double 
\[2F\in\Diff(X\times I,\partial(X\times I)).\]
If we consider this as a pseudo-isotopy of $X$, then it has $\Wh_1(\pi_1 X; \mathbb{Z}_2)$ invariant $\gamma+\gamma^{-1}$.

Suppose $2F\in\Diff(X\times I,\partial(X\times I))$ is isotopic to the identity. Then $2F$ would be isotopic as a pseudo-isotopy to the identity, so $\Theta(F)=0$. But the $\Wh_1(\pi_1 X; \mathbb{Z}_2)$ invariant of~$2F$ is~$\gamma+\gamma^{-1}$, and by the assumption that $\gamma$ and $\gamma^{-1}$ are not conjugate, $\gamma+\gamma^{-1}$ is not conjugate to $1$, so does not vanish in $\Wh_1(\pi_1 X; \mathbb{Z}_2)$, so this is a contradiction.

Hence it suffices to prove that $2F$, considered as a diffeomorphism of $X\times I$, is pseudo-isotopic to the identity. That is we must construct a pseudo-isotopy of $X\times I$ from the identity to $2F$, namely a diffeomorphism $D$ of $(X\times I)\times I$ with $D|_{(X\times I)\times 1}=F$ and $(X\times I)\times 0$ the identity. For this construction, the suspension $SF$ defined in \cite[Chapter 1, Section 5]{HatcherWagoner} suffices.
\end{proof}

\subsection{Diffeomorphisms of \texorpdfstring{$S^1\times S^2\times I$}{S1xS2xI}}

We begin with the example $ X = S^1\times S^2\times I$. We identify $\pi_2 X = \mathbb{Z}$ and $\pi_1 X$ with the multiplicative infinite cyclic group $\{t^n\;|\;n\in\mathbb{Z}\}$ so we may identify $(\pi_2 X)[\pi_1 X]$ with $\mathbb{Z}[t^\pm]$. We note that the action of $\pi_1 X$ on $\pi_2 X$ is trivial (one can see this in $S^1\times S^2$). 

Since by Section \ref{postnikovdef}
\[k_1 X\in H^3(\pi_1 X; \pi_2 X)=H^3(\mathbb{Z};\mathbb{Z})= H^3(S^1;\mathbb{Z})=0\] 
it follows that $k_1 X=0$. Hence we can consider
\[\Theta(\ker\Sigma)\subset \Wh_1(\pi_1 X;\mathbb{Z}_2\times\pi_2 X) = \Wh_1(\pi_1 X;\pi_2 X)\oplus\Wh_1(\pi_1 X;\mathbb{Z}_2).\]

By Proposition \ref{whtrace} we have
\begin{align*}
\Wh_1(\pi_1 X;\pi_2 X) & = \mathbb{Z}[t]/\langle n t^a -n^{t^b}t^b t^a t^{-b},\; n\cdot 1\;|\;a,b,n\in\mathbb{Z}\rangle \\
                       & = \mathbb{Z}[t]/\langle n\cdot 1\;|\;n\in\mathbb{Z}\rangle \\
					   & = \bigoplus_{i\in \mathbb{Z}^{\times}}\mathbb{Z}t^i.
\end{align*}

Similarly
\begin{align*}
\Wh_1(\pi_1 X;\mathbb{Z}_2) & = \mathbb{Z}_2[t]/\langle m t^a -n^{t^b}t^b t^a t^{-b},\; m\cdot 1\;|\;a,b\in\mathbb{Z},\;m\in\mathbb{Z}_2\rangle \\
                       & = \mathbb{Z}_2[t]/\langle m\cdot 1\;|\;n\in\mathbb{Z}\rangle \\
					   & = \bigoplus_{i\in \mathbb{Z}^{\times}}\mathbb{Z}_2 t^i.
\end{align*}

Hence 
\begin{align*}
\Wh_1(\pi_1 X;\mathbb{Z}_2\times\pi_2 X) & = \Wh_1(\pi_1 X;\mathbb{Z}_2)\oplus\Wh_1(\pi_1 X;\pi_2 X)\\
                                         & = \bigoplus_{i\in \mathbb{Z}^{\times}}\mathbb{Z}_2 t^i \oplus \bigoplus_{i\in \mathbb{Z}^{\times}}\mathbb{Z}_2 t^i\\
										 &= \bigoplus_{i\in \mathbb{Z}^{\times}}(\mathbb{Z}_2\times\mathbb{Z}) t^i.
\end{align*}

By Proposition \ref{duality-formulae} we also have that 
\[\Theta(\mathcal{J}((S^1\times S^2)\times I))\subset Z_4((S^1\times S^2)\times I) = \{\theta\in \Wh_1(\pi_1 X;\mathbb{Z}_2 X)\ |\; \theta = \overline{\theta}\}.\] 
For $(m,n)\cdot t^a \in \bigoplus_{i\in \mathbb{Z}^{\times}}(\mathbb{Z}_2\times\mathbb{Z}) t^i = \Wh_1(\pi_1 X;\mathbb{Z}_2\times\pi_2 X)$ note that 
\[\overline{(m,n)t^a}=(m+w_2^X(m),-w_1^X(m)n^{t^{-a}})t^{-a} = (m,-n)t^{-a}\]
since $w_1^X$ and $w_2^X$ are trivial for $S^1\times S^2\times I$. 

Since $a\neq 0$ we never have that $(m,n)t^a=(m,-n)t^{-a}$ so it is clear that 
\[Z_4((S^1\times S^2)\times I)=\{b\in \Wh_1(\pi_1 X;\mathbb{Z}_2\times\pi_2 X)\ |\ b=\overline{b}\} = \langle (m,n)t^a + (m,-n) t^{-a} \rangle.\]

Hence quotienting out by $Z_4(S^1\times S^2\times I)$ just identifies $\mathbb{Z}_2\times\mathbb{Z} t^a$ with $\mathbb{Z}_2\times\mathbb{Z} t^{-a}$, so we have a map

\[\Wh_1(\pi_1 X;\mathbb{Z}_2\times\pi_2 X)/\Theta(\mathcal{J}\cap\ker\Sigma)\xrightarrow{q}\Wh_1(\pi_1 X;\mathbb{Z}_2\times\pi_2 X)/Z_4(X) = \bigoplus_{i\in \mathbb{Z}_{>0}}(\mathbb{Z}_2\times\mathbb{Z}) t^i.\]

By Corollary \ref{Whoneimage} we have $F\in\ker\Sigma\subset\mathcal{P}(X)$ with $\Theta(F)=(0,n)t^a$, and so it follows we have  $ f\in\ker\Sigma \subset\Diff_{PI}(S^1\times S^2\times I,\partial(S^1\times S^2\times I))$ with $\Theta(f)=(0,n)t^a$. Let $p_2$ be the projection $p_2\colon\bigoplus_{i\in \mathbb{Z}_{>0}}(\mathbb{Z}_2\times\mathbb{Z}) t^i\rightarrow \bigoplus_{i\in \mathbb{N}}\mathbb{Z}$. The $S^1\times S^2\times I$ case of Theorem \ref{nontrivdiffeos} follows letting $K=\ker\Sigma$, and $\Theta'=p_2\circ q\circ\Theta$.

\nontrivdiffeos*

Note that as $w_2^X=0$ for $S^1\times S^2\times I$ we do not know how to realise the $\mathbb{Z}_2$ part of $\bigoplus_{i\in \mathbb{Z}_{>0}}(\mathbb{Z}_2\times\mathbb{Z}) t^i$ which arises from the framing.

\subsection{Diffeomorphisms of the connect sum of aspherical 3-manifolds times \texorpdfstring{$I$}{I}}

In this section we produce diffeomorphisms for $X=(M_1\# M_2)\times I$, where $M_i$ are closed, orientable, aspherical 3-manifolds. The condition of being aspherical is equivalent to being irreducible with an infinite fundamental group; this follows from the sphere theorem, see \cite[Theorem 4.3]{Hempel}. Many examples of such 3-manifolds exist, including $\Sigma_g\times S^1$ for $\Sigma_g$ a surface of genus $g>0$, as well as many hyperbolic 3-manifolds.

Note that aspherical 3-manifolds $M_i$ have torsion free fundamental group. To see this note that $M_i$ is $K(\pi_1 M_i,1)$ space. If $G\leqslant \pi_1 M_i$ is a cyclic subgroup, let $\widetilde{M_i}$ be the corresponding cover of $M_i$. Then $\widetilde{X}$ is a $K(G,1)$ space so $H_i(G,\mathbb{Z})=H_i(\widetilde{M_i},\mathbb{Z})=0$ for~$i>3$, which is only possible if $G$ is infinite; see \cite[Proposition 2.45]{HatcherAT}.

We first compute $\pi_1 X $ and $\pi_2 X $ along with the action. Let $M=M_1\#M_2$. It is clear that~$\pi_i X =\pi_i M $ for all $i$. It is also clear that $\pi_1 M=\pi_1 M_1\ast\pi_1 M_2$. To compute $\pi_2 M $ we consider the universal cover $p\colon\widetilde{M}\rightarrow M$. Writing $M=(M_1\setminus B^3)\cup_{S^2}(M_2\setminus B^3)$ we denote~$Y_i=p^{-1}(M_i\setminus B^3)$. Considering the action of $\pi_1 M $ on $\widetilde{M}$ we can make the following identifications
\begin{gather*}
Y_1 = \bigsqcup_{\pi_1 M/ \pi_1 M_1} \widetilde{(M_1\setminus B^3)},\\
Y_2 = \bigsqcup_{\pi_1 M/ \pi_1 M_2} \widetilde{(M_2\setminus B^3)},\\
Y_1\cap Y_2 =\bigsqcup_{\pi_1 M} S^2.
\end{gather*}


We write the Mayer-Vietoris sequence for $\widetilde{M}=Y_1\cup Y_2$ with coefficients in $\mathbb{Z}$

\[ 0 = H_3(\widetilde{M})\rightarrow H_2(Y_1\cap Y_2)\xrightarrow{(j_1,- j_2)} H_2(Y_1)\oplus H_2(Y_2)\xrightarrow{i_1 + i_2} H_2(\widetilde{M})\rightarrow H_1(Y_1\cap Y_2)=0.\]

Note that $H_3(\widetilde{M})=0$ as $\pi_1 M$ is infinite, so $\widetilde{M}$ is non compact. Using the identifications above we obtain

\begin{align*}
0 &\rightarrow \mathbb{Z}[\pi_1 M]\xrightarrow{(j_1, -j_2)} \left(\mathbb{Z}[\pi_1 M] \otimes_{\mathbb{Z}\pi_1 M_1}H_2(\widetilde{M_1\setminus B^3})\right)\oplus\left(\mathbb{Z}\pi_1 M \otimes_{\mathbb{Z}\pi_1 M_2}H_2(\widetilde{M_2\setminus B^3})\right)\\
&\rightarrow H_2(\widetilde{M})\rightarrow 0.
\end{align*}

To calculate $H_2(\widetilde{M_i\setminus B^3})$, note that since $M_i$ is aspherical, $H_2(\widetilde{M_i})=\pi_2 M_i = 0$. Note also that 
\[\widetilde{M_1\setminus B^3} = \widetilde{M_1}\setminus\bigcup_{g\in\pi_i M}g\widetilde{B^3}\]
for $\widetilde{B^3}$ some lift of the ball $B^3$ (which we note is also a ball). Hence $H_2(\widetilde{M_i\setminus B^3})$ is generated by the boundaries of these balls, and the only possible relation between these generators comes from taking the boundary of a 3-chain corresponding to the entire 3-manifold $\widetilde{M_i\setminus B^3}$. There is only such a 3-chain if $\widetilde{M_i\setminus B^3}$ is compact, which does not hold since $\pi_1 M_i$ is infinite. Hence $H_2(\widetilde{M_i\setminus B^3}) = \mathbb{Z}[\pi_1 M_i]$. Substituting this into the short exact sequence we obtain:
\[ 0 \rightarrow \mathbb{Z}[\pi_1 M]\xrightarrow{(\id,- \id)} \mathbb{Z}[\pi_1 M]\oplus \mathbb{Z}[\pi_1 M]\xrightarrow{i_1 + i_2} H_2(\widetilde{M})\rightarrow 0.\]
Hence,
\[\pi_2 (M_1\# M_2)= H_2(\widetilde{M_1\# M_2})=\mathbb{Z}[\pi_1 M] =\mathbb{Z}[\pi_1 M_1 \ast \pi_1 M_2] \]
with the action of $\pi_1 M$ on $\pi_2 M$ given by the obvious left multiplication. 

By Proposition \ref{whtrace} we have
\begin{align*}
\Wh_1(\pi_1 X;\mathbb{Z}_2\times\pi_2 X)
&= (\mathbb{Z}_2\times\mathbb{Z}[\pi_1 M]) [\pi_1 M]/\langle (m,n g) a -(m,(n g)^{b})b a b^{-1},\ (m,n g) 1\rangle \\
& = (\mathbb{Z}_2\times\mathbb{Z}[\pi_1 M]) [\pi_1 M]/\langle (m,n g) a -(m,n bg)b a b^{-1},\ (m,n g) 1)\rangle .\\
\end{align*}
Identifying $\Wh_1(\pi_1 X;\mathbb{Z}_2\times\pi_2 X)$ with this quotient of $(\mathbb{Z}_2\times\mathbb{Z}[\pi_1 M])[\pi_1 M]$ consider the surjective map,
\begin{align*}
q\colon\Wh_1(\pi_1 X;\mathbb{Z}_2\times \pi_2 X )&\longrightarrow \bigoplus_{S\in\Conj(\pi_1 X)^{\neq 1}} (\mathbb{Z}_2\times\mathbb{Z})S \\
q\colon(m,n g)a&\longmapsto \begin{cases}
(m,n)\Cl(a)&\text{ if }\Cl(a)\neq 1\\
0&\text{ if }\Cl(a)=1.
\end{cases}
\end{align*}
where $\Conj(\pi_1 X)^{\neq 1}$ denotes the set of conjugacy classes which are not the conjugacy class of $1$, and $\Cl(a)$ denotes the conjugacy class of $a$. To see this is well defined we note that it vanishes on both relations since
\begin{align*}
(m,n g) a -(m,n b g)b a b^{-1}\longmapsto &(m,n)\Cl(a)-(m,n)\Cl(bab^{-1})\\
&=(m,n)\Cl(a)-(m,n)\Cl(a)=0.
\end{align*}

Since $\pi_1 M=\pi_1 M_1\ast\pi_1 M_2$, and $\pi_1 M_i$ are infinite there are many conjugacy classes in $\pi_1 M$. 

Ultimately we wish to consider the quotient 
\[\big( \Wh_1(\pi_1 X;\mathbb{Z}_2\times\pi_2 X)/\widehat{\chi}\big) /\hTheta(\mathcal{J}(X)\cap\ker\Sigma).\]

Note by Proposition \ref{Bnprop} we have
\[\hTheta(\mathcal{J}(X)\cap\ker\Sigma)\leqslant Z_4^1(X)=\{\theta\in\Wh_1(\pi_1 X;\mathbb{Z}_2\times\pi_2 X)/\widehat{\chi} \;| \;\theta=\overline{\theta}\}.\]
Let 
\[\widecheck{Z}=\{\theta\in\Wh_1(\pi_1 X;\mathbb{Z}_2\times\pi_2 X) \;| \;\theta=\overline{\theta}\}\leqslant \Wh_1(\pi_1 X;\mathbb{Z}_2\times\pi_2 X)\]
and let $r$ be the quotient map $r\colon\Wh_1(\pi_1 X;\mathbb{Z}_2\times\pi_2 X)\rightarrow \Wh_1(\pi_1 X;\mathbb{Z}_2\times\pi_2 X)/\widecheck{Z}$. Then it is clear that 
\[\big(\Wh_1(\pi_1 X;\mathbb{Z}_2\times\pi_2 X)/\widehat{\chi}\big)/Z_4^1(X)=\big(\Wh_1(\pi_1 X;\mathbb{Z}_2\times\pi_2 X)/\widecheck{Z}\big)/r(\widehat{\chi}).\]

Our approach for the remainder of this section is to use the map
\[\big(\Wh_1(\pi_1 X;\mathbb{Z}_2\times\pi_2 X)/\widehat{\chi}\big) /\hTheta(\mathcal{J}(X)\cap\ker\Sigma)\rightarrow \big(\Wh_1(\pi_1 X;\mathbb{Z}_2\times\pi_2 X)/\widecheck{Z}\big)/r(\widehat{\chi})\]
to understand $\big(\Wh_1(\pi_1 X;\mathbb{Z}_2\times\pi_2 X)/\widehat{\chi} \big)/\hTheta(\mathcal{J}(X)\cap\ker\Sigma)$.

Again identifying $\Wh_1(\pi_1 X;\mathbb{Z}_2\times\pi_2 X)$ with a quotient of~$(\mathbb{Z}_2\times\mathbb{Z}[\pi_1 X])[\pi_1 X]$, first note that
\[\overline{(m,ng)a} = (m+w_2^X(ng),-w_1^X(a)(ng)^{a^{-1}})a^{-1}=(m,-n a^{-1}g)a^{-1}\]
since $w_1^X$ and $w_2^X$ are trivial in $M$ as it is an orientable 3-manifold, and so $w_1^X$ and $w_2^X$ are also trivial in $X=M\times I$. Noting that

\[q(\overline{(m,ng)a}) = (m, -n) \Cl(a^{-1}),\]
we define an involution on $\bigoplus_{S\in\Conj(\pi_1 X)^{\neq 1}} (\mathbb{Z}_2\times\mathbb{Z})S$ via
\begin{align*}
\bar{\cdot}\colon \bigoplus_{S\in\Conj(\pi_1 X)^{\neq 1}} (\mathbb{Z}_2\times\mathbb{Z})S &\rightarrow \bigoplus_{S\in\Conj(\pi_1 X)^{\neq 1}} (\mathbb{Z}_2\times\mathbb{Z})S \\
\bar{\cdot}\colon (m,n)\Cl(a)&\mapsto (m,-n)\Cl(a^{-1}).\\
\end{align*}

Now $q(\overline{a}) = \overline{q(a)}$, so $q(\widecheck{Z}) = \{ s\in \bigoplus_{S\in\Conj(\pi_1 X)^{\neq 1}} (\mathbb{Z}_2\times\mathbb{Z})S\ |\ s=\overline{s}\}$. Given a conjugacy class $S\in\Conj(\pi_1 X)$ we denote $\overline{S}=\Cl(a^{-1})$ where $S=\Cl(a)$. 

We claim that 
\[q(\widecheck{Z}) = \langle (m,n) S + (m, -n)\overline{S},\ (m, 0) P\ |\ m\in\mathbb{Z}_2,\ n\in\mathbb{Z}\ S,P\in\Conj(\pi_1 X),\ \overline{P}=P \rangle\]
Note that if $\Cl(a^{-1}) = \Cl(b^{-1})$ then $a^{-1} = r b^{-1} r^{-1}$, so $a = r b r^{-1}$ and $\Cl(a) = \Cl(b)$, so $\overline{S} =\overline{P}\implies S=P$.  To prove this claim, we can write any element in 
\[\bigoplus_{S\in\Conj(\pi_1 X)^{\neq 1}} (\mathbb{Z}_2\times\mathbb{Z})S\]
as $\sum_i (m_i, n_i) S_i,$ where the $S_i$ are distinct. If
\[\sum_i (m_i, n_i) S_i = \overline{\sum_i (m_i, n_i) S_i} = \sum_i (m_i, -n_i) \overline{S_i}\]
then since $\bar{\cdot}$ is injective, there is a permutation $\sigma$ which pairs $S_i$ with the unique $S_{\sigma(i)}$ such that $S_i=\overline{S_{\sigma(i)}}$. We hence also see that $m_i=m_{\sigma(i)}$ and $n_i=-n_{\sigma(i)}$; note that if~$i=\sigma(i)$ then $n_i=-n_{i}=0$. Hence we can rewrite the sum as

\begin{align*}
\sum_i (m_i, n_i)S &= \sum_{i,\ i=\sigma(i)}(m_i,0) S_i + \sum_{i,\ i<\sigma(i)} (m_k, n_k)S_k + (m_{\sigma(i)},n_{\sigma(i)})S_{\sigma(i)}\\
& = \sum_{i,\ i=\sigma(i)}(m_i,0) S_i + \sum_{i,\ i<\sigma(i)} (m_i, n_i)S_i + (m_i, - n_i)\overline{S_i} 
\end{align*}
which is the sum of generators of the required form, proving our claim.

Hence we can see that quotienting $\bigoplus_{S\in\Conj(\pi_1 X)^{\neq 1}} (\mathbb{Z}_2\times\mathbb{Z})S$ by $p(\widecheck{Z})$ identifies $(\mathbb{Z}_2\times \mathbb{Z}) S$ with $(\mathbb{Z}_2\times \mathbb{Z}) \overline{S}$ when $S\neq\overline{S}$, and kills the $\mathbb{Z}_2$ part when $S=\overline{S}$, that is
\[\Bigg(\bigoplus_{S\in\Conj(\pi_1 X)^{\neq 1}}(\mathbb{Z}_2\times\mathbb{Z})S\Bigg)/p(\widecheck{Z}) = \bigoplus_{\substack{S\in\Conj(\pi_1 X)^{\neq 1},\\ S=\overline S}}\mathbb{Z} S\oplus\bigoplus_{\substack{[S]\in \Conj(\pi_1 X)^{\neq 1}/\sim,\\S\neq\overline{S}}} (\mathbb{Z}_2\times \mathbb{Z})S\]
where $\sim$ is the equivalence relation on $\Conj(\pi_1 X)^{\neq 1}$ given by $S\sim\overline{S}$. Clearly $q$ induces a surjective map between the quotients
\[\tilde{q}\colon \Wh_1(\pi_1 X; \mathbb{Z}_2\times\pi_2 X)/\widecheck{Z}\rightarrow\bigoplus_{\substack{S\in\Conj(\pi_1 X)^{\neq 1},\\ S=\overline S}}\mathbb{Z} S\oplus\bigoplus_{\substack{[S]\in \Conj(\pi_1 X)^{\neq 1}/\sim,\\S\neq\overline{S}}} (\mathbb{Z}_2\times \mathbb{Z})S.\]

As previously, we will see that we are able to realise the $0\times\mathbb{Z}$ part of the second summand, as well as the $\mathbb{Z}$ part of the first summand, however, it will be easier to come up with conjugacy classes with $S\neq \overline{S}$). 

\subsubsection{Conjugacy classes in free products.}
In order to come up with suitable conjugacy classes, in this subsection we prove the following
\begin{proposition}\label{infiniteconjclasses}
For all $n\in\mathbb{N}$ there exists $a_n\in \pi_1 X = \pi_1 M_1\ast \pi_1 M_2$ such that $a_n$ and $a_m$ are not conjugate for $n\neq m$, and that $a_n$ and $a_m^{-1}$ are not conjugate~$\forall n,m\in\mathbb{N}$; in particular $a_n$ is not conjugate to its inverse. Hence there are infinitely many distinct equivalence classes of conjugacy classes $[S]\in\Conj(\pi_1 X)/\sim$ such that $S\neq \overline{S}$.
\end{proposition}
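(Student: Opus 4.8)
The plan is to use the normal form for elements of a free product and the classical conjugacy criterion. Recall that every element of $\pi_1 M_1 \ast \pi_1 M_2$ of infinite order is conjugate to a cyclically reduced word, and two cyclically reduced words of syllable length $\geq 2$ are conjugate if and only if one is a cyclic permutation of the other; moreover an element is conjugate to its inverse only if the cyclically reduced representative is a cyclic permutation of its formal inverse. So it suffices to produce, for each $n$, a cyclically reduced word $a_n$ whose syllable sequence is ``rigid'' in the sense that its set of cyclic permutations is disjoint from that of $a_m$ for $m \neq n$ and disjoint from the cyclic permutations of $a_m^{-1}$ for all $m$.

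First I would fix nontrivial elements $x \in \pi_1 M_1$ and $y \in \pi_1 M_2$ (these exist since $\pi_1 M_i$ is infinite, hence nontrivial); note $x, y$ have infinite order as $\pi_1 M_i$ is torsion-free. Then I would set, for instance,
\[
a_n = x y x y^2 x y^3 \cdots x y^n
\]
(a word that alternates between the two factors and whose $y$-exponents strictly increase from $1$ to $n$). Such a word is already cyclically reduced. The key combinatorial observation is that the multiset of $y$-exponents appearing in the cyclic word determined by $a_n$ is $\{1, 2, \ldots, n\}$, while that of $a_m$ is $\{1, \ldots, m\}$; since these multisets differ for $n \neq m$ and cyclic permutation does not change the multiset of exponents, $a_n$ and $a_m$ are not conjugate. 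For the inverse, observe $a_m^{-1} = y^{-m} x^{-1} \cdots y^{-1} x^{-1}$, whose $y$-exponents are all negative, whereas those of $a_n$ are all positive; no cyclic permutation can change signs, so $a_n$ is never conjugate to $a_m^{-1}$. In particular $a_n$ is not conjugate to $a_n^{-1}$, so the conjugacy class $S_n = \Cl(a_n)$ satisfies $S_n \neq \overline{S_n}$, and the classes $[S_n] \in \Conj(\pi_1 X)/\!\sim$ are pairwise distinct.

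The main obstacle is getting the conjugacy criterion for free products stated and applied cleanly: I need to be careful that the alternating structure (syllables genuinely alternating between the two free factors) is preserved under cyclic reduction and that "cyclic permutation of syllables" is precisely the conjugacy relation for cyclically reduced words of length $\geq 2$. This is standard — it is the conjugacy theorem for free products, e.g. as in Magnus–Karrass–Solitar or Lyndon–Schupp — so I would cite it rather than reprove it. One minor point to check is the edge case of short words: taking $n \geq 1$ the word $a_n$ has syllable length $2n \geq 2$, so we are always in the range where the cyclic-permutation criterion applies, and we never need the separate analysis for length-one words. Given this, the proposition follows immediately, and the final sentence — infinitely many distinct $[S] \in \Conj(\pi_1 X)/\!\sim$ with $S \neq \overline S$ — is exactly the statement that $\{[S_n]\}_{n \in \mathbb{N}}$ is an infinite family.
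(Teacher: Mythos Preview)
Your proof is correct and takes a somewhat different route from the paper's. The paper instead chooses the simpler family $a_n = (ab)^n$ for fixed nontrivial $a \in \pi_1 M_1$, $b \in \pi_1 M_2$, and then proves from scratch, as a separate lemma, that these powers lie in pairwise distinct conjugacy classes for all $n \in \mathbb{Z}$; the argument is a hands-on analysis with a length function on conjugacy classes (essentially the cyclically reduced syllable length) together with explicit cancellation in alternating products, and torsion-freeness of the factors enters at the end to rule out $a$ or $b$ having order two. You instead pick a more intricate family $a_n = xyxy^2\cdots xy^n$ but outsource the structural work to the standard conjugacy theorem for free products, after which everything reduces to comparing multisets of $y$-exponents. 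Your version is shorter and more conceptual at the cost of a black-box citation; the paper's is longer but self-contained. The underlying content is the same in both cases --- rigidity of the syllable structure in a free product --- and torsion-freeness is used in your argument exactly where it is needed, to guarantee that the powers $y^k$ are genuinely distinct syllables and that positive and negative exponents never coincide.
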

This gives us the following corollary.
\begin{corollary} The abelian group
\[\bigoplus_{S\in\Conj(\pi_1 X)^{\neq 1},\ S=\overline S}\mathbb{Z} S\oplus\bigoplus_{[S]\in \Conj(\pi_1 X)^{\neq 1}/\sim,\ S\neq\overline{S}} (\mathbb{Z}_2\times \mathbb{Z})S\]
has infinite rank, hence $\Wh_1(\pi_1 X; \mathbb{Z}_2\times\pi_2 X)/\widecheck{Z}$ also has infinite rank as $\tilde{q}$ is surjective.
\end{corollary}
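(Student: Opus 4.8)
The plan is to deduce this purely formally from Proposition \ref{infiniteconjclasses}, with no further geometry required. First I would apply that proposition to obtain elements $a_1, a_2, \ldots \in \pi_1 X = \pi_1 M_1 \ast \pi_1 M_2$ which are pairwise non-conjugate and such that no $a_n$ is conjugate to $a_m^{-1}$ for any $m$. Put $S_n = \Cl(a_n) \in \Conj(\pi_1 X)^{\neq 1}$; each $a_n$ is non-trivial since it is not conjugate to its own inverse, so these are genuinely classes in $\Conj(\pi_1 X)^{\neq 1}$. By construction the $S_n$ are distinct, each satisfies $S_n \neq \overline{S_n}$, and $S_n \notin \{S_m,\overline{S_m}\}$ for $n \neq m$, so the classes $[S_n] \in \Conj(\pi_1 X)^{\neq 1}/\!\sim$ are pairwise distinct.

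Next I would observe that, by the very definition of the second direct sum, each $[S_n]$ (having $S_n \neq \overline{S_n}$) contributes a distinct summand $(\mathbb{Z}_2 \times \mathbb{Z})S_n$ to
\[\bigoplus_{\substack{S\in\Conj(\pi_1 X)^{\neq 1},\\ S=\overline S}}\mathbb{Z} S \;\oplus\; \bigoplus_{\substack{[S]\in \Conj(\pi_1 X)^{\neq 1}/\sim,\\ S\neq\overline{S}}} (\mathbb{Z}_2\times \mathbb{Z})S.\]
Taking the free $\mathbb{Z}$ factor out of each of the summands indexed by $[S_1],[S_2],\ldots$ exhibits a copy of $\bigoplus_{n\in\mathbb{N}}\mathbb{Z}$ as a direct summand of this group, so it has infinite torsion-free rank, i.e.\ $\dim_\mathbb{Q}\big((-)\otimes_\mathbb{Z}\mathbb{Q}\big) = \infty$. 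Finally, since $\tilde q \colon \Wh_1(\pi_1 X, \mathbb{Z}_2\times\pi_2 X)/\widecheck{Z} \to \bigoplus \cdots$ is a surjective homomorphism of abelian groups, applying the right-exact functor $- \otimes_\mathbb{Z} \mathbb{Q}$ gives a surjection of $\mathbb{Q}$-vector spaces onto one of infinite dimension; hence $\Wh_1(\pi_1 X, \mathbb{Z}_2\times\pi_2 X)/\widecheck{Z}$ also has infinite rank, which is the assertion.

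I do not expect any genuine obstacle here: the statement is a bookkeeping consequence of Proposition \ref{infiniteconjclasses}, where the substantive work (producing infinitely many conjugacy classes not fixed by $\bar{\cdot}$ and pairwise inequivalent under $\sim$) has already been done. The only point needing a word of care is the last step, namely that torsion-free rank is monotone under surjections of abelian groups, which follows from right-exactness of $-\otimes_\mathbb{Z}\mathbb{Q}$.
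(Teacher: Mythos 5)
Your argument is correct and matches the paper's treatment: the paper states this corollary as an immediate consequence of Proposition \ref{infiniteconjclasses} without further proof, and your write-up simply supplies the routine details (the infinitely many pairwise distinct classes $[S_n]$ with $S_n\neq\overline{S_n}$ giving infinitely many $(\mathbb{Z}_2\times\mathbb{Z})$ summands, plus monotonicity of torsion-free rank under surjections via $-\otimes_{\mathbb{Z}}\mathbb{Q}$). No gaps.
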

Proposition \ref{infiniteconjclasses} follows easily from the following fact about free products of groups.
\begin{lemma}\label{freeprodconj}
Given groups $A$ and $B$, let $a\in A$, $b\in B$ with $a,b\neq 1$, then $(ab)^n\in A\ast B$ are distinct conjugacy classes in $A\ast B$ for all $n\in\mathbb{N}$. If additionally one of $a$ or $b$ is not of order two, then $(ab)^n\in A\ast B$ are distinct conjugacy classes in $A\ast B$ for all $n\in\mathbb{Z}$.
\end{lemma}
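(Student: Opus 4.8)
The plan is to use the normal form for conjugacy classes in a free product. Recall that every element of $A\ast B$ is conjugate to a \emph{cyclically reduced} word, i.e.\ a word $g_1 g_2\cdots g_k$ with each $g_i$ lying in $A\setminus\{1\}$ or $B\setminus\{1\}$, with consecutive letters in different factors, and (when $k\geq 2$) with $g_1$ and $g_k$ in different factors; and two cyclically reduced words represent conjugate elements if and only if one is a cyclic permutation of the other. This is a standard fact (see e.g.\ Magnus--Karrass--Solitar, or Serre's \emph{Trees}); I would cite it rather than reprove it.

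With $a\neq 1$ in $A$ and $b\neq 1$ in $B$, the word $(ab)^n = a b a b\cdots a b$ (with $n$ copies of $a$ and $n$ of $b$) is already cyclically reduced of length $2n$. Hence its conjugacy class consists precisely of the cyclic permutations of this length-$2n$ word, all of which again have length $2n$; so for $n\neq m$ the classes of $(ab)^n$ and $(ab)^m$ are distinct simply because the cyclic word lengths $2n\neq 2m$ differ. This already gives the first assertion for $n\in\mathbb{N}$. For the refinement to $n\in\mathbb{Z}$, note $(ab)^{-n} = b^{-1}a^{-1}b^{-1}a^{-1}\cdots b^{-1}a^{-1}$, which is cyclically reduced of length $2n$; its cyclic permutations are exactly the words alternating between a letter of $B$ and a letter of $A$ with the $B$-letters all equal to $b^{-1}$ and the $A$-letters all equal to $a^{-1}$. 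Comparing with the cyclic permutations of $(ab)^m$ (whose $A$-letters are all $a$ and $B$-letters all $b$): a cyclic permutation of $(ab)^m$ equals one of $(ab)^{-n}$ only if $m=n$ and moreover $a=a^{-1}$ and $b=b^{-1}$, i.e.\ both $a$ and $b$ have order dividing $2$. So under the extra hypothesis that one of $a,b$ is not of order two, $(ab)^n$ and $(ab)^m$ are non-conjugate whenever $n\neq m$ in $\mathbb{Z}$, and in particular $(ab)^n$ is never conjugate to $(ab)^{-n}$ for $n\neq 0$.

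I expect no serious obstacle here; the only care needed is to state the normal-form/conjugacy theorem for free products in exactly the form used (cyclically reduced representatives, uniqueness up to cyclic permutation) and to handle the length-$1$ and identity cases separately — but those are excluded since $a,b\neq 1$ forces length $\geq 2$. The mild subtlety is the order-two caveat: when both $a$ and $b$ have order $2$ the word $(ab)^{-1}=ba$ is a cyclic permutation of $ab$, so $(ab)^n$ and $(ab)^{-n}$ can coincide as conjugacy classes, which is why Proposition~\ref{infiniteconjclasses} will instead choose $a_n=(ab)^n$ with $a,b$ taken (using that $\pi_1 M_i$ is infinite and torsion-free, hence contains elements of infinite order) so that $ab$ has infinite order, and then invoke the $n\in\mathbb{Z}$ clause of the lemma. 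Finally, Lemma~\ref{freeprodconj} as stated is for abstract groups $A,B$; applying it to $A=\pi_1 M_1$, $B=\pi_1 M_2$ is immediate once we note these are nontrivial (indeed infinite), giving the required $a,b\neq 1$.
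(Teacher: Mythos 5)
Your proof is correct, but it takes a genuinely different route from the paper's. You invoke the classical conjugacy theorem for free products (every element is conjugate to a cyclically reduced word, and two cyclically reduced words of length $\geq 2$ are conjugate iff they are cyclic permutations of one another), after which both claims fall out immediately: distinct $|n|$ gives distinct cyclic word lengths $2|n|$, and $(ab)^n$ versus $(ab)^{-n}=(b^{-1}a^{-1})^n$ reduces to comparing the two cyclic permutations $(a^{-1}b^{-1})^n$ and $(b^{-1}a^{-1})^n$ with $(ab)^n$, forcing $a=a^{-1}$ and $b=b^{-1}$. The paper does not cite this theorem; instead it defines the length function $l$ on conjugacy classes as the minimal alternating length of a representative and proves $l(\Cl(a_1b_1\cdots a_nb_n))=2n$ by a direct, case-by-case cancellation argument on $r_1\cdots r_k\,(ab)^n\,r_k^{-1}\cdots r_1^{-1}$, and then runs a second explicit cancellation argument for the $n$ versus $-n$ case. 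In effect the paper reproves, from scratch, exactly the special case of the normal-form/conjugacy theorem that it needs, so the underlying combinatorics is the same; your version is shorter and cleaner because it outsources that combinatorics to a standard citable result (Magnus--Karrass--Solitar or Serre), at the cost of not being self-contained. Your closing remark about how the lemma is applied (taking $a,b$ nontrivial in the torsion-free groups $\pi_1 M_i$ so that neither has order two) matches the paper's use of the lemma in Proposition \ref{infiniteconjclasses}.
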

Taking $A=\pi_1 M_1$, $B\in\pi_1 M_2$, setting $a_n = (ab)^n$ proves Proposition \ref{infiniteconjclasses}; note that $\pi_1 M_i$ is torsion free so has no order two elements. We give a proof of Lemma \ref{freeprodconj} below.

\begin{proof}[Proof of Lemma \ref{freeprodconj}]
We define a length function $l:\Conj(A\ast B)^{\neq 1}\rightarrow\mathbb{N}$ by defining~$l(S)$ to be the minimum $n$ such that we can write $S = \Cl(c_1 c_2 \cdots c_n)$ such that $1\neq c_i\in A$ for $i$ odd and $1\neq c_i\in B$ for $i$ even, or $1\neq c_i\in A$ for $i$ even and $1\neq c_i\in B$ for $i$ odd. That is, it is the minimum $n$ such that we can give as an alternating product in elements of $A$ and $B$.

Note that in the free product $A\ast B$ it is clear that if $c_1 \ldots c_n = d_1 \ldots d_n$ for both $c_i$ and $d_i$ alternating in $A$ and $B$, then $c_i = d_i$ $\forall i$.

For $a_i\in A$, $b_i\in B$, we claim that $l(\Cl(a_1 b_1 a_2 b_2 \cdots a_n b_n))$ is $2n$. To prove this, suppose 
\[r a_1 b_1 a_2 b_2 \ldots a_n b_n r^{-1} = c_1 c_2 \ldots c_m\]
for $m< 2 n$ and $c_1 c_2 \ldots c_m$ an alternating sum in $A$ and $B$. We can also write $r=r_1 r_2\cdots r_k$ as an alternating sum in $A$ and $B$, so we have 
\[r_1 r_2 \ldots r_k a_1 b_1 a_2 b_2 \ldots a_n b_n r_k^{-1} r_{k-1}^{-1}\ldots r_1^{-1} = c_1 c_2 \ldots c_m.\]

Suppose first that $r_n\in B$. Then $r_1\ldots r_n a_1 b_1 a_2 b_2 \ldots a_n b_n$ is alternating. If $b_n r_k^{-1}\neq 1$ then 
\[r_1 r_2 \ldots r_k a_1 b_1 a_2 b_2 \ldots a_n (b_n r_k^{-1}) r_{k-1}^{-1}\ldots r_1^{-1}\]
is alternating of length $2n+2k-1$, so $m = n+ 2k-1 > n$ which is a contradiction. Hence $b_n r_k^{-1}= 1$. Cancelling $b_n r_k^{-1}= 1$ and repeating this argument on $r_k a_1 b_1 a_2 b_2 \ldots a_n r_{k-1}^{-1}\ldots r_1^{-1}$ we prove that $a_n r_{k-1}^{-1} = 1$.

When $k\leq 2n$, we can repeat this argument to prove that $r_k^{-1} r_{k-1}^{-1}\ldots r_1^{-1}$ cancels with the right $k$ terms of $a_1 b_1 \ldots a_n b_n$ at which point the remaining product $r_1 \ldots r_n a_1 b_2 \ldots$ is alternating and $m=2n$ which is a contradiction. 

When $k>2n$ we cancel every term in $a_1 b_1\ldots a_n b_n$ and are left with 
\[ r_1 r_2 \ldots r_k r_{k-2n}^{-1}\ldots r_1^{-1} .\]

Now if $r_k r_{k-2n}^{-1}\neq 1$, then $r_1 r_2 \ldots (r_k r_{k-2n}^{-1}) r_{k-2n-1}^{-1}\ldots r_1^{-1}$ is again alternating and has length~$k+k-2n-1\geq n$, so $m=k+k-2n-1$ which is a contradiction. Hence we must have $r_k r_{k-2n}^{-1}= 1$. 

Repeating this argument we see that all $k-2n$ terms of $r_{k-2n}^{-1}\ldots r_1^{-1}$ must cancel the right $k-2n$ terms of $r_1 r_2 \ldots r_k$ and are left with $r_1\ldots r_{2n}$ which is alternating of length $2n$ so again a contradiction.

If we suppose instead that $r_n\in A$, we see that $a_1 b_1 a_2 b_2 \ldots a_n b_n r_k^{-1} r_{k-1}^{-1}\ldots r_1^{-1}$ is alternating, and run the same argument proving that $r_k a_1 =1$ and so on, again arriving at a contradiction. 

Hence we cannot write $r a_1 b_1 a_2 b_2 \ldots a_n b_n r^-1 = c_1 c_2 \ldots c_m$ for $m\leq n$. This completes the proof of our claim that $l(\Cl(a_1 b_1 a_2 b_2 \cdots a_n b_n))$ is $2n$.

Now since $l(\Cl((ab)^n)) = 2n$, we have that $(ab)^n$ are in distinct conjugacy classes for all $n\in\mathbb{N}$ as required.

Suppose now that one of $a$ or $b$ is not order two. To prove that $(ab)^n$ are in distinct conjugacy classes for all $n\in\mathbb{Z}$ it is sufficient to prove that $(ab)^n$ is not conjugate to~$(ab)^{-n}$. 

Suppose that $r(ab)^n r^{-1}=(ab)^{-n} = (b^{-1} a^{-1})^n$ writing $r=r_1r_2\ldots r_k$ as an alternating product in $A$ and $B$, we have 
\[r_1r_2\ldots r_k (ab)^n r_k^{-1}\ldots r_2^{-1} r_1^{-1}=(b^{-1}a^{-1})^n\]

Suppose $r_k\in B$. Then we must have that $r_1\in B$ as the right hand term starts with an element of $B$. Hence $k$ is odd. Now the only way the alternating length of the left can agree with the alternating length of the right is if $r_k^{-1}\ldots r_2^{-1} r_1^{-1}$ cancels with the right hand $k$ terms of $(ab)^n$, hence $r_k^{-1} r_{k-1}^{-1}\ldots r_1 = b^{-1}a^{-1}b^{-1}\ldots a^{-1}b^{-1}$. Now we have
\[r_k (ab)^n r_k^{-1}\ldots r_2^{-1} r_1^{-1}= (ba)^n = (b^{-1}a^{-1})^n\]
which is only possible if $a=a^{-1}$ and $b=b^{-1}$ so $a$ and $b$ are order two which is a contradiction.

Suppose instead that $r_k\in A$. Then we see that $r_1\in A$ as the right hand term ends with a term of $A$. Hence again $k$ is odd. Similarly to the previous argument we see that $r$ must cancel with the left hand $k$ terms and again conclude that
\[r_k (ab)^n r_k^{-1}\ldots r_2^{-1} r_1^{-1}= (ba)^n\] 
again leading to a contradiction.
\end{proof}

\subsubsection{The rank of \texorpdfstring{$K_3\mathbb{Z}[\pi_1 M_1\# M_2]$}{K3Z[pi1X]}}
To complete our argument, we must quotient out by $r(\widehat{\chi})=r(\chi(K_3\mathbb{Z}[\pi_1 X])+\overline{\chi(K_3\mathbb{Z}[\pi_1 X])})$. We will prove that $K_3\mathbb{Z}[\pi_1 X]$ has finite rank, and so $r(\widehat{\chi})$ also has finite rank.

\begin{proposition}\label{KthreeM}
If $M=M_1\# M_2$ is a 3-manifold that is the connect sum of two aspherical 3-manifolds then $K_3\mathbb{Z}[\pi_1 M]$ has rank two.
\end{proposition}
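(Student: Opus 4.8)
The plan is to combine the Farrell--Jones conjecture with an Atiyah--Hirzebruch spectral sequence computation. Write $\Gamma_i = \pi_1 M_i$ and $\Gamma = \pi_1 M = \Gamma_1 \ast \Gamma_2$; as observed above the $\Gamma_i$ are torsion-free, and hence so is $\Gamma$. By geometrization the $\Gamma_i$ are fundamental groups of hyperbolic, Seifert-fibered, or graph/mixed aspherical $3$-manifolds, and in every case the $K$-theoretic Farrell--Jones conjecture is known; since this class of groups is closed under free products, $\Gamma$ satisfies it as well. Therefore the assembly map identifies $K_n(\mathbb{Z}[\Gamma])$ with the equivariant homology $H_n^{\Gamma}(E_{\mathcal{VCY}}\Gamma; \mathbf{K}(\mathbb{Z}))$. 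Since $\Gamma$ is torsion-free, $E_{\mathcal{FIN}}\Gamma = E\Gamma$ and every virtually cyclic subgroup of $\Gamma$ is trivial or infinite cyclic, so the relative term comparing the families $\mathcal{VCY}$ and $\mathcal{FIN}$ is a sum over conjugacy classes of maximal infinite cyclic subgroups of Bass--Heller--Swan Nil groups $NK_\ast(\mathbb{Z})$, which vanish because $\mathbb{Z}$ is regular. Hence
\[K_n(\mathbb{Z}[\Gamma]) \cong H_n(B\Gamma; \mathbf{K}(\mathbb{Z})).\]

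Next I would identify $B\Gamma$. Because $M_i$ is aspherical, $M_i \simeq B\Gamma_i$, and $K(\Gamma_1,1)\vee K(\Gamma_2,1)$ is a $K(\Gamma_1\ast\Gamma_2,1)$, so $B\Gamma \simeq M_1 \vee M_2$. Splitting the $\mathbf{K}(\mathbb{Z})$-homology of a wedge gives
\[K_n(\mathbb{Z}[\Gamma]) \cong \widetilde{H}_n(M_1; \mathbf{K}(\mathbb{Z})) \oplus \widetilde{H}_n(M_2; \mathbf{K}(\mathbb{Z})) \oplus K_n(\mathbb{Z}).\]

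Finally, take $n = 3$ and compute ranks via the spectral sequence $E^2_{p,q} = \widetilde{H}_p(M_i; K_q(\mathbb{Z})) \Rightarrow \widetilde{H}_{p+q}(M_i; \mathbf{K}(\mathbb{Z}))$, which is concentrated in $0 \le p \le 3$, $q \ge 0$ (as $M_i$ is a $3$-complex and $\mathbb{Z}$ has trivial negative $K$-theory). On the line $p+q = 3$ the only term that is not finite is $E^2_{3,0} = H_3(M_i;\mathbb{Z}) \cong \mathbb{Z}$, using that $M_i$ is closed and orientable, since $K_1(\mathbb{Z})=K_2(\mathbb{Z})=\mathbb{Z}/2$ and $K_3(\mathbb{Z})=\mathbb{Z}/48$ are finite; no differential into or out of $E_{3,0}$ can change its rank (all relevant targets are finite, all relevant sources vanish). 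So $\widetilde{H}_3(M_i; \mathbf{K}(\mathbb{Z}))$ has rank $1$, and since $K_3(\mathbb{Z})$ is finite, $K_3(\mathbb{Z}[\Gamma])$ has rank $1+1+0 = 2$.

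The step I expect to be the main obstacle is the careful reduction of the Farrell--Jones assembly from virtually cyclic to finite subgroups and the resulting identification $K_n(\mathbb{Z}[\Gamma]) \cong H_n(B\Gamma;\mathbf{K}(\mathbb{Z}))$: while for torsion-free $\Gamma$ this is standard (only infinite cyclic virtually cyclic subgroups occur, and their Nil contributions vanish since $\mathbb{Z}$ is regular), it should be phrased precisely and attributed to a clean reference, as should the Farrell--Jones conjecture for $3$-manifold groups.
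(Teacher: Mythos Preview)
Your proposal is correct and follows essentially the same route as the paper: invoke the Farrell--Jones conjecture for 3-manifold groups to identify $K_3\mathbb{Z}[\pi_1 M]$ with $H_3(B\Gamma;\mathbf{K}(\mathbb{Z}))$, use asphericity to get $B\Gamma\simeq M_1\vee M_2$, and compute the rank via the Atiyah--Hirzebruch spectral sequence, observing that the only non-torsion contribution on the $p+q=3$ line is $H_3(M_i;\mathbb{Z})\cong\mathbb{Z}$. Your version is in fact slightly more careful than the paper's in two places: you use reduced homology in the wedge splitting (so the bookkeeping $1+1+0=2$ is clean), and you spell out the passage from the $\mathcal{VCY}$ to the $\mathcal{FIN}$ family via vanishing of $NK_\ast(\mathbb{Z})$, which the paper leaves implicit.
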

\begin{proof}
In order to compute $K_3\mathbb{Z}[\pi_1 M]$, we first note that the Farrell Jones conjecture holds for 3-manifold groups; see \cite[Corollary 0.3]{FJfor3manifolds}. Since $\pi_1 M=\pi_1 M_1\ast\pi_1 M_2$ is torsion free there is an isomorphism
\[ H_n(B(\pi_1 M);\mathbf{K}(\mathbb{Z}))\xrightarrow{\cong} K_n\mathbb{Z}[\pi_1 M]\]
where $\mathbf{K}(\mathbb{Z})$ is the $K$-Theory spectrum of $\mathbb{Z}$, and $H_n(-;\mathbf{K}(\mathbb{Z}))$ is the generalised homology theory associated to this spectrum; see \cite{luckbook} for further details on this, and the Farrell Jones conjecture.

We proceed to calculate $H_3(B(\pi_1 M);\mathbf{K}(\mathbb{Z}))$. We first further simplify things by noting that since $M_i$ are aspherical we have that $B (\pi_1 M_i )= M_i$ and so
\[B( \pi_1 M )= B(\pi_1 M_1\ast\pi_1 M_2) = B (\pi_1 M_1)\bigvee B (\pi_1 M_2) = M_1\bigvee M_2.\]
Using the axioms for generalised homology theories we have that
\[H_n(B(\pi_1 M);\mathbf{K}(\mathbb{Z}))\cong H_n( M_1;\mathbf{K}(\mathbb{Z}))\oplus H_n( M_2;\mathbf{K}(\mathbb{Z}))\]
hence it will be sufficient to calculate $H_n( M_i;\mathbf{K}(\mathbb{Z}))$.

For any generalised homology theory there is an Atiyah-Hirzebruch spectral sequence with $E_2$ page given by
\[E_2^{p,q}=H_p(M_i;K_q(\mathbb{Z})) \]
where $H_p(M_i;K_p(\mathbb{Z}))$ is usual singular homology with coefficients, and $K_p(\mathbb{Z})$ are the algebraic K-theory groups of $\mathbb{Z}$; see for example \cite{davis-kirk} for details on the Atiyah-Hirzebruch spectral sequence. This spectral sequence converges to $H_n(B(\pi_1 X);\mathbf{K}(\mathbb{Z}))$ in the following sense; on each homology group there is a filtration
\[H_n(M_i;\mathbf{K}(\mathbb{Z})) = \mathcal{F}_0^n\supset \mathcal{F}_1^n\supset \ldots\supset \mathcal{F}_k^n = 0 \]
and the $E_\infty$ page of the spectral sequence gives
\[E_\infty^{n-j,j} = \mathcal{F}_j^n/\mathcal{F}_{j+1}^n.\]

We will use the first few terms of $K_q(\mathbb{Z})$, namely $K_0(\mathbb{Z})=\mathbb{Z}$, $K_1(\mathbb{Z})=\mathbb{Z}_2$, $K_2(\mathbb{Z})=\mathbb{Z}_2$ $K_3(\mathbb{Z})=\mathbb{Z}_{48}$; see for example \cite{weibel}.

Below we write out the second page of the spectral sequence; in red we highlight those terms with $p+q=3$, that will contribute to $H_3(M_i;\mathbf{K}(\mathbb{Z}))$, and we draw arrows where there is a non zero differential; note that for $p<0$ and $p>3$, $E_2^{p,q}= H_p(M_i; K_q(\mathbb{Z}))=0$. 

\begin{sseqdata}[ name = AHss-pg2,
xscale = 3,
homological Serre grading,
classes = { draw = none } ]

\class["\mathbb{Z}"](0,0)
\class["\mathbb{Z}_2"](0,1)
\class["\mathbb{Z}_2"](0,2)
\class["\mathbb{Z}_{48}", red](0,3)
\class["\vdots"](0,4)

\class["{H_1(M_i;\mathbb{Z})}"](1,0)
\class["{H_1(M_i;\mathbb{Z}_2)}"](1,1)
\class["{H_1(M_i;\mathbb{Z}_2)}", red](1,2)
\class["{H_1(M_i;\mathbb{Z}_{48})}"](1,3)
\class["\vdots"](1,4)

\class["{H_2(M_i;\mathbb{Z})}"](2,0)
\class["{H_2(M_i;\mathbb{Z}_2)}", red](2,1)
\class["{H_2(M_i;\mathbb{Z}_2)}"](2,2)
\class["{H_2(M_i;\mathbb{Z}_{48})}"](2,3)
\class["\vdots"](2,4)

\class["\mathbb{Z}", red](3,0)
\class["\mathbb{Z}_2"](3,1)
\class["\mathbb{Z}_2"](3,2)
\class["\mathbb{Z}_{48}"](3,3)
\class["\vdots"](3,4)


\d[green]2(3,0)
\d2(2,1)
\d2(3,1)
\d2(2,2)

\end{sseqdata}
\printpage[ name = AHss-pg2, page = 2 ] \quad

We note that above the first row all groups are torsion, and indeed finite! The only group which is non torsion and which contributes to $H_3(M_i;\mathbf{K}(\mathbb{Z}))$ is $E_2^{3,0}=\mathbb{Z}$. Since the differential $d_2^{3,0}\colon \mathbb{Z}\rightarrow E_2^{1,1}=H_1(M_i;\mathbb{Z}_2)$, drawn in green, is a map into a finite group, we see that we must have $\ker(d_2^{3,0})=\mathbb{Z}$, so $E_3^{3,0}=\mathbb{Z}$. 

Now consider $d_3^{3,0}\colon\mathbb{Z}\rightarrow E_3^{0,2}$. since $E_3^{0,2}$ is a quotient of $\mathbb{Z}_2$, it is either $\mathbb{Z}_2$ or $0$, so again we see that $\ker( d_3^{3,0})=\mathbb{Z}$ and that $E_4^{3,0}=\mathbb{Z}$. Since all differentials $d_n^{p,q}$ are zero for $n\geq 4$, $E_\infty^{p,q} = E_4^{p,q}$.

Hence $\mathcal{F}^3_0/\mathcal{F}^3_1 =\mathbb{Z}$, while $\mathcal{F}^3_1/\mathcal{F}^3_2=E_4^{2,1}$,  $\mathcal{F}^3_2/\mathcal{F}^3_3=E_4^{1,2}$,  $\mathcal{F}^3_3/\mathcal{F}^3_4=E_4^{0,3}$ are all torsion and finite. We also have  $\mathcal{F}^3_4/\mathcal{F}^3_5=E_4^{-1,4}=0$; since the filtration terminates with 0, it must be that $\mathcal{F}^3_4=0$. Hence $\mathcal{F}^3_3$ is finite, hence $\mathcal{F}^3_2$ is finite, and hence $\mathcal{F}^3_1$ is finite. 

We have a short exact sequence
\[ 0\rightarrow\mathcal{F}^3_1\rightarrow \mathcal{F}^3_0 = H_3(M_i;\mathbf{K}(\mathbb{Z}))\rightarrow \mathcal{F}^3_0/\mathcal{F}^3_1 = \mathbb{Z}\rightarrow 0.\]
as all the groups are abelian and $\mathcal{F}^3_0/\mathcal{F}^3_1=\mathbb{Z}$ is free abelian, the sequence splits and so
\[H_3(M_i;\mathbf{K}(\mathbb{Z}))=\mathbb{Z}\oplus \mathcal{F}^3_1.\]
Since $\mathcal{F}^3_1$ is finite, it follows that $H_3(M_i;\mathbf{K}(\mathbb{Z}))$ is rank one, and so $H_n(B\pi_1 M;\mathbf{K}(\mathbb{Z}))$ is rank two.
\end{proof}

\subsubsection{Diffeomorphisms of \texorpdfstring{$\Diff_{PI}((M_1\# M_2)\times I,\partial((M_1\# M_2)\times I))$}{DiffPIM1M2}} Recall the map
\[\tilde{q}\colon \Wh_1(\pi_1 X; \mathbb{Z}_2\times\pi_2 X)/\widecheck{Z}\rightarrow\bigoplus_{\substack{S\in\Conj(\pi_1 X)^{\neq 1},\\ S=\overline S}}\mathbb{Z} S\oplus\bigoplus_{\substack{[S]\in \Conj(\pi_1 X)^{\neq 1}/\sim,\\S\neq\overline{S}}} (\mathbb{Z}_2\times \mathbb{Z})S.\]
Denote the target of this map by $R$. We can induce a map on the quotients
\[\tilde{\tilde{q}}\colon \big(\Wh_1(\pi_1 X; \mathbb{Z}_2\times\pi_2 X)/\widecheck{Z}\big)/r(\widehat{\chi})\rightarrow R/\tilde{q}(r(\widehat{\chi}))\]
Recall that $\big(\Wh_1(\pi_1 X; \mathbb{Z}_2\times\pi_2 X)/\widecheck{Z}\big)/r(\widehat{\chi})=\big(\Wh_1(\pi_1 X; \mathbb{Z}_2\times\pi_2 X)/\widehat{\chi}\big)/Z_4^1(X)$ and that we have a map
\[\big(\Wh_1(\pi_1 X;\mathbb{Z}_2\times\pi_2 X)/\widehat{\chi}\big) /\hTheta(\mathcal{J}(X)\cap\ker\Sigma)\rightarrow \big(\Wh_1(\pi_1 X; \mathbb{Z}_2\times\pi_2 X)/\widehat{\chi}\big)/Z_4^1(X).\]
Composing this with $\tilde{\tilde{q}}$ we obtain a map
\[q'\colon\big(\Wh_1(\pi_1 X;\mathbb{Z}_2\times\pi_2 X)/\widehat{\chi} \big)/\hTheta(\mathcal{J}(X)\cap\ker\Sigma)\rightarrow R/\tilde{q}(r(\widehat{\chi})).\]
The left hand side is precisely the target of
\[\hTheta\colon\ker\Sigma\subset\Diff_{PI}(X,\partial X)\rightarrow \big(\Wh_1(\pi_1 X;\mathbb{Z}_2\times\pi_2 X)/\widehat{\chi} \big)/\hTheta(\mathcal{J}(X)\cap\ker\Sigma).\]
Taking the composition $q'\circ\hTheta$ gives $q'\circ\hTheta\colon \ker\Sigma\rightarrow R/\tilde{q}(r(\widehat{\chi}))$. By Theorem \ref{Whoneimage} the image of $q'\circ\hTheta$ contains 
\[\left(\rule{0cm}{0.9cm}\right. \bigoplus_{\substack{S\in\Conj(\pi_1 X)^{\neq 1},\\ S=\overline S}}\mathbb{Z} S\oplus\bigoplus_{\substack{[S]\in \Conj(\pi_1 X)^{\neq 1}/\sim,\\S\neq\overline{S}}} (0\times \mathbb{Z})S\left.\rule{0cm}{0.9cm}\right)/\tilde{q}(r(\widehat{\chi}))\leqslant R/\tilde{q}(r(\widehat{\chi})).\]
By Proposition \ref{infiniteconjclasses}
\[S = \bigoplus_{\substack{S\in\Conj(\pi_1 X)^{\neq 1},\\ S=\overline S}}\mathbb{Z} S\oplus\bigoplus_{\substack{[S]\in \Conj(\pi_1 X)^{\neq 1}/\sim,\\S\neq\overline{S}}} (0\times \mathbb{Z})S\]
has infinite rank. By Proposition \ref{KthreeM}, $\widehat{\chi}=\chi( K_3\mathbb{Z}[\pi_1 M]) + \overline{\chi( K_3\mathbb{Z}[\pi_1 M])}$ is at most rank four, and so $\tilde{q}(r(\widehat{\chi}))$ is at most rank four. Hence $S/\tilde{q}(r(\widehat{\chi}))$ has infinite rank, and so contains a subgroup isomorphic to $\bigoplus_\mathbb{N}\mathbb{Z}$. 
Letting $K=\ker\Sigma$ and letting $\Theta'$ be the composition of $q'\circ\Theta$ and projection onto the subgroup isomorphic to $\bigoplus_\mathbb{N}\mathbb{Z}$ yields the $(M_1\# M_2)\times I$ case of Theorem \ref{nontrivdiffeos}.

%

\end{document}